\newtheorem{theorem}{Theorem}[section]
\newtheorem{proposition}[theorem]{Proposition}
\newtheorem{lemma}[theorem]{Lemma}
\newtheorem{corollary}[theorem]{Corollary}
\theoremstyle{definition}
\newtheorem{example}[theorem]{Example}  
\newtheorem{definition}[theorem]{Definition}   
\theoremstyle{remark}
\newcommand\W{\mathcal W}	%Set of walks in $\Delta$ (equivalently $\D$)
\newcommand\E{\mathcal E}	%Directed graph whose vertices are the steps of the TASEP
\newcommand\N{\mathcal N}	%Directed graph whose vertices are necklaces, corresponding to equivalence classes of vertices of $\E$
\newcommand\D{\mathcal D}	%Directed graph whose vertices are the points in the simplex $\Delta$
\newcommand\C{\mathcal C}	%The cylinder obtained as a quotient of Z^2
\newcommand\CS{\Lambda}	%Set of cylindric shapes
\newcommand\CSg{\mathcal L}	%Directed graph whose vertices are elements of \P
\newcommand\WW{W}	%Set of words over {+,-}
\newcommand\bij{\phi}		%Special case of the bijection \CRS when the two input tableaux are equal (and so are the output tableaux)
\newcommand\x{\mathbf x}	%points in the simplex
\newcommand\y{\mathbf y}	%points in the simplex
\newcommand\s{\mathtt s}	%south steps
\newcommand\w{\mathtt w}	%west steps
\newcommand\e{\mathtt e}	%east steps for paths in growth diagram
\newcommand\OW{\mathcal O}	%Set of oscillating walks in $\Delta$ (equivalently $\D$)
\newcommand\bars{\bar{s}} %for negative steps -s_i
\newcommand\bdry{\partial}
\newcommand\Z{\mathbb Z}
\DeclareMathOperator\SCT{SCT}	%standard cylindric tableaux
\DeclareMathOperator\SSCT{SSCT}	%semistandard cylindric tableaux
\DeclareMathOperator\SYT{SYT}	%standard Young tableaux
\DeclareMathOperator\OCT{OCT}	% oscillating cylindric tableaux
\newcommand\ins{R}
\DeclareMathOperator\CRS{CRS}
\DeclareMathOperator\CRSK{CRSK}
\DeclareMathOperator\wt{wt}
\DeclareMathOperator\RS{RS}
\DeclareMathOperator\ssh{ssh}
\newcommand\blam{\bar\lambda}
\newcommand\lam{\lambda}
\newcommand\balpha{\bar\alpha}
\newcommand\barij{\overline{\langle i,j\rangle}}
\renewcommand\ij{\langle i,j\rangle}
\newcommand{\bT}{\overline T}	%complement of a tableau
\newcommand{\bU}{\overline U}
\newcommand{\bP}{\overline P}
\newcommand{\bQ}{\overline Q}
\newcommand{\tP}{\tilde P}
\newcommand{\tQ}{\tilde Q}
\newcommand\hatch[2]{\fill[pattern=north east lines,pattern color=gray] (#1,#2) rectangle (#1+1,#2+1);}
\newcommand\halfhatch[2]{\fill[pattern=north east lines,pattern color=gray] (#1,#2) rectangle (#1+1,#2+.5);}
\newcommand\bright[2]{\fill[pattern=grid,pattern color=green] (#1,#2) rectangle (#1+1,#2+1);}
\newcommand\halfbright[2]{\fill[pattern=grid,pattern color=green] (#1,#2) rectangle (#1+1,#2+.5);}
\newcommand\newcell[2]{\fill[pink] (#1,#2) rectangle (#1+1,#2+1);}
\def\north{-- ++(0,1)}
\def\east{-- ++(1,0)}
\def\south{-- ++(0,-1)}
\def\west{-- ++(-1,0)}
\newcommand{\add}[1]{\overset{#1}{\rightarrow}}
\newcommand\rholl{\rho^{\raisebox{1pt}[2pt][0pt]{$\llcorner$}}} 
\newcommand\rhoul{\rho^{\raisebox{-3pt}[2pt][0pt]{$\ulcorner$}}}
\newcommand\rholr{\rho^{\raisebox{1pt}[2pt][0pt]{$\lrcorner$}}} 
\newcommand\rhour{\rho^{\!\!\raisebox{-3pt}[2pt][0pt]{$\urcorner$}}} 
\newcommand{\card}[1]{\left|#1\right|}
\newcommand{\bracket}[1]{\llbracket#1\rrbracket} %used to denote necklaces (equivalence classes of binary words under cyclic rotation), avoiding confusion with [d]={1,...,d} and with cylindric shapes [\lambda_1,...].
\newcommand{\mn}[1]{\bar{#1}}
\DeclareMathOperator\evac{evac}
\renewcommand\S{\mathcal S}	
\newcommand{\state}[5]{
\draw[color=#5] (#1:#2) circle (1);
\foreach \i in {1,...,#3} {
%\filldraw[color=#5,fill=white] (#1:#2) ++(270+180/#3-\i*360/#3:1) circle (.2);
\filldraw[color=#5,fill=white] (#1:#2) ++(270+180/#3-2*360/#3-\i*360/#3:1) circle (.2);
}
\foreach \i in #4 {
\filldraw[color=#5] (#1:#2) ++(270+180/#3-2*360/#3-\i*360/#3:1) circle (.2);
}}
\newcommand{\arrow}[2]{
\draw[->,shorten <=.55cm,shorten >=.55cm] #1--#2;
}
\newcommand{\arrows}[2]{
\draw[->,thick,shorten <=1mm,shorten >=1mm] #1--#2;
}
\title{Cylindric growth diagrams, walks in simplices,\\ and exclusion processes}
\author{Sergi Elizalde\thanks{Department of Mathematics, Dartmouth College, Hanover, NH 03755, USA. {\tt sergi.elizalde@dartmouth.edu}.}}
\date{}
\begin{document}

\maketitle

\begin{abstract}
We establish bijections between three classes of
combinatorial objects that have been studied in very different contexts: lattice walks in simplicial regions as introduced by Mortimer--Prellberg, standard cylindric tableaux as introduced by Gessel--Krattenthaler and Postnikov, and sequences of states in the totally asymmetric simple exclusion process. This perspective allows us to translate symmetries from one setting into another, revealing unexpected properties of these objects.

Specifically, we show that a recent bijection of Courtiel, Elvey Price and Marcovici between certain simplicial walks with forward and backward steps is equivalent to a special case of a cylindric analogue of the Robinson--Schensted--Knuth correspondence.
Originally defined by Neyman by iterating an insertion operation, we provide an alternative description of this correspondence
by introducing a cylindric version of Fomin's growth diagrams.
This natural description elucidates the symmetry obtained when switching the insertion an recording tableaux, and it 
allows us to interpret the above walks as oscillating cylindric tableaux.
\end{abstract}

\noindent {\bf Keywords:} cylindric tableau, growth diagram, RSK, differential poset, TASEP, constrained walk, oscillating tableau

%MSC:  	05A19, 05A17, 05C81, 82C22

\section{Introduction}

The first goal of this paper is to explore a connection between three seemingly unrelated combinatorial objects that have appeared in the literature. The first object are certain lattice walks in simplicial regions that were introduced by Mortimer and Prellberg~\cite{mortimer_number_2015}, who used the kernel method to show that they have some surprising enumerative properties, later proved bijectively by Courtiel, Elvey Price and Marcovici~\cite{courtiel_bijections_2021}.
The second object are standard cylindric tableaux, which are a special case of Gessel and Krattenthaler's cylindric partitions~\cite{gessel_cylindric_1997}, later studied by Postnikov in connection to Gromov--Witten invariants~\cite{postnikov_affine_2005}, and recently used by Huh et al.\ to obtain affine analogues of bounded Littlewood identities~\cite{huh_bounded_2025}. Standard cylindric tableaux can be thought of as standard Young tableaux with certain restrictions involving the entries of the first and last rows.
The third object are sequences of states in the totally asymmetric simple exclusion process (TASEP) on a cycle~\cite{ferrari_stationary_2007,liggett_stochastic_1999}; more specifically, walks in the directed graph (with its loops removed) underlying the Markov chain of this process. 

We will show that these three objects are in bijection with each other. In addition to providing new enumerative results, a consequence of the bijections is that some symmetries that are natural in one setting, such as conjugation of standard cylindric tableaux, translate into less obvious involutions in other settings, such as a certain duality between walks on simplices of different dimensions. 
Another remarkable consequence is that we can use a cylindric analogue of the Robinson--Schensted correspondence, due to Neyman~\cite{neyman_cylindric_2015}, to deduce a recent result of Courtiel, Elvey Price and Marcovici~\cite{courtiel_bijections_2021} stating that the number of the above simplicial walks starting at a given point does not change if the direction of the steps is reversed.

Mortimer and Prellberg~\cite{mortimer_number_2015}, as well as  Courtiel et al.~\cite{courtiel_bijections_2021}, also considered more general walks where each step can be taken in either forward or backward direction. When applied to these walks, our bijections produce  oscillating cylindric tableaux, which are cylindric analogues of the well-studied oscillating tableaux \cite{sundaram_tableaux_1990,roby_applications_1991}, and sequences of states in the symmetric simple exclusion process (SSEP), where particles can jump in either direction.

The second goal of this paper is to introduce a cylindric analogue of Fomin's growth diagrams. 
Growth diagrams were first devised in~\cite{fomin_generalized_1986,fomin_duality_1994,fomin_schensted_1995} in order to generalize the Robinson--Schensted (RS) correspondence~\cite{stanley_enumerative_1999}, a celebrated bijection between permutations and pairs of standard Young tableaux of the same shape. 
Over the years, growth diagrams have proved to be very useful in various settings (see e.g.~\cite{krattenthaler_growth_2006}), and they have been extended \cite[Ch.~4]{roby_applications_1991} in order to provide an alternative description of the more general Robinson--Schensted--Knuth (RSK) correspondence, a bijection between matrices with non-negative integer entries and pairs of semistandard Young tableaux of the same shape.
Growth diagrams are built in terms of local rules which describe how to label the vertices of a certain grid by elements of a differential poset~\cite{stanley_differential_1988}. In the basic case, this is the poset of partitions ordered by containment of their shapes,
known as Young's lattice. Even though the analogous lattice of cylindric shapes is not technically a differential poset according to~\cite{stanley_differential_1988}, 
we will show that it can still be used to define a cylindric analogue of growth diagrams, as well as a generalization to the semistandard case. 

Cylindric growth diagrams, as we will call them, have several applications. First, they provide a very natural description of Neyman's cylindric analogue of the RSK correspondence~\cite{neyman_cylindric_2015}, which was originally defined in terms of an insertion operation inspired in Sagan and Stanley's insertion on skew tableaux~\cite{sagan_robinson-schensted_1990}.
In particular, our description using cylindric growth diagrams elucidates the symmetry of the correspondence, similarly to how Fomin's original growth diagrams explain
the symmetry of the classical RS correspondence.

Second, cylindric growth diagrams, in the symmetric case and when restricted to standard cylindric tableaux, provide the ``right'' setting to understand the general case of a bijection of Courtiel et al.~\cite{courtiel_bijections_2021} for simplicial walks with a given starting point and an arbitrary pattern of forward and backward steps. Compared to the ad-hoc description from~\cite{courtiel_bijections_2021}, which is based on a ``convergent rewriting system'' and then rephrased in terms of tilings of square using a certain set of tiles, 
our description arises naturally once we interpret the walks as oscillating cylindric tableaux. In particular, the local rules of the cylindric growth diagrams determine the set of available tiles.

The paper is structured as follows. In Section~\ref{sec:background}, we describe the three combinatorial objects of interest: lattice walks in simplicial regions, standard (and more generally, semistandard) cylindric tableaux, and sequences of states in the TASEP. In Section~\ref{sec:connections}, we describe bijections between these objects, and we study some of their properties. In Section~\ref{sec:CRS}, we use a cylindric analogue of the RS correspondence, 
described by Neyman~\cite{neyman_cylindric_2015} in terms of  insertion operations on cylindric tableaux, to give a bijection between forward and backward simplicial walks starting at a given point.

In Section~\ref{sec:growth}, we introduce the notion of cylindric growth diagrams, and use them to provide a natural and more symmetric description of the cylindric analogue of the RSK correspondence.
In the standard case, the forward and backward rules for cylindric growth diagrams resemble the classical ones, except that the row indices are periodic and that no cells can be filled with crosses. In the semistandard case, our general construction is vaguely reminiscent of a description 
by Berenstein and Kirillov~\cite{berenstein_robinson--schensted--knuth_2001} of a variant of RSK (as a bijection between matrices of nonnegative integers and plane partitions) in terms of piecewise linear transformations. 
In Section~\ref{sec:OCT}, we describe bijections on cylindric oscillating tableaux using growth diagrams, which translate to bijections for simplicial walks with  a given starting point and an arbitrary pattern of forward and backward steps.
Finally, in Section~\ref{sec:further}, we discuss some possible directions for further exploration.

\section{Background}\label{sec:background}

\subsection{Walks in simplices}\label{sec:walks}

We assume throughout the paper that $d,L\ge1$. Define the following simplicial section of the $d$-dimensional integer lattice:
$$\Delta_{d,L}=\{(x_1,x_2,\dots,x_d)\in \mathbb{N}^d:x_1+x_2+\dots+x_d=L\},$$
where $\mathbb{N}$ is the set of non-negative integers.
For $i\in[d]=\{1,2,\dots,d\}$, let $e_i$ be the unit vector whose $i$th coordinate equals $1$ and whose other coordinates equal $0$, and let $s_i=e_{i+1}-e_{i}$,\footnote{This indexing differs from the notation in~\cite{courtiel_bijections_2021} in that it is shifted by one, but it is more convenient for our correspondences with other combinatorial objects.} with the convention $e_{d+1}=e_1$. See Figure~\ref{fig:DeltadL} for an example. We will consider walks in $\Delta_{d,L}$ with steps $s_i$ for $i\in[d]$. Sometimes it will be convenient to think of these as walks in the directed graph whose vertices are the points in $\Delta_{d,L}$, and whose edges are ordered pairs $(\x,\y)$ such that $\y-\x=s_i$ for some $i\in[d]$ (we sometimes label such edges with $s_i$ to refer to them). We denote this directed graph by $\D_{d,L}$. The graphs $\D_{3,2}$ and $\D_{3,3}$ are shown in Figures~\ref{fig:32} and~\ref{fig:33}, respectively. Let $C=(L,0,\dots,0)\in \Delta_{d,L}$ denote one of the corners of the simplex. Walks starting at $C$ correspond to words over the alphabet $\{s_1,s_2,\dots,s_d\}$ such that, in every prefix, the number of steps $s_i$ is no less than the number of steps $s_{i+1}$, for each $i\in[d-1]$, and the number of steps $s_d$ plus $L$ is no less than the number of steps $s_{1}$. This idea will be explored in Section~\ref{sec:connections} to obtain a bijection between walks in $\D_{d,L}$ and certain tableaux.

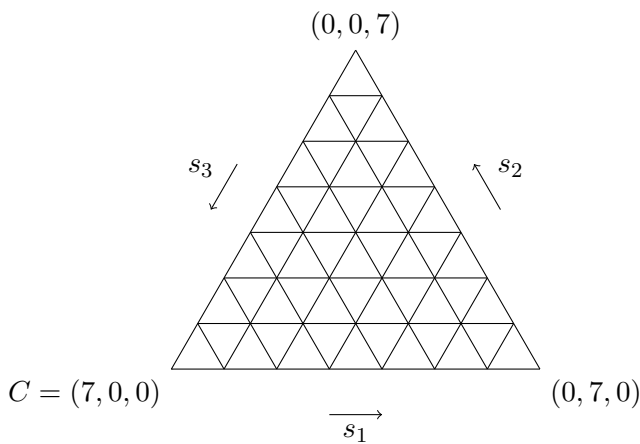
\begin{figure}[htb]
\centering
\newcommand*\rows{7}
\begin{tikzpicture}[scale=0.7]
    \foreach \row in {0, 1, ...,\rows} {
        \draw ($\row*(0.5, {0.5*sqrt(3)})$) -- ($(\rows,0)+\row*(-0.5, {0.5*sqrt(3)})$);
        \draw ($\row*(1, 0)$) -- ($(\rows/2,{\rows/2*sqrt(3)})+\row*(0.5,{-0.5*sqrt(3)})$);
        \draw ($\row*(1, 0)$) -- ($(0,0)+\row*(0.5,{0.5*sqrt(3)})$);
    }
	\draw (1,0) {} coordinate (vecta);
    \draw ($(-.5,{.5*sqrt(3)})$) {} coordinate (vectb);
    \draw ($(-.5,{-.5*sqrt(3)})$) {} coordinate (vectc);
    \draw (0,0) node[below left] {$C=(7,0,0)$} coordinate(O);
    \draw ($\rows*(vecta)$) node[below right] {$(0,7,0)$}  coordinate(R);
    \draw ($-\rows*(vectc)$) node[above] {$(0,0,7)$};
    \draw[->] ($\rows/2*(vecta)+(vectc)$) -- node[below] {$s_1$} ($\rows/2*(vecta)-(vectb)$);
    \draw[->] ($-\rows/2*(vectc)+(vectb)$) -- node[above left] {$s_3$} ($-\rows/2*(vectc)-(vecta)$);
    \draw[->] ($(R)+\rows/2*(vectb)+(vecta)$) -- node[above right] {$s_2$} ($(R)+\rows/2*(vectb)-(vectc)$);
\end{tikzpicture}
   \caption{The simplicial region $\Delta_{3,7}$.}
   \label{fig:DeltadL}    
\end{figure}

For a point $\x\in\Delta_{d,L}$, let $\W^n_{d,L}(\x)$ be the set of $n$-step walks in $\Delta_{d,L}$ starting at $\x$ with steps $s_i$ for $i\in[d]$; equivalently, $n$-step walks in the graph $\D_{d,L}$ starting at $\x$.
The enumeration of these walks is relatively straightforward for $d=2$ \cite[Prop.\ 1, Cor.\ 2]{mortimer_number_2015}, but it becomes much more interesting for $d=3$, where the problem was solved by Mortimer and Prellberg \cite[Thm.~3]{mortimer_number_2015}. In the special case where the starting point is the corner $C$, they showed that, surprisingly, these walks are equinumerous with Motzkin paths of bounded height. 

Recall that a Motzkin path of length $n$ is a path in $\Z^2$ from $(0,0)$ to $(n,0)$, with steps $(1,1)$, $(1,0)$ and $(1,-1)$, not going below the $x$-axis. Let $M_{n,h}$ denote the number of Motzkin paths of length $n$ and height at most $h$, that is, not going above the line $y=h$. Additionally, let $M'_{n,h}$ be the number of Motzkin paths of length $n$ and height at most $h$ that do not have any $(1,0)$ steps on the line $y=h$. For $n\ge0$ and $L\ge1$, let 
\begin{equation}\label{eq:anL} a_{n,L}=\begin{cases} M_{n,h} & \text{if }L=2h+1,\\
M'_{n,h} & \text{if }L=2h.
\end{cases}
\end{equation}

\begin{theorem}[\cite{mortimer_number_2015}]\label{thm:mortimer_number_2015}
For $n\ge0$ and $L\ge1$, $$|\W^n_{3,L}(C)|=a_{n,L}.$$
\end{theorem}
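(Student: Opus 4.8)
The plan is to first repackage these walks as two-dimensional lattice paths in a triangle, and then to match the resulting count against bounded Motzkin paths.

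\textbf{The reduction.} I would record a walk by its word in the steps $s_1,s_2,s_3$ and pass to the coordinates $(a,b)=(x_2,x_3)$, so that $x_1=L-a-b$ and the domain becomes the triangle $R_L=\{(a,b):a,b\ge0,\ a+b\le L\}$ with starting point $(0,0)$. In these coordinates the three steps act as $s_1\colon(a,b)\mapsto(a+1,b)$, $\ s_2\colon(a,b)\mapsto(a-1,b+1)$, $\ s_3\colon(a,b)\mapsto(a,b-1)$. The key observation is that $y=a+b$ increases by $1$ on $s_1$, stays fixed on $s_2$, and decreases by $1$ on $s_3$; since these three increments are distinct, the step type at each time, and hence the whole walk, is recovered from the trajectory of $y$. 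Writing $s_1,s_2,s_3$ as $U,F,D$, this makes $\W^n_{3,L}(C)$ bijective with the set of words $w\in\{U,F,D\}^n$ whose every prefix satisfies $\#U\ge\#F$, $\ \#F\ge\#D$, and $\#U\le\#D+L$, the three inequalities encoding $a\ge0$, $b\ge0$, and $a+b\le L$. Here $y$ itself traces a path with unit up, level, and down steps inside the strip $0\le y\le L$, but the extra ballot conditions $\#U\ge\#F\ge\#D$ controlling the flats make this strictly more than a bounded Motzkin path.

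\textbf{Matching to $a_{n,L}$.} Let $N_{n,L}=|\W^n_{3,L}(C)|$ count these constrained words. I would introduce the transfer matrix $T_L$ of the walk on $R_L$, so that $N_{n,L}=\mathbf1^{\mathsf T}T_L^{\,n}\mathbf e_{(0,0)}$, and compare its generating function $\sum_n N_{n,L}t^n$ with the classical finite continued fraction for bounded Motzkin paths, i.e.\ the ratio of Fibonacci--Chebyshev polynomials $p_h/p_{h+1}$, with the topmost flat step deleted in the even case to produce $M'$ rather than $M$. The factor of two between the height bound $h$ and the strip width $L$, together with the parity dependence, points to a \emph{folding}: the segment $\{0,1,\dots,L\}$ is folded onto $\{0,1,\dots,h\}$, and the presence or absence of a fixed point of this fold, according to whether $L=2h+1$ or $L=2h$, accounts precisely for the distinction between $M_{n,h}$ and $M'_{n,h}$. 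As evidence that the reduction and the fold are calibrated correctly, the cases $L=1,2,3$ give the sequences $1,1,1,\dots$; $\ 1,1,2,3,5,8,\dots$; and $1,1,2,4,8,\dots$, which match $M_{n,0}$, $M'_{n,1}$, and $M_{n,1}$ respectively.

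\textbf{The main obstacle.} The heart of the difficulty, and the reason the coincidence is surprising, is that $N_{n,L}$ sums $T_L^{\,n}$ over all $\binom{L+2}{2}$ endpoints of a large triangle, whereas $M_{n,h}$ is a single diagonal entry of the much smaller $(h+1)\times(h+1)$ Motzkin transfer matrix; for the two to agree, the vectors $\mathbf1$ and $\mathbf e_{(0,0)}$ must annihilate every eigenvalue of $T_L$ that does not already occur for the Motzkin matrix. Establishing this cancellation is exactly what the kernel method does in the original argument, by solving the functional equation for $\sum_n N_{n,L}t^n$ in closed form and recognizing the answer, the bulk of the effort being the algebraic manipulation of the kernel. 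The alternative, and in the spirit of this paper the more illuminating route, is to build the folding bijection from constrained words to height-$\le h$ Motzkin paths explicitly; I expect this to be the genuinely hard and clever step, since it must simultaneously linearize the two ballot conditions on the flats and implement the parity-sensitive fold. The cylindric Robinson--Schensted machinery developed later in the paper should supply precisely such a bijective mechanism.
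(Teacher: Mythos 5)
There is a genuine gap: your proposal never establishes the identity it sets out to prove. The reduction in your first paragraph is correct (and is exactly the reformulation the paper itself records in Section~\ref{sec:walks}: walks from $C$ are words over $\{s_1,s_2,s_3\}$ whose prefixes satisfy $\#s_1\ge\#s_2\ge\#s_3$ and $\#s_1\le\#s_3+L$), but it is only a restatement of the problem, not progress toward the Motzkin count. The entire content of the theorem is the surprising equinumerosity of these constrained words with Motzkin paths of height at most $h\approx L/2$, and your second and third paragraphs only describe two possible strategies for proving it --- a transfer-matrix/continued-fraction comparison with a conjectured ``folding,'' or an explicit folding bijection --- without carrying out either. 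You say yourself that the spectral cancellation ``is exactly what the kernel method does in the original argument'' and that the bijection ``I expect... to be the genuinely hard and clever step.'' That hard step is the theorem; sanity checks for $L=1,2,3$ do not substitute for it.

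For context: this statement is quoted by the paper from Mortimer--Prellberg, whose proof uses the kernel method to solve a functional equation; the paper proves nothing here, and explicitly notes that a (complicated) bijective proof was only later given by Courtiel, Elvey Price and Marcovici. Your closing expectation that ``the cylindric Robinson--Schensted machinery developed later in the paper should supply precisely such a bijective mechanism'' is also not supported by the paper: that machinery yields Theorems~\ref{thm:courtiel_bijections_2021} and~\ref{thm:osc_walks} (reversal and type-independence of walks), not the Motzkin-path identity, and in Section~\ref{sec:further} the author presents finding a bijective explanation of counts like $a_{n,L}$ (equivalently, of the Huh--Kim--Krattenthaler--Okada matching identities, which reduce to Theorem~\ref{thm:mortimer_number_2015} when $d=3$) as an open hope rather than a consequence of the growth-diagram framework. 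So the missing step cannot be outsourced to the rest of the paper; to complete your argument you would need to either execute the kernel-method computation or construct the folding bijection explicitly.
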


Mortimer and Prellberg's proof~\cite{mortimer_number_2015} uses the kernel method to solve a functional equation. A complicated bijective proof of Theorem~\ref{thm:mortimer_number_2015} was later given by Courtiel et al.~\cite{courtiel_bijections_2021}.
For $d=4$, the following generating function enumerating walks starting at a corner $C$ is also given in~\cite{courtiel_bijections_2021}.

\begin{theorem}[{\cite[Cor.\ 39]{courtiel_bijections_2021}}]\label{thm:d4}
For all $L\ge1$,
$$\sum_{n\ge0}|\W^n_{4,L}(C)|\,t^n=\frac{1}{(L+4)^2}\sum_{\substack{1\le j<k\le L+3\\ 2\nmid j,k}}^{L+4}\frac{(\zeta^k+\zeta^{-k}-\zeta^j-\zeta^{-j})(2+\zeta^j+\zeta^{-j})(2+\zeta^k+\zeta^{-k})}{1-(\zeta^j+\zeta^{-j}+\zeta^k+\zeta^{-k})t},$$
where $\zeta=e^{\frac{i \pi}{L+4}}$.
\end{theorem}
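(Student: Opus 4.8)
The plan is to compute $|\W^n_{4,L}(C)|$ as a matrix entry of a power of the transfer (adjacency) matrix of the finite directed graph $\D_{4,L}$, and then to diagonalize this matrix by exploiting a free-fermionic structure. Writing $A$ for the adjacency matrix of $\D_{4,L}$, $\delta_C$ for the indicator vector of the corner, and $\mathbf 1$ for the all-ones vector, we have $|\W^n_{4,L}(C)| = \mathbf 1^{\top} A^n \delta_C$, since an $n$-step walk from $C$ may end at any point of $\Delta_{4,L}$. Consequently $\sum_{n\ge 0}|\W^n_{4,L}(C)|\,t^n = \mathbf 1^{\top}(I-tA)^{-1}\delta_C$, and once $A$ is diagonalized with eigenvalues $\lambda$ and biorthogonal eigenvectors $v,w$, each eigenvalue contributes a term $\frac{(\mathbf 1^{\top} v)(w^{\top}\delta_C)}{w^{\top} v}\cdot\frac{1}{1-\lambda t}$. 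Matching this against the claimed formula, I must show that the eigenvalues are exactly the sums $\lambda_{j,k}=\zeta^{j}+\zeta^{-j}+\zeta^{k}+\zeta^{-k}$ with $\zeta=e^{i\pi/(L+4)}$, ranging over pairs $1\le j<k\le L+3$ with $j,k$ odd, and that the overlaps produce the stated numerator together with the prefactor $1/(L+4)^2$.

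The structure of the answer dictates the method. Each eigenvalue $\lambda_{j,k}$ is a \emph{sum of two single-mode energies} $2\cos(\pi j/(L+4))$ and $2\cos(\pi k/(L+4))$, these being precisely the eigenvalues of the tridiagonal adjacency matrix of a path graph on $L+3$ vertices; the restriction to pairs $j<k$ and the Vandermonde-type factor $\zeta^{k}+\zeta^{-k}-\zeta^{j}-\zeta^{-j}=\lambda_k-\lambda_j$ appearing in the numerator are the hallmark of two \emph{non-intersecting} walkers. I would therefore first use the bijections of Section~\ref{sec:connections} (walks $\leftrightarrow$ standard/oscillating cylindric tableaux $\leftrightarrow$ TASEP states) to reinterpret a walk in $\Delta_{4,L}$ as a pair of non-intersecting lattice paths, equivalently a two-particle exclusion configuration on a ring of size $L+4$. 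In this picture the step operators $s_i$ become single-particle nearest-neighbour hops, so $A$ is the exterior (wedge) square of a single-particle hopping operator, the antisymmetrization encoding the exclusion constraint; this is exactly a free-fermion situation.

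Next I would diagonalize the single-particle operator by the discrete sine transform, obtaining eigenvectors indexed by a mode $j$ with sine-type components $\sin(\pi j x/(L+4))$ and eigenvalue $2\cos(\pi j/(L+4))$; the Dirichlet-type boundary conditions coming from the walls of the simplex (coordinates hitting $0$) are what force the half-integer quantization, and the parity condition $2\nmid j$ will emerge from the overlap of these modes with the corner state $\delta_C$ and the symmetric vector $\mathbf 1$, modes of the wrong parity having zero overlap and dropping out of the sum. Antisymmetrizing over the two particles replaces single modes by Slater determinants indexed by pairs $j<k$, turning the eigenvalues into the sums $\lambda_{j,k}$ and producing the antisymmetric factor $\lambda_k-\lambda_j$; the remaining boundary overlaps $\mathbf 1^{\top} v$ and $w^{\top}\delta_C$ should evaluate to the factors $2+\zeta^{j}+\zeta^{-j}=4\cos^2(\pi j/(2(L+4)))$ and $2+\zeta^{k}+\zeta^{-k}=4\cos^2(\pi k/(2(L+4)))$, i.e.\ squared single-mode amplitudes at the boundary, while the eigenvector normalizations contribute $1/(L+4)^2$. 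Summing the geometric series $\sum_{n\ge0}\lambda_{j,k}^{\,n}t^n = (1-\lambda_{j,k}t)^{-1}$ over all admissible pairs then yields the asserted generating function.

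The main obstacle is making the free-fermion diagonalization rigorous rather than heuristic: one must pin down the exact finite matrix $A$ in the two-particle coordinates, verify that its eigenfunctions are genuinely antisymmetrized products of sine modes (the Bethe-ansatz, or Lindström--Gessel--Viennot, step), and, most delicately, track the boundary conditions at the simplex walls with enough precision to recover both the exact quantization (odd indices, range $1\le j<k\le L+3$) and the precise numerator, including the constant $1/(L+4)^2$. Checking that the overlaps $\mathbf 1^{\top}v$ and $w^{\top}\delta_C$ produce exactly $(2+\zeta^{j}+\zeta^{-j})(2+\zeta^{k}+\zeta^{-k})$ and that no spurious modes survive is the computational crux. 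An alternative and possibly cleaner route to the same identity is to establish a Lindström--Gessel--Viennot determinant for the two non-intersecting paths and evaluate it directly; in that case the $2\times2$ determinant explains the Vandermonde factor and the product of boundary weights transparently, and the sum over $j<k$ is the natural index set of the determinant expansion.
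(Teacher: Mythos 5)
The first thing to note is that the paper does not prove Theorem~\ref{thm:d4} at all: it is quoted from \cite{courtiel_bijections_2021} as an external result, so there is no internal proof to compare against, and your proposal must be judged on its own merits.

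Your instinct that a formula of this shape should come from diagonalizing a transfer matrix with free-fermionic (non-intersecting walker) structure is sound, but there is a genuine gap at the very first reduction, and it propagates through everything else. The bijections of Section~\ref{sec:connections} (Theorem~\ref{thm:equivalence}) do not convert a walk in $\Delta_{4,L}$ into ``a two-particle exclusion configuration on a ring of size $L+4$'': they convert it into a trajectory of \emph{four} particles on a ring of $L+4$ sites (equivalently, a standard cylindric tableau of period $(4,L)$). The relevant operator is therefore a fourth exterior power, not a wedge square, and on a cycle with one-directional hops even that holds only up to Jordan--Wigner boundary signs, which depend on the parity of the particle number; it is this antiperiodic boundary condition for four particles --- not ``Dirichlet walls of the simplex'' --- that forces the odd quantization $2\nmid j,k$. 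The pairs $(j,k)$ in the answer do not reflect two walkers: they index the negation-symmetric momentum quadruples $\{\pm j,\pm k\}$, and the real content of the proof is (i) showing that all other quadruples of momenta have vanishing amplitude against the initial vector $\delta_C$ (the packed state $0^{L}1111$) and the final vector $\mathbf 1$, and (ii) evaluating the surviving amplitudes and the prefactor $1/(L+4)^2$. Your proposal defers exactly these steps (you call them ``the computational crux''), and your alternative LGV route presupposes the same unjustified two-path interpretation; a dimension count already shows it cannot be a plain bijection of state spaces, since two particles on $L+3$ sites give $\binom{L+3}{2}$ configurations while $|\Delta_{4,L}|=\binom{L+3}{3}$. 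So what you have is a plausible strategy outline, not a proof.

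Separately, a warning that an honest execution of your plan would have surfaced: the identity as displayed fails an elementary check, so it cannot be proved literally as stated. For $L=1$ the simplex $\Delta_{4,1}$ admits exactly one walk of each length from $C$, so the left-hand side is $1/(1-t)$; the right-hand side has the single term $(j,k)=(1,3)$ and evaluates to $-\tfrac{1}{\sqrt5}\cdot\tfrac{1}{1-t}$. The correct formula carries the factor $(\zeta^k+\zeta^{-k}-\zeta^j-\zeta^{-j})$ \emph{squared} --- which is precisely what the free-fermion overlap computation produces, via the identity $(2-\zeta^{j-k}-\zeta^{k-j})(2-\zeta^{j+k}-\zeta^{-j-k})=(\zeta^k+\zeta^{-k}-\zeta^j-\zeta^{-j})^2$ --- and with that square the checks at $L=1$ and $L=2$ pass. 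Any correct completion of your argument should detect and resolve this discrepancy rather than reproduce the displayed right-hand side.
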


\newcommand{\dA}{7}
\newcommand{\dB}{4}

\begin{figure}[htb]
\centering
\begin{tikzpicture}
\begin{scope}[scale=.8]
\coordinate (a) at (2,0);
\coordinate (b) at (1,1.732);

\foreach \x in {0,...,2}{
	\foreach \y in {\x,...,2}{
		\fill ($\x*(a)+2*(b)-\y*(b)$) circle (.1);
}}

\fill[red] (0,0) circle (.1);
\fill[red] ($2*(a)$) circle (.1);
\fill[red] ($2*(b)$) circle (.1);
\fill[blue] (a) circle (.1);
\fill[blue] (b) circle (.1);
\fill[blue] ($(a)+(b)$) circle (.1);

\foreach \x in {0,...,1}{
	\foreach \y in {\x,...,1}{
		\arrows{($\x*(a)+2*(b)-\y*(b)$)}{($\x*(a)+(b)-\y*(b)$)}
		\arrows{($\x*(b)+(a)-\y*(a)$)}{($\x*(b)+2*(a)-\y*(a)$)}
		\arrows{($\x*(b)+2*(a)-\y*(a)$)}{($\x*(b)+(b)+(a)-\y*(a)$)}
}}
\draw (0,0) node[below] {$C=(2,0,0)$};
\draw ($2*(a)$) node[below] {$(0,2,0)$};
\draw ($2*(b)$) node[above] {$(0,0,2)$};
\draw (0,3.5) node {$\D_{3,2}$};
\end{scope}

\begin{scope}[shift={(7.2,0.2)},scale=0.42]
\state{90}{\dA}{5}{{1,2,3}}{red}
\state{90}{\dB}{5}{{2,4,5}}{blue}
\state{90+360/5}{\dA}{5}{{1,4,5}}{red}
\state{90+360/5}{\dB}{5}{{2,3,5}}{blue}
\state{90+2*360/5}{\dA}{5}{{2,3,4}}{red}
\state{90+2*360/5}{\dB}{5}{{1,3,5}}{blue}
\state{90+3*360/5}{\dA}{5}{{1,2,5}}{red}
\state{90+3*360/5}{\dB}{5}{{1,3,4}}{blue}
\state{90+4*360/5}{\dA}{5}{{3,4,5}}{red}
\state{90+4*360/5}{\dB}{5}{{1,2,4}}{blue}
\foreach \i in {0,...,4}{
	\coordinate (a1) at (90+\i*360/5:\dA);  
	\coordinate (b1) at (90+\i*360/5:\dB);
	\coordinate (a2) at (90+360/5+\i*360/5:\dA);
	\coordinate (b2) at (90+360/5+\i*360/5:\dB);
	\arrow{(a1)}{(b2)}
	\arrow{(b1)}{(a2)}
	\arrow{(b1)}{(b2)}
}
\draw (-5,6) node {$\E_{3,2}$};
\end{scope}
\begin{scope}[shift={(0.5,-2.2)},scale=0.42]
\state{0}{0}{5}{{1,2,3}}{red}
\state{0}{5}{5}{{2,3,5}}{blue}
\draw[->] (1.4,-.3)--(3.6,-.3);
\draw[->] (3.6,.3)--(1.4,.3);
\draw[->] (5.3,1.5) to[out=60,in=120,distance=2cm] (4.7,1.5);
\draw (2,2) node {$\N_{3,2}$};
\end{scope}

\begin{scope}[shift={(-5,-1.7)},scale=.9]
\foreach \x in {(0,0),(3,1),(0,2),(3,3),(0,4)}
{\fill[red] \x circle (.1);}
\foreach \x in {(2,0),(1,1),(2,2),(1,3),(2,4)}
{\fill[blue] \x circle (.1);}
\foreach \y in {0,2}{
		\arrows{(2,\y)}{(1,\y+1)}
		\arrows{(1,\y+1)}{(2,\y+2)}
\foreach \x in {0,2}{
		\arrows{(\x,\y)}{(\x+1,\y+1)}
		\arrows{(\x+1,\y+1)}{(\x,\y+2)}
}}
\draw (1,-.3) node {$\vdots$};
\draw (1,4.3) node {$\vdots$};
\draw (0,0) node[left,scale=.9] {$[0 0 0]$};
\draw (1,1) node[left,xshift=-2,scale=.9] {$[1 0 0]$};
\draw (0,2) node[left,scale=.9] {$[2 0 0]$};
\draw (1,3) node[left,xshift=-2,scale=.9] {$[2 1 0]$};
\draw (0,4) node[left,scale=.9] {$[2 2 0]$};
\draw (2,0) node[right,xshift=2,scale=.9] {$[1 0 \mn1]$};
\draw (3,1) node[right,scale=.9] {$[1 1 \mn1]$};
\draw (2,2) node[right,xshift=2,scale=.9] {$[1 1 0]$};
\draw (3,3) node[right,scale=.9] {$[1 1 1]$};
\draw (2,4) node[right,xshift=2,scale=.9] {$[2 1 1]$};
\draw (1,5) node {$\CSg_{3,2}$};
\end{scope}
\end{tikzpicture}
\caption{The graphs $\CSg_{3,2}$, $\D_{3,2}$ and $\E_{3,2}$, and the multigraph $\N_{3,2}$. 
}
\label{fig:32}
\end{figure} 

\newcommand{\dAA}{10}
\newcommand{\dD}{13}

\begin{figure}[htb]
\centering
\begin{tikzpicture}
\begin{scope}[scale=.8]
\coordinate (a) at (2,0);
\coordinate (b) at (1,1.732);

\foreach \x in {0,...,3}{
	\foreach \y in {\x,...,3}{
		\fill ($\x*(a)+3*(b)-\y*(b)$) circle (.1);
}}

\fill[red] (0,0) circle (.1);
\fill[red] ($3*(a)$) circle (.1);
\fill[red] ($3*(b)$) circle (.1);
\fill[blue] (a) circle (.1);
\fill[blue] ($2*(a)+(b)$) circle (.1);
\fill[blue] ($2*(b)$) circle (.1);
\fill[green] (b) circle (.1);
\fill[green] ($2*(b)+(a)$) circle (.1);
\fill[green] ($2*(a)$) circle (.1);
\fill[brown] ($(b)+(a)$) circle (.1);

\foreach \x in {0,...,2}{
	\foreach \y in {\x,...,2}{
		\arrows{($\x*(a)+3*(b)-\y*(b)$)}{($\x*(a)+2*(b)-\y*(b)$)}
		\arrows{($\x*(b)+2*(a)-\y*(a)$)}{($\x*(b)+3*(a)-\y*(a)$)}
		\arrows{($\x*(b)+3*(a)-\y*(a)$)}{($\x*(b)+(b)+2*(a)-\y*(a)$)}
}}
\draw (0,0) node[below] {$C=(3,0,0)$};
\draw ($3*(a)$) node[below] {$(0,3,0)$};
\draw ($3*(b)$) node[above] {$(0,0,3)$};
\draw (0,4.5) node {$\D_{3,3}$};
\end{scope}

\begin{scope}[shift={(2.5,-8.5)},scale=0.42]
\state{0}{0}{6}{{2,3,4}}{red} %A
\state{0}{5}{6}{{1,3,4}}{blue} %B
\state{90}{5}{6}{{3,4,6}}{green}  %C
\state{45}{7.071}{6}{{2,4,6}}{brown} %D
\arrow{(0,0)}{(5,0)}
\arrow{(0,5)}{(0,0)}
\arrow{(0,5)}{(5,5)}
\arrow{(5,5)}{(5,0)}
\arrow{(5.3,5)}{(5.3,0)}
\arrow{(4.7,5)}{(4.7,0)}
\arrow{(5.15,0.15)}{(0.15,5.15)}
\arrow{(4.85,-0.15)}{(-0.15,4.85)}
\draw (2,7) node {$\N_{3,3}$};
\end{scope}

\begin{scope}[shift={(-5,-5)},scale=0.42]
\state{180}{\dD}{6}{{2,4,6}}{brown}
\state{180}{\dAA}{6}{{1,2,3}}{red}
\state{180}{\dA}{6}{{3,4,5}}{red}
\state{180}{\dB}{6}{{1,5,6}}{red}
\state{180+360/6}{\dAA}{6}{{2,3,6}}{blue} 
\state{180+360/6}{\dA}{6}{{2,4,5}}{blue}
\state{180+360/6}{\dB}{6}{{1,4,6}}{blue}
\state{180+2*360/6}{\dAA}{6}{{2,3,5}}{green}
\state{180+2*360/6}{\dA}{6}{{1,3,6}}{green}
\state{180+2*360/6}{\dB}{6}{{1,4,5}}{green}
\state{180+3*360/6}{\dD}{6}{{1,3,5}}{brown}
\state{180+3*360/6}{\dAA}{6}{{2,3,4}}{red}
\state{180+3*360/6}{\dA}{6}{{1,2,6}}{red}
\state{180+3*360/6}{\dB}{6}{{4,5,6}}{red}
\state{180+4*360/6}{\dAA}{6}{{1,3,4}}{blue}
\state{180+4*360/6}{\dA}{6}{{1,2,5}}{blue}
\state{180+4*360/6}{\dB}{6}{{3,5,6}}{blue}
\state{180+5*360/6}{\dAA}{6}{{1,2,4}}{green}
\state{180+5*360/6}{\dA}{6}{{3,4,6}}{green}
\state{180+5*360/6}{\dB}{6}{{2,5,6}}{green}
\foreach \i in {0,2,3,5}{
	\coordinate (aa1) at (180+\i*360/6:\dAA);
	\coordinate (a1) at (180+\i*360/6:\dA);
	\coordinate (b1) at (180+\i*360/6:\dB);
	\coordinate (aa2) at (180+360/6+\i*360/6:\dAA);
	\coordinate (a2) at (180+360/6+\i*360/6:\dA);
	\coordinate (b2) at (180+360/6+\i*360/6:\dB);
	\arrow{(aa1)}{(aa2)}
	\arrow{(a1)}{(a2)}
	\arrow{(b1)}{(b2)}
}
\foreach \i in {1,4}{
	\coordinate (aa1) at (180+\i*360/6:\dAA);
	\coordinate (a1) at (180+\i*360/6:\dA);
	\coordinate (b1) at (180+\i*360/6:\dB);
	\coordinate (aa2) at (180+360/6+\i*360/6:\dAA);
	\coordinate (a2) at (180+360/6+\i*360/6:\dA);
	\coordinate (b2) at (180+360/6+\i*360/6:\dB);
	\arrow{(aa1)}{(aa2)}
	\arrow{(aa1)}{(a2)}
	\arrow{(a1)}{(aa2)}
	\arrow{(a1)}{(b2)}
	\arrow{(b1)}{(a2)}
	\arrow{(b1)}{(b2)}
}
\foreach \i in {2,5}{
	\coordinate (aa1) at (180+\i*360/6:\dAA);
	\coordinate (a1) at (180+\i*360/6:\dA);
	\coordinate (b1) at (180+\i*360/6:\dB);
	\coordinate (d) at (180+360/6+\i*360/6-3:\dD+.5);
	\arrow{(aa1)}{(d)}
	\arrow{(a1)}{(d)}
	\arrow{(b1)}{(d)}
}
\foreach \i in {1,4}{
	\coordinate (aa1) at (180+\i*360/6:\dAA);
	\coordinate (a1) at (180+\i*360/6:\dA);
	\coordinate (b1) at (180+\i*360/6:\dB);
	\coordinate (d) at (180-360/6+\i*360/6+3:\dD+.5);
	\arrow{(d)}{(aa1)}
	\arrow{(d)}{(a1)}
	\arrow{(d)}{(b1)}
}
\draw (-11,7) node {$\E_{3,3}$};
\end{scope}

\begin{scope}[shift={(-9,0.2)},scale=.9]
\foreach \x in {(0,0),(4,0),(6,0),(0,3),(4,3),(6,3)}
{\fill[red] \x circle (.1);}
\foreach \x in {(2,0),(2,3)}
{\fill[brown] \x circle (.1);}
\foreach \x in {(1,1),(3,1),(5,1),(1,4),(3,4),(5,4)}
{\fill[blue] \x circle (.1);}
\foreach \x in {(1,2),(3,2),(5,2)}
{\fill[green] \x circle (.1);}

\draw (3,-.3) node {$\vdots$};
\draw (3,4.6) node {$\vdots$};
\draw (0,0) node[left,scale=.9] {$[0 0 0]$};
\draw (0,3) node[left,scale=.9] {$[1 1 1]$};
\draw (2,0) node[left,xshift=-2,scale=.9] {$[1 0 \mn1]$};
\draw (2,3) node[left,xshift=-2,scale=.9] {$[2 1 0]$};
\draw (1,1) node[left,xshift=-2,scale=.9] {$[1 0 0]$};
\draw (1,4) node[left,xshift=-2,scale=.9] {$[2 1 1]$};
\draw (3,1) node[left,xshift=-5,scale=.9] {$[1 1 \mn1]$};
\draw (3,4) node[left,xshift=-2,scale=.9] {$[2 2 0]$};
\draw (1,2) node[left,xshift=-2,scale=.9] {$[1 1 0]$};
\draw (3,2) node[left,xshift=-2,scale=.9] {$[2 0 0]$};
\draw (5,2) node[right,xshift=2,scale=.9] {$[2 1 \mn1]$};
\draw (5,1) node[right,scale=.9] {$[2 0 \mn1]$};
\draw (5,4) node[right,scale=.9] {$[3 1 0]$};
\draw (4,0) node[right,xshift=2,scale=.9] {$[2 \mn1 \mn1]$};
\draw (4,3) node[right,xshift=2,scale=.9] {$[3 0 0]$};
\draw (6,0) node[right,scale=.9] {$[1 1 \mn2]$};
\draw (6,3) node[right,scale=.9] {$[2 2 \mn1]$};
\arrows{(0,0)}{(1,1)} \arrows{(0,3)}{(1,4)}
\arrows{(2,0)}{(1,1)} \arrows{(2,3)}{(1,4)} 
\arrows{(2,0)}{(3,1)} \arrows{(2,3)}{(3,4)} 
\arrows{(2,0)}{(5,1)} \arrows{(2,3)}{(5,4)} 
\arrows{(4,0)}{(5,1)} \arrows{(4,3)}{(5,4)}
\arrows{(6,0)}{(3,1)} \arrows{(6,3)}{(3,4)}
\arrows{(1,1)}{(1,2)}\arrows{(1,1)}{(3,2)}
\arrows{(3,1)}{(1,2)}\arrows{(3,1)}{(5,2)}
\arrows{(5,1)}{(3,2)}\arrows{(5,1)}{(5,2)}
\arrows{(1,2)}{(0,3)}\arrows{(1,2)}{(2,3)}
\arrows{(3,2)}{(2,3)}\arrows{(3,2)}{(4,3)}
\arrows{(5,2)}{(2,3)}\arrows{(5,2)}{(6,3)}

\draw (1.5,4.8) node {$\CSg_{3,3}$};
\end{scope}
\end{tikzpicture}
\caption{The graphs $\CSg_{3,3}$, $\D_{3,3}$ and $\E_{3,3}$, and the multigraph $\N_{3,3}$. 
}
\label{fig:33}
\end{figure} 

Another important result of Courtiel et al.\ is \cite[Thm.~8]{courtiel_bijections_2021} (see also \cite[Thm.\ 34]{courtiel_bijections_2021} for the case of arbitrary $d$), which can be formulated as follows.

\begin{theorem}[\cite{courtiel_bijections_2021}]\label{thm:courtiel_bijections_2021}
For any $\x\in\Delta_{d,L}$, there is a bijection between the set $\W_{d,L}^n(\x)$ of $n$-step walks in $\D_{d,L}$ starting at $\x$ and the set of $n$-step walks in $\D_{d,L}$ ending at $\x$.
\end{theorem}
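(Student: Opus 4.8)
The plan is to obtain the bijection as a manifestation of the symmetry of a Robinson--Schensted--type correspondence, after first reducing the statement to a single reversal-invariance. \textbf{Step 1 (reduce via reverse-and-reflect).} Let $\rho(x_1,\dots,x_d)=(x_d,x_{d-1},\dots,x_1)$ be the coordinate-reversing involution of $\Delta_{d,L}$. A direct computation gives $\rho(s_i)=-s_{d-i}$ (indices mod $d$), so $\rho$ carries each step to the reverse of a step. Consequently, reading a walk $\x_0\to\cdots\to\x_n$ backwards and applying $\rho$ to every vertex turns a walk \emph{ending} at $\x$ into a genuine forward-step walk \emph{starting} at $\rho(\x)$, and this operation is an involutive bijection. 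Hence the number of $n$-step walks ending at $\x$ equals $|\W^n_{d,L}(\rho(\x))|$, and the theorem becomes equivalent to the reversal-invariance $|\W^n_{d,L}(\x)|=|\W^n_{d,L}(\rho(\x))|$. I would stress that a pure symmetry argument cannot finish here: the direction-preserving automorphisms of $\D_{d,L}$ include the cyclic group generated by coordinate rotation, whose orbit of $\x$ consists of the cyclic shifts of $\x$, and for $d\ge4$ the reflected point $\rho(\x)$ is in general not a cyclic shift of $\x$. So a genuine bijection is needed.

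\textbf{Step 2 (a Schensted-type insertion).} I would encode each walk from $\x$ as a word in the step alphabet $\{s_1,\dots,s_d\}$ and apply a cylindric analogue of Schensted insertion (in the spirit of Neyman's correspondence), producing a pair $(P,Q)$ of standard cylindric tableaux of a common cylindric shape. The outer boundary of the common shape records the endpoint of the walk, while the two tableaux record, respectively, the ``spatial'' insertion data and the ``temporal'' recording data, and the point is that these two roles are interchangeable.

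\textbf{Step 3 (symmetry, the crux).} The correspondence should be symmetric under interchanging $P$ and $Q$, and this interchange is exactly the time-reversal of the walk. Composing the swap with the map of Step 1 converts the reversed backward-step walk back into a forward-step walk, yielding the desired bijection between $\W^n_{d,L}(\x)$ and the walks ending at $\x$. I expect the main obstacle to be precisely this interchange symmetry in the cylindric setting: since the poset of cylindric shapes is not a differential poset, one cannot simply invoke Fomin's growth-diagram theory. The hard part will be to set up local growth rules labelling the grid by cylindric shapes and to verify that these rules are invariant under reflection across the main diagonal; this diagonal symmetry is what produces $P\leftrightarrow Q$, and hence the theorem. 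This is exactly the content that the cylindric growth diagrams are designed to supply.
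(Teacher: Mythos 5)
Your meta-level instinct matches the paper's actual strategy: Theorem~\ref{thm:courtiel_bijections_2021} is deduced there from the symmetry of Neyman's cylindric Robinson--Schensted correspondence (with cylindric growth diagrams later supplying the transparent proof of that symmetry), and your Step~1 computation is correct as far as it goes ($\rho(s_i)=-s_{d-i}$, so reverse-and-reflect bijects walks ending at $\x$ with walks starting at $\rho(\x)$). However, the middle of your argument has a genuine gap: the correspondence you invoke in Step~2 does not exist in the form you describe. Neyman's correspondence (Theorem~\ref{thm:CRS}) is \emph{not} a map from words in the step alphabet to pairs of tableaux of a common shape; its input is a pair $(T,U)$ of standard cylindric tableaux with a common \emph{inner} shape and its output is a pair $(P,Q)$ with a common \emph{outer} shape, built purely from internal insertions --- there is no word or permutation component. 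In this setting a walk in $\D_{d,L}$ starting at $\x$ is not something one Schensted-inserts letter by letter; it \emph{is} a single tableau, namely an element of $\SCT^n_{d,L}(\cdot/\alpha)$ recorded by its chain of shapes, where $f(\alpha)=\x$. Consequently your assertion that ``the outer boundary of the common shape records the endpoint of the walk'' has no existing correspondence to lean on; a word-input cylindric RSK with that property would itself have to be constructed, which is exactly the work being deferred.

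The second gap is the crux claim of Step~3, that swapping $P\leftrightarrow Q$ ``is exactly the time-reversal of the walk.'' This is unjustified, and it is not how the symmetry enters. The symmetry $\CRS(T,U)=(P,Q)\Rightarrow\CRS(U,T)=(Q,P)$ is used only to restrict the correspondence to the diagonal: setting $U=T$ forces $\CRS(T,T)=(P,P)$, which defines the bijection $\bij:\SCT^n_{d,L}(\alpha/\cdot)\to\SCT^n_{d,L}(\cdot/\alpha)$ of Corollary~\ref{cor:bij}, from tableaux with \emph{outer} shape $\alpha$ to tableaux with \emph{inner} shape $\alpha$. Time-reversal appears only in the elementary dictionary, with no coordinate reflection at all: a walk ending at $\x$, read backwards, removes cells one at a time from $\alpha$, i.e.\ it is the walk representation of a tableau with outer shape $\alpha$. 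The theorem is then the composite
$$\{\text{walks ending at }\x\}\;\leftrightarrow\;\SCT^n_{d,L}(\alpha/\cdot)\;\overset{\bij}{\leftrightarrow}\;\SCT^n_{d,L}(\cdot/\alpha)\;\leftrightarrow\;\W^n_{d,L}(\x).$$
By contrast, your proposed composite (swap, then Step~1) does not even typecheck: if the swap literally were time-reversal, then following it with the vertexwise reflection of Step~1 would send a walk $W\in\W^n_{d,L}(\x)$ to the walk obtained by applying $\rho$ to every vertex of the reversal of $W$ --- a trivial map requiring no insertion algorithm, and one whose image consists of walks ending at $\rho(\x)$, not at $\x$. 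Relatedly, Step~1, though correct, is a detour: it reorients the goal toward a bijection $\W^n_{d,L}(\x)\to\W^n_{d,L}(\rho(\x))$, a statement your Steps~2--3 never return to. The repair is to drop $\rho$ entirely, use plain reversal to identify walks ending at $\x$ with $\SCT^n_{d,L}(\alpha/\cdot)$, and then prove (i) the pair-to-pair cylindric RS correspondence, (ii) its $P\leftrightarrow Q$ symmetry --- which, as you anticipate, is cleanest via growth-diagram local rules invariant under diagonal reflection --- and (iii) the diagonal specialization giving $\bij$; that is precisely the paper's proof.
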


In Section~\ref{sec:CRS}, we will provide an alternative construction of this bijection based on a cylindric analogue of the RS correspondence.

Mortimer and Prellberg~\cite{mortimer_number_2015}, and later Courtiel et al.~\cite{courtiel_bijections_2021}, also consider walks in $\Delta_{d,L}$ which, in addition to steps $s_i$, can have steps $\bars_i\coloneqq -s_i$ for $i\in[d]$. 
We think of these as walks in the directed graph $\D_{d,L}$ where edges may be used in both directions, and we call them 
{\em oscillating walks}. Given a point $\x\in\Delta_{d,L}$, let $\OW^n_{d,L}(\x)$ be the set of $n$-step oscillating walks in $\Delta_{d,L}$ starting at~$\x$.
Following~\cite{courtiel_bijections_2021}, we call steps $s_i$ {\em forward steps}, and steps $\bars_i$ {\em backward steps}. 
More generally, we can define oscillating walks in any directed multigraph by allowing the edges to be followed in the forward or backward direction.
The {\em type} of an $n$-step oscillating walk in a directed multigraph is the word $w\in\{+,-\}^n$ whose $r$th entry is a $+$ or a $-$ if the $r$th step of the walk is a forward or a backward step, respectively.
Oscillating walks of type $+^n$ are simply called {\em walks}.
For $w\in\{+,-\}^n$, let $\OW^w_{d,L}(\x)$ be the set of oscillating walks in $\D_{d,L}$ of type $w$ starting at $\x$.
In \cite[Thm.\ 6 \& 34]{courtiel_bijections_2021}, Courtiel et al.\ prove the following.

\begin{theorem}[\cite{courtiel_bijections_2021}]\label{thm:osc_walks}
Let $\x\in\Delta_{d,L}$ and $n\ge0$. For any $w,w'\in\{+,-\}^n$, there is a bijection between $\OW^w_{d,L}(\x)$ and $\OW^{w'}_{d,L}(\x)$. 
\end{theorem}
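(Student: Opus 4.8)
The plan is to prove the stronger statement that $|\OW^w_{d,L}(\x)|$ depends only on $n$ (and on $\x,d,L$), not on the type $w$; since equinumerous finite sets are in bijection, this yields the theorem. Because $\{+,-\}^n$ is connected under flips of a single coordinate, it suffices to show that flipping one entry of $w$ leaves $|\OW^w_{d,L}(\x)|$ unchanged. To organize this, I encode forward steps by the adjacency matrix $P$ of $\D_{d,L}$, with $P_{\x,\y}=1$ exactly when $\y=\x+s_i$ for some $i\in[d]$; a backward step is then governed by the transpose $P^{\mathsf T}$. Writing $M_+=P$ and $M_-=P^{\mathsf T}$, and letting $\mathbf 1$ be the all-ones vector indexed by $\Delta_{d,L}$, one has $|\OW^w_{d,L}(\x)|=(M_{w_1}M_{w_2}\cdots M_{w_n}\mathbf 1)_{\x}$. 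Flipping the $r$th entry of $w$ changes this by $\pm\bigl((M_{w_1}\cdots M_{w_{r-1}})(P-P^{\mathsf T})(M_{w_{r+1}}\cdots M_{w_n})\mathbf 1\bigr)_\x$, so the whole theorem reduces to the single operator identity
\[
(P-P^{\mathsf T})\,W\mathbf 1=0 \qquad\text{for every word $W$ in the alphabet $\{P,P^{\mathsf T}\}$.}
\]

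The base case $W=I$ is a degree count. A forward step $s_i=e_{i+1}-e_i$ is available at $\x=(x_1,\dots,x_d)$ precisely when $x_i\ge1$, so the out-degree of $\x$ equals $\#\{i\in[d]:x_i\ge1\}$, the number of nonzero coordinates. A backward step $-s_i$ is available when $x_{i+1}\ge1$, and since $i\mapsto i+1$ (with $e_{d+1}=e_1$) runs over all of $[d]$, the in-degree of $\x$ equals $\#\{i\in[d]:x_{i+1}\ge1\}=\#\{j\in[d]:x_j\ge1\}$, the same number. Hence $P\mathbf 1=P^{\mathsf T}\mathbf 1$, i.e.\ $(P-P^{\mathsf T})\mathbf 1=0$. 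I would then propagate this by induction on the length of $W$, and here the key structural input is that $P$ is a normal operator, $PP^{\mathsf T}=P^{\mathsf T}P$. Granting normality, suppose $u=W'\mathbf 1$ satisfies $Pu=P^{\mathsf T}u$. For a forward extension, $(P-P^{\mathsf T})Pu=P^2u-P^{\mathsf T}Pu=P^2u-PP^{\mathsf T}u=P(Pu-P^{\mathsf T}u)=0$, using normality in the third equality and the hypothesis at the end; the backward case $(P-P^{\mathsf T})P^{\mathsf T}u=0$ is symmetric. Since every word is either $I$ or $PW'$ or $P^{\mathsf T}W'$ with $W'$ shorter, this closes the induction and gives the displayed identity, hence the theorem.

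The main obstacle is therefore the normality of $P$. It holds in all small cases I have checked, and it is consistent with the symmetries of $\D_{d,L}$: the reversal $\sigma\colon(x_1,\dots,x_d)\mapsto(x_d,\dots,x_1)$ is a digraph anti-automorphism, sending each forward $s_i$-edge to a backward $s_{d-i}$-edge, so its permutation matrix $Q$ (with $Q=Q^{\mathsf T}=Q^{-1}$) satisfies $P^{\mathsf T}=QPQ$. Consequently $PQ$ and $QP$ are symmetric, $PP^{\mathsf T}=(PQ)^2$ and $P^{\mathsf T}P=(QP)^2=Q(PQ)^2Q$, so normality becomes the assertion that $(PQ)^2$ commutes with $Q$. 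Proving this cleanly is the crux, and I expect to need in addition the cyclic symmetry $\rho\colon(x_1,\dots,x_d)\mapsto(x_2,\dots,x_d,x_1)$, which commutes with $P$. Alternatively --- and this is the route developed in Section~\ref{sec:growth} --- the local rules of the cylindric growth diagrams furnish an explicit single-step switching bijection realizing each coordinate flip, which establishes the identity $(P-P^{\mathsf T})W\mathbf 1=0$ bijectively and yields a self-contained proof; constructing and verifying that local rule is, in my view, the genuine difficulty.
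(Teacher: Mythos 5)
Your reduction is sound as far as it goes: the sets $\OW^w_{d,L}(\x)$ are finite, so equinumerosity suffices; $\{+,-\}^n$ is connected under single-coordinate flips; the transfer-matrix identity $|\OW^w_{d,L}(\x)|=(M_{w_1}\cdots M_{w_n}\mathbf 1)_{\x}$ is correct; the degree count giving $P\mathbf 1=P^{\mathsf T}\mathbf 1$ is correct; and, granting $PP^{\mathsf T}=P^{\mathsf T}P$, your induction does yield $(P-P^{\mathsf T})W\mathbf 1=0$ for every word $W$ in $\{P,P^{\mathsf T}\}$. But the proposal never proves the one statement on which everything hinges: the normality of $P$. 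You check small cases, reformulate it as ``$(PQ)^2$ commutes with $Q$,'' say you expect to need the cyclic symmetry as well, and then fall back on the local rules of cylindric growth diagrams. That fallback is not a completion of your argument; it \emph{is} the paper's proof (the rules of Section~\ref{sec:growth} together with Theorem~\ref{thm:OCT-symmetric} and Corollary~\ref{cor:ww'}), so as written the proposal has a genuine gap exactly at its crux.

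The gap is real but fillable by a direct count, with no symmetry tricks. For $\x\ne\y$, the entry $(PP^{\mathsf T})_{\x,\y}$ counts common out-neighbors $\mathbf z=\x+s_i=\y+s_j$, and such $\mathbf z$ correspond bijectively to ordered pairs $(i,j)$ with $i\ne j$ and $\x-\y=s_j-s_i$: the required inequalities are automatic, since reading off the $i$th and $j$th coordinates of $\x-\y=s_j-s_i$ gives $x_i\ge y_i+1\ge 1$ and $y_j\ge x_j+1\ge 1$ (note $(s_j)_i\in\{0,1\}$ and $(s_i)_j\in\{0,1\}$ when $i\ne j$). Likewise $(P^{\mathsf T}P)_{\x,\y}$ counts common in-neighbors $\mathbf z'=\x-s_j=\y-s_i$, indexed by the same pairs, and again every condition is forced: the $(j{+}1)$st and $(i{+}1)$st coordinates of $\x-\y=s_j-s_i$ give $x_{j+1}\ge 1$ and $y_{i+1}\ge 1$, which is exactly what is needed for $\mathbf z'\in\Delta_{d,L}$, while the two edges out of $\mathbf z'$ exist because $z'_j=x_j+1\ge1$ and $z'_i=y_i+1\ge1$. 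Hence both entries equal the number of such pairs, and together with your diagonal computation $P$ is normal. With that lemma supplied, your proof is complete and genuinely different from the paper's: it establishes equinumerosity (hence an abstract bijection) by a commutation argument, whereas the paper produces an explicit bijection by lifting walks to oscillating cylindric tableaux (Theorem~\ref{thm:equivalence-oscillating}) and changing types via symmetric cylindric growth diagrams (Theorem~\ref{thm:OCT-symmetric}, Corollary~\ref{cor:ww'}); the paper's route also proves the stronger refinement with prescribed endpoints. Your strategy is essentially the one the paper itself sketches in Section~\ref{sec:further} with the operators $U$ and $D$, except that there the commutation is taken on the poset $\CS_{d,L}$, where it is immediate from properties (D2) and (D3), rather than on $\D_{d,L}$, where it needs the argument above.
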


In Section~\ref{sec:growth}, we will use cylindric growth diagrams to provide an alternative proof of this result. In the special case $w=+^n$ and $w'=-^n$, Theorem~\ref{thm:osc_walks} reduces to Theorem~\ref{thm:courtiel_bijections_2021}, since walks starting at $\x$ that consist of backward steps are equivalent, by reversal, to walks ending at $\x$ that consist of forward steps.

\subsection{Standard cylindric tableaux}\label{sec:SCT}

Cylindric partitions were introduced by Gessel and Krattenthaler in~\cite{gessel_cylindric_1997}, as plane partitions satisfying certain constraints between the entries of the first and the last row. A particularly interesting special case of them, called $(0,1)$-cylindric partitions in~\cite{gessel_cylindric_1997}, is equivalent to {\em semistandard cylindric tableaux}, as defined by Postnikov in~\cite{postnikov_affine_2005}.
Postnikov showed that these tableaux are related to the 3-point Gromov--Witten invariants, which are the structure constants of the quantum cohomology ring of the Grassmannian. Specifically, he introduced a generalization of Schur functions, as generating functions for semistandard cylindric tableaux of a given shape, and he showed that, when these are expanded in terms of ordinary Schur functions, the Gromov--Witten invariants appear as coefficients in this expansion. These generalized Schur functions, under the name of {\em cylindric skew Schur functions}, have been further studied by 
McNamara~\cite{mcnamara_cylindric_2006} from the perspective of Schur-positivity.

For positive integers $d$ and $L$, define the cylinder $\C_{d,L}$ to be the quotient $\Z^2/(-d,L)\Z$. Elements of $\C_{d,L}$ are equivalence classes $\langle i,j\rangle =(i,j)+(-d,L)\Z$, where $i,j\in\Z$.  We borrow this notation from~\cite{postnikov_affine_2005,neyman_cylindric_2015}, although we use $L$ instead of $d+L$ for the second index.
As in~\cite{postnikov_affine_2005,neyman_cylindric_2015}, we draw points $(i,j)\in\Z^2$ as unit squares on the plane with vertices $(i-1,j-1),\allowbreak(i-1,j),\allowbreak(i,j-1),\allowbreak(i,j)$, with the unusual convention that the positive $x$-axis points downward and the positive $y$-axis points to the right, to be consistent with the English notation for Young diagrams. With this $90^\circ$-rotation of the usual Cartesian coordinates, $(i,j)$ represents the square in row $i$ and column $j$,
where row indices increase from top to bottom, and column indices increase from left to right. 
We use the term {\em cell} to refer to an equivalence class $\langle i,j\rangle$ of squares.

A {\em cylindric shape\footnote{Such an object is called a cylindric partition in \cite{neyman_cylindric_2015}, but we avoid this term because in \cite{gessel_cylindric_1997} it is used to mean something different, namely the cylindric version of a plane partition} of period $(d,L)$} is a doubly infinite weakly decreasing sequence of integers, $\lambda=(\lambda_i)_{i\in\Z}$, such that $\lambda_i=\lambda_{i+d}+L$ for all $i\in\Z$. 
We often write it as $\lambda=[\lambda_1,\lambda_2,\dots,\lambda_d]$, since these $d$ integers uniquely determine a cylindric shape, provided that 
$\lambda_d+L\ge \lambda_1\ge\lambda_2\ge\dots\ge\lambda_d$. In some of the figures we will omit the commas and use bars to denote negative numbers.
Denote by $\CS_{d,L}$ the set of cylindric shapes of period $(d,L)$. We will use the term {\em shape} to mean {\em cylindric shape} throughout the paper.
For $\lambda\in\CS_{d,L}$, the region $\{(i,j)\in\Z^2:j\le\lambda_i\}$ is a union of equivalence classes, which we denote by 
$Y_\lambda=\{\langle i,j\rangle\in\C_{d,L}:j\le\lambda_i\}$, and we call it the {\em Young diagram} of $\lambda$. We often identify $\lambda$ with $Y_\lambda$ when we talk about adding or removing cells to $\lambda$. The boundary of $Y_\lambda$ determines a closed lattice path (also called a {\em loop}) in $\C_{d,L}$ with steps $\s=(1,0)$  and $\w=(0,-1)$ (south and west in our orientation, respectively). We denote the doubly infinite periodic sequence of steps of this path by 
\begin{equation}\label{eq:bdry}
\bdry_\lambda=(\w^{\lambda_0-\lambda_{1}}\s\w^{\lambda_{1}-\lambda_{2}}\s\dots\w^{\lambda_{d-1}-\lambda_{d}}\s)^\infty,
\end{equation}
where exponentiation indicates repetition.
For example, for the shape $\mu=[3,1,0]\in\CS_{3,4}$, the upper (\textcolor{red}{red}) path in Figure~\ref{fig:SCT} has step sequence $\bdry_\mu=(\w\s\w\w\s\w\s)^\infty$.

\begin{figure}[htb]
\centering
\begin{tikzpicture}[scale=0.5]
 \fill[yellow!25] (-1,-.5)--(-1,0) \east \north\east\north\east\east\north\east \north\east\north\east\east\north\east -- ++(0,.5)-- ++(3,0) -- ++(0,-.5) \west\west\south\south\west\west\south\west\west\south\south\west\west\south\west\west -- ++(0,-0.5) -- ++(-2,0);
 \fill[yellow] (0,0) \north\east\north\east\east\north\east\east\south\south\west\west\south\west\west\west;
 \draw[very thin,dotted] (-1,0)  grid (11,6);
 \draw[red, ultra thick] (-1,-.5)--(-1,0) \east \north\east\north\east\east\north\east \north\east\north\east\east\north\east -- ++(0,.5);
 \draw[blue, ultra thick] (1,-.5)--(1,0) \east\east \north\east\east\north\north\east\east \north\east\east\north\north\east\east -- ++(0,.5);
 \draw[thick,<->,olive] (-.2,0)-- node[left] {$d$} (-.2,3);
 \draw[thick,<->,olive] (0,3.2)-- node[above] {$L$} (4,3.2); 
 \draw (1.5,1.5) node {1}; \draw (1.5+4,1.5+3) node {\footnotesize1};
 \draw (3.5,2.5) node {2}; \draw (3.5+4,2.5+3) node {\footnotesize2};  \draw (3.5-4,2.5-3) node {\footnotesize2};
 \draw (.5,.5) node {3}; \draw (.5+4,.5+3) node {\footnotesize3}; \draw (.5+8,.5+6) node {\footnotesize3};
 \draw (1.5,.5) node {4}; \draw (1.5+4,.5+3) node {\footnotesize4}; \draw (1.5+8,.5+6) node {\footnotesize4};
 \draw (2.5,1.5) node {5}; \draw (2.5+4,1.5+3) node {\footnotesize5};
 \draw (2.5,.5) node {6}; \draw (2.5+4,.5+3) node {\footnotesize6}; \draw (2.5+8,.5+6) node {\footnotesize6};
 \draw (4.5,2.5) node {7}; \draw (4.5+4,2.5+3) node {\footnotesize7}; \draw (4.5-4,2.5-3) node {\footnotesize7};
 \draw (3.5,1.5) node {8}; \draw (3.5+4,1.5+3) node {\footnotesize8};
 \draw (4.5,1.5) node {9}; \draw (4.5+4,1.5+3) node {\footnotesize9};
 \draw[red] (2,2.5) node {$\mu$};
 \draw[blue] (5.5,2) node {$\lambda$};
 \draw[thick,->] (0,6.5)--(0,-1.5) node[below] {$x$};
\draw[thick,->] (-1,3)--(11.5,3) node[right] {$y$};

\begin{scope}[shift={(16,0)}]
 \fill[yellow!25] (-1,-.5)--(-1,0) \east \north\east\north\east\east\north\east \north\east\north\east\east\north\east -- ++(0,.5)-- ++(3,0) -- ++(0,-.5) \west\west\south\south\west\west\south\west\west\south\south\west\west\south\west\west -- ++(0,-0.5) -- ++(-2,0);
 \fill[yellow] (0,0) \north\east\north\east\east\north\east\east\south\south\west\west\south\west\west\west;
 \draw[very thin,dotted] (-1,0)  grid (11,6);
 \draw[red, ultra thick] (-1,-.5)--(-1,0) \east \north\east\north\east\east\north\east \north\east\north\east\east\north\east -- ++(0,.5);
 \draw[blue, ultra thick] (1,-.5)--(1,0) \east\east \north\east\east\north\north\east\east \north\east\east\north\north\east\east -- ++(0,.5);
 \draw[thick,<->,olive] (-.2,0)-- node[left] {$d$} (-.2,3);
 \draw[thick,<->,olive] (0,3.2)-- node[above] {$L$} (4,3.2); 
 \draw (1.5,1.5) node {1}; \draw (1.5+4,1.5+3) node {\footnotesize1};
 \draw (3.5,2.5) node {1}; \draw (3.5+4,2.5+3) node {\footnotesize1};  \draw (3.5-4,2.5-3) node {\footnotesize1};
 \draw (.5,.5) node {1}; \draw (.5+4,.5+3) node {\footnotesize1}; \draw (.5+8,.5+6) node {\footnotesize1};
 \draw (1.5,.5) node {2}; \draw (1.5+4,.5+3) node {\footnotesize2}; \draw (1.5+8,.5+6) node {\footnotesize2};
 \draw (2.5,1.5) node {2}; \draw (2.5+4,1.5+3) node {\footnotesize2};
 \draw (2.5,.5) node {5}; \draw (2.5+4,.5+3) node {\footnotesize5}; \draw (2.5+8,.5+6) node {\footnotesize5};
 \draw (4.5,2.5) node {4}; \draw (4.5+4,2.5+3) node {\footnotesize4}; \draw (4.5-4,2.5-3) node {\footnotesize4};
 \draw (3.5,1.5) node {2}; \draw (3.5+4,1.5+3) node {\footnotesize2};
 \draw (4.5,1.5) node {5}; \draw (4.5+4,1.5+3) node {\footnotesize5};
 \draw[red] (2,2.5) node {$\mu$};
 \draw[blue] (5.5,2) node {$\lambda$};
 \draw[thick,->] (0,6.5)--(0,-1.5) node[below] {$x$};
\draw[thick,->] (-1,3)--(11.5,3) node[right] {$y$};
\end{scope}
\end{tikzpicture}
\caption{A standard cylindric tableau (left) and a semistandard cylindric tableau (right) of shape $\lambda/\mu$, where $\lambda=[5,5,3]$ and $\mu=[3,1,0]$ are elements of $\CS_{3,4}$. Here $|\lambda/\mu|=9$.}
\label{fig:SCT}
\end{figure}
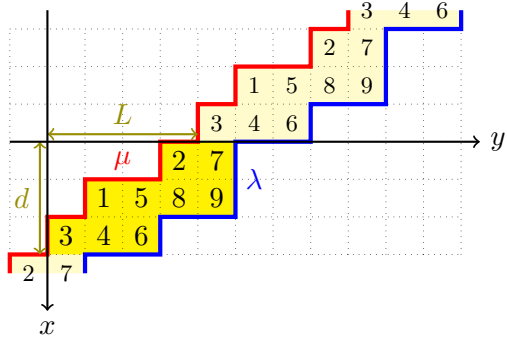 

Consider the partial order on $\CS_{d,L}$ defined by containment of their Young diagrams: given $\lambda,\mu\in\CS_{d,L}$, write $\mu\subseteq\lambda$ if $Y_\mu\subseteq Y_\lambda$; equivalently, if $\mu_i\le\lambda_i$ for all $i\in\Z$. For $\mu\subseteq\lambda$, we define the {\em cylindric Young diagram of shape $\lambda/\mu$} to be the set
$$Y_{\lambda/\mu}=Y_\lambda\setminus Y_\mu=\{\langle i,j \rangle\in\C_{d,L}: \mu_i< j\le \lambda_i\}.$$
Unlike $Y_\lambda$ or $Y_\mu$, the set $Y_{\lambda/\mu}$ is finite. We denote its cardinality by $|\lambda/\mu|=\sum_{i=1}^d (\lambda_i-\mu_i)$. Note that $Y_{\lambda/\mu}$ is the set of cells between the boundary paths of $Y_\mu$ and $Y_\lambda$. 
It will be convenient to also define $|\lambda|=\lambda_1+\dots+\lambda_d$, which may be negative.

A {\em semistandard cylindric tableau} of shape $\lambda/\mu$ is a function $T:Y_{\lambda/\mu}\to\{1,2,3,\dots\}$ such that $T(\langle i,j\rangle)\le T(\langle i+1,j\rangle)$ and $T(\langle i,j\rangle)<T(\langle i,j+1\rangle)$ for all $i,j$; equivalently, a filling of the cells in $Y_{\lambda/\mu}$ so that entries are weakly increasing along rows (from left to right) and strictly increasing along columns (from top to bottom). 
Such a tableau is {\em standard} if it is filled with the entries $1,2,\dots,|\lambda/\mu|$, each appearing exactly once. 
See the examples in Figure~\ref{fig:SCT}. 

Denote by $\SSCT_{d,L}(\lambda/\mu)$ (resp.\ $\SCT_{d,L}(\lambda/\mu)$) the set of semistandard (resp.\ standard) cylindric tableaux of period $(d,L)$ and shape $\lambda/\mu$.
If $T\in\SSCT_{d,L}(\lambda/\mu)$, we call $\lambda$ and $\mu$ the outer shape and the inner shape of $T$, respectively. 
Define the weight $\wt(T)=\prod_{i}x_i^{c_i}$, where $c_i$ is the number of times that the entry $i$ appears in $T$.
For example, if $T\in\SSCT_{3,4}([5,5,3]/[3,1,0])$ is the tableau from the right of Figure~\ref{fig:SCT}, then $\wt(T)=x_1^3x_2^3x_4x_5^2$.
Denote by $\SSCT_{d,L}^n(\cdot/\mu)$ (respectively, $\SSCT_{d,L}^n(\lambda/\cdot)$) the set of semistandard cylindric tableaux with $n$ cells and inner shape $\mu$ (respectively, outer shape $\lambda$), and define analogous notation for $\SCT$.

We will call $\lambda/\mu$ a {\em horizontal strip} if the cylindric Young diagram $Y_{\lambda/\mu}$ contains at most one cell in each column; equivalently, if $\mu_i\le\lambda_i\le\mu_{i-1}$ for all $i$. In any semistandard cylindric tableau, the set of cells containing a given number form a horizontal strip.

Let $\CSg_{d,L}$ be the infinite directed graph obtained from the Hasse diagram of the poset $(\CS_{d,L},\subseteq)$ by orienting its edges upwards; see Figures~\ref{fig:32} and~\ref{fig:33} for examples. In other words, this graph has vertex set $\CS_{d,L}$ and, for $\mu,\lambda\in\CS_{d,L}$, it has an edge from $\mu$ to $\lambda$ if $\lambda$ can be obtained from $\mu$ by adding a cell; equivalently, if $\lambda_i=\mu_i+1$ for some $i\in [d]$ and $\lambda_j=\mu_j$ for all $j\in[d]\setminus\{i\}$. 
Note that it is possible to add a cell to row $i$ of $\mu$ if and only if $\mu_{i-1}>\mu_i$. 

For $\lambda,\mu\in\CS_{d,L}$ with $\mu\subseteq\lambda$ and  $|\lambda/\mu|=n$, elements of $\SCT_{d,L}(\lambda/\mu)$ are in bijection with $n$-step walks from $\mu$ to $\lambda$ in the graph $\CSg_{d,L}$; equivalently, sequences of shapes $\mu=\lambda^{(0)}\subset\lambda^{(1)}\subset\dots\subset\lambda^{(n)}=\lambda$, where $\lambda^{(k)}\in \CS_{d,L}$ for all $k$, and each 
$\lambda^{(k)}$, for $1\le k\le n$, is obtained from 
$\lambda^{(k-1)}$ by adding a cell. 
Similarly, elements of $\SSCT_{d,L}(\lambda/\mu)$ with entries at most $N$ are in bijection with sequences of shapes $\mu=\lambda^{(0)}\subseteq\lambda^{(1)}\subseteq\dots\subseteq\lambda^{(N)}=\lambda$, where $\lambda^{(k)}\in \CS_{d,L}$ for all $k$, and $\lambda^{(k)}/\lambda^{(k-1)}$ is a horizontal strip for each $1\le k\le N$. 
For $T\in\SSCT_{d,L}(\lambda/\mu)$, we denote this sequence by 
\begin{equation}\label{eq:ssh}
\ssh(T)=(\lambda^{(0)},\lambda^{(1)},\dots),
\end{equation}
where $\lambda^{(k)}$ is the outer shape of the tableau consisting of the cells of $T$ with entries at most~$k$.

In analogy with the definition of oscillating tableaux, which have been widely studied~\cite{sundaram_tableaux_1990,roby_applications_1991,krattenthaler_bijections_2016}, 
we define {\em oscillating cylindric tableaux} to be sequences $\rho^0,\rho^1,\dots,\rho^n$ of cylindric shapes in $\CS_{d,L}$ where each shape is obtained from the previous one by either adding or removing one cell. One difference with the non-cylindric case is that it is always possible to remove a cell from a cylindric shape.
Oscillating cylindric tableaux can be viewed as oscillating walks in the graph $\CSg_{d,L}$.
For $\alpha,\beta\in\CS_{d,L}$, let $\OCT_{d,L}^n(\alpha,\cdot)$ denote the set of oscillating cylindric tableaux $\rho^0,\rho^1,\dots,\rho^n$ where $\rho^0=\alpha$, and let $\OCT_{d,L}^n(\alpha,\beta)$ denote the set of those where additionally $\rho^n=\beta$. The type of such an oscillating tableau is the word in $\{+,-\}^n$ whose $k$th entry is a $+$ if $\rho^{k-1}\subset\rho^k$, and a $-$ if $\rho^{k-1}\supset\rho^k$, for $1\le k\le n$.
Oscillating cylindric tableau of type $+^n$ can be identified with standard cylindric tableaux with $n$ cells via the correspondence $\ssh$ from Equation~\eqref{eq:ssh}. 
Given $w\in\{+,-\}^n$, denote by $\OCT_{d,L}^w(\alpha,\cdot)$ the set of elements of $\OCT_{d,L}^n(\alpha,\cdot)$ of type $w$, and define 
$\OCT_{d,L}^w(\alpha,\beta)$ similarly.

\subsection{Exclusion processes}\label{sec:TASEP}

In the {\em totally asymmetric simple exclusion process} (TASEP) on the cycle $\Z_N$, each of the sites $r\in\Z_N$ can either contain a particle or be empty. We denote a state of the system by $u=u_1u_2\dots u_N$, where $u_r=1$ if site $r$ contains a particle, and $u_r=0$ otherwise. The indices of $u$ are always interpreted modulo $N$. We draw such a state by placing $N$ beads around a circle representing the sites in clockwise order, starting and ending at the bottom of the circle, and filling in the beads corresponding to particles.
At each time step, a particle can jump to the next site in counterclockwise direction if this site is empty. 
The number of particles remains fixed through the process, let $d$ denote this number, and assume that $d\ge1$.

Typically, one associates transition probabilities to these particle jumps to define a Markov chain, see~\cite{ferrari_stationary_2007,liggett_stochastic_1999}. Here, however, we are more interested in the underlying directed graph $\E_{d,N-d}$ whose vertices are the $\binom{N}{d}$ states $u=u_1u_2\dots u_N$ containing $d$ particles, and whose edges correspond to valid jumps of a particle.
Specifically, $\E_{d,N-d}$ has an edge from state $u$ to state $v$ if and only if there exists $r\in[N]$ such that $u_{r-1}u_r=01$, $v_{r-1}v_r=10$,  and $u_t=v_t$ for all $t\in\Z_N\setminus\{r-1,r\}$, with indices taken modulo $N$. 
Note that $\E_{d,N-d}$ does not contain loops, unlike the Markov chain for the TASEP, where each of the $N$ sites is equally likely to {\it attempt} a jump, 
but it succeeds only if that site contains a particle and the next site counterclockwise is empty, and it stays in the same state otherwise. We will consider walks in the directed graph $\E_{d,N-d}$, which record sequences of valid jumps.

We also define the directed multigraph $\N_{d,N-d}$ as the quotient of $\E_{d,N-d}$ under the equivalence relation given by cyclic rotations.
Specifically, define a {\em necklace} to be an equivalence class of vertices of $\E_{d,N-d}$, where $u\sim v$ if there exists $t$ such that $u_r=v_{r+t}$ for all $r$, again with indices taken modulo $N$. The vertices of $\N_{d,N-d}$ are necklaces, for which we use the notation $\bracket{u}$. The number of edges from necklace $\bracket{u}$ to necklace $\bracket{v}$ is the number of states 
$\tilde{v}\in\bracket{v}$ for which $\E_{d,N-d}$ has an edge from $u$ to $\tilde{v}$. By cyclic symmetry, this number does not depend on the chosen representative from $\bracket{u}$. In other words, $\bracket{u}$ has an outgoing edge for each cyclic occurrence of the consecutive substring $01$ in the necklace, and the edge points to the necklace obtained from $\bracket{u}$ by replacing this occurrence with $10$.

The graph $\E_{3,L}$ and the multigraph $\N_{3,L}$ are shown in Figure~\ref{fig:32} for $L=2$, and in Figure~\ref{fig:33} for $L=3$.

The {\em symmetric simple exclusion process} (SSEP) is defined similarly, by allowing particles to jump in both directions (clockwise and counterclockwise), provided the next site is empty~\cite{spitzer_interaction_1970,liggett_stochastic_1999,lacoin_simple_2017}. In this case, sequences of valid jumps can be interpreted as walks in the graph $\E_{d,N-d}$ where edges can be taken in the forward or backward direction, i.e., oscillating walks.

\section{Connecting all three}\label{sec:connections}

In this section we make explicit the connection between walks in simplices, standard cylindric tableaux, and exclusion processes.
The idea behind the correspondence is that inserting a cell in row $i$ of a shape in $\CS_{d,L}$ translates to moving along a step $s_i$ in $\D_{d,L}$, and to the $i$th particle (with the appropriate indexing) of a state in $\E_{d,L}$ or $\N_{d,L}$ jumping to the next site in counterclockwise direction. Next we make this precise.

Different kinds of tableaux have appeared in the literature in connection to the TASEP~\cite{corteel_multiline_2020,mandelshtam_toric_2020}, where they are used to encode states. However, unlike in these previous appearances, our standard cylindric tableaux encode sequences of states (i.e., walks in the underlying directed graph) instead.

\subsection{Covering maps}

We start with some definitions from graph theory that will be useful when relating these three notions. 

\begin{definition}
Let $G$ be a directed multigraph with vertex set $V(G)$. 
For each vertex $u\in V(G)$, let $N_G^+(u)$ (respectively, $N_G^-(u)$) be the multiset of vertices where the multiplicity of each $v\in V(G)$ equals the number of edges from $u$ to $v$ (respectively, from $v$ to $u$) in~$G$. We call the multisets $N_G^+(u)$ and $N_G^-(u)$ the {\em out-neighbors} and {\em in-neighbors} of $u$, respectively.
\end{definition}

\begin{definition}
Let $G$ and $H$ be directed multigraphs. A {\em (bidirectional) covering map} from $G$ to $H$ is a surjective map $f:V(G)\to V(H)$ such that, for every $u\in V(G)$, $\psi$ restricts to a bijection between the multisets $N_G^+(u)$ and $N_G^+(f(u))$, and to a bijection  between the multisets $N_G^-(u)$ and $N_G^-(f(u))$.
\end{definition}

We will omit the word {\em bidirectional} for the sake of brevity.

\begin{example}\label{ex:EN}
Let $q$ be the quotient map that takes each vertex $u$ of $\E_{d,L}$ to its equivalence class $\bracket{u}$, as described in Section~\ref{sec:TASEP}. Let us check that $q$ is a covering map from $\E_{d,L}$ to $\N_{d,L}$.
Clearly, the map $q:V(\E_{d,L})\to V(\N_{d,L})$ is surjective. 
In addition, for any $u,v\in V(\E_{d,L})$, the number of edges from $\bracket{u}$ to $\bracket{v}$ in $\N_{d,L}$ is the number of edges in $\E_{d,L}$ from $u$ to vertices in the equivalence class $\bracket{v}$. Thus, $q$ restricts to a bijection between the multiset of out-neighbors of $u$ in $\E_{d,L}$ and the multiset of out-neighbors of $\bracket{u}$ in $\N_{d,L}$.
Similarly, $q$ restricts to a bijection between the multiset of in-neighbors of $u$ in $\E_{d,L}$ and the multiset of in-neighbors of $\bracket{u}$ in $\N_{d,L}$.
\end{example}

Recall the infinite graph $\CSg_{d,L}$ whose vertices are cylindric shapes in $\CS_{d,L}$, and the graph $\D_{d,L}$ whose vertices are points in the simplicial region $\Delta_{d,L}$.

\begin{lemma}\label{lem:PD}
The map $f:\CS_{d,L}\to\Delta_{d,L}$ defined by
$$f(\alpha)=(\alpha_0-\alpha_1,\alpha_1-\alpha_2,\dots,\alpha_{d-1}-\alpha_{d})$$
is a covering map from $\CSg_{d,L}$ to $\D_{d,L}$.
\end{lemma}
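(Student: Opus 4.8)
The plan is to verify the three defining properties of a covering map in turn: that $f$ is well-defined (its image lands in $\Delta_{d,L}$), that it is surjective, and that it induces a bijection on out-neighbors and on in-neighbors at every vertex. First I would check well-definedness. For $\alpha\in\CS_{d,L}$ the coordinates of $f(\alpha)$ are the consecutive differences $\alpha_{i-1}-\alpha_i$, which are nonnegative because $\alpha$ is weakly decreasing and because $\alpha_0=\alpha_d+L\ge\alpha_1$ (the defining inequality of a cylindric shape). Their sum telescopes to $\alpha_0-\alpha_d=L$, so indeed $f(\alpha)\in\Delta_{d,L}$. For surjectivity, given $\x=(x_1,\dots,x_d)\in\Delta_{d,L}$ I would exhibit an explicit preimage: set $\alpha_d=0$, define $\alpha_{k-1}=\alpha_k+x_k$ for $k=d,d-1,\dots,1$, and extend periodically by $\alpha_{i+d}=\alpha_i-L$. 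Since $x_1+\dots+x_d=L$, this is consistent with the relation $\alpha_0=\alpha_d+L$, the resulting sequence is weakly decreasing, and $f(\alpha)=\x$. (It is worth noting in passing that the full fiber over $\x$ is the $\Z$-orbit of this $\alpha$ under adding a constant to all entries, which is exactly the deck transformation of the cover.)

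The heart of the argument is the local correspondence between edges. Recall that the out-neighbors of $\alpha$ in $\CSg_{d,L}$ are the shapes obtained by incrementing a single $\alpha_i$, $i\in[d]$, and that this is permitted exactly when $\alpha_{i-1}>\alpha_i$. Incrementing $\alpha_i$ changes precisely two of the differences defining $f$: using cyclic indexing modulo $d$, the coordinate $x_i=\alpha_{i-1}-\alpha_i$ decreases by $1$ and the coordinate $x_{i+1}$ increases by $1$. The only delicate case is $i=d$, where the second affected coordinate is $x_1=\alpha_0-\alpha_1$: here one must invoke the periodicity $\alpha_0=\alpha_d+L$, so that incrementing $\alpha_d$ also increments $\alpha_0$. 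In every case the net effect on $f(\alpha)$ is the addition of $-e_i+e_{i+1}=s_i$ (with $e_{d+1}=e_1$), so the edge that adds a cell in row $i$ maps under $f$ to the step $s_i$.

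It then remains to match the validity conditions and pass from this edge correspondence to a bijection of neighbor multisets. Since $x_i=\alpha_{i-1}-\alpha_i$, the inequality $\alpha_{i-1}>\alpha_i$ that allows a cell to be added in row $i$ is equivalent to $x_i\ge1$, which is exactly the condition for $f(\alpha)+s_i$ to remain in $\Delta_{d,L}$, i.e.\ for $s_i$ to be an available step out of $f(\alpha)$ in $\D_{d,L}$. Because distinct rows $i$ yield distinct shapes and the vectors $s_i$ are pairwise distinct, both out-neighbor multisets are genuine sets, and $f$ carries the out-neighbor obtained by adding in row $i$ to the out-neighbor $f(\alpha)+s_i$; this is the desired bijection $N^+_{\CSg_{d,L}}(\alpha)\to N^+_{\D_{d,L}}(f(\alpha))$. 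The in-neighbor case is entirely symmetric: removing a cell from row $i$ (possible iff $\alpha_i>\alpha_{i+1}$) increases $x_i$ by $1$ and decreases $x_{i+1}$ by $1$, changing $f(\alpha)$ by $-s_i$, and $\alpha_i>\alpha_{i+1}$ is equivalent to $x_{i+1}\ge1$, precisely the condition for the backward step $-s_i$ to stay in $\Delta_{d,L}$. I expect the only genuine obstacle to be the periodic wraparound at $i=d$, where the relation $\alpha_0=\alpha_d+L$ is exactly what makes incrementing the last row behave like the cyclic step $s_d=e_1-e_d$; once this bookkeeping is handled, the remaining verifications are routine.
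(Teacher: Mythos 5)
Your proposal is correct and follows essentially the same route as the paper's proof: verify that $f(\alpha)\in\Delta_{d,L}$, note surjectivity, and then match adding (resp.\ removing) a cell in row $i$ --- possible iff $x_i>0$ (resp.\ $x_{i+1}>0$) --- with the step $s_i$ (resp.\ $-s_i$) out of (resp.\ into) $f(\alpha)$, giving the bijections on out- and in-neighbors. You merely supply details the paper leaves implicit, such as the explicit preimage for surjectivity and the wraparound bookkeeping at $i=d$.
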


\begin{proof}
Let $\alpha\in\CS_{d,L}$ and $\x=f(\alpha)$. Then $\x\in\Delta_{d,L}$, since all its entries are nonnegative and their sum equals $\alpha_0-\alpha_{d}=L$. Clearly, $f$ is surjective.
For $i\in[d]$, a cell can be added to row $i$ of $\alpha$ if and only if $x_{i}=\alpha_{i-1}-\alpha_{i}>0$, which is the same condition that guarantees $\x+s_i\in\Delta_{d,L}$.  In addition, if $\beta$ is the shape obtained by adding this cell, then $f(\beta)=\x+s_i$. 
Thus, $f$ restricts to a bijection between the out-neighbors of $\alpha$ in $\CSg_{d,L}$ and the out-neighbors of $\x$ in $\D_{d,L}$.

Similarly, a cell can be removed from row $i$ of $\alpha$ if and only if $x_{i+1}=\alpha_{i}-\alpha_{i+1}>0$, which is the condition for $\x-s_i\in\Delta_{d,L}$. 
If $\beta$ is the shape obtained by removing this cell, then $f(\beta)=\x-s_i$. 
Thus, $f$ restricts to a bijection between the in-neighbors of~$\alpha$ and the in-neighbors of~$\x$.
\end{proof}

\begin{lemma}\label{lem:DN}
The map $g:\Delta_{d,L}\to V(\N_{d,L})$ defined on $\x=(x_1,x_2,\dots,x_d)\in\Delta_{d,L}$ by 
$$g(\x)=\bracket{0^{x_1}10^{x_2}1\dots 0^{x_d}1}$$
is a covering map from $\D_{d,L}$ to $\N_{d,L}$.
\end{lemma}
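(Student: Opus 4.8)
The plan is to verify directly that $g$ satisfies the three requirements of a covering map, tracking how each step $s_i$ in $\D_{d,L}$ corresponds to a single particle jump in the necklace $g(\x)$, in the same spirit as the proof of Lemma~\ref{lem:PD}. First I would check that $g$ is well defined: the word $w_{\x}=0^{x_1}10^{x_2}1\cdots0^{x_d}1$ has exactly $d$ ones and $x_1+\dots+x_d=L$ zeros, hence length $d+L=N$, so it is a state of $\E_{d,L}$ and $\bracket{w_{\x}}$ is a vertex of $\N_{d,L}$. For surjectivity, given any necklace with $d$ particles I would pick the rotation whose representative ends in a $1$ and read off the lengths $x_1,\dots,x_d\ge0$ of the maximal runs of $0$s preceding each of the $d$ ones (cyclically); these sum to $L$, so $\x\in\Delta_{d,L}$ and $g(\x)$ is the given necklace. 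Note that $g$ need not be injective — for instance in $\N_{2,2}$ one has $g(2,0)=g(0,2)=\bracket{0011}$ — but covering maps do not require injectivity, and the local conditions are checked at each vertex $\x$ separately.

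The key bookkeeping device is to index the $d$ ones of $w_{\x}$ so that the $i$-th one terminates the block $0^{x_i}1$. The $i$-th one is immediately preceded (cyclically) by a $0$ precisely when $x_i>0$, so the cyclic occurrences of the substring $01$ in $w_{\x}$ — which by the definition of $\N_{d,L}$ index the out-edges of $g(\x)$ — are in bijection with $\{i\in[d]:x_i>0\}$. This is exactly the set indexing the out-neighbors $\x+s_i$ of $\x$ in $\D_{d,L}$, by the same computation as in Lemma~\ref{lem:PD}: the step $s_i$ is available iff $x_i>0$. It then remains to check that performing the jump $01\mapsto10$ at the $i$-th one turns $w_{\x}$ into a representative of $g(\x+s_i)$. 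For $i<d$ the jump moves the $i$-th one one position to the left, replacing $0^{x_i}10^{x_{i+1}}$ by $0^{x_i-1}10^{x_{i+1}+1}$, which is exactly $w_{\x+s_i}$; for $i=d$ the jump moves the last one left, producing $0^{x_1}1\cdots0^{x_d-1}10$, whose cyclic rotation (bringing the trailing $0$ to the front) equals $w_{\x+s_d}=0^{x_1+1}1\cdots0^{x_d-1}1$, so the resulting necklace is still $g(\x+s_d)$. This matching of index sets, together with the identity $g(\x+s_i)=$ (jump target), shows that $g$ restricts to a bijection between the out-neighbor multiset of $\x$ and that of $g(\x)$.

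The in-neighbor case is entirely symmetric: the cyclic occurrences of $10$ index the in-edges of $g(\x)$, and the $i$-th one is followed by a $0$ precisely when $x_{i+1}>0$ (indices mod $d$), which is the condition for $\x-s_i\in\Delta_{d,L}$; reversing the jump sends $w_{\x}$ to a representative of $g(\x-s_i)$. I expect the main obstacle to be careful bookkeeping rather than conceptual difficulty: one must treat the wrap-around step $s_d$ consistently, where the particle jump crosses the seam of the chosen representative and the target word must be rotated back into standard form, and one must phrase the correspondence at the level of \emph{edges}, since $g$ may collapse several vertices and send distinct out-neighbors $\x+s_i$ to the same necklace with higher multiplicity. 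Keeping the common index set $\{i\in[d]:x_i>0\}$ as the labelling of both edge sets makes the bijection transparent and resolves both points at once.
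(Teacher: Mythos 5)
Your proposal is correct and follows essentially the same approach as the paper's proof: you identify the out-edges of $\x$ (indexed by $i$ with $x_i>0$) with the cyclic occurrences of $01$ at the end of each block $0^{x_i}1$, verify that the jump $01\mapsto10$ produces a representative of $g(\x+s_i)$, and treat in-neighbors symmetrically via occurrences of $10$. Your additional details — the explicit surjectivity argument, the wrap-around rotation for $i=d$, and the remark that the bijection must be stated at the level of edges since $g$ can collapse distinct out-neighbors into one necklace of higher multiplicity — are all sound refinements of what the paper leaves implicit.
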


\begin{proof}
The necklace $g(\x)$ is a vertex of $\N_{d,L}$, since it contains $d$ ones and $L$ zeros. Clearly, the map on the vertices is surjective. 
Each edge leaving $\x$ in $\D_{d,L}$ corresponds to an index $i\in[d]$ such that $x_i>0$, which we can associate to the occurrence of the consecutive substring $01$ at the end of $0^{x_i}1$ in $g(\x)$.
The necklace $g(\x+s_i)$ is obtained from $g(\x)$ by replacing this occurrence of $01$ with $10$.
Thus, $g$ restricts to a bijection between the out-neighbors of $\x$ in $\D_{d,L}$ and the out-neighbors of $g(\x)$ in~$\N_{d,L}$.

Similarly, each edge entering $\x$ in $\D_{d,L}$ corresponds to an index $i\in[d]$ such that $x_{i+1}>0$ (with indices modulo $d$),
which we can associate to the (cyclic) occurrence of the consecutive substring $10$ at the beginning of $10^{x_{i+1}}$ in $g(\x)$.
The necklace $g(\x-s_i)$ is obtained from $g(\x)$ by replacing this occurrence of $10$ with $01$.
This gives a bijection between the in-neighbors of $\x$ and the in-neighbors of~$g(\x)$.
\end{proof}

\begin{lemma}\label{lem:PE}
Let $\rho$ denote the rotation operation on finite binary strings which moves the first entry of the string to the end.
The map $h:\CS_{d,L}\to V(\E_{d,L})$ defined by
$$h(\alpha)=\rho^{\alpha_d}(0^{\alpha_0-\alpha_1}10^{\alpha_1-\alpha_2}\dots0^{\alpha_{d-1}-\alpha_{d}}1)$$
is a covering map from $\CSg_{d,L}$ to $\E_{d,L}$.
\end{lemma}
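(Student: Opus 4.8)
The plan is to verify the three defining properties of a covering map for $h$ directly, in the spirit of Lemmas~\ref{lem:PD} and~\ref{lem:DN}, while organizing everything around the observation that $h$ is a \emph{lift} of the covering map $g\circ f$ through the covering map $q$. Write $w(\alpha)=0^{\alpha_0-\alpha_1}1\cdots 0^{\alpha_{d-1}-\alpha_d}1$, so that $h(\alpha)=\rho^{\alpha_d}(w(\alpha))$. This string has exactly $L$ zeros and $d$ ones, hence length $N=d+L$, and since rotation preserves the number of ones, $h(\alpha)$ is a genuine vertex of $\E_{d,L}$. Comparing with the definitions of $f$ and $g$, and using that $\rho$ preserves the necklace class, we get $q(h(\alpha))=\bracket{w(\alpha)}=g(f(\alpha))$, that is, $q\circ h=g\circ f$; since $f$ and $g$ are covering maps, so is the composite $g\circ f$.

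First I would record the \emph{shift-equivariance} of $h$: adding a constant $c$ to every part of $\alpha$ yields a cylindric shape whose gaps $\alpha_{i-1}-\alpha_i$, and hence whose word $w$, are unchanged, while its last part becomes $\alpha_d+c$; thus $h$ of this shifted shape equals $\rho^{c}(h(\alpha))$. Surjectivity of $h$ then follows painlessly. Given $u\in V(\E_{d,L})$, surjectivity of $g\circ f$ provides $\alpha_0$ with $q(h(\alpha_0))=\bracket{u}$, so $h(\alpha_0)$ lies in the same necklace as $u$, i.e.\ $h(\alpha_0)=\rho^{c}(u)$ for some $c$; subtracting $c$ from every part of $\alpha_0$ gives a shape whose image under $h$ is $\rho^{-c}(h(\alpha_0))=u$.

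The heart of the argument is edge-preservation: if $\beta$ is obtained from $\alpha$ by adding a cell in row $i$, then $h(\beta)$ is obtained from $h(\alpha)$ by replacing one occurrence of the consecutive substring $01$ with $10$, so that $h(\beta)\in N^+_{\E_{d,L}}(h(\alpha))$. Adding the cell changes only two gaps, decreasing $\alpha_{i-1}-\alpha_i$ by $1$ and increasing $\alpha_i-\alpha_{i+1}$ by $1$, which in $w$ slides the $i$th one one step left past the zero before it — exactly a $01\mapsto 10$ swap; for $i\in\{1,\dots,d-1\}$ the part $\alpha_d$ is unchanged, so applying the common rotation $\rho^{\alpha_d}$ (the edge relation of $\E_{d,L}$ is cyclic, hence rotation-invariant) carries this swap to the desired edge $h(\alpha)\to h(\beta)$. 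The one case requiring care is $i=d$: there the two affected gaps are the first and last letters-runs of $w$, and $\alpha_d$ itself increases by $1$, so $h(\alpha)$ and $h(\beta)$ are rotated by \emph{different} amounts. Here one checks that $\rho(w(\beta))$ differs from $w(\alpha)$ by a single $01\mapsto 10$ swap at the wrap-around position, whence $h(\beta)=\rho^{\alpha_d}\!\big(\rho(w(\beta))\big)$ is again a forward neighbor of $h(\alpha)=\rho^{\alpha_d}(w(\alpha))$. I expect this rotation bookkeeping in the boundary case $i=d$ to be the main obstacle. Applying the result with $\alpha$ and $\beta$ interchanged shows that removing a cell from $\alpha$ lands in $N^-_{\E_{d,L}}(h(\alpha))$, covering the in-neighbor case as well.

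Finally, to upgrade edge-preservation to the required \emph{bijections} on neighborhoods, I would invoke the commuting square. Having shown that $h$ maps $N^+_{\CSg_{d,L}}(\alpha)$ into $N^+_{\E_{d,L}}(h(\alpha))$, and using $q\circ h=g\circ f$ with both $g\circ f$ and $q$ restricting to bijections of out-neighborhoods, the map $\beta\mapsto h(\beta)$ is injective (because $\beta\mapsto q(h(\beta))=g(f(\beta))$ is) and surjective onto $N^+_{\E_{d,L}}(h(\alpha))$: any out-neighbor $u'$ of $h(\alpha)$ satisfies $q(u')=g(f(\beta))$ for a unique $\beta\in N^+_{\CSg_{d,L}}(\alpha)$, and the injectivity of $q$ on $N^+_{\E_{d,L}}(h(\alpha))$ then forces $h(\beta)=u'$. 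The identical argument with $N^-$ in place of $N^+$ finishes the proof.
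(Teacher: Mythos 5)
Most of your proposal parallels the paper's own proof and is correct as far as it goes: the check that $h(\alpha)$ is a vertex, the shift-equivariance $h(\alpha+c)=\rho^{c}(h(\alpha))$ (which gives a clean surjectivity argument, slightly more detailed than the paper's), and the edge-preservation computation, including the wrap-around case $i=d$, where your identity $h(\beta)=\rho^{\alpha_d}\bigl(\rho(w(\beta))\bigr)$ with $\rho(w(\beta))$ differing from $w(\alpha)$ by a single $01\mapsto 10$ swap is exactly the computation in the paper.

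The final step, however, has a genuine gap. You upgrade edge-preservation to a bijection on neighborhoods by asserting that $\beta\mapsto q(h(\beta))=g(f(\beta))$ is injective on $N^+_{\CSg_{d,L}}(\alpha)$, and that $q$ is injective on $N^+_{\E_{d,L}}(h(\alpha))$. Neither holds in general: a covering map restricts to a bijection of \emph{multisets} of neighbors, which is strictly weaker than injectivity of the underlying vertex map, and $\N_{d,L}$ genuinely has multiple edges. Concretely, take $d=L=2$ and $\alpha=[1,0]$, so $h(\alpha)=0101$. Its two out-neighbors in $\E_{2,2}$ are $1001$ and $0110$, which lie in the same necklace class, so $N^+_{\N_{2,2}}(\bracket{0101})$ is the multiset $\{\bracket{0110},\bracket{0110}\}$ with multiplicity two. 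Here $q$ identifies the two out-neighbors of $h(\alpha)$, the two out-neighbors $[2,0]$ and $[1,1]$ of $\alpha$ in $\CSg_{2,2}$ have the same image under $g\circ f$, and there is no \emph{unique} $\beta\in N^+_{\CSg_{2,2}}(\alpha)$ with $g(f(\beta))=q(u')$; your argument cannot tell the preimages apart. The repair is to argue directly, as the paper does: the explicit formulas for $h(\beta)$ show that adding a cell in row $i$ swaps the specific cyclic occurrence of $01$ at the end of the block $0^{\alpha_{i-1}-\alpha_i}1$, which sets up a bijection between insertion corners of $\alpha$ (i.e., out-neighbors of $\alpha$ in $\CSg_{d,L}$) and cyclic occurrences of $01$ in $h(\alpha)$; since swapping distinct occurrences of $01$ yields distinct states, $h$ restricted to $N^+_{\CSg_{d,L}}(\alpha)$ is injective and surjective onto $N^+_{\E_{d,L}}(h(\alpha))$, and symmetrically for in-neighbors. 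Your edge-preservation paragraph already contains the computations needed for this; only the appeal to the commuting square must be replaced.
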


\begin{proof}
For any $\alpha\in\CS_{d,L}$, the binary string $h(\alpha)$ has $d$ ones and $\alpha_0-\alpha_d=L$ zeros, so it is a vertex of $\E_{d,L}$. Additionally, every vertex of $\E_{d,L}$ is of the form $h(\alpha)$ for some $\alpha\in\CS_{d,L}$, so the map is surjective.
A cell can be added to row $i$ of $\alpha$ if and only if $\alpha_{i-1}-\alpha_{i}>0$, in which case the resulting shape $\beta$ satisfies 
$\beta_i=\alpha_i+1$ and $\beta_j=\alpha_j$ for $j\in[d]\setminus\{i\}$.
If $i\in[d-1]$, then 
$$h(\beta)=\rho^{\alpha_d}(0^{\alpha_0-\alpha_1}10^{\alpha_1-\alpha_2}\dots0^{\alpha_{i-1}-\alpha_i-1}10^{\alpha_{i}+1-\alpha_{i+1}}1\dots
0^{\alpha_{d-1}-\alpha_{d}}1),$$
and if $i=d$, then 
$$h(\beta)=\rho^{\alpha_d+1}(0^{\alpha_0+1-\alpha_1}10^{\alpha_1-\alpha_2}\dots0^{\alpha_{d-1}-\alpha_{d}-1}1)=\rho^{\alpha_d}(0^{\alpha_0-\alpha_1}10^{\alpha_1-\alpha_2}\dots0^{\alpha_{d-1}-\alpha_{d}-1}10).$$
In both cases, $h(\beta)$ is obtained from $h(\alpha)$ by replacing a cyclic occurrence of $01$ with $10$. Thus, $h$ restricts to a bijection between the out-neighbors of $\alpha$ and the out-neighbors of $h(\alpha)$.

A similar argument shows that $h$ also restricts to a bijection between the in-neighbors of $\alpha$ and the in-neighbors of $h(\alpha)$.
\end{proof}

Let us give two examples of the map $h$ from Lemma~\ref{lem:PE}. For $\alpha=[3,1,0]\in\CS_{3,4}$, we have $h(\alpha)=0100101$. For $\alpha=[5,5,3]\in\CS_{3,4}$, we have $h(\alpha)=\rho^{3}(0011001)=1001001$.  A diagram of the four covering maps defined above is shown in Figure~\ref{fig:diagram}. 
It is easy to see that $g\circ f=q\circ h$, since $$g(f(\alpha))=\bracket{0^{\alpha_0-\alpha_1}10^{\alpha_1-\alpha_2}1\dots 0^{\alpha_{d-1}-\alpha_d}1}=\bracket{\rho^{\alpha_d}(0^{\alpha_0-\alpha_1}10^{\alpha_1-\alpha_2}\dots0^{\alpha_{d-1}-\alpha_{d}}1)}=q(h(\alpha))$$
for all $\alpha\in\CS_{d,L}$. This is the same necklace that is obtained from the infinite periodic sequence $\bdry_\alpha$, defined in Equation~\eqref{eq:bdry}, by replacing $\w$ steps with $0$s and $\s$ steps with $1$s.

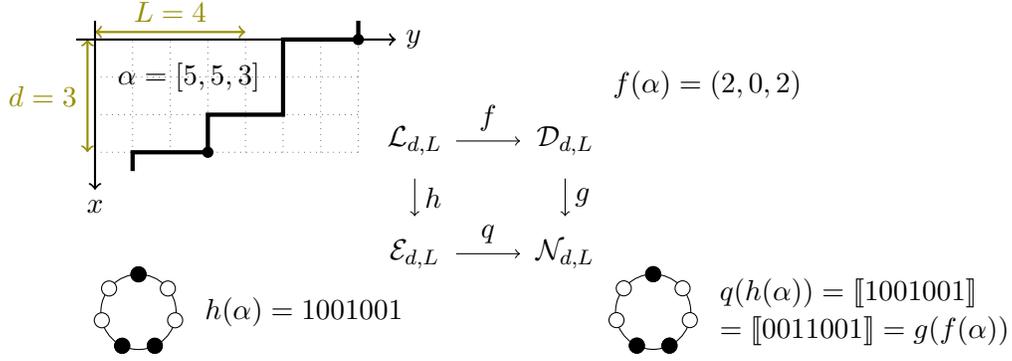
\begin{figure}[htb]
\centering
\begin{tikzpicture}[scale=0.5]

 \draw[very thin,dotted] (0,0)  grid (7,3);
 \draw[ultra thick] (1,-.5)--(1,0) \east\east \north\east\east\north\north\east\east -- ++(0,.5);
 \draw[thick,<->,olive] (-.2,0)-- node[left] {$d=3$} (-.2,3);
 \draw[thick,<->,olive] (0,3.2)-- node[above] {$L=4$} (4,3.2); 
 \draw (2.5,2) node {$\alpha=[5,5,3]$};
 \draw[thick,->] (0,3.5)--(0,-1); \draw (0,-1) node[below] {$x$};
\draw[thick,->] (-.5,3)--(8,3); \draw (8,3) node[right] {$y$};
\fill (3,0) circle (.15);
\fill (7,3) circle (.15);

\begin{scope}[shift={(-.5,.3)}]
\draw(9,0) node{$\CSg_{d,L}$};
\draw(13,0) node{$\D_{d,L}$};
\draw(9,-3) node{$\E_{d,L}$};
\draw(13,-3) node{$\N_{d,L}$};
\draw[->] (10.1,0)--node[above]{$f$} (11.8,0); 
\draw[->] (9,-1)--node[right]{$h$} (9,-2); 
\draw[->] (13,-1)--node[right]{$g$} (13,-2); 
\draw[->] (10.1,-3)--node[above]{$q$} (11.8,-3); 

\draw (14,1.5) node[right]{$f(\alpha)=(2,0,2)$} ;

\state{290}{4.84}{7}{{2,5,6}}{black}
\draw (290:4.84) ++(1.5,0) node[right]{$h(\alpha)=1001001$};

\state{343.5}{16}{7}{{2,5,6}}{black}
\draw (343.5:16) ++(1.5,0.5) node[right]{$q(h(\alpha))=\bracket{1001001}$};
\draw (343.5:16) ++(1.5,-0.5) node[right]{$=\bracket{0011001}=g(f(\alpha))$};
\end{scope}
\end{tikzpicture}
\caption{A diagram and an example of the covering maps $f,g,h,q$.}
\label{fig:diagram}
\end{figure}

In each of Figures~\ref{fig:32} and~\ref{fig:33}, the covering maps $q$ and $g$ are illustrated by the colors of the vertices: each vertex of $\N_{d,L}$ has the same color as each of its preimages.

Recall that an oscillating walk in a directed multigraph $G$ can take edges in the forward or backward direction, and that the type of an $n$-step oscillating walk is the word in $\{+,-\}^n$ that records the direction of the steps.

\begin{lemma}\label{lem:covering}
Let $G$ and $H$ be directed multigraphs for which there exists a covering map $\psi$ from $G$ to $H$. Then, for any $u\in V(G)$ and any $n\ge0$, 
$\psi$ induces a type-preserving bijection $\tilde{\psi}$ between $n$-step oscillating walks in $G$ starting at $u$ and $n$-step oscillating walks in $H$ starting at $\psi(u)$.
In addition, if the vertices of an oscillating walk in $G$ are $u,u_1,\dots,u_n$, then the vertices of the corresponding oscillating walk in $H$ are $\psi(u),\psi(u_1),\dots,\psi(u_n)$.
\end{lemma}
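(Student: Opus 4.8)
The plan is to construct the bijection $\tilde\psi$ step by step, using the defining property of a covering map to lift the choice of each successive edge. First I would set up the induction on $n$: the base case $n=0$ is trivial, since an empty oscillating walk starting at $u$ corresponds to the empty oscillating walk starting at $\psi(u)$, and both have the empty type. For the inductive step, suppose we have matched $n$-step oscillating walks; I would show how to extend the correspondence one step further in a way that preserves type. The essential point is that at each vertex $u_k$, the covering map $\psi$ restricts to a \emph{bijection} between $N_G^+(u_k)$ and $N_H^+(\psi(u_k))$, and separately between $N_G^-(u_k)$ and $N_H^-(\psi(u_k))$. These two multiset bijections are exactly what allow a forward (resp.\ backward) step in $G$ to be transported to a forward (resp.\ backward) step in $H$, and conversely.

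More concretely, I would describe $\tilde\psi$ directly rather than purely inductively. Given an oscillating walk in $G$ with vertices $u=u_0,u_1,\dots,u_n$ and type $w\in\{+,-\}^n$, its $k$th step is an edge of $G$ used in the forward or backward direction, i.e.\ an element of the multiset $N_G^+(u_{k-1})$ (if $w_k=+$) or $N_G^-(u_{k-1})$ (if $w_k=-$). Applying the appropriate multiset bijection supplied by $\psi$ produces a corresponding edge at $\psi(u_{k-1})$ in $H$, used in the same direction, landing at $\psi(u_k)$. Concatenating these images yields an oscillating walk in $H$ with vertices $\psi(u_0),\psi(u_1),\dots,\psi(u_n)$ and the same type $w$. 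This simultaneously establishes the vertex-tracking claim and type-preservation.

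To see that $\tilde\psi$ is a bijection, I would run the same construction backwards: an oscillating walk in $H$ starting at $\psi(u)$ with a given type can be lifted uniquely to one in $G$ starting at $u$, because at each step the relevant multiset bijection ($N_G^\pm(u_{k-1})\to N_H^\pm(\psi(u_{k-1}))$) is invertible, and the previous vertex $u_{k-1}$ in $G$ is already determined by induction. Thus the lifting is well-defined and inverse to $\tilde\psi$, so $\tilde\psi$ is a type-preserving bijection. The key subtlety to articulate carefully is that the edge selected at step $k$ must be matched at the vertex $u_{k-1}$ (not at the starting vertex $u$), so the correspondence genuinely depends on the entire history of the walk up to that point; this is why the inductive framing, which pins down $u_{k-1}$ and hence $\psi(u_{k-1})$ before choosing the $k$th edge, is the cleanest way to make the argument rigorous.

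The main obstacle, I expect, is notational rather than conceptual: since edges in a multigraph may be parallel, one must treat steps as elements of the \emph{multisets} $N_G^\pm$ rather than merely as target vertices, so that the bijection $\psi$ provides on these multisets genuinely identifies specific edges. Being careful here ensures $\tilde\psi$ is well-defined even when there are multiple edges between the same pair of vertices, which is exactly the situation that arises for $\N_{d,L}$.
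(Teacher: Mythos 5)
Your proposal is correct and follows essentially the same route as the paper's proof: an induction on $n$ that transports each step of the walk using the covering map's local bijections between $N_G^{\pm}$ and $N_H^{\pm}$, with the inverse given by the analogous unique lifting. Your explicit remark that steps must be treated as elements of the multisets (so parallel edges are handled correctly) is a point the paper leaves implicit, but it does not change the argument.
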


\begin{proof}
We will prove this by induction on $n$. The result trivially holds for $n=0$.
Now let $n\ge1$ and assume the $\psi$ induces a type-preserving bijection $\tilde{\psi}$ for $(n-1)$-step oscillating walks starting at any given vertex.

An $n$-step oscillating walk in $G$ starting at $u$ is determined the next vertex $v$, which can be any vertex in the multisets $N_G^+(u)$ or $N_G^-(u)$ depending on whether the first edges is in the forward or backward direction, together with an $(n-1)$-step oscillating walk starting at~$v$.

To construct the bijection for $n$-step oscillating walks that start with a forward edge, say from $u$ to $v$, we use the fact that $\psi$ restricts to a bijection between $N_G^+(u)$ and $N_H^+(\psi(u))$, and then apply the bijection between $(n-1)$-step oscillating walks in $G$ starting at $v$ and $(n-1)$-step oscillating walks in $H$ starting at $\psi(v)$.

Similarly, we can construct a bijection for oscillating walks that start with a backward edge by using the 
fact that $\psi$ restricts to bijection between $N_G^-(u)$ and $N_H^-(\psi(u))$.
\end{proof}

\subsection{Bijections between walks}\label{sec:bijections}

With the above setup, we can easily describe bijections between the three types of combinatorial objects from Section~\ref{sec:background}. We focus on the case of walks with forward steps, but the arguments for oscillating walks are very similar.

\begin{theorem}\label{thm:equivalence}
Fix $d,L\ge1$. Let $\alpha\in\CS_{d,L}$, let $\x=(x_1,x_2,\dots,x_d)\in\Delta_{d,L}$ where $x_i=\alpha_{i-1}-\alpha_{i}$ for $i\in[d]$, and let $u=0^{x_1}10^{x_2}1\dots 0^{x_d}1$. 
There are explicit bijections between the following sets:
\begin{enumerate}[label={(\alph*)}]
\item\label{it:a} The set $\SCT_{d,L}^n(\cdot/\alpha)$ of standard cylindric tableaux of period $(d,L)$ with $n$ cells and inner shape~$\alpha$.
\item\label{it:b} The set $\W^n_{d,L}(\x)$ of $n$-step walks in $\D_{d,L}$ starting at vertex $\x$.
\item\label{it:c} The set of $n$-step walks in $\E_{d,L}$ starting at state $u$.
\item\label{it:d} The set of $n$-step walks in $\N_{d,L}$ starting at state $\bracket{u}$.
\end{enumerate}
\end{theorem}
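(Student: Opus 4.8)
The plan is to obtain all four bijections by reinterpreting set~\ref{it:a} as a set of walks in $\CSg_{d,L}$ and then transporting walks along the covering maps already constructed, using Lemma~\ref{lem:covering} as the transport mechanism. No new combinatorial construction is needed; everything reduces to composing established bijections and checking that base points match.

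First I would invoke Remark~\ref{rem:SCT}, which identifies $\SCT_{d,L}^n(\cdot/\alpha)$ with the set of $n$-step walks of type $+^n$ in $\CSg_{d,L}$ starting at the inner shape $\alpha$ (via the walk representation of a tableau). This turns set~\ref{it:a} into a set of forward walks in a graph to which the covering maps apply. Since all four sets consist of forward walks (type $+^n$) while Lemma~\ref{lem:covering} is stated for oscillating walks, I would note that each induced bijection is \emph{type-preserving} and hence restricts to the subset of walks of type $+^n$; this is the one place where a small remark is genuinely required.

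Next I would apply Lemma~\ref{lem:covering} to three of the covering maps, each time verifying that the relevant base point is correct. By Lemma~\ref{lem:PD}, $f$ covers $\CSg_{d,L}\to\D_{d,L}$ and $f(\alpha)=\x$ by the definition of $\x$, so forward walks from $\alpha$ in $\CSg_{d,L}$ biject with forward walks from $\x$ in $\D_{d,L}$, matching \ref{it:a} with \ref{it:b}. By Lemma~\ref{lem:DN}, $g$ covers $\D_{d,L}\to\N_{d,L}$ and $g(\x)=\bracket{u}$ directly from the formula for $g$ and the definition of $u$, matching \ref{it:b} with \ref{it:d}. By Example~\ref{ex:EN}, $q$ covers $\E_{d,L}\to\N_{d,L}$ and $q(u)=\bracket{u}$ by definition of the quotient map, matching \ref{it:c} with \ref{it:d}. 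Composing these bijections and their inverses links all four sets.

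The subtlety I expect to be the only pitfall is the temptation to connect \ref{it:a} to \ref{it:c} directly through the covering map $h$ of Lemma~\ref{lem:PE}: this fails because $h(\alpha)=\rho^{\alpha_d}(u)$ is a cyclic rotation of $u$, not $u$ itself, so the walks would start at the wrong state. Routing \ref{it:c} to \ref{it:d} through $q$ sidesteps this entirely, and the identity $g\circ f=q\circ h$ noted after Lemma~\ref{lem:PE} confirms that the various routes are mutually consistent. Beyond this observation, the argument is pure bookkeeping once the three base-point identities $f(\alpha)=\x$, $g(\x)=\bracket{u}$, and $q(u)=\bracket{u}$ are checked.
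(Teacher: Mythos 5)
Your proposal is correct and follows essentially the same route as the paper: interpret set \ref{it:a} as forward walks in $\CSg_{d,L}$ via Remark~\ref{rem:SCT}, then transport along the covering maps $f$, $g$, $q$ using Lemma~\ref{lem:covering} (restricted to type $+^n$), with the same base-point checks $f(\alpha)=\x$, $g(\x)=\bracket{u}$, $q(u)=\bracket{u}$; your observation about the rotation issue with $h$ matches the paper's handling of it, where the rotation $\rho^{-\alpha_d}$ is applied when describing the \ref{it:a}--\ref{it:c} bijection directly. The only difference is that the paper additionally spells out explicit descriptions of the composed bijections (rows of entries mapping to steps $s_{i_k}$, and particle jumps in the TASEP), which is supplementary rather than essential.
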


\newcommand{\arrowsl}[4]{
\draw[->,purple,ultra thick,shorten <=1mm,shorten >=1mm] #1--node[#4]{#3} #2;}

\newcommand{\arrowl}[4]{
\draw[->,purple,very thick,shorten <=.55cm,shorten >=.55cm] #1--node[#4]{#3} #2;}

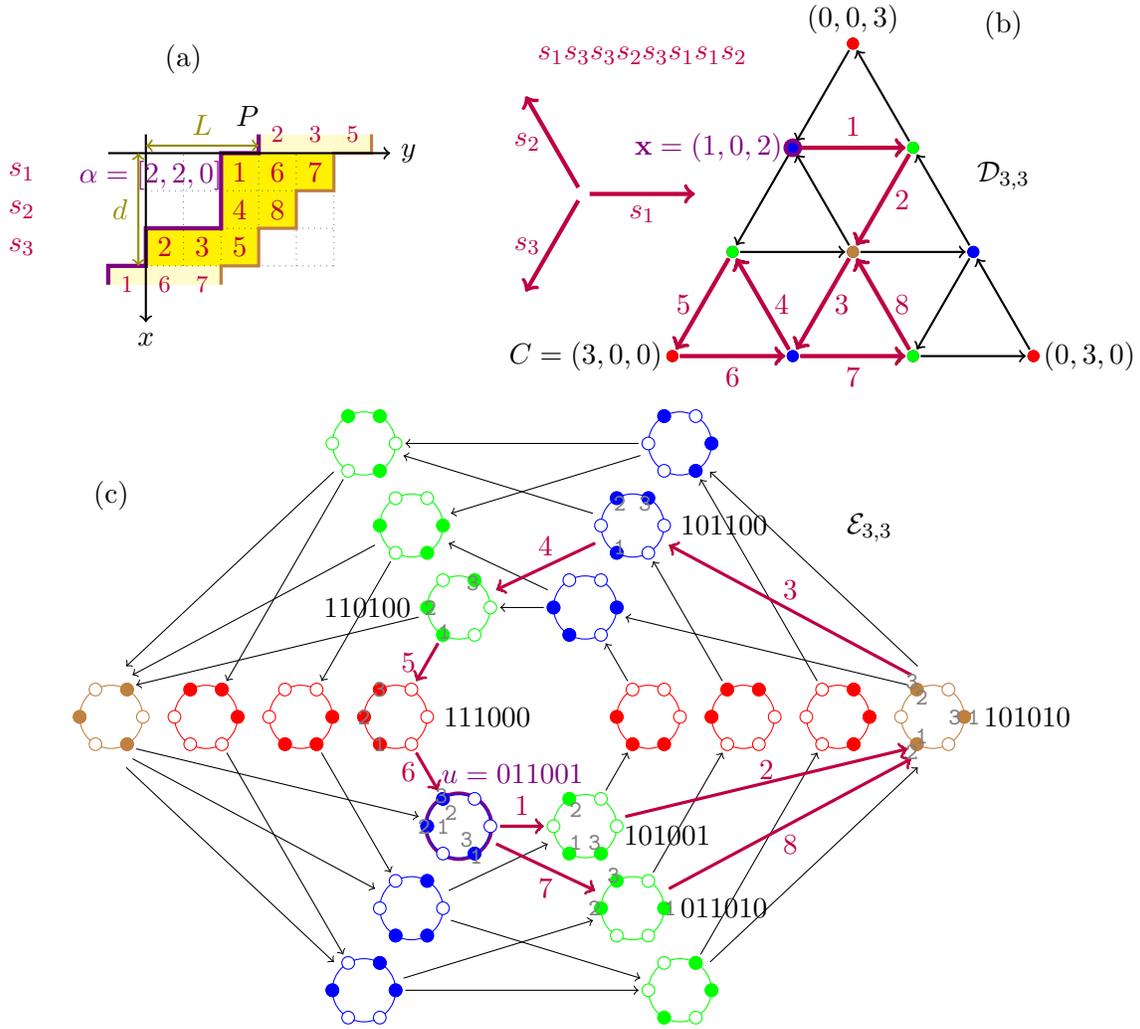
\begin{figure}%[htb]
\centering
\begin{tikzpicture}
\begin{scope}[scale=0.5]
 \fill[yellow!25] (-1,-.5) rectangle (2,0);
 \fill[yellow!25] (3,3) rectangle (6,3.5);
 \fill[yellow] (0,0) \north\east\east\north\north\east \east\east \south\west\south\west\south \west\west\west;
 \draw[very thin,dotted] (0,0)  grid (5,3);
 \draw[violet, ultra thick] (-1,-.5)--++(0,.5)\east\north\east\east\north\north\east -- ++(0,.5);
 \draw[brown, very thick] (6,3.5)-- ++(0,-.5)\west\south\west\south\west\south\west -- ++(0,-0.5);
 \draw[thick,->] (0,3.5)--(0,-1.5) node[below] {$x$};
\draw[thick,->] (-1,3)--(6.5,3) node[right] {$y$};
 \draw[thick,<->,olive] (-.2,0)-- node[left] {$d$} (-.2,3);
 \draw[thick,<->,olive] (0,3.2)-- node[above] {$L$} (3,3.2); 
 \draw[purple] (2.5,2.5) node {1}; \draw[purple]  (2.5-3,2.5-3) node {\footnotesize1};
 \draw[purple]  (.5,.5) node {2}; \draw[purple]  (.5+3,.5+3) node {\footnotesize2};
 \draw[purple]  (1.5,.5) node {3}; \draw[purple]  (1.5+3,.5+3) node {\footnotesize3};
 \draw[purple]  (2.5,1.5) node {4}; 
 \draw[purple]  (2.5,.5) node {5}; \draw[purple]  (2.5+3,.5+3) node {\footnotesize5};
 \draw[purple]  (3.5,2.5) node {6}; \draw[purple]  (3.5-3,2.5-3) node {\footnotesize6};
 \draw[purple]  (4.5,2.5) node {7}; \draw[purple]  (4.5-3,2.5-3) node {\footnotesize7};
 \draw[purple]  (3.5,1.5) node {8}; 
 \draw[violet] (2.29,2.4) node[left] {$\alpha=[2,2,0]$};
\draw (2.7,4) node {$P$};
\draw[purple] (-3.3,2.5) node {$s_1$};
\draw[purple] (-3.3,1.5) node {$s_2$};
\draw[purple] (-3.3,.5) node {$s_3$};
\draw (1,5.5) node {\ref{it:a}};
\end{scope}

\begin{scope}[shift={(7,-1.2)},scale=.8]
\coordinate (a) at (2,0);
\coordinate (b) at (1,1.732);

\foreach \x in {0,...,3}{
	\foreach \y in {\x,...,3}{
		\fill ($\x*(a)+3*(b)-\y*(b)$) circle (.1);
}}

\fill[red] (0,0) circle (.1);
\fill[red] ($3*(a)$) circle (.1);
\fill[red] ($3*(b)$) circle (.1);
\fill[blue] (a) circle (.1);
\fill[blue] ($2*(a)+(b)$) circle (.1);
\fill[blue] ($2*(b)$) circle (.1);
\fill[green] (b) circle (.1);
\fill[green] ($2*(b)+(a)$) circle (.1);
\fill[green] ($2*(a)$) circle (.1);
\fill[brown] ($(b)+(a)$) circle (.1);

\foreach \x in {0,...,2}{
	\foreach \y in {\x,...,2}{
		\arrows{($\x*(a)+3*(b)-\y*(b)$)}{($\x*(a)+2*(b)-\y*(b)$)}
		\arrows{($\x*(b)+2*(a)-\y*(a)$)}{($\x*(b)+3*(a)-\y*(a)$)}
		\arrows{($\x*(b)+3*(a)-\y*(a)$)}{($\x*(b)+(b)+2*(a)-\y*(a)$)}
}}
\arrowsl{($2*(b)$)}{($(a)+2*(b)$)}{$1$}{above}
\arrowsl{($(a)+2*(b)$)}{($(a)+(b)$)}{$2$}{right}
\arrowsl{($(a)+(b)$)}{($(a)$)}{$3$}{right}
\arrowsl{($(a)$)}{($(b)$)}{$4$}{right}
\arrowsl{($(b)$)}{(0,0)}{$5$}{left}
\arrowsl{(0,0)}{($(a)$)}{$6$}{below}
\arrowsl{($(a)$)}{($2*(a)$)}{$7$}{below}
\arrowsl{($2*(a)$)}{($(a)+(b)$)}{$8$}{right}

\draw (0,0) node[left] {$C=(3,0,0)$};
\draw ($3*(a)$) node[right] {$(0,3,0)$};
\draw ($3*(b)$) node[above] {$(0,0,3)$};
\draw[violet] ($2*(b)$) node[left] {$\x=(1,0,2)$};
\draw[violet,ultra thick] ($2*(b)$) circle (.12);

\arrowsl{($3*(b)-2*(a)+(-.5,-2.5)$)}{($3*(b)-(a)+(-.5,-2.5)$)}{$s_1$}{below}
\arrowsl{($3*(b)-2*(a)+(-.5,-2.5)$)}{($4*(b)-3*(a)+(-.5,-2.5)$)}{$s_2$}{left}
\arrowsl{($3*(b)-2*(a)+(-.5,-2.5)$)}{($2*(b)-2*(a)+(-.5,-2.5)$)}{$s_3$}{left}

\draw[purple] (-.5,5) node {$s_1s_3s_3s_2s_3s_1s_1s_2$};
\draw (5.5,5.5) node {\ref{it:b}};
\draw (5.5,3) node {$\D_{3,3}$};
\end{scope}

\begin{scope}[shift={(5,-6)},scale=0.42]
\draw[violet,very thick] (240:\dB) circle (1.05);
\state{180}{\dD}{6}{{2,4,6}}{brown}
\state{180}{\dAA}{6}{{1,2,3}}{red}
\state{180}{\dA}{6}{{3,4,5}}{red}
\state{180}{\dB}{6}{{1,5,6}}{red}
\state{180+60}{\dAA}{6}{{2,3,6}}{blue} 
\state{180+60}{\dA}{6}{{2,4,5}}{blue}
\state{180+60}{\dB}{6}{{1,4,6}}{blue}
\state{180+2*60}{\dAA}{6}{{2,3,5}}{green}
\state{180+2*60}{\dA}{6}{{1,3,6}}{green}
\state{180+2*60}{\dB}{6}{{1,4,5}}{green}
\state{180+3*60}{\dD}{6}{{1,3,5}}{brown}
\state{180+3*60}{\dAA}{6}{{2,3,4}}{red}
\state{180+3*60}{\dA}{6}{{1,2,6}}{red}
\state{180+3*60}{\dB}{6}{{4,5,6}}{red}
\state{180+4*60}{\dAA}{6}{{1,3,4}}{blue}
\state{180+4*60}{\dA}{6}{{1,2,5}}{blue}
\state{180+4*60}{\dB}{6}{{3,5,6}}{blue}
\state{180+5*60}{\dAA}{6}{{1,2,4}}{green}
\state{180+5*60}{\dA}{6}{{3,4,6}}{green}
\state{180+5*60}{\dB}{6}{{2,5,6}}{green}
\foreach \i in {0,2,3,5}{
	\coordinate (aa1) at (180+\i*60:\dAA);
	\coordinate (a1) at (180+\i*60:\dA);
	\coordinate (b1) at (180+\i*60:\dB);
	\coordinate (aa2) at (180+60+\i*60:\dAA);
	\coordinate (a2) at (180+60+\i*60:\dA);
	\coordinate (b2) at (180+60+\i*60:\dB);
	\arrow{(aa1)}{(aa2)}
	\arrow{(a1)}{(a2)}
	\arrow{(b1)}{(b2)}
}
\foreach \i in {1,4}{
	\coordinate (aa1) at (180+\i*60:\dAA);
	\coordinate (a1) at (180+\i*60:\dA);
	\coordinate (b1) at (180+\i*60:\dB);
	\coordinate (aa2) at (180+60+\i*60:\dAA);
	\coordinate (a2) at (180+60+\i*60:\dA);
	\coordinate (b2) at (180+60+\i*60:\dB);
	\arrow{(aa1)}{(aa2)}
	\arrow{(aa1)}{(a2)}
	\arrow{(a1)}{(aa2)}
	\arrow{(a1)}{(b2)}
	\arrow{(b1)}{(a2)}
	\arrow{(b1)}{(b2)}
}
\foreach \i in {2,5}{
	\coordinate (aa1) at (180+\i*60:\dAA);
	\coordinate (a1) at (180+\i*60:\dA);
	\coordinate (b1) at (180+\i*60:\dB);
	\coordinate (d) at (180+60+\i*60-3:\dD+.5);
	\arrow{(aa1)}{(d)}
	\arrow{(a1)}{(d)}
	\arrow{(b1)}{(d)}
}
\foreach \i in {1,4}{
	\coordinate (aa1) at (180+\i*60:\dAA);
	\coordinate (a1) at (180+\i*60:\dA);
	\coordinate (b1) at (180+\i*60:\dB);
	\coordinate (d) at (180-60+\i*60+3:\dD+.5);
	\arrow{(d)}{(aa1)}
	\arrow{(d)}{(a1)}
	\arrow{(d)}{(b1)}
}

\arrowl{(240:\dB)}{(300:\dB)}{$1$}{above}
\arrowl{(300:\dB)}{(0-3:\dD+.5)}{$2$}{above=-1mm}
\arrowl{(0+3:\dD+.5)}{(60:\dA)}{$3$}{above}
\arrowl{(60:\dA)}{(120:\dB)}{$4$}{above}
\arrowl{(120:\dB)}{(180:\dB)}{$5$}{left}
\arrowl{(180:\dB)}{(240:\dB)}{$6$}{left}
\arrowl{(240:\dB)}{(300:\dA)}{$7$}{below}
\arrowl{(300:\dA)}{(0-3:\dD+.5)}{$8$}{below}

\draw[violet] (180+60:\dB)++(1.7,1.65) node {$u=011001$};
\draw[gray] (180+60:\dB)++(180:.5) node {\footnotesize $\mathtt{1}$};
\draw[gray] (180+60:\dB)++(120:.5) node {\footnotesize $\mathtt{2}$};
\draw[gray] (180+60:\dB)++(-60:.5) node {\footnotesize $\mathtt{3}$};
\draw (180+2*60:\dB)++(.9,-.3) node[right] {$101001$};
\draw[gray] (180+2*60:\dB)++(240:.6) node {\footnotesize $\mathtt{1}$};
\draw[gray] (180+2*60:\dB)++(120:.6) node {\footnotesize $\mathtt{2}$};
\draw[gray] (180+2*60:\dB)++(-60:.6) node {\footnotesize $\mathtt{3}$};
\draw (0:\dD)++(1.3,0) node[right] {$101010$};
\draw[gray] (0:\dD)++(240:.7) node {\footnotesize $\mathtt{1}$};
\draw[gray] (0:\dD)++(120:.7) node {\footnotesize $\mathtt{2}$};
\draw[gray] (0:\dD)++(0:.7) node {\footnotesize $\mathtt{3}$};
\draw (60:\dA)++(1.2,0) node[right] {$101100$};
\draw[gray] (60:\dA)++(240:.8) node {\footnotesize $\mathtt{1}$};
\draw[gray] (60:\dA)++(120:.8) node {\footnotesize $\mathtt{2}$};
\draw[gray] (60:\dA)++(60:.8) node {\footnotesize $\mathtt{3}$};
\draw (120:\dB)++(-1.2,0) node[left] {$110100$};
\draw[gray] (120:\dB)++(240:.9) node {\footnotesize $\mathtt{1}$};
\draw[gray] (120:\dB)++(180:.9) node {\footnotesize $\mathtt{2}$};
\draw[gray] (120:\dB)++(60:.9) node {\footnotesize $\mathtt{3}$};
\draw (180:\dB)++(1.2,0) node[right] {$111000$};
\draw[gray] (180:\dB)++(240:1.0) node {\footnotesize $\mathtt{1}$};
\draw[gray] (180:\dB)++(180:1.0) node {\footnotesize $\mathtt{2}$};
\draw[gray] (180:\dB)++(120:1.0) node {\footnotesize $\mathtt{3}$};
\draw[gray] (240:\dB)++(300:1.1) node {\footnotesize $\mathtt{1}$};
\draw[gray] (240:\dB)++(180:1.1) node {\footnotesize $\mathtt{2}$};
\draw[gray] (240:\dB)++(120:1.1) node {\footnotesize $\mathtt{3}$};
\draw (300:\dA)++(1.2,0) node[right] {$011010$};
\draw[gray] (300:\dA)++(0:1.2) node {\footnotesize $\mathtt{1}$};
\draw[gray] (300:\dA)++(180:1.2) node {\footnotesize $\mathtt{2}$};
\draw[gray] (300:\dA)++(120:1.2) node {\footnotesize $\mathtt{3}$};
\draw[gray] (0:\dD)++(0:1.3) node {\footnotesize $\mathtt{1}$};
\draw[gray] (0:\dD)++(240:1.3) node {\footnotesize $\mathtt{2}$};
\draw[gray] (0:\dD)++(120:1.3) node {\footnotesize $\mathtt{3}$};

\draw (-13,7) node {\ref{it:c}};
\draw (11,6) node {$\E_{3,3}$};
\end{scope}
\end{tikzpicture}
\caption{A standard cylindric tableau with inner shape $\alpha=[2,2,0]\in\CS_{3,3}$ (top left), the corresponding walk in $\D_{3,3}$ starting at $\x=(1,0,2)$ (top right), and the corresponding walk in $\E_{3,3}$ starting at $u=011001$ (bottom), using the bijections described in Theorem~\ref{thm:equivalence}.}
\label{fig:33walk}
\end{figure}

\begin{proof}
To describe a bijection between the sets \ref{it:a} and \ref{it:b}, recall the covering map $f$ from $\CSg_{d,L}$ to $\D_{d,L}$ given in Lemma~\ref{lem:PD}, and note that $f(\alpha)=\x$. 
By the map $\ssh$ from Equation~\eqref{eq:ssh}, standard cylindric tableaux of period $(d,L)$ with inner shape $\alpha$ can be interpreted as walks in $\CSg_{d,L}$ starting at $\alpha$. Thus, by Lemma~\ref{lem:covering} in the case of walks with only forward steps, $f$ induces a bijection $\tilde{f}$ from \ref{it:a} to \ref{it:b}, which can be described as follows. 
Given a tableau $T\in\SCT_{d,L}^n(\cdot/\alpha)$, let $i_k\in [d]$ be the row that contains entry $k$, for each $k\in[n]$.
Then $\tilde{f}(T)$ is the 
$n$-step walk  that starts at $\x$ and has steps $s_{i_1}s_{i_2}\dots s_{i_n}$. See Figure~\ref{fig:33walk} for an example of this bijection.
The condition that $T$ has increasing rows and columns guarantees that the walk  stays in the region $\Delta_{d,L}$.

A bijection $\tilde{g}$ between the sets \ref{it:b} and \ref{it:d} arises from the covering map $g$ from $\D_{d,L}$ to $\N_{d,L}$ given in Lemma~\ref{lem:DN}, using again Lemma~\ref{lem:covering} and noting that $g(\x)=\bracket{u}$. 
A bijection $\tilde{q}$ between \ref{it:c} and \ref{it:d} follows from the quotient map $q$ in Example~\ref{ex:EN}, which is a covering map from $\E_{d,L}$ to~$\N_{d,L}$.

It is also possible to give a direct description of the bijection $\tilde{q}^{-1}\circ\tilde{g}$ between the sets \ref{it:b} and \ref{it:c}.
First, index the $d$ particles in state $u$ so that, for each $i\in[d]$, particle $i$ occupies the site corresponding to the $i$th $1$ from the left in the string $0^{x_1}10^{x_2}1\dots 0^{x_d}1$.
Now, given a walk $s_{i_1}s_{i_2}\dots s_{i_n}$ in $\D_{d,L}$ starting at $\x$, we associate to it the walk in $\E_{d,L}$ starting at $u$ that is obtained by successive jumps of the particles $i_1,i_2,\dots,i_n$ to the next site in counterclockwise direction. See Figure~\ref{fig:33walk} for an example.

Similarly, we can directly describe a bijection between the sets \ref{it:a} and \ref{it:c} as follows. Given $T\in\SCT_{d,L}^n(\cdot/\alpha)$, again let $i_k\in [d]$ be the row that contains entry $k$, for each $k\in[n]$. Indexing the particles in state $u$ so that particle $i$ occupies the site corresponding to the $i$th $1$ from the left, the resulting walk in $\E_{d,L}$ starting at $u$ consists of successive jumps of the particles $i_1,i_2,\dots,i_n$ in counterclockwise direction.
In the case $\alpha_d=0$, we have $h(\alpha)=u$, where $h$ is the covering map from Lemma~\ref{lem:PE}, and the walk just described is 
$\tilde{h}(T)$, where $\tilde{h}$ is the bijection given by Lemma~\ref{lem:covering}. For arbitrary $\alpha_d$, the walk $\tilde{h}(T)$ starts at $h(\alpha)=\rho^{\alpha_d}(u)$, so we need to apply the rotation $\rho^{-\alpha_d}$ to each of the vertices of this walk in order to obtain the aforementioned walk starting at $u$.
\end{proof}

A natural involution on cylindric shapes, and by extension on standard cylindric tableaux, is obtained by conjugation. For $\lambda\in\CS_{d,L}$, define its conjugate to be the shape $\lambda'\in\CS_{L,d}$, where $$\lambda'_j=\max\{i:\lambda_i\ge j\}$$ for all $j$. The Young diagrams $Y_\lambda$ and $Y_{\lambda'}$ are reflections of each other along the diagonal $y=x$, that is, $\langle i,j \rangle \in Y_\lambda$ if and only if $\langle j,i \rangle \in Y_{\lambda'}$. The step sequences $\bdry_{\lambda}$ and $\bdry_{\lambda'}$ of their boundary lattice paths are obtained from each other by switching steps $\w$ and $\s$, and reversing.

\begin{example} The transpose of $\mu=[3,1,0]\in\CS_{3,4}$, drawn in  Figure~\ref{fig:SCT}, is $\mu'=[2,1,1,0]\in\CS_{4,3}$. The step sequence of its boundary path is $\bdry_{\mu'}=(\w\s\w\s\s\w\s)^\infty$.
\end{example}

If $T$ is a standard cylindric tableau of shape $\lambda/\mu$, define its conjugate to be the standard cylindric tableau $T'$ of shape $\lambda'/\mu'$ where $T'(\langle j,i \rangle)=T(\langle i,j \rangle)$ for all $\langle j,i \rangle\in Y_{\lambda'/\mu'}$.

For a state $u=u_1u_2\dots u_N$ of the TASEP on $\Z_N$, define its reverse-complement $u^{rc}$ to be the state where $u^{rc}_k=1-u_{N+1-k}$ 
for all $k$, obtained from $u$ by switching occupied sites with empty sites (an operation referred to as {\it particle-hole symmetry}) and reversing the orientation.

\begin{theorem}\label{thm:conjugate}
With the notation from Theorem~\ref{thm:equivalence}, let $\alpha'\in\CS_{L,d}$ be the conjugate of $\alpha$, let $\y=(y_1,y_2,\dots,y_L)\in\Delta_{L,d}$ where $y_j=\alpha'_{j-1}-\alpha'_j$ for $j\in[L]$, and let $u^{rc}$ be the reverse-complement of $u$.
There are explicit bijections between the sets in Theorem~\ref{thm:equivalence} and the following sets:
\begin{enumerate}[label={(\alph*')}]
\item\label{it:a'} The set $\SCT_{L,d}^n(\cdot/\alpha')$  of standard cylindric tableaux of period $(L,d)$ with $n$ cells and inner shape~$\alpha'$.
\item\label{it:b'} The set $\W^n_{L,d}(\y)$ of $n$-step walks in $\D_{L,d}$ starting at vertex $\y$.
\item\label{it:c'} The set of $n$-step walks in $\E_{L,d}$ starting at state $u^{rc}$.
\item\label{it:d'} The set of $n$-step walks in $\N_{L,d}$ starting at state $\bracket{u^{rc}}$.
\end{enumerate}
\end{theorem}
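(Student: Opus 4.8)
The plan is to reduce the entire statement to a single genuinely new bijection, between sets \ref{it:a} and \ref{it:a'}, and then to obtain everything else formally by running Theorem~\ref{thm:equivalence} a second time with the roles of $d$ and $L$ interchanged. Since Theorem~\ref{thm:equivalence} already links \ref{it:a}, \ref{it:b}, \ref{it:c}, \ref{it:d} on the unprimed side, and its swapped instance (applied to $\alpha'\in\CS_{L,d}$) will link \ref{it:a'}, \ref{it:b'}, \ref{it:c'}, \ref{it:d'} on the primed side, a bijection \ref{it:a}$\leftrightarrow$\ref{it:a'} suffices to bridge the two webs and produce bijections between any unprimed and any primed set.

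First I would verify that conjugation $T\mapsto T'$ is a bijection from $\SCT_{d,L}^n(\cdot/\alpha)$ to $\SCT_{L,d}^n(\cdot/\alpha')$. Reflection across the diagonal $y=x$ sends $Y_{\lambda/\mu}$ to $Y_{\lambda'/\mu'}$, preserves the number of cells, and interchanges the row- and column-increasing conditions; as these two conditions are symmetric, $T'$ is again a standard cylindric tableau, now of shape $\lambda'/\alpha'$. Because $\lambda\mapsto\lambda'$ is an involution on cylindric shapes that preserves containment, and conjugation of tableaux is involutive, the map $T\mapsto T'$ is a bijection between \ref{it:a} and \ref{it:a'}.

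Next I would apply Theorem~\ref{thm:equivalence} verbatim with $(d,L)$ replaced by $(L,d)$ and $\alpha$ replaced by $\alpha'$. Its set \ref{it:a} becomes exactly \ref{it:a'}; the associated point is $\y=f(\alpha')$ with $y_j=\alpha'_{j-1}-\alpha'_j$, so its set \ref{it:b} is \ref{it:b'}; and the associated necklace is $g(f(\alpha'))=\bracket{0^{y_1}1\cdots 0^{y_L}1}$. The only thing to check is that this necklace equals $\bracket{u^{rc}}$. Let $c$ denote the coding of a boundary step sequence into a necklace via $\w\mapsto 0$, $\s\mapsto 1$, so that $g(f(\alpha))=c(\bdry_\alpha)=\bracket{u}$ as in the paragraph after Lemma~\ref{lem:PE}. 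Since $\bdry_{\alpha'}$ is obtained from $\bdry_\alpha$ by switching $\w\leftrightarrow\s$ and reversing, and since $c\circ(\text{switch})=(\text{complement})\circ c$ while $c$ commutes with reversal, we get that $c(\bdry_{\alpha'})$ is the reverse-complement of $c(\bdry_\alpha)$; that is, $g(f(\alpha'))=\bracket{u^{rc}}$, which identifies set \ref{it:d'}.

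Finally, for set \ref{it:c'} I would invoke the covering map $q$ from $\E_{L,d}$ to $\N_{L,d}$ (Example~\ref{ex:EN}) together with Lemma~\ref{lem:covering}: since $u^{rc}$ is a vertex of $\E_{L,d}$ with $q(u^{rc})=\bracket{u^{rc}}$, walks in $\E_{L,d}$ starting at $u^{rc}$ are in type-preserving bijection with walks in $\N_{L,d}$ starting at $\bracket{u^{rc}}$, i.e.\ with set \ref{it:d'}. This deliberately avoids the issue that $u^{rc}$ need not agree as a \emph{string} with the canonical representative $0^{y_1}1\cdots 0^{y_L}1$ produced by Theorem~\ref{thm:equivalence}; only their necklaces coincide, which is exactly what the covering-map argument requires. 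The main obstacle I anticipate is purely the bookkeeping in the necklace identity $g(f(\alpha'))=\bracket{u^{rc}}$ — correctly matching the reverse-and-complement operation on binary strings with the switch-$\w\leftrightarrow\s$-and-reverse operation on boundary paths — rather than any conceptual difficulty; the rest is a direct composition of the conjugation bijection \ref{it:a}$\leftrightarrow$\ref{it:a'} with the swapped instance of Theorem~\ref{thm:equivalence}.
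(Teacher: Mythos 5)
Your proposal is correct, and its skeleton is the same as the paper's proof: conjugation of tableaux bridges \ref{it:a} and \ref{it:a'}, the instance of Theorem~\ref{thm:equivalence} with $(d,L),\alpha$ replaced by $(L,d),\alpha'$ produces the primed web, and the key necklace identity $g(f(\alpha'))=\bracket{u^{rc}}$ is established exactly as in the paper, by coding boundary paths with $\w\mapsto 0$, $\s\mapsto 1$ and using that $\bdry_{\alpha'}$ is the switch-and-reverse of $\bdry_\alpha$.

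The one point where you genuinely diverge is the treatment of \ref{it:c'}, and there your version is arguably tighter. The paper invokes the swapped theorem for all four primed sets at once, even though that theorem literally yields walks in $\E_{L,d}$ starting at the canonical string $v=0^{y_1}1\cdots 0^{y_L}1$, which in general differs from $u^{rc}$ as a string (in Figure~\ref{fig:conjugate}, $v=00011$ while $u^{rc}=00110$); the paper then separately remarks that reverse-complement is a graph isomorphism $\E_{d,L}\to\E_{L,d}$, which is what actually ties \ref{it:c} to \ref{it:c'} as stated. You instead apply Lemma~\ref{lem:covering} to the quotient covering map $q:\E_{L,d}\to\N_{L,d}$ at the vertex $u^{rc}$, linking \ref{it:c'} directly to \ref{it:d'} and bypassing the string-versus-necklace mismatch altogether. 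Both repairs work and cost essentially nothing; yours has the merit of flagging the subtlety explicitly, while the paper's reverse-complement isomorphism has the side benefit of giving a direct, vertex-by-vertex description of the bijections between \ref{it:c} and \ref{it:c'} and between \ref{it:d} and \ref{it:d'}.
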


\begin{proof}
Letting $f:\CS_{d,L}\to\Delta_{d,L}$ be the map from Lemma~\ref{lem:PD}, we have $\y=f(\alpha')$.
Letting $g:\Delta_{d,L}\to V(\N_{d,L})$ be the map from Lemma~\ref{lem:DN},  and letting $v=0^{y_1}10^{y_2}1\dots 0^{y_L}1$, we have $g(\y)=\bracket{v}=\bracket{u^{rc}}$.
The reason for the last equality is that $\bracket{v}$ encodes the periodic sequence of steps $\bdry_{\alpha'}$ (with $0$s recording $\w$ steps and $1$s recording $\s$ steps), whereas $\bracket{u}$ encodes the sequence $\bdry_\alpha$. Thus, by Theorem~\ref{thm:equivalence}, where the roles of $\alpha$, $\x$ and $u$ are played by $\alpha'$, $\y$ and $v$, respectively, we obtain explicit bijections between the four sets \ref{it:a'}, \ref{it:b'}, \ref{it:c'} and \ref{it:d'}. 

A trivial bijection between the set \ref{it:a} from Theorem~\ref{thm:equivalence} and the set \ref{it:a'} is given by conjugation of standard cylindric tableaux. It follows that all eight sets \ref{it:a}--\ref{it:d} and \ref{it:a'}--\ref{it:d'} are in bijection with each other.

It is also possible to describe the resulting bijection between the sets \ref{it:c} and \ref{it:c'} (respectively, \ref{it:d} and \ref{it:d'}) directly: it is obtained by applying the reverse-complement map to the vertices of $\E_{d,L}$ (respectively, $\N_{d,L}$).
This map is a graph isomorphism between $\E_{d,L}$ and $\E_{L,d}$ (respectively, $\N_{d,L}$ and $\N_{L,d}$), since changing a cyclic occurrence of $01$ to $10$ in a binary string $w$ is equivalent to changing a cyclic occurrence of $01$ to $10$ in $w^{rc}$.
See Figure~\ref{fig:conjugate} for an example of these bijections.
\end{proof}

\begin{figure}%[htb]
\centering
\begin{tikzpicture}

\draw (-2.25,1.75) node {\ref{it:a}};
\draw (11.25,1.75) node {\ref{it:a'}};
\draw (-2.25,-2.5) node {\ref{it:b}};
\draw (11.25,-2.5) node {\ref{it:b'}};
\draw (-2.25,-6.75) node {\ref{it:c}};
\draw (11.25,-6.75) node {\ref{it:c'}};

\begin{scope}[scale=.5]
 \fill[yellow!25] (0,-.5)--(0,0) \north\north\east\east\north -- ++(0,.5)-- ++(2,0) -- ++(0,-.5) \west\south\south\west\south\west -- ++(0,-0.5) -- ++(-1,0);
 \fill[yellow] (0,0)\north\north\east\east\north\east\south\south\west\south\west\west;
 \draw[very thin,dotted] (0,0)  grid (4,3);
 \draw[violet, ultra thick] (0,-.5)--(0,0)\north\north\east\east\north  -- ++(0,.5);
 \draw[blue, very thick] (4,3.5)--(4,3)\west\south\south\west\south\west -- ++(0,-.5);
 \draw[thick,->] (0,3.5)--(0,-1.5) node[below] {$x$};
\draw[thick,->] (-.5,3)--(4.5,3) node[right] {$y$};
 \draw[thick,<->,olive] (-.2,0)-- node[left] {$d$} (-.2,3);
 \draw[thick,<->,olive] (0,3.2)-- node[above] {$L$} (2,3.2); 
 \draw[purple] (.5,1.5) node {1}; 
 \draw[purple]  (.5,.5) node {2}; \draw[purple]  (.5+2,.5+3) node {\footnotesize2};
 \draw[purple]  (1.5,1.5) node {3}; 
 \draw[purple]  (1.5,.5) node {4};  \draw[purple]  (1.5+2,.5+3) node {\footnotesize4}; 
 \draw[purple]  (2.5,2.5) node {5};  \draw[purple]  (2.5-2,2.5-3) node {\footnotesize5}; 
 \draw[purple]  (2.5,1.5) node {6}; 
 \draw[violet] (2.29,2.5) node[left] {$\alpha=[2,0,0]$};

\draw[purple] (-3,2.5) node {$s_1$};
\draw[purple] (-3,1.5) node {$s_2$};
\draw[purple] (-3,.5) node {$s_3$};
\draw[<->] (7.6,3)--node[above]{\small conjugate} (9.4,3);
\end{scope}

\begin{scope}[shift={(7,.5)},scale=.5]
 \fill[yellow!25] (-2,-.5)--(-2,0) \east\east\east\north\north\east\east\east -- ++(0,.5)-- ++(2,0) -- ++(0,-.5) \west\south\west\west\south\west -- ++(0,-0.5) -- ++(-4,0);
 \fill[yellow] (0,0) \east\north\north\east\east\east \east\east \west\south\west\west\south\west \west;
 \draw[very thin,dotted] (-2,0)  grid (5,2);
 \draw[violet, ultra thick] (-2,-.5)--(-2,0) \east\east\east\north\north\east\east\east  -- ++(0,.5);
 \draw[blue, very thick] (6,2.5)--(6,2)\west\south\west\west\south\west  -- ++(0,-.5);
 \draw[thick,->] (0,2.5)--(0,-1.5) node[below] {$x$};
\draw[thick,->] (-2,2)--(6.5,2) node[right] {$y$};
 \draw[thick,<->,olive] (-.2,0)-- node[left] {$L$} (-.2,2);
 \draw[thick,<->,olive] (0,2.2)-- node[above] {$d$} (3,2.2); 
 \draw[purple] (1.5,1.5) node {1};  \draw[purple] (1.5-3,1.5-2) node {1}; 
 \draw[purple] (2.5,1.5) node {2};  \draw[purple] (2.5-3,1.5-2) node {2}; 
 \draw[purple] (3.5,1.5) node {5};  \draw[purple] (3.5-3,1.5-2) node {5}; 
 \draw[purple] (4.5,1.5) node {6};  \draw[purple] (4.5-3,1.5-2) node {6}; 
 \draw[purple] (1.5,.5) node {3};  \draw[purple] (1.5+3,.5+2) node {3}; 
 \draw[purple] (2.5,.5) node {4};  \draw[purple] (2.5+3,.5+2) node {4}; 
 \draw[violet] (1.29,1.5) node[left] {$\alpha'=[1,1]$};

\draw[purple] (-3,1.5) node {$s_1$};
\draw[purple] (-3,.5) node {$s_2$};
\end{scope}

\begin{scope}[shift={(-.5,-5.2)},scale=.8]
\coordinate (a) at (2,0);
\coordinate (b) at (1,1.732);

\foreach \x in {0,...,2}{
	\foreach \y in {\x,...,2}{
		\fill ($\x*(a)+2*(b)-\y*(b)$) circle (.1);
}}

\fill[red] (0,0) circle (.1);
\fill[red] ($2*(a)$) circle (.1);
\fill[red] ($2*(b)$) circle (.1);
\fill[blue] (a) circle (.1);
\fill[blue] (b) circle (.1);
\fill[blue] ($(a)+(b)$) circle (.1);

\foreach \x in {0,...,1}{
	\foreach \y in {\x,...,1}{
		\arrows{($\x*(a)+2*(b)-\y*(b)$)}{($\x*(a)+(b)-\y*(b)$)}
		\arrows{($\x*(b)+(a)-\y*(a)$)}{($\x*(b)+2*(a)-\y*(a)$)}
		\arrows{($\x*(b)+2*(a)-\y*(a)$)}{($\x*(b)+(b)+(a)-\y*(a)$)}
}}

\arrowsl{($2*(a)$)}{($(a)+(b)$)}{$1$}{right}
\arrowsl{($(a)+(b)$)}{($(a)$)}{$2$}{right}
\arrowsl{($(a)$)}{($(b)$)}{$3$}{right}
\arrowsl{($(b)$)}{(0,0)}{$4$}{left}
\arrowsl{(0,0)}{($(a)$)}{$5$}{above}
\arrowsl{($(a)+(-.1,0)$)}{($(b)+(-.1,0)$)}{$6$}{left}

\draw (0,0) node[left] {$(2,0,0)$};
\draw[violet] ($2*(a)$) node[right,xshift=1] {$\x=(0,2,0)$};
\draw[violet,ultra thick] ($2*(a)$) circle (.12);
\draw ($2*(b)$) node[above] {$(0,0,2)$};

\arrowsl{($3*(b)-2*(a)+(-.2,-2)$)}{($3*(b)-(a)+(-.2,-2)$)}{$s_1$}{below}
\arrowsl{($3*(b)-2*(a)+(-.2,-2)$)}{($4*(b)-3*(a)+(-.2,-2)$)}{$s_2$}{left}
\arrowsl{($3*(b)-2*(a)+(-.2,-2)$)}{($2*(b)-2*(a)+(-.2,-2)$)}{$s_3$}{left}

\draw[purple] (0.5,4.7) node {$s_2s_3s_2s_3s_1s_2$};
\draw (3.5,2.5) node {$\D_{3,2}$};
\end{scope}

\begin{scope}[shift={(6.5,-4.2)},scale=.8]
\fill[red] (0,0) circle (.1);
\fill[red] (6,0) circle (.1);
\fill[blue] (2,0) circle (.1);
\fill[blue] (4,0) circle (.1);

\foreach \x in {0,...,2}{
	\arrows{(2*\x,.06)}{(2*\x+2,.06)}
	\arrows{(2*\x+2,-.06)}{(2*\x,-.06)}
}

\arrowsl{(0,.06)}{(2,.06)}{$1,5$}{above}
\arrowsl{(2,.06)}{(4,.06)}{$2,6$}{above}
\arrowsl{(2,-.06)}{(0,-.06)}{$4$}{below}
\arrowsl{(4,-.06)}{(2,-.06)}{$3$}{below}

\draw (-.5,-.1) node[below] {$\y=(3,0)$};
\draw[violet,ultra thick] (0,0) circle (.12);
\draw (6,-.1) node[below] {$(0,3)$};

\arrowsl{(3,2)}{(5,2)}{$s_1$}{below}
\arrowsl{(3,2)}{(1,2)}{$s_2$}{below}

\draw[purple] (3,3) node {$s_1s_1s_2s_2s_1s_1$};
\draw (-.5,1) node {$\D_{2,3}$};
\end{scope}

\begin{scope}[shift={(1,-9.5)},scale=.42] 
\draw[violet,very thick] (90+2*360/5:\dA) circle (1.05);
\state{90}{\dA}{5}{{1,2,3}}{red}
\state{90}{\dB}{5}{{2,4,5}}{blue}
\state{90+360/5}{\dA}{5}{{1,4,5}}{red}
\state{90+360/5}{\dB}{5}{{2,3,5}}{blue}
\state{90+2*360/5}{\dA}{5}{{2,3,4}}{red}
\state{90+2*360/5}{\dB}{5}{{1,3,5}}{blue}
\state{90+3*360/5}{\dA}{5}{{1,2,5}}{red}
\state{90+3*360/5}{\dB}{5}{{1,3,4}}{blue}
\state{90+4*360/5}{\dA}{5}{{3,4,5}}{red}
\state{90+4*360/5}{\dB}{5}{{1,2,4}}{blue}
\foreach \i in {0,...,4}{
	\coordinate (a1) at (90+\i*360/5:\dA);  
	\coordinate (b1) at (90+\i*360/5:\dB);
	\coordinate (a2) at (90+360/5+\i*360/5:\dA);
	\coordinate (b2) at (90+360/5+\i*360/5:\dB);
	\arrow{(a1)}{(b2)}
	\arrow{(b1)}{(a2)}
	\arrow{(b1)}{(b2)}
}

\draw[violet] (90+2*360/5:\dA)++(-1.3,0) node[left]{$u=10011$} ;
\draw[gray] (90+2*360/5:\dA)++(90+2*360/5:.6) node {\footnotesize $\mathtt{1}$};
\draw[gray] (90+2*360/5:\dA)++(90-360/5:.6) node {\footnotesize $\mathtt{2}$};
\draw[gray] (90+2*360/5:\dA)++(90-2*360/5:.6) node {\footnotesize $\mathtt{3}$};
\arrowl{(90+2*360/5:\dA)}{(90+3*360/5:\dB)}{$1$}{below}
\arrowl{(90+3*360/5:\dB)}{(90+4*360/5:\dB)}{$2$}{left}
\arrowl{(90+4*360/5:\dB)}{(90:\dB)}{$3$}{below}
\arrowl{(90:\dB)}{(90+360/5:\dA)}{$4$}{above}
\arrowl{(90+360/5:\dA)}{(90+2*360/5:\dB)}{$5$}{left}
\arrowl{(90+2*360/5:\dB)}{(90+3*360/5:\dB)}{$6$}{above}

\draw[<->] (8,-2)--node[above]{\small reverse-complement} (10.2,-2);
\draw (3.5,5.5) node {$\E_{3,2}$};
\end{scope}

\begin{scope}[shift={(9,-9.5)},scale=.42]
\draw[violet,very thick] (90+2*360/5:\dA) circle (1.05);

\state{90}{\dA}{5}{{2,3}}{red}
\state{90}{\dB}{5}{{1,4}}{blue}
\state{90+360/5}{\dA}{5}{{5,3}}{red}
\state{90+360/5}{\dB}{5}{{1,3}}{blue}
\state{90+2*360/5}{\dA}{5}{{1,2}}{red}
\state{90+2*360/5}{\dB}{5}{{3,5}}{blue}
\state{90+3*360/5}{\dA}{5}{{3,4}}{red}
\state{90+3*360/5}{\dB}{5}{{5,2}}{blue}
\state{90+4*360/5}{\dA}{5}{{5,1}}{red}
\state{90+4*360/5}{\dB}{5}{{2,4}}{blue}
\foreach \i in {0,...,4}{
	\coordinate (a1) at (90+\i*360/5:\dA);  
	\coordinate (b1) at (90+\i*360/5:\dB);
	\coordinate (a2) at (90+360/5+\i*360/5:\dA);
	\coordinate (b2) at (90+360/5+\i*360/5:\dB);
	\arrow{(a1)}{(b2)}
	\arrow{(b1)}{(a2)}
	\arrow{(b1)}{(b2)}
}

\draw[violet] (90+2*360/5:\dA)++(-1.3,0) node[left] {$u^{rc}=00110$};
\draw[gray] (90+2*360/5:\dA)++(90:.6) node {\footnotesize $\mathtt{1}$};
\draw[gray] (90+2*360/5:\dA)++(90-360/5:.6) node {\footnotesize $\mathtt{2}$};
\arrowl{(90+2*360/5:\dA)}{(90+3*360/5:\dB)}{$1$}{below}
\arrowl{(90+3*360/5:\dB)}{(90+4*360/5:\dB)}{$2$}{left}
\arrowl{(90+4*360/5:\dB)}{(90:\dB)}{$3$}{below}
\arrowl{(90:\dB)}{(90+360/5:\dA)}{$4$}{above}
\arrowl{(90+360/5:\dA)}{(90+2*360/5:\dB)}{$5$}{left}
\arrowl{(90+2*360/5:\dB)}{(90+3*360/5:\dB)}{$6$}{above}
\draw (-3.5,5.5) node {$\E_{2,3}$};
\end{scope}
\end{tikzpicture}
\caption{A standard cylindric tableau with inner shape $\alpha=[2,0,0]\in\CS_{3,2}$ (top left) and its conjugate, having inner shape $\alpha'=[1,1]\in\CS_{2,3}$ (top right);
their corresponding walks in $\D_{3,2}$ starting at $\x=(0,2,0)$ (middle left), and in $\D_{2,3}$ starting at $\y=(3,0)$ (middle right); and their corresponding walks in $\E_{3,2}$ starting at $u=10011$ (bottom left), and in $\E_{2,3}$ starting at $u^{rc}=00110$ (bottom right), via the bijections from Theorems~\ref{thm:equivalence} and~\ref{thm:conjugate}. }
\label{fig:conjugate}
\end{figure} 

Unlike the above bijections between \ref{it:a} and \ref{it:a'}, between \ref{it:c} and \ref{it:c'}, and between \ref{it:d} and \ref{it:d'}, the bijection between the sets \ref{it:b} and \ref{it:b'} that results from Theorem~\ref{thm:conjugate} is not as straightforward to describe directly in terms of walks in simplices. This illustrates how the correspondences between the different combinatorial objects can be useful. 

For small values of $d$, the existing enumerative results about walks in simplices (Theorems~\ref{thm:mortimer_number_2015} and~\ref{thm:d4}) translate, via Theorems~\ref{thm:equivalence} and~\ref{thm:conjugate}, into results about standard cylindric tableaux and sequences of states of the TASEP.
In the following corollary, we omit the corresponding sets \ref{it:a'}, \ref{it:c'} and \ref{it:d'} because they are in trivial bijection with \ref{it:a} (via conjugation), \ref{it:c} and \ref{it:d} (via reverse-complement), respectively.

\begin{corollary}
The cardinality of each of the following sets equals $a_{n,L}$, as given by Equation~\eqref{eq:anL}:
\begin{enumerate}[label={(\alph*)}]
\item[\ref{it:a}] The set $\SCT_{3,L}^n(\cdot/[0,0,0])$ of standard cylindric tableaux of period $(3,L)$ with $n$ cells and rectangular inner shape $[0,0,0]$.
\item[\ref{it:b}] The set $\W^n_{3,L}(C)$ of $n$-step walks in $\D_{3,L}$ starting at a given corner of the simplex.
\item[\ref{it:b'}] The set $\W^n_{L,3}(C)$ of $n$-step walks in $\D_{L,3}$ starting at a given corner of the simplex.
\item[\ref{it:c}] The set of $n$-step walks in $\E_{3,L}$ starting at state $1^3 0^{L}$.
\item[\ref{it:d}] The set of $n$-step walks in $\N_{3,L}$ starting at state $\bracket{1^3 0^{L}}$.
\end{enumerate}
\end{corollary}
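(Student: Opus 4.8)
The plan is to obtain the corollary as a direct specialization of Theorems~\ref{thm:equivalence} and~\ref{thm:conjugate}, anchored by the enumeration in Theorem~\ref{thm:mortimer_number_2015}. The starting datum is the cylindric shape $\alpha=[0,0,0]\in\CS_{3,L}$. First I would record the relevant parameters: since $\alpha_1=\alpha_2=\alpha_3=0$ and $\alpha_0=\alpha_3+L=L$, the map $f$ of Lemma~\ref{lem:PD} gives $f(\alpha)=(\alpha_0-\alpha_1,\alpha_1-\alpha_2,\alpha_2-\alpha_3)=(L,0,0)=C$, so that the inner shape $[0,0,0]$ corresponds precisely to the corner of $\Delta_{3,L}$, and the associated binary word is $u=0^{x_1}10^{x_2}10^{x_3}1=0^L1^3$.

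With these identifications in hand, Theorem~\ref{thm:equivalence} with $d=3$ immediately supplies bijections among the sets~\ref{it:a}, \ref{it:b}, \ref{it:c} and~\ref{it:d}, where \ref{it:a} is $\SCT_{3,L}^n(\cdot/[0,0,0])$ and \ref{it:b} is $\W^n_{3,L}(C)$. Since Theorem~\ref{thm:mortimer_number_2015} asserts $|\W^n_{3,L}(C)|=a_{n,L}$, all four sets then have cardinality $a_{n,L}$. The only point needing care is that Theorem~\ref{thm:equivalence} produces the state $u=0^L1^3$, whereas the corollary is phrased with $1^30^L$; I would resolve this by noting that these two states are cyclic rotations of one another and that cyclic rotation is a graph automorphism of $\E_{3,L}$ (the edge condition $01\mapsto10$ being rotation invariant), so the two walk counts agree. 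For the necklace set~\ref{it:d} the two states coincide outright, as $\bracket{0^L1^3}=\bracket{1^30^L}$.

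For set~\ref{it:b'}, I would invoke Theorem~\ref{thm:conjugate}. The conjugate of $[0,0,0]\in\CS_{3,L}$ is $\alpha'=[0^L]\in\CS_{L,3}$, which follows directly from $\alpha'_j=\max\{i:\alpha_i\ge j\}$ using $\alpha_0=\alpha_{-1}=\dots=L$ and $\alpha_i=0$ for $i\in[3]$. Applying $f$ in period $(L,3)$ and using $\alpha'_0=\alpha'_L+3=3$ gives $\y=f(\alpha')=(3,0,\dots,0)=C$, the corner of $\Delta_{L,3}$. Thus Theorem~\ref{thm:conjugate} places $\W^n_{L,3}(C)$ in bijection with set~\ref{it:a}, so it too has cardinality $a_{n,L}$.

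There is no substantial obstacle: the entire argument is bookkeeping that specializes already-established bijections, and the only subtlety is the harmless rotation normalization of the TASEP state handled above. It is worth emphasizing, however, that the resulting identity $|\W^n_{3,L}(C)|=|\W^n_{L,3}(C)|$ --- the equinumerosity of corner-anchored walks in simplices of the \emph{different} dimensions $3$ and $L$ --- is a genuinely nonobvious statement whose proof becomes immediate only through the conjugation symmetry of standard cylindric tableaux.
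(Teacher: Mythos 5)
Your proof is correct and takes essentially the same route as the paper, which presents this corollary as an immediate specialization of Theorems~\ref{thm:equivalence} and~\ref{thm:conjugate} anchored by Theorem~\ref{thm:mortimer_number_2015}, exactly as you do. Your extra observation that the state $u=0^L1^3$ produced by Theorem~\ref{thm:equivalence} and the state $1^30^L$ in the corollary differ by a cyclic rotation, which is a graph automorphism of $\E_{3,L}$ (and is invisible at the necklace level), is a careful resolution of a detail the paper leaves implicit.
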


The analogous result for $d=4$ and $\alpha=[0,0,0,0]$ states that the corresponding sets are enumerated by the coefficient of $t^n$ in the generating function from Theorem~\ref{thm:d4}.

The bijections in this section can easily be extended to the case of oscillating walks, where we allow forward and backward steps.  Theorem~\ref{thm:equivalence} generalizes as follows.

\begin{theorem}\label{thm:equivalence-oscillating} 
Fix $d,L\ge1$ and $n\ge0$. Let $\alpha\in\CS_{d,L}$, let $\x=(x_1,x_2,\dots,x_d)\in\Delta_{d,L}$ where $x_i=\alpha_{i-1}-\alpha_{i}$ for $i\in[d]$, let $u=0^{x_1}10^{x_2}1\dots 0^{x_d}1$, and let $w\in\{+,-\}^n$.
There are explicit bijections between the following sets:
\begin{enumerate}[label={(\Alph*)}]
\item\label{it:A} The set $\OCT_{d,L}^w(\alpha,\cdot)$ of oscillating cylindric tableaux of period $(d,L)$ and type $w$ starting at~$\alpha$.
\item\label{it:B} The set $\OW^w_{d,L}(\x)$ of oscillating walks in $\D_{d,L}$ of type $w$ starting at vertex $\x$.
\item\label{it:C} The set of oscillating walks in $\E_{d,L}$ of type $w$ starting at state $u$.
\item\label{it:D} The set of oscillating walks in $\N_{d,L}$ of type $w$ starting at state $\bracket{u}$.
\end{enumerate}
\end{theorem}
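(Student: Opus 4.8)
The plan is to mirror the proof of Theorem~\ref{thm:equivalence} almost verbatim, replacing the restriction to forward-only walks by the full strength of Lemma~\ref{lem:covering}. The essential point is that all four sets in the statement are, by definition, sets of oscillating walks of type $w$ in one of the four graphs $\CSg_{d,L}$, $\D_{d,L}$, $\E_{d,L}$, $\N_{d,L}$, and that the maps relating these graphs are covering maps, so that Lemma~\ref{lem:covering} transports oscillating walks (of any fixed type) across each of them while preserving the type.

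First I would identify set~\ref{it:A} with oscillating walks directly. By the definition of oscillating cylindric tableaux in Section~\ref{sec:SCT}, an element of $\OCT_{d,L}^w(\alpha,\cdot)$ is precisely an oscillating walk of type $w$ in the graph $\CSg_{d,L}$ starting at $\alpha$; unlike the forward case treated in Theorem~\ref{thm:equivalence}, no analogue of Remark~\ref{rem:SCT} is required, since this identification is immediate. The remaining sets~\ref{it:B}, \ref{it:C}, \ref{it:D} are by definition the oscillating walks of type $w$ in $\D_{d,L}$, $\E_{d,L}$, $\N_{d,L}$ starting at $\x$, $u$, $\bracket{u}$, respectively.

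Next I would invoke Lemma~\ref{lem:covering} three times. Applying it to the covering map $f$ from $\CSg_{d,L}$ to $\D_{d,L}$ (Lemma~\ref{lem:PD}), together with the identity $f(\alpha)=\x$, yields a type-preserving bijection between \ref{it:A} and \ref{it:B}. Applying it to the covering map $g$ from $\D_{d,L}$ to $\N_{d,L}$ (Lemma~\ref{lem:DN}), together with $g(\x)=\bracket{u}$, yields a type-preserving bijection between \ref{it:B} and \ref{it:D}. Applying it to the quotient covering map $q$ from $\E_{d,L}$ to $\N_{d,L}$ (Example~\ref{ex:EN}), together with $q(u)=\bracket{u}$, yields a type-preserving bijection between \ref{it:C} and \ref{it:D}. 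Composing these three bijections connects all four sets.

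The step requiring the most attention---though it is not a genuine obstacle, since the work has been front-loaded into the covering-map lemmas and into Lemma~\ref{lem:covering}---is to confirm that each map sends the prescribed starting vertex to the prescribed starting vertex ($f(\alpha)=\x$, $g(\x)=\bracket{u}$, $q(u)=\bracket{u}$); these identities were already established in the proof of Theorem~\ref{thm:equivalence} and do not depend on $w$. If a direct (non-covering-map) description is wanted, one would additionally note that a backward step $\bars_i$ corresponds to \emph{removing} a cell from row $i$ of the current shape and, on the exclusion-process side, to the corresponding particle jumping in the clockwise direction, exactly dual to the forward case; the type $w$ is preserved automatically because each covering map matches out-neighbors with out-neighbors and in-neighbors with in-neighbors, so forward steps are sent to forward steps and backward steps to backward steps.
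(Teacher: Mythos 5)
Your proposal is correct and follows essentially the same route as the paper: the paper's proof likewise identifies the four sets with oscillating walks in $\CSg_{d,L}$, $\D_{d,L}$, $\E_{d,L}$, $\N_{d,L}$ and applies Lemma~\ref{lem:covering} to the covering maps $f$, $g$, and $q$ (checking $f(\alpha)=\x$, $g(\x)=\bracket{u}$, $q(u)=\bracket{u}$), giving bijections \ref{it:A}$\leftrightarrow$\ref{it:B}, \ref{it:B}$\leftrightarrow$\ref{it:D}, and \ref{it:C}$\leftrightarrow$\ref{it:D}. Your added remark that backward steps correspond to removing cells matches the paper's explicit description of $\tilde{f}$.
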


\begin{proof}
The proof is very similar to that of Theorem~\ref{thm:equivalence}.
By Lemma~\ref{lem:covering}, the covering map $f$ from $\CSg_{d,L}$ to $\D_{d,L}$ given in Lemma~\ref{lem:PD} induces a bijection $\tilde{f}$ between the sets \ref{it:A} and~\ref{it:B}, which can be described as follows. 
Given an oscillating tableau $\alpha=\rho^0,\rho^1,\dots,\rho^n$ in $\OCT_{d,L}^w(\alpha,\cdot)$, its image is the oscillating walk in $\D_{d,L}$ starting at $\x$ whose $k$th step is $s_{i_k}$ (respectively, $\bars_{i_k}$) if $\rho^k$ is obtained from $\rho^{k-1}$ by adding (respectively, removing) a cell in row $i_k$. 

A bijection $\tilde{g}$ between the sets \ref{it:B} and \ref{it:D} arises from the covering map $g$ from $\D_{d,L}$ to $\N_{d,L}$ given in Lemma~\ref{lem:DN}, and a straightforward bijection $\tilde{q}$ between \ref{it:C} and \ref{it:D} follows from the quotient map $q$ in Example~\ref{ex:EN}.
\end{proof}

\section{The cylindric RSK correspondence}
\label{sec:CRS}

Having described the basic bijections between standard cylindric tableaux, walks in simplices, and sequences of states in exclusion processes, in this section we exploit them to deduce more significant results. Specifically, we will use a cylindric analogue of the RS correspondence to obtain an unexpected proof of Theorem~\ref{thm:courtiel_bijections_2021}.

Sagan and Stanley introduced analogues of the RSK correspondence for skew tableaux; see \cite[Thm.\ 2.1 \& 5.1]{sagan_robinson-schensted_1990} for the standard case and~\cite[Thm.\ 6.11]{sagan_robinson-schensted_1990} for the semistandard case.
Their description relies on two types of row insertion operations, which were later adapted by Neyman to cylindric tableaux, in order to define a cylindric version of the RSK correspondence \cite[Thm.\ 5.2]{neyman_cylindric_2015}. 
We will describe Neyman's construction in the simpler case of standard cylindric tableaux, which are relevant to us because of their interpretation as walks in simplices.

\subsection{The insertion description}

We say that $i\in[d]$ is an {\em insertion corner} of a cylindric shape $\mu\in\CS_{d,L}$ if $\mu_{i-1}>\mu_i$. 
For example, the shape $\mu=[3,1,0]\in\CS_{3,4}$ from Figure~\ref{fig:SCT} has three insertion corners, namely $1$, $2$ and $3$.
Insertion corners of $\mu$ correspond to occurrences of $\w\s$ in the periodic sequence $\bdry_\mu$ from Equation~\eqref{eq:bdry}, or equivalently, to (cyclic) occurrences of $01$ in the necklace $\bracket{h(\mu)}$, where $h$ is the map from Lemma~\ref{lem:PE}. 
We denote the number of insertion corners of $\mu$ by
\begin{equation}\label{eq:corners}
c(\mu)=|\{i\in[d]:\mu_{i-1}>\mu_i\}|.
\end{equation}
The statistic $c(\mu)$ equals the number of blocks of consecutive $1$s in the necklace $\bracket{h(\mu)}$, or equivalently, the number of blocks of consecutive $0$s.

If $T\in\SCT_{d,L}(\lambda/\mu)$ and $i$ is an insertion corner of $\mu$, the cell $\langle i,\mu_i+1 \rangle$ is called an {\em inner corner} of~$T$. 
For example, the inner corners of the tableau in Figure~\ref{fig:SCT} are $\langle 1,4\rangle$, $\langle 2,2\rangle$, and $\langle 3,1\rangle$.

\begin{definition}[Internal row insertion]\label{def:insertion}
Let $T\in\SCT^{n}_{d,L}(\lambda/\mu)$, and let $\langle i,j\rangle$ be an inner corner of~$T$; that is, $\mu_{i-1}>\mu_i$ and $j=\mu_i+1$.
To perform {\em internal insertion in $T$ at row $i$}, suppose first that cell $\langle i,j \rangle$ contains an element $a$, and remove this cell from $T$ (adding $1$ to $\mu_i$ to change the inner shape). 

Next, insert element $a$ into row $i+1$ as follows. If $a$ is larger than all the entries in this row, place $a$ in a new cell at the end of row $i+1$ (adding $1$ to $\lambda_{i+1}$ to change the outer shape), and stop here. Otherwise, let $a$ replace the smallest entry $a'$ in row $i+1$ such that $a'>a$, and recursively insert element $a'$ into the next row $i+2$.
Denote by $\ins_i(T)$ the standard cylindric tableau obtained at the end of the process.

In the special case that $T$ did not have an entry in cell $\langle i,j \rangle$ (which happens when $\mu_i=\lambda_i$), simply add $1$ to both $\lambda_i$ and $\mu_i$. This changes the inner and outer shape, but the cells of $\ins_i(T)$ are the same as those of $T$.
\end{definition}

An example of internal row insertion is given in Figure~\ref{fig:insertion}. Unlike in the usual RSK correspondence, the insertion process in the cylindric case may visit the same row multiple times, since rows are indexed modulo~$d$.

\begin{figure}[htb]
\centering
\begin{tikzpicture}[scale=0.5]
 \fill[yellow!25] (-1,-.5)--(-1,0) \east \north\east\north\east\east\north\east -- ++(0,.5)-- ++(3,0) -- ++(0,-.5) \west\west\south\south\west\west\south\west\west -- ++(0,-0.5) -- ++(-2,0);
 \fill[yellow] (0,0) \north\east\north\east\east\north\east\east\south\south\west\west\south\west\west\west;
 \hatch{3}{2}
\halfhatch{-1}{-.5}
 \draw[very thin,dotted] (0,0)  grid (5,3);
 \draw[black, very thick] (-1,-.5)--(-1,0) \east \north\east\north\east\east\north\east  -- ++(0,.5);
 \draw[black, very thick] (1,-.5)--(1,0) \east\east \north\east\east\north\north\east\east -- ++(0,.5);
 \draw[thick,<->,olive] (-.2,0)-- node[left] {$d$} (-.2,3);
 \draw[thick,<->,olive] (0,3.2)-- node[above] {$L$} (4,3.2); 
 \draw (1.5,1.5) node {1}; 
 \draw (3.5,2.5) node {2}; \draw (3.5-4,2.5-3) node {\footnotesize2};  
 \draw (.5,.5) node {3}; \draw (.5+4,.5+3) node {\footnotesize3}; 
 \draw (1.5,.5) node {4}; \draw (1.5+4,.5+3) node {\footnotesize4}; 
 \draw (2.5,1.5) node {5}; 
 \draw (2.5,.5) node {6}; \draw (2.5+4,.5+3) node {\footnotesize6};
 \draw (4.5,2.5) node {7}; \draw (4.5-4,2.5-3) node {\footnotesize7}; 
 \draw (3.5,1.5) node {8};
 \draw (4.5,1.5) node {9}; 
 \draw[thick,->] (0,3.5)--(0,-1.5);
\draw[thick,->] (-1,3)--(7.5,3);
\draw[->] (7,1)--(7.5,1);

\begin{scope}[shift={(10 ,0)}]
 \fill[yellow!25] (0,-.5)--(0,0) \north\east\north\east\east\east \north-- ++(0,.5)-- ++(3,0) -- ++(0,-.5) \west\west\south\south\west\west\south\west\west -- ++(0,-0.5) -- ++(-2,0);
 \fill[yellow] (0,0) \north\east\north\east\east\east\north\east\south\south\west\west\south\west\west\west;
 \draw[very thin,dotted] (0,0)  grid (5,3);
 \draw[black, very thick] (0,-.5)--(0,0) \north\east\north\east\east\east\north  -- ++(0,.5);
 \draw[black, very thick] (1,-.5)--(1,0) \east\east \north\east\east\north\north\east\east -- ++(0,.5);
 \draw (1.5,1.5) node {1}; 
 \draw (.5,.5) node {3}; \draw (.5+4,.5+3) node {\footnotesize3}; 
 \draw (1.5,.5) node {4}; \draw (1.5+4,.5+3) node {\footnotesize4}; 
 \draw (2.5,1.5) node {5}; 
 \draw (2.5,.5) node {6}; \draw (2.5+4,.5+3) node {\footnotesize6};
 \draw (4.5,2.5) node {7}; \draw (4.5-4,2.5-3) node {\footnotesize7}; 
 \draw (3.5,1.5) node {8};
 \draw (4.5,1.5) node {9}; 
 \draw[thick,->] (0,3.5)--(0,-1.5);
\draw[thick,->] (-.5,3)--(7.5,3);
\draw[->] (7,1)--(7.5,1);
\draw (-1,1.5) node {$2\rightsquigarrow$};
\end{scope}

\begin{scope}[shift={(20 ,0)}]
 \fill[yellow!25] (0,-.5)--(0,0) \north\east\north\east\east\east \north-- ++(0,.5)-- ++(3,0) -- ++(0,-.5) \west\west\south\south\west\west\south\west\west -- ++(0,-0.5) -- ++(-2,0);
 \fill[yellow] (0,0) \north\east\north\east\east\east\north\east\south\south\west\west\south\west\west\west;
 \draw[very thin,dotted] (0,0)  grid (5,3);
 \draw[black, very thick] (0,-.5)--(0,0) \north\east\north\east\east\east\north  -- ++(0,.5);
 \draw[black, very thick] (1,-.5)--(1,0) \east\east \north\east\east\north\north\east\east -- ++(0,.5);
 \draw (1.5,1.5) node {1}; 
 \draw (.5,.5) node {3}; \draw (.5+4,.5+3) node {\footnotesize3}; 
 \draw (1.5,.5) node {4}; \draw (1.5+4,.5+3) node {\footnotesize4}; 
 \draw (2.5,1.5) node {2}; 
 \draw (2.5,.5) node {6}; \draw (2.5+4,.5+3) node {\footnotesize6};
 \draw (4.5,2.5) node {7}; \draw (4.5-4,2.5-3) node {\footnotesize7}; 
 \draw (3.5,1.5) node {8};
 \draw (4.5,1.5) node {9}; 
 \draw[thick,->] (0,3.5)--(0,-1.5);
\draw[thick,->] (-.5,3)--(7.5,3);
\draw (-1,.5) node {$5\rightsquigarrow$};
\end{scope}

\begin{scope}[shift={(1,-6)}]
\draw[->] (-3,1)--(-2.5,1);
 \fill[yellow!25] (0,-.5)--(0,0) \north\east\north\east\east\east \north-- ++(0,.5)-- ++(3,0) -- ++(0,-.5) \west\west\south\south\west\west\south\west\west -- ++(0,-0.5) -- ++(-2,0);
 \fill[yellow] (0,0) \north\east\north\east\east\east\north\east\south\south\west\west\south\west\west\west;
 \draw[very thin,dotted] (0,0)  grid (5,3);
 \draw[black, very thick] (0,-.5)--(0,0) \north\east\north\east\east\east\north  -- ++(0,.5);
 \draw[black, very thick] (1,-.5)--(1,0) \east\east \north\east\east\north\north\east\east -- ++(0,.5);
 \draw (1.5,1.5) node {1}; 
 \draw (.5,.5) node {3}; \draw (.5+4,.5+3) node {\footnotesize3}; 
 \draw (1.5,.5) node {4}; \draw (1.5+4,.5+3) node {\footnotesize4}; 
 \draw (2.5,1.5) node {2}; 
 \draw (2.5,.5) node {5}; \draw (2.5+4,.5+3) node {\footnotesize5};
 \draw (4.5,2.5) node {7}; \draw (4.5-4,2.5-3) node {\footnotesize7}; 
 \draw (3.5,1.5) node {8};
 \draw (4.5,1.5) node {9}; 
 \draw[thick,->] (0,3.5)--(0,-1.5);
\draw[thick,->] (-.5,3)--(7.5,3);
\draw (-1,2.5) node {$6\rightsquigarrow$};
\end{scope}

\begin{scope}[shift={(11,-6)}]
\draw[->] (-3,1)--(-2.5,1);
 \fill[yellow!25] (0,-.5)--(0,0) \north\east\north\east\east\east \north-- ++(0,.5)-- ++(3,0) -- ++(0,-.5) \west\west\south\south\west\west\south\west\west -- ++(0,-0.5) -- ++(-2,0);
 \fill[yellow] (0,0) \north\east\north\east\east\east\north\east\south\south\west\west\south\west\west\west;
 \draw[very thin,dotted] (0,0)  grid (5,3);
 \draw[black, very thick] (0,-.5)--(0,0) \north\east\north\east\east\east\north  -- ++(0,.5);
 \draw[black, very thick] (1,-.5)--(1,0) \east\east \north\east\east\north\north\east\east -- ++(0,.5);
 \draw (1.5,1.5) node {1}; 
 \draw (.5,.5) node {3}; \draw (.5+4,.5+3) node {\footnotesize3}; 
 \draw (1.5,.5) node {4}; \draw (1.5+4,.5+3) node {\footnotesize4}; 
 \draw (2.5,1.5) node {2}; 
 \draw (2.5,.5) node {5}; \draw (2.5+4,.5+3) node {\footnotesize5};
 \draw (4.5,2.5) node {6}; \draw (4.5-4,2.5-3) node {\footnotesize6}; 
 \draw (3.5,1.5) node {8};
 \draw (4.5,1.5) node {9}; 
 \draw[thick,->] (0,3.5)--(0,-1.5);
\draw[thick,->] (-.5,3)--(7.5,3);
\draw (-1,1.5) node {$7\rightsquigarrow$};
\end{scope}

\begin{scope}[shift={(21,-6)}]
\draw[->] (-3,1)--(-2.5,1);
 \fill[yellow!25] (0,-.5)--(0,0) \north\east\north\east\east\east \north-- ++(0,.5)-- ++(3,0) -- ++(0,-.5) \west\west\south\south\west\west\south\west\west -- ++(0,-0.5) -- ++(-2,0);
 \fill[yellow] (0,0) \north\east\north\east\east\east\north\east\south\south\west\west\south\west\west\west;
 \draw[very thin,dotted] (0,0)  grid (5,3);
 \draw[black, very thick] (0,-.5)--(0,0) \north\east\north\east\east\east\north  -- ++(0,.5);
 \draw[black, very thick] (1,-.5)--(1,0) \east\east \north\east\east\north\north\east\east -- ++(0,.5);
 \draw (1.5,1.5) node {1}; 
 \draw (.5,.5) node {3}; \draw (.5+4,.5+3) node {\footnotesize3}; 
 \draw (1.5,.5) node {4}; \draw (1.5+4,.5+3) node {\footnotesize4}; 
 \draw (2.5,1.5) node {2}; 
 \draw (2.5,.5) node {5}; \draw (2.5+4,.5+3) node {\footnotesize5};
 \draw (4.5,2.5) node {6}; \draw (4.5-4,2.5-3) node {\footnotesize6}; 
 \draw (3.5,1.5) node {7};
 \draw (4.5,1.5) node {9}; 
 \draw[thick,->] (0,3.5)--(0,-1.5);
\draw[thick,->] (-.5,3)--(7.5,3);
\draw (-1,.5) node {$8\rightsquigarrow$};
\end{scope}

\begin{scope}[shift={(1,-12)}]
\draw[->] (-3,1)--(-2.5,1);
 \fill[yellow!25] (0,-.5)--(0,0) \north\east\north\east\east\east \north-- ++(0,.5)-- ++(4,0) -- ++(0,-.5) \west\west\west\south\south\west\south\west\west\west -- ++(0,-0.5) -- ++(-2,0);
 \fill[yellow] (0,0) \north\east\north\east\east\east\north\east\south\south\west\south\west\west\west\west;
 \bright{3}{0}
 \draw[very thin,dotted] (0,0)  grid (5,3);
 \draw[black, very thick] (0,-.5)--(0,0) \north\east\north\east\east\east\north  -- ++(0,.5);
 \draw[black, very thick] (1,-.5)--(1,0) \east\east\east\north\east\north\north\east\east\east -- ++(0,.5);
 \draw (1.5,1.5) node {1}; 
 \draw (.5,.5) node {3}; \draw (.5+4,.5+3) node {\footnotesize3}; 
 \draw (1.5,.5) node {4}; \draw (1.5+4,.5+3) node {\footnotesize4}; 
 \draw (2.5,1.5) node {2}; 
 \draw (2.5,.5) node {5}; \draw (2.5+4,.5+3) node {\footnotesize5};
 \draw (3.5,.5) node {8}; \draw (3.5+4,.5+3) node {\footnotesize8};
 \draw (4.5,2.5) node {6}; \draw (4.5-4,2.5-3) node {\footnotesize6}; 
 \draw (3.5,1.5) node {7};
 \draw (4.5,1.5) node {9}; 
 \draw[thick,->] (0,3.5)--(0,-1.5);
\draw[thick,->] (-.5,3)--(8.5,3);
\end{scope}
\end{tikzpicture}
\caption{Internal insertion at row $1$ in the standard cylindric tableau from Figure~\ref{fig:SCT}.}
\label{fig:insertion}
\end{figure}
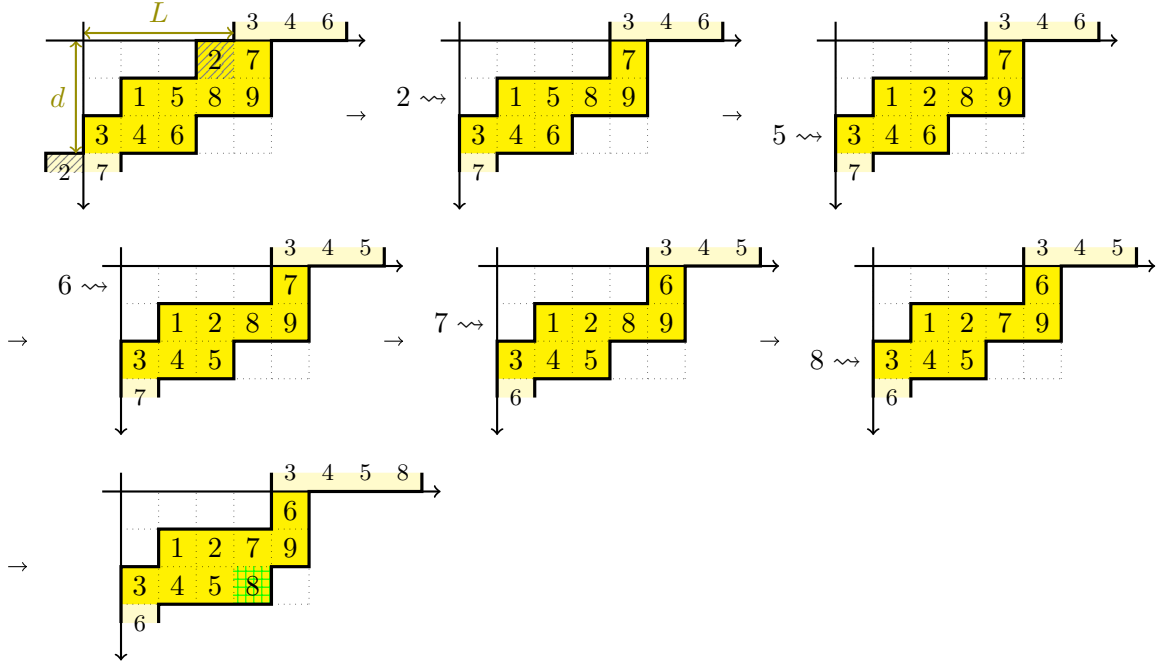 

We can now describe Neyman's bijection in the case of standard cylindric tableaux. We call it the {\em cylindric Robinson--Schensted} algorithm, and denote it by $\CRS$. The symbol $\bigsqcup$ denotes disjoint union.

\begin{theorem}[\cite{neyman_cylindric_2015}]
\label{thm:CRS}
Fix $\alpha,\beta\in\CS_{d,L}$ and $n,m\ge0$. There is a bijection
$$\CRS:\bigsqcup_{\substack{\mu\subseteq\alpha,\beta\\ |\alpha/\mu|=n,|\beta/\mu|=m}} \SCT_{d,L}(\alpha/\mu)\times \SCT_{d,L}(\beta/\mu) \to \bigsqcup_{\substack{\lambda\supseteq\alpha,\beta\\ |\lambda/\beta|=n,|\lambda/\alpha|=m}} \SCT_{d,L}(\lambda/\beta)\times \SCT_{d,L}(\lambda/\alpha).$$
\end{theorem}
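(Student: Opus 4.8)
The plan is to realize $\CRS$ as an iterated \emph{internal} insertion driven by one of the two input tableaux, following Sagan--Stanley's skew algorithm adapted to the cylinder (Definition~\ref{def:insertion}), and to exhibit an explicit inverse built from a reverse insertion. Given $(P,Q)$ on the left-hand side, with $P\in\SCT_{d,L}(\alpha/\mu)$ and $Q\in\SCT_{d,L}(\beta/\mu)$, I would take the walk representation $\mu=\beta^{(0)}\subset\beta^{(1)}\subset\dots\subset\beta^{(m)}=\beta$ of $Q$ (Remark~\ref{rem:SCT}) and let $i_k$ be the row in which the cell labeled $k$ of $Q$ sits. Processing $k=1,2,\dots,m$ in order, the current inner shape of $P$ equals $\beta^{(k-1)}$, and because $Q$ is standard the addability of its $k$th cell says precisely that $\beta^{(k-1)}_{i_k-1}>\beta^{(k-1)}_{i_k}$, i.e.\ $i_k$ is an insertion corner and hence an inner corner of the current tableau; so $\ins_{i_k}$ is legal. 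Each $\ins_{i_k}$ raises the inner shape by the cell $Q$ prescribed and raises the outer shape by exactly one cell. After $m$ steps the inner shape is $\beta$; I call the final outer shape $\lambda$, the final tableau $\tP$, and I build $\tQ$ by recording the outer cell created at step $k$ with the label $k$.

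Next I would check well-definedness and the two output shapes. Internal insertion preserves strict increase along rows and columns, so $\tP\in\SCT_{d,L}(\lambda/\beta)$; since each $\ins_{i_k}$ leaves the number of cells unchanged, $|\lambda/\beta|=|\alpha/\mu|=n$. The outer shape grows by one cell per step along a saturated chain $\alpha=\lambda^{(0)}\subset\dots\subset\lambda^{(m)}=\lambda$, so $|\lambda/\alpha|=m$ and $\tQ$ is a bijection $Y_{\lambda/\alpha}\to\{1,\dots,m\}$. The content requiring proof is that $\tQ$ is \emph{standard}: here I would establish the cylindric analogue of the row-bumping lemma, showing that as the inserted values increase the successive new outer cells move weakly left and strictly down in the cylinder's coordinates, so consecutive recorded cells cannot violate the increasing conditions along rows and columns. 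Termination of each bumping cascade is automatic, since the bumped values strictly increase and lie in $\{1,\dots,n\}$; thus a cascade performs at most $n$ bumps even when it wraps around the cylinder.

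For invertibility I would define reverse internal insertion as the inverse bumping: remove the designated outer corner, carry its entry upward one row at a time, replacing in each row the largest entry smaller than it, and finally deposit the surviving entry in the vacated inner corner, thereby restoring one inner cell. Given a pair $(\tP,\tQ)$ on the right-hand side, I read the labels of $\tQ$ in decreasing order $m,m-1,\dots,1$; at each stage the cell labeled $k$ is an outer corner of the current outer shape, and reverse insertion there simultaneously recovers the row $i_k$ (rebuilding the $k$th cell of $Q$) and undoes $\ins_{i_k}$. The standard fact that forward and reverse bumping are mutually inverse, together with the bumping-order lemma ensuring that $\tQ$ designates a genuine outer corner at every stage, shows that this recovers $(P,Q)$ and that the two maps are inverse bijections.

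The main obstacle is exactly this cylindric bumping-order lemma and the verification that reverse insertion genuinely inverts $\ins_i$; both are where the periodicity of the cylinder and the fact that a cell can \emph{always} be removed from a cylindric shape (there is no privileged empty shape, unlike the classical case) must be handled with care, particularly when a bumping route wraps from row $i$ around to row $i+d$. I would not attempt to prove the symmetry $\CRS(P,Q)\leftrightarrow(\tP,\tQ)$ under swapping $\alpha\leftrightarrow\beta$ (equivalently $P\leftrightarrow Q$) from this insertion description, since it is opaque here; that symmetry is precisely what the cylindric growth-diagram reformulation of Section~\ref{sec:growth} is designed to make transparent, so at this stage I would establish only the bijection.
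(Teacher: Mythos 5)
Your proposal takes essentially the same route as the paper's proof: $\CRS(T,U)$ is computed by iterated internal insertion $\ins_{i_k}$ at the rows prescribed by the successive entries of the second tableau, recording in a new tableau the cell where each bumping cascade terminates, with the inverse given by reverse insertion read off the recording tableau in decreasing order. The paper's proof is precisely this construction (it defers the detailed verifications — standardness of the recording tableau and invertibility — to Neyman's original paper, which your outline correctly identifies as the substantive lemmas), so your approach matches it.
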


\begin{proof}
Suppose that $|\alpha|+m=|\beta|+n$, since otherwise both unions would be empty.
Fix $\mu\subseteq\alpha,\beta$ with $|\alpha/\mu|=n$ and $|\beta/\mu|=m$, and let $(T,U)\in \SCT_{d,L}(\alpha/\mu)\times \SCT_{d,L}(\beta/\mu)$.
We will build a sequence of pairs $(P_k,Q_k)$ for $0\le k\le m$. For $k=0$, let $P_0=T$, and let $Q_0$ be the empty tableau of shape $\alpha/\alpha$. For $k$ from $1$ to $m$, construct $(P_k,Q_k)$ iteratively as follows.
Let $\langle i,j\rangle$ be the cell in $U$ containing $k$, which must be an inner corner of $P_{k-1}$. Let $P_k=\ins_i(P_{k-1})$ (as in Definition~\ref{def:insertion}), and let $Q_k$ be obtained from $Q_{k-1}$ by placing $k$ in the cell where this insertion procedure terminates (that is, the cell that is added to $P_{k-1}$ to obtain $P_k$).
Finally, let $\CRS(T,U)=(P_m,Q_m)$. See Figure~\ref{fig:CRS} for an example.
\end{proof}

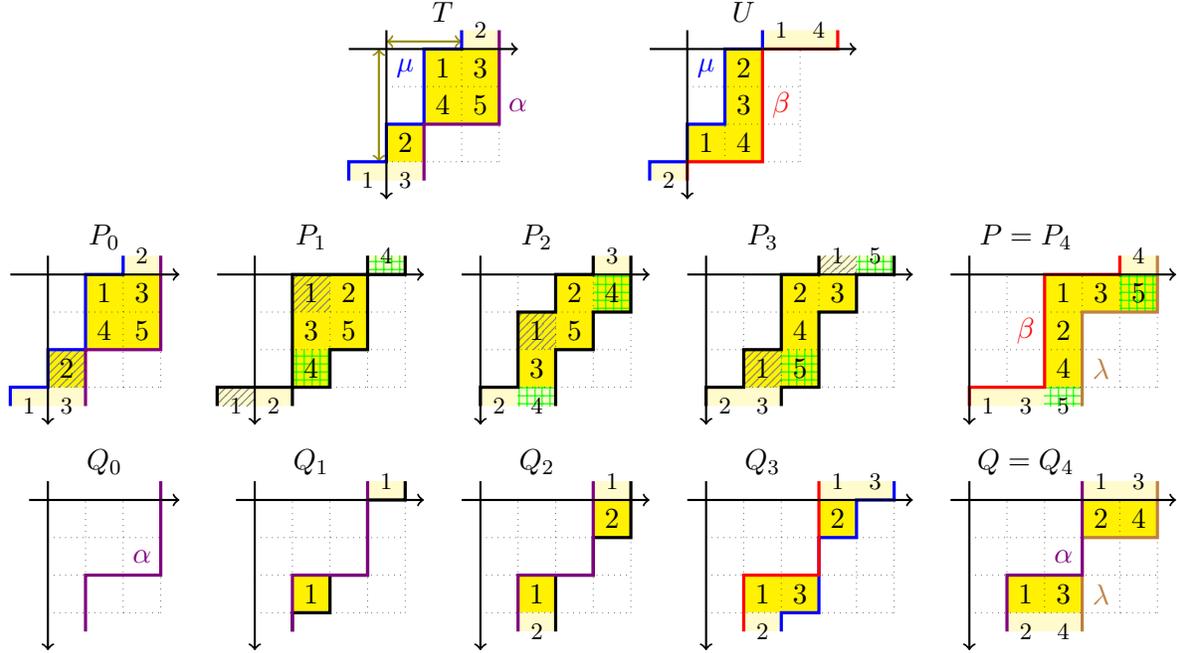
\begin{figure}[htb]
\centering
\begin{tikzpicture}[scale=0.5]
 \fill[yellow!25] (-1,-.5)++(0,.5) \east \north\east\north\north\east -- ++(0,.5)-- ++(1,0) -- ++(0,-.5) \south\south\west\west\south -- ++(0,-0.5) -- ++(-2,0);
 \fill[yellow] (0,0) \north\east\north\north\east \east  \south\south\west\west\south \west;
 \draw[very thin,dotted] (0,0)  grid (3,3);
 \draw[thick,<->,olive] (-.2,0)-- (-.2,3);
 \draw[thick,<->,olive] (0,3.2)-- (2,3.2); 
 \draw[blue, very thick] (-1,-.5)--++(0,.5) \east \north\east\north\north\east -- ++(0,.5);
 \draw[violet, very thick] (3,3.5)-- ++(0,-.5) \south\south\west\west\south -- ++(0,-0.5);
 \draw (1.5,2.5) node {1}; \draw (1.5-2,2.5-3) node {\footnotesize 1}; 
 \draw (.5,.5) node {2}; \draw (.5+2,.5+3) node {\footnotesize 2};  
 \draw (2.5,2.5) node {3}; \draw (2.5-2,2.5-3) node {\footnotesize 3}; 
 \draw (1.5,1.5) node {4}; 
 \draw (2.5,1.5) node {5}; 
 \draw[blue] (.5,2.5) node {$\mu$};
 \draw[violet] (3.5,1.5) node {$\alpha$};
 \draw[thick,->] (0,3.5)--(0,-1);
\draw[thick,->] (-1,3)--(3.5,3);
\draw (1.5,4) node {$T$};

\begin{scope}[shift={(8,0)}]
 \fill[yellow!25] (-1,-.5)++(0,.5) \east \north\east\north\north\east -- ++(0,.5)-- ++(2,0) -- ++(0,-.5) \west\west\south\south\south\west\west -- ++(0,-0.5) -- ++(-1,0);
 \fill[yellow] (0,0) \north\east\north\north\east \east  \west\south\south\south\west\west;
 \draw[very thin,dotted] (0,0)  grid (3,3);
 \draw[blue, very thick] (-1,-.5)--++(0,.5) \east \north\east\north\north\east -- ++(0,.5);
 \draw[red, very thick] (4,3.5)-- ++(0,-.5) \west\west\south\south\south\west\west-- ++(0,-0.5);
 \draw (.5,.5) node {1}; \draw (.5+2,.5+3) node {\footnotesize 1}; 
 \draw (1.5,2.5) node {2}; \draw (1.5-2,2.5-3) node {\footnotesize 2};  
 \draw (1.5,1.5) node {3}; 
 \draw (1.5,.5) node {4}; \draw (1.5+2,.5+3) node {\footnotesize 4}; 
 \draw[blue] (.5,2.5) node {$\mu$};
 \draw[red] (2.5,1.5) node {$\beta$};
 \draw[thick,->] (0,3.5)--(0,-1);
\draw[thick,->] (-1,3)--(4.5,3);
\draw (1.5,4) node {$U$};
\end{scope}

\begin{scope}[shift={(-9,-6)}]
 \fill[yellow!25] (-1,-.5)++(0,.5) \east \north\east\north\north\east -- ++(0,.5)-- ++(1,0) -- ++(0,-.5) \south\south\west\west\south -- ++(0,-0.5) -- ++(-2,0);
 \fill[yellow] (0,0) \north\east\north\north\east \east  \south\south\west\west\south \west;
\hatch{0}{0}
 \draw[very thin,dotted] (0,0)  grid (3,3);
 \draw[blue, very thick] (-1,-.5)--++(0,.5) \east \north\east\north\north\east -- ++(0,.5);
 \draw[violet, very thick] (3,3.5)-- ++(0,-.5) \south\south\west\west\south -- ++(0,-0.5);
 \draw (1.5,2.5) node {1}; \draw (1.5-2,2.5-3) node {\footnotesize 1}; 
 \draw (.5,.5) node {2}; \draw (.5+2,.5+3) node {\footnotesize 2};  
 \draw (2.5,2.5) node {3}; \draw (2.5-2,2.5-3) node {\footnotesize 3}; 
 \draw (1.5,1.5) node {4}; 
 \draw (2.5,1.5) node {5}; 
 \draw[thick,->] (0,3.5)--(0,-1);
\draw[thick,->] (-1,3)--(3.5,3);
\draw (1.5,4) node {$P_0$};

\begin{scope}[shift={(5.5,0)}]
 \fill[yellow!25] (-1,-.5)++(0,.5) \east \east\north\north\north\east\east -- ++(0,.5)-- ++(1,0) -- ++(0,-.5) \west\south\south\west\south\west -- ++(0,-0.5) -- ++(-2,0);
 \fill[yellow] (0,0) \east\north\north\north\east \east  \south\south\west\south\west \west;
\bright{1}{0}\halfbright{1+2}{0+3}
\hatch{1}{2}\halfhatch{-1}{-.5}
 \draw[very thin,dotted] (0,0)  grid (4,3);
 \draw[black,very thick] (-1,-.5)--++(0,.5) \east \east\north\north\north\east\east -- ++(0,.5);
 \draw[black,very thick] (4,3.5)-- ++(0,-.5) \west\south\south\west\south\west -- ++(0,-0.5);
 \draw (1.5,2.5) node {1}; \draw (1.5-2,2.5-3) node {\footnotesize 1}; 
 \draw (1.5,.5) node {4}; \draw (1.5+2,.5+3) node {\footnotesize 4};  
 \draw (2.5,2.5) node {2}; \draw (2.5-2,2.5-3) node {\footnotesize 2}; 
 \draw (1.5,1.5) node {3}; 
 \draw (2.5,1.5) node {5}; 
 \draw[thick,->] (0,3.5)--(0,-1);
\draw[thick,->] (-1,3)--(4.5,3);
\draw (1.5,4) node {$P_1$};
\end{scope}

\begin{scope}[shift={(11.5,0)}]
 \fill[yellow!25] (0,-.5)++(0,.5) \east\north\north\east\north\east -- ++(0,.5)-- ++(1,0) -- ++(0,-.5) \south\west\south\west\south -- ++(0,-0.5) -- ++(-2,0);
 \fill[yellow] (0,0) \east\north\north\east\north\east \east  \south\west\south\west\south \west\west;
\hatch{1}{1}
\bright{3}{2}\halfbright{3-2}{2-2.5}
 \draw[very thin,dotted] (0,0)  grid (4,3);
 \draw[black, very thick] (0,-.5)--++(0,.5) \east\north\north\east\north\east -- ++(0,.5);
 \draw[black, very thick] (4,3.5)-- ++(0,-.5) \south\west\south\west\south -- ++(0,-0.5);
 \draw (3.5,2.5) node {4}; \draw (3.5-2,2.5-3) node {\footnotesize 4}; 
 \draw (1.5,.5) node {3}; \draw (1.5+2,.5+3) node {\footnotesize 3};  
 \draw (2.5,2.5) node {2}; \draw (2.5-2,2.5-3) node {\footnotesize 2}; 
 \draw (1.5,1.5) node {1}; 
 \draw (2.5,1.5) node {5}; 
 \draw[thick,->] (0,3.5)--(0,-1);
\draw[thick,->] (-.5,3)--(4.5,3);
\draw (1.5,4) node {$P_2$};
\end{scope}

\begin{scope}[shift={(17.5,0)}]
 \fill[yellow!25] (0,-.5)++(0,.5) \east\north\east\north\north\east -- ++(0,.5)-- ++(2,0) -- ++(0,-.5) \west\south\west\south\south\west -- ++(0,-0.5) -- ++(-2,0);
 \fill[yellow] (0,0) \east\north\east\north\north\east \east  \south\west\south\south\west \west\west;
 \hatch{1}{0}
\halfhatch{3}{3}
\bright{2}{0}
\halfbright{2+2}{0+3}
 \draw[very thin,dotted] (0,0)  grid (5,3);
 \draw[black, very thick] (0,-.5)--++(0,.5) \east\north\east\north\north\east -- ++(0,.5);
 \draw[black, very thick] (5,3.5)-- ++(0,-.5) \west\south\west\south\south\west -- ++(0,-0.5);
 \draw (3.5,2.5) node {3}; \draw (3.5-2,2.5-3) node {\footnotesize 3}; 
 \draw (1.5,.5) node {1}; \draw (1.5+2,.5+3) node {\footnotesize 1};  
 \draw (2.5,2.5) node {2}; \draw (2.5-2,2.5-3) node {\footnotesize 2}; 
 \draw (2.5,.5) node {5}; \draw (2.5+2,.5+3) node {\footnotesize 5};  
 \draw (2.5,1.5) node {4}; 
 \draw[thick,->] (0,3.5)--(0,-1);
\draw[thick,->] (-.5,3)--(5.5,3);
\draw (1.5,4) node {$P_3$};
\end{scope}

\begin{scope}[shift={(24.5,0)}]
 \fill[yellow!25] (0,-.5)++(0,.5) \east\east\north\north\north\east\east -- ++(0,.5)-- ++(1,0) -- ++(0,-.5) \south\west\west\south\south -- ++(0,-0.5) -- ++(-3,0);
 \fill[yellow] (0,0) \east\east\north\north\north\east\east \east  \south\west\west\south\south\west \west\west;
\bright{4}{2}
\halfbright{4-2}{2-2.5}
 \draw[very thin,dotted] (0,0)  grid (5,3);
 \draw[red, very thick] (0,-.5)--++(0,.5) \east\east\north\north\north\east\east -- ++(0,.5);
 \draw[brown, very thick] (5,3.5)-- ++(0,-.5) \south\west\west\south\south -- ++(0,-0.5);
 \draw (3.5,2.5) node {3}; \draw (3.5-2,2.5-3) node {\footnotesize 3}; 
 \draw (4.5,2.5) node {5}; \draw (4.5-2,2.5-3) node {\footnotesize 5};  
 \draw (2.5,2.5) node {1}; \draw (2.5-2,2.5-3) node {\footnotesize 1}; 
 \draw (2.5,.5) node {4}; \draw (2.5+2,.5+3) node {\footnotesize 4};  
 \draw (2.5,1.5) node {2}; 
 \draw[thick,->] (0,3.5)--(0,-1);
\draw[thick,->] (-.5,3)--(5.5,3);
\draw (1.5,4) node {$P=P_4$};
\draw[red] (1.5,1.5) node {$\beta$};
\draw[brown] (3.5,.5) node {$\lambda$};
\end{scope}
\end{scope}

\begin{scope}[shift={(-9,-12)}]
 \draw[very thin,dotted] (0,0)  grid (3,3);
 \draw[violet, very thick] (1,-.5)--++(0,.5)  \north\east\east\north\north -- ++(0,.5);
 \draw[thick,->] (0,3.5)--(0,-1);
\draw[thick,->] (-.5,3)--(3.5,3);
\draw (1.5,4) node {$Q_0$};
\draw[violet] (2.5,1.5) node {$\alpha$};

\begin{scope}[shift={(5.5,0)}]
 \fill[yellow!25] (1+2,0+3) rectangle (2+2,3.5);
 \fill[yellow] (1,0) rectangle (2,1);
 \draw[very thin,dotted] (0,0)  grid (4,3);
 \draw[black, very thick] (4,3.5)-- ++(0,-.5)\west\south\south\west\south\west-- ++(0,-0.5);
 \draw[violet, very thick] (1,-.5)--++(0,.5)  \north\east\east\north\north -- ++(0,.5);
 \draw (1.5,.5) node {1}; \draw (1.5+2,.5+3) node {\footnotesize 1}; 
 \draw[thick,->] (0,3.5)--(0,-1);
\draw[thick,->] (-.5,3)--(4.5,3);
\draw (1.5,4) node {$Q_1$};
\end{scope}

\begin{scope}[shift={(11.5,0)}]
 \fill[yellow!25] (1+2,0+3) rectangle (2+2,3.5);  \fill[yellow!25] (1,-.5) rectangle (2,0);
 \fill[yellow] (1,0) rectangle (2,1);  \fill[yellow] (3,2) rectangle (4,3);
 \draw[very thin,dotted] (0,0)  grid (4,3);
 \draw[black, very thick] (4,3.5)-- ++(0,-.5)\south\west\south\west\south-- ++(0,-0.5);
 \draw[violet, very thick] (1,-.5)--++(0,.5)  \north\east\east\north\north -- ++(0,.5);
 \draw (1.5,.5) node {1}; \draw (1.5+2,.5+3) node {\footnotesize 1}; 
 \draw (3.5,2.5) node {2}; \draw (3.5-2,2.5-3) node {\footnotesize 2}; 
 \draw[thick,->] (0,3.5)--(0,-1);
\draw[thick,->] (-.5,3)--(4.5,3);
\draw (1.5,4) node {$Q_2$};
\end{scope}

\begin{scope}[shift={(17.5,0)}]
 \fill[yellow!25] (3,3) rectangle (5,3.5);  \fill[yellow!25] (1,-.5) rectangle (2,0);
 \fill[yellow] (1,0) rectangle (3,1);  \fill[yellow] (3,2) rectangle (4,3);
 \draw[very thin,dotted] (0,0)  grid (5,3);
 \draw[blue, very thick] (5,3.5)-- ++(0,-.5)\west\south\west\south\south\west-- ++(0,-0.5);
 \draw[red, very thick] (1,-.5)--++(0,.5)  \north\east\east\north\north -- ++(0,.5);
 \draw (1.5,.5) node {1}; \draw (1.5+2,.5+3) node {\footnotesize 1}; 
 \draw (3.5,2.5) node {2}; \draw (3.5-2,2.5-3) node {\footnotesize 2}; 
 \draw (2.5,.5) node {3}; \draw (2.5+2,.5+3) node {\footnotesize 3};  
 \draw[thick,->] (0,3.5)--(0,-1);
\draw[thick,->] (-.5,3)--(5.5,3);
\draw (1.5,4) node {$Q_3$};
\end{scope}

\begin{scope}[shift={(24.5,0)}]
 \fill[yellow!25] (1,-.5)++(0,.5) \north\east\east\north\north -- ++(0,.5)-- ++(2,0) -- ++(0,-.5) \south\west\west\south\south -- ++(0,-0.5) -- ++(-2,0);
 \fill[yellow] (1,0)\north\east\east\north\north \east\east \south\west\west\south\south  \west\west;
 \draw[very thin,dotted] (0,0)  grid (5,3);
 \draw[brown, very thick] (5,3.5)-- ++(0,-.5)\south\west\west\south\south-- ++(0,-0.5);
 \draw[violet, very thick] (1,-.5)--++(0,.5)  \north\east\east\north\north -- ++(0,.5);
 \draw (1.5,.5) node {1}; \draw (1.5+2,.5+3) node {\footnotesize 1}; 
 \draw (4.5,2.5) node {4}; \draw (4.5-2,2.5-3) node {\footnotesize 4};  
 \draw (3.5,2.5) node {2}; \draw (3.5-2,2.5-3) node {\footnotesize 2}; 
 \draw (2.5,.5) node {3}; \draw (2.5+2,.5+3) node {\footnotesize 3};  
 \draw[thick,->] (0,3.5)--(0,-1);
\draw[thick,->] (-.5,3)--(5.5,3);
\draw (1.5,4) node {$Q=Q_4$};
\draw[violet] (2.5,1.5) node {$\alpha$};
\draw[brown] (3.5,.5) node {$\lambda$};
\end{scope}
\end{scope} 
\end{tikzpicture}
\caption{An example of the bijection $\CRS$ from Theorem~\ref{thm:CRS} with $\alpha=[3,3,1],\beta=[2,2,2]\in\CS_{3,2}$, and $n=5$, $m=4$. In this example, $\mu=[1,1,0]$ and $\lambda=[5,3,3]$. In the computation of $\CRS(T,U)=(P,Q)$, the inner corner of each $P_k$ where internal insertion is about to occur is shaded with \textcolor{gray}{gray} diagonal lines, and the newly added cell where the previous insertion terminated is highlighted with a \textcolor{green}{green} grid pattern. }
\label{fig:CRS}
\end{figure} 

Neyman \cite{neyman_cylindric_2015} also introduces a more complicated multi-insertion operation to deal with repeated entries in semistandard cylindric tableaux. He then uses this operation to describe the {\em cylindric Robinson--Schensted--Knuth} algorithm, denoted by $\CRSK$. 

\begin{theorem}[\cite{neyman_cylindric_2015}]
\label{thm:CRSK}
Fix $\alpha,\beta\in\CS_{d,L}$. There is a bijection
$$\CRSK:\bigsqcup_{\mu\subseteq\alpha,\beta} \SSCT_{d,L}(\alpha/\mu)\times \SSCT_{d,L}(\beta/\mu) \to \bigsqcup_{\lambda\supseteq\alpha,\beta} \SSCT_{d,L}(\lambda/\beta)\times \SSCT_{d,L}(\lambda/\alpha)$$
such that, if $\CRSK(T,U)=(P,Q)$, then $\wt(T)=\wt(P)$ and $\wt(U)=\wt(Q)$.
\end{theorem}

\subsection{The symmetric case}

Neyman shows in \cite[Thm.~5.18]{neyman_cylindric_2015}, adapting a similar result for skew tableaux \cite[Thm.~3.3]{sagan_robinson-schensted_1990}, that if $\CRSK(T,U)=(P,Q)$, then $\CRSK(U,T)=(Q,P)$. We will see an alternative proof of this symmetry in Section~\ref{sec:symmetry}. In the special case of standard cylindric tableaux with $\alpha=\beta$, it follows that, if we let $T\in \SCT_{d,L}(\alpha/\mu)$ with $|\alpha/\mu|=n$, then $\CRS(T,T)=(P,P)$ for some $P\in\SCT_{d,L}(\lambda/\alpha)$ with $|\lambda/\alpha|=n$. 
Denoting by $\bij$ the map such that $\bij(T)=P$, Theorem~\ref{thm:CRS} implies the following result. See Figure~\ref{fig:bij} for an example.

\begin{corollary}\label{cor:bij}
For any fixed $\alpha\in\CS_{d,L}$, there is a bijection 
$$\bij:\SCT^n_{d,L}(\alpha/\cdot)\to \SCT^n_{d,L}(\cdot/\alpha).$$
\end{corollary}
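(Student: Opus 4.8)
The plan is to obtain $\bij$ by restricting the cylindric Robinson--Schensted bijection $\CRS$ of Theorem~\ref{thm:CRS} to its ``diagonal''. First I would specialize Theorem~\ref{thm:CRS} to the case $\beta=\alpha$ and $m=n$, which produces a bijection
$$\CRS\colon\bigsqcup_{\substack{\mu\subseteq\alpha\\ |\alpha/\mu|=n}} \SCT_{d,L}(\alpha/\mu)^2 \ \longrightarrow\ \bigsqcup_{\substack{\lambda\supseteq\alpha\\ |\lambda/\alpha|=n}} \SCT_{d,L}(\lambda/\alpha)^2.$$
(The compatibility condition $|\alpha|+m=|\beta|+n$ becomes trivial under this specialization, so no piece is forced to be empty.) Writing a pair in the domain as $(T,U)$ and in the codomain as $(P,Q)$, the disjoint union on the left ranges exactly over the possible inner shapes of elements of $\SCT^n_{d,L}(\alpha/\cdot)$, while the one on the right ranges over the possible outer shapes of elements of $\SCT^n_{d,L}(\cdot/\alpha)$; that is, $\bigsqcup_\mu \SCT_{d,L}(\alpha/\mu)=\SCT^n_{d,L}(\alpha/\cdot)$ and $\bigsqcup_\lambda \SCT_{d,L}(\lambda/\alpha)=\SCT^n_{d,L}(\cdot/\alpha)$.

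Second, I would invoke the symmetry of $\CRS$ recalled above (Neyman~\cite[Thm.\ 5.18]{neyman_cylindric_2015}, to be reproved via growth diagrams in Section~\ref{sec:growth}): if $\CRS(T,U)=(P,Q)$ then $\CRS(U,T)=(Q,P)$. Equivalently, $\CRS$ commutes with the coordinate-swap involution $\sigma\colon(A,B)\mapsto(B,A)$ acting on both the domain and the codomain. The key formal observation is that a bijection commuting with an involution carries the fixed-point set of $\sigma$ on the domain bijectively onto the fixed-point set of $\sigma$ on the codomain: if $\phi\circ\sigma=\sigma\circ\phi$ and $\sigma(x)=x$, then $\sigma(\phi(x))=\phi(\sigma(x))=\phi(x)$, and conversely $\sigma(y)=y$ forces $\sigma(\phi^{-1}(y))=\phi^{-1}(y)$. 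Here the $\sigma$-fixed pairs are precisely the diagonal pairs: on the left those of the form $(T,T)$, and on the right those of the form $(P,P)$. In particular, applying $\CRS$ to a diagonal pair $(T,T)$ yields a $\sigma$-fixed pair, hence necessarily one of the form $(P,P)$; this is exactly what makes $\bij(T)=P$ well-defined.

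Third, I would identify the diagonal of the domain with $\SCT^n_{d,L}(\alpha/\cdot)$ through $T\mapsto(T,T)$, and the diagonal of the codomain with $\SCT^n_{d,L}(\cdot/\alpha)$ through $P\mapsto(P,P)$. Under these identifications the restriction of $\CRS$ to diagonals is exactly the map $\bij$, and it is a bijection because $\CRS$ is. The whole argument is purely formal once the symmetry is available, mirroring the classical fact that the RS correspondence restricts to a bijection between involutions $w=w^{-1}$ and single standard Young tableaux. Consequently the sole nontrivial input is the identity $\CRS(U,T)=(Q,P)$, and the only point genuinely requiring care is that this symmetry holds at the level of standard cylindric tableaux; we take it from Neyman's theorem now and re-establish it transparently using cylindric growth diagrams in Section~\ref{sec:growth}.
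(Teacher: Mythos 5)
Your proposal is correct and takes essentially the same route as the paper: the paper also defines $\bij$ by restricting $\CRS$ (Theorem~\ref{thm:CRS}) to the diagonal case $\alpha=\beta$, $T=U$, using Neyman's symmetry $\CRS(T,U)=(P,Q)\Rightarrow\CRS(U,T)=(Q,P)$ to conclude that $\CRS(T,T)=(P,P)$, so that $\bij(T)=P$ is well-defined and bijective. Your explicit fixed-point argument for a bijection commuting with an involution is just a more formal spelling-out of the step the paper leaves implicit.
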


\begin{figure}
\centering
\begin{tikzpicture}[scale=0.5]

\fill[yellow!25] (-4,-.5) rectangle (-1,0);
\fill[yellow!25] (1,3) rectangle (3,3.5);
\fill[yellow] (-2,0) \north\east\north\north\east\east \east\east  \west\south\south\west\west\south \west\west;
\hatch{-1}{2}
\halfhatch{-4}{-.5}
 \draw[very thin,dotted] (-2,0)  grid (2,3);
 \draw[thick,<->,olive] (-.2,0)-- (-.2,3);
 \draw[thick,<->,olive] (0,3.2)-- (3,3.2); 
 \draw[black, very thick] (-4,-.5)--++(0,.5) \east\east \north\east\north\north\east\east -- ++(0,.5);
 \draw[violet, ultra thick] (3,3.5)-- ++(0,-.5)\west\south\south\west\west\south \west -- ++(0,-0.5);
 \draw (-.5,2.5) node {1}; \draw (-.5-3,2.5-3) node {\footnotesize 1}; 
 \draw (-1.5,.5) node {2}; \draw (-1.5+3,.5+3) node {\footnotesize 2};  
 \draw (.5,2.5) node {3}; \draw (.5-3,2.5-3) node {\footnotesize 3}; 
 \draw (-.5,1.5) node {4}; 
 \draw (.5,1.5) node {5}; 
 \draw (1.5,2.5) node {6}; \draw (1.5-3,2.5-3) node {\footnotesize 6}; 
 \draw (1.5,1.5) node {7}; 
 \draw (-.5,.5) node {8}; \draw (-.5+3,.5+3) node {\footnotesize 8};  
 \draw[violet] (2.5,1.5) node {$\alpha$};
 \draw[->] (0,3.5)--(0,-1);
\draw[->] (-4,3)--(3.5,3);
\draw (-.5,4) node {$T$};
\draw[->] (3.5,1.5)--(4,1.5);

\begin{scope}[shift={(7.5,0)}]
\fill[yellow!25] (-3,-.5) rectangle (0,0);
\fill[yellow!25] (1,3) rectangle (3,3.5);
\fill[yellow] (-2,0) \north\east\north\east\north\east \east\east \south \west\south\west\west\south \west\west;
\hatch{-2}{0}\halfhatch{1}{3}
\newcell{2}{2}
\fill[pink!50](-1,-.5) rectangle (0,0);
\bright{2}{2}\halfbright{2-3}{2-2.5}
 \draw[very thin,dotted] (-2,0)  grid (3,3);
 \draw[black, very thick] (-3,-.5)--++(0,.5) \east \north\east\north\east\north\east -- ++(0,.5);
 \draw[black, very thick] (3,3.5)-- ++(0,-.5)\south\west\south\west\west\south  -- ++(0,-0.5);
  \draw[violet, ultra thick,dotted] (3,3.5)-- ++(0,-.5)\west\south\south\west\west\south \west -- ++(0,-0.5);
 \draw (2.5,2.5) node {8}; \draw (2.5-3,2.5-3) node {\footnotesize 8}; 
 \draw (-1.5,.5) node {2}; \draw (-1.5+3,.5+3) node {\footnotesize 2};  
 \draw (.5,2.5) node {3}; \draw (.5-3,2.5-3) node {\footnotesize 3}; 
 \draw (-.5,1.5) node {1}; 
 \draw (.5,1.5) node {5}; 
 \draw (1.5,2.5) node {6}; \draw (1.5-3,2.5-3) node {\footnotesize 6}; 
 \draw (1.5,1.5) node {7}; 
 \draw (-.5,.5) node {4}; \draw (-.5+3,.5+3) node {\footnotesize 4};  
 \draw[->] (0,3.5)--(0,-1);
\draw[->] (-3,3)--(3.5,3);
\draw[->] (3.5,1.5)--(4,1.5);
\end{scope}

\begin{scope}[shift={(15,0)}]
\fill[yellow!25] (-3,-.5) rectangle (0,0);
\fill[yellow!25] (2,3) rectangle (4,3.5);
\fill[yellow] (-1,0) \north\north\east\north\east\east \east \south \west\south\west\south \west\west;
\hatch{0}{2}\halfhatch{-3}{-.5}
\newcell{2}{2}\newcell{0}{0}
\fill[pink!50](-1,-.5) rectangle (0,0);
\fill[pink!50](3,3) rectangle (4,3.5);
\bright{0}{0}\halfbright{3}{3}
 \draw[very thin,dotted] (-1,0)  grid (3,3);
 \draw[black, very thick] (-3,-.5)--++(0,.5) \east\east\north\north\east\north\east\east -- ++(0,.5);
 \draw[black, very thick] (4,3.5)-- ++(0,-.5)\west\south\west\south\west\south\west  -- ++(0,-0.5);
  \draw[violet, ultra thick,dotted] (3,3.5)-- ++(0,-.5)\west\south\south\west\west\south \west -- ++(0,-0.5);
 \draw (2.5,2.5) node {8}; \draw (2.5-3,2.5-3) node {\footnotesize 8}; 
 \draw (.5,.5) node {5}; \draw (.5+3,.5+3) node {\footnotesize 5};  
 \draw (.5,2.5) node {2}; \draw (.5-3,2.5-3) node {\footnotesize 2}; 
 \draw (-.5,1.5) node {1}; 
 \draw (.5,1.5) node {3}; 
 \draw (1.5,2.5) node {6}; \draw (1.5-3,2.5-3) node {\footnotesize 6}; 
 \draw (1.5,1.5) node {7}; 
 \draw (-.5,.5) node {4}; \draw (-.5+3,.5+3) node {\footnotesize 4};  
 \draw[->] (0,3.5)--(0,-1);
\draw[->] (-3,3)--(4.5,3);
\end{scope}

\begin{scope}[shift={(1,-5.5)}]
\fill[yellow!25] (-2,-.5) rectangle (0,0);
\fill[yellow!25] (2,3) rectangle (5,3.5);
\fill[yellow] (-1,0) \north\north\east\east\north\east \east \south\west\south\south \west\west\west;
\hatch{-1}{1}
\newcell{2}{2}\newcell{0}{0}\newcell{1}{0}
\fill[pink!50](-1,-.5) rectangle (0,0);
\fill[pink!50](3,3) rectangle (5,3.5);
\bright{1}{0}\halfbright{1+3}{3}
 \draw[very thin,dotted] (-1,0)  grid (3,3);
 \draw[black, very thick] (-2,-.5)--++(0,.5) \east\north\north\east\east\north\east -- ++(0,.5);
 \draw[black, very thick] (5,3.5)-- ++(0,-.5)\west\west\south\west\south\south\west\west  -- ++(0,-0.5);
  \draw[violet, ultra thick,dotted] (3,3.5)-- ++(0,-.5)\west\south\south\west\west\south \west -- ++(0,-0.5);
 \draw (2.5,2.5) node {8}; \draw (2.5-3,2.5-3) node {\footnotesize 8}; 
 \draw (.5,.5) node {5}; \draw (.5+3,.5+3) node {\footnotesize 5};  
 \draw (1.5,.5) node {7}; \draw (1.5+3,.5+3) node {\footnotesize 7}; 
 \draw (-.5,1.5) node {1}; 
 \draw (.5,1.5) node {2}; 
 \draw (1.5,2.5) node {4}; \draw (1.5-3,2.5-3) node {\footnotesize 4}; 
 \draw (1.5,1.5) node {6}; 
 \draw (-.5,.5) node {3}; \draw (-.5+3,.5+3) node {\footnotesize 3};  
 \draw[->] (0,3.5)--(0,-1);
\draw[->] (-2,3)--(5.5,3);
\draw[->] (-3,1.5)--(-2.5,1.5);

\begin{scope}[shift={(8.5,0)}]
\fill[yellow!25] (-2,-.5) rectangle (0,0);
\fill[yellow!25] (2,3) rectangle (5,3.5);
\fill[yellow] (-1,0) \north\east\north\east\north\east \east \south\south\west\south \west\west\west;
\hatch{0}{1}
\newcell{2}{2}\newcell{0}{0}\newcell{1}{0}\newcell{2}{1}
\fill[pink!50](-1,-.5) rectangle (0,0);
\fill[pink!50](3,3) rectangle (5,3.5);
\bright{2}{1}
 \draw[very thin,dotted] (-1,0)  grid (3,3);
 \draw[black, very thick] (-2,-.5)--++(0,.5) \east\north\east\north\east\north\east -- ++(0,.5);
 \draw[black, very thick] (5,3.5)-- ++(0,-.5)\west\west\south\south\west\south\west\west  -- ++(0,-0.5);
  \draw[violet, ultra thick,dotted] (3,3.5)-- ++(0,-.5)\west\south\south\west\west\south \west -- ++(0,-0.5);
 \draw (2.5,2.5) node {7}; \draw (2.5-3,2.5-3) node {\footnotesize 7}; 
 \draw (.5,.5) node {5}; \draw (.5+3,.5+3) node {\footnotesize 5};  
 \draw (1.5,.5) node {6}; \draw (1.5+3,.5+3) node {\footnotesize 6}; 
 \draw (2.5,1.5) node {8}; 
 \draw (.5,1.5) node {2}; 
 \draw (1.5,2.5) node {3}; \draw (1.5-3,2.5-3) node {\footnotesize 3}; 
 \draw (1.5,1.5) node {4}; 
 \draw (-.5,.5) node {1}; \draw (-.5+3,.5+3) node {\footnotesize 1};  
 \draw[->] (0,3.5)--(0,-1);
\draw[->] (-2,3)--(5.5,3);
\draw[->] (-3,1.5)--(-2.5,1.5);
\end{scope}

\begin{scope}[shift={(17,0)}]
\fill[yellow!25] (-2,-.5) rectangle (0,0);
\fill[yellow!25] (2,3) rectangle (6,3.5);
\fill[yellow] (-1,0) \north\east\east\north\north\east \east \south\south\south \west\west\west\west;
\hatch{1}{2}\halfhatch{-2}{-.5}
\newcell{2}{2}\newcell{0}{0}\newcell{1}{0}\newcell{2}{1}\newcell{2}{0}
\fill[pink!50](-1,-.5) rectangle (0,0);
\fill[pink!50](3,3) rectangle (6,3.5);
\bright{2}{0}\halfbright{2+3}{3}
 \draw[very thin,dotted] (-1,0)  grid (3,3);
 \draw[black, very thick] (-2,-.5)--++(0,.5) \east\north\east\east\north\north\east -- ++(0,.5);
 \draw[black, very thick] (6,3.5)-- ++(0,-.5)\west\west\west\south\south\south\west\west\west  -- ++(0,-0.5);
  \draw[violet, ultra thick,dotted] (3,3.5)-- ++(0,-.5)\west\south\south\west\west\south \west -- ++(0,-0.5);
 \draw (2.5,2.5) node {5}; \draw (2.5-3,2.5-3) node {\footnotesize 5}; 
 \draw (.5,.5) node {2}; \draw (.5+3,.5+3) node {\footnotesize 2};  
 \draw (1.5,.5) node {6}; \draw (1.5+3,.5+3) node {\footnotesize 6}; 
 \draw (2.5,1.5) node {7}; 
 \draw (2.5,.5) node {8}; \draw (2.5+3,.5+3) node {\footnotesize 8}; 
 \draw (1.5,2.5) node {3}; \draw (1.5-3,2.5-3) node {\footnotesize 3}; 
 \draw (1.5,1.5) node {4}; 
 \draw (-.5,.5) node {1}; \draw (-.5+3,.5+3) node {\footnotesize 1};  
 \draw[->] (0,3.5)--(0,-1);
\draw[->] (-2,3)--(6.5,3);
\draw[->] (-3,1.5)--(-2.5,1.5);
\end{scope}
\end{scope} 

\begin{scope}[shift={(0,-11)}]
\fill[yellow!25] (-1,-.5) rectangle (1,0);
\fill[yellow!25] (2,3) rectangle (6,3.5);
\fill[yellow] (-1,0) \north\east\east\north\east\north \east\east \south \west\south\south\west\west\west\west;
\hatch{1}{1}
\newcell{2}{2}\newcell{0}{0}\newcell{1}{0}\newcell{2}{1}\newcell{2}{0}\newcell{3}{2}
\fill[pink!50](-1,-.5) rectangle (1,0);
\fill[pink!50](3,3) rectangle (6,3.5);
\bright{3}{2}\halfbright{3-3}{2-2.5}
 \draw[very thin,dotted] (-1,0)  grid (4,3);
 \draw[black, very thick] (-1,-.5)--++(0,.5) \north\east\east\north\east\north -- ++(0,.5);
 \draw[black, very thick] (6,3.5)-- ++(0,-.5)\west\west\south\west\south\south\west\west  -- ++(0,-0.5);
  \draw[violet, ultra thick,dotted] (3,3.5)-- ++(0,-.5)\west\south\south\west\west\south \west -- ++(0,-0.5);
 \draw (2.5,2.5) node {5}; \draw (2.5-3,2.5-3) node {\footnotesize 5}; 
 \draw (.5,.5) node {2}; \draw (.5+3,.5+3) node {\footnotesize 2};  
 \draw (1.5,.5) node {4}; \draw (1.5+3,.5+3) node {\footnotesize 4}; 
 \draw (2.5,1.5) node {7}; 
 \draw (2.5,.5) node {8}; \draw (2.5+3,.5+3) node {\footnotesize 8}; 
 \draw (3.5,2.5) node {6}; \draw (3.5-3,2.5-3) node {\footnotesize 6}; 
 \draw (1.5,1.5) node {3}; 
 \draw (-.5,.5) node {1}; \draw (-.5+3,.5+3) node {\footnotesize 1};  
 \draw[->] (0,3.5)--(0,-1);
\draw[->] (-1,3)--(6.5,3);
\draw[->] (-3,1.5)--(-2.5,1.5);

\begin{scope}[shift={(9.5,0)}]
\fill[yellow!25] (-1,-.5) rectangle (2,0);
\fill[yellow!25] (2,3) rectangle (6,3.5);
\fill[yellow] (-1,0) \north\east\east\east\north\north \east\east\east \south \west\west\south\south\west\west\west\west;
\hatch{-1}{0}\halfhatch{2}{3}
\newcell{2}{2}\newcell{0}{0}\newcell{1}{0}\newcell{2}{1}\newcell{2}{0}\newcell{3}{2}\newcell{4}{2}
\fill[pink!50](-1,-.5) rectangle (2,0);
\fill[pink!50](3,3) rectangle (6,3.5);
\bright{4}{2}\halfbright{4-3}{2-2.5}
\draw[very thin,dotted] (-1,0)  grid (5,3);
 \draw[black, very thick] (-1,-.5)--++(0,.5) \north\east\east\east\north\north -- ++(0,.5);
 \draw[black, very thick] (6,3.5)-- ++(0,-.5)\west\south\west\west\south\south\west  -- ++(0,-0.5);
  \draw[violet, ultra thick,dotted] (3,3.5)-- ++(0,-.5)\west\south\south\west\west\south \west -- ++(0,-0.5);
 \draw (2.5,2.5) node {4}; \draw (2.5-3,2.5-3) node {\footnotesize 4}; 
 \draw (.5,.5) node {2}; \draw (.5+3,.5+3) node {\footnotesize 2};  
 \draw (1.5,.5) node {3}; \draw (1.5+3,.5+3) node {\footnotesize 3}; 
 \draw (2.5,1.5) node {5}; 
 \draw (2.5,.5) node {7}; \draw (2.5+3,.5+3) node {\footnotesize 7}; 
 \draw (3.5,2.5) node {6}; \draw (3.5-3,2.5-3) node {\footnotesize 6}; 
 \draw (4.5,2.5) node {8}; \draw (4.5-3,2.5-3) node {\footnotesize 8}; 
 \draw (-.5,.5) node {1}; \draw (-.5+3,.5+3) node {\footnotesize 1};  
 \draw[->] (0,3.5)--(0,-1);
\draw[->] (-1,3)--(6.5,3);
\draw[->] (-3,1.5)--(-2.5,1.5);
\end{scope}

\begin{scope}[shift={(19,0)}]
\fill[pink!50] (-1,-.5) rectangle (2,0);
\fill[pink!50] (3,3) rectangle (6,3.5);
\fill[pink] (0,0) \north\east\east\north\north \east\east\east \south \west\south\west\south\west\west\west;
\bright{3}{1}
 \draw[very thin,dotted] (0,0)  grid (5,3);
 \draw[black, very thick] (6,3.5)-- ++(0,-.5)\west\south\west\south\west\south\west  -- ++(0,-0.5);
  \draw[violet, ultra thick] (3,3.5)-- ++(0,-.5)\west\south\south\west\west\south \west -- ++(0,-0.5);
  \draw[violet] (1.5,1.5) node {$\alpha$};
 \draw (2.5,2.5) node {1}; \draw (2.5-3,2.5-3) node {\footnotesize 1}; 
 \draw (.5,.5) node {2}; \draw (.5+3,.5+3) node {\footnotesize 2};  
 \draw (1.5,.5) node {3}; \draw (1.5+3,.5+3) node {\footnotesize 3}; 
 \draw (2.5,1.5) node {4}; 
 \draw (2.5,.5) node {5}; \draw (2.5+3,.5+3) node {\footnotesize 5}; 
 \draw (3.5,2.5) node {6}; \draw (3.5-3,2.5-3) node {\footnotesize 6}; 
 \draw (4.5,2.5) node {7}; \draw (4.5-3,2.5-3) node {\footnotesize 7}; 
 \draw (3.5,1.5) node {8}; 
 \draw[->] (0,3.5)--(0,-1);
\draw[->] (-1,3)--(7.5,3);
\draw[->] (-3,1.5)--(-2.5,1.5);
\draw (1,4) node {$\bij(T)=P$};
\end{scope}
\end{scope}

\end{tikzpicture}
\caption{An example of the bijection $\bij$ from Corollary~\ref{cor:bij} with $\alpha=[2,2,0]\in\CS_{3,3}$ and $n=8$. In the computation of $\bij(T)=P$, the inner corners where internal insertion is about to occur are shaded with \textcolor{gray}{gray} diagonal lines, and the newly added cells where the previous insertion terminated are highlighted with a \textcolor{green}{green} grid pattern. }
\label{fig:bij}
\end{figure}
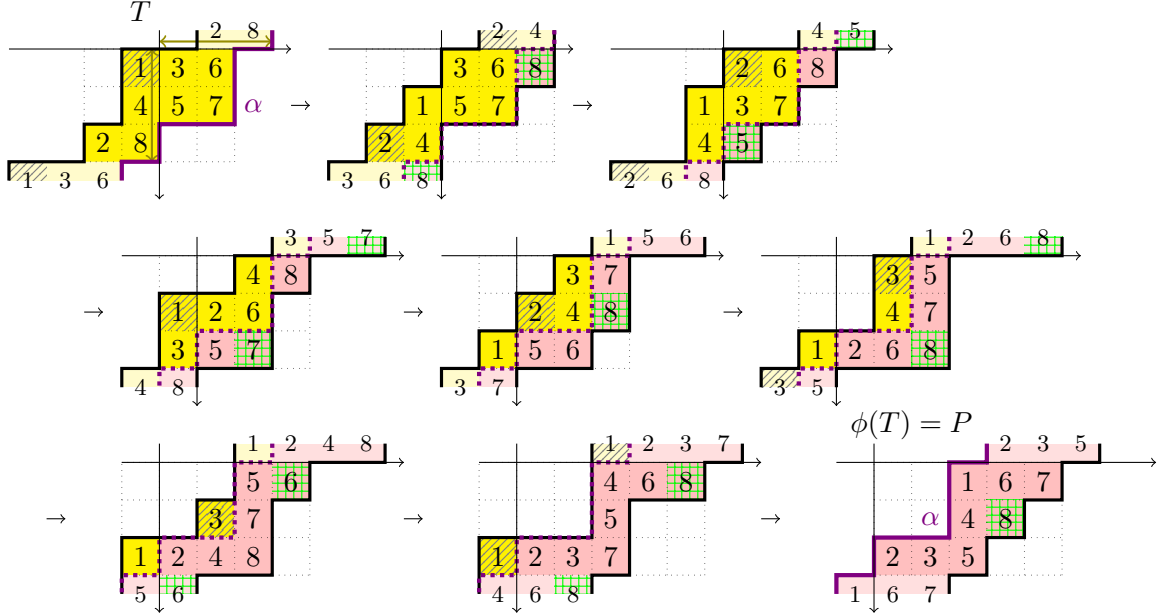 

 In Section~\ref{sec:complementation} we will show that the map $\bij$ is essentially an involution.
We are now ready to use our correspondences from Section~\ref{sec:connections} to translate the bijection $\bij$ into the other settings, namely, walks in simplicies and sequences of states in the TASEP.

\begin{theorem}\label{thm:backward}
Fix $d,L\ge1$. Let $\alpha\in\CS_{d,L}$, let $\x=(x_1,x_2,\dots,x_d)\in\Delta_{d,L}$ where $x_i=\alpha_{i-1}-\alpha_{i}$ for $i\in[d]$, and let $u=0^{x_1}10^{x_2}1\dots 0^{x_d}1$.
There are bijections between the sets in Theorem~\ref{thm:equivalence} and the following sets:
\begin{enumerate}[label={(\^{\alph*})}]
\item\label{it:a''} The set $\SCT_{d,L}^n(\alpha/\cdot)$ of standard cylindric tableaux of period $(d,L)$ with $n$ cells and outer shape~$\alpha$.
\item\label{it:b''} The set of $n$-step walks in $\D_{d,L}$ ending at vertex $\x$.
\item\label{it:c''} The set of $n$-step walks in $\E_{d,L}$ ending at state $u$.
\item\label{it:d''} The set of $n$-step walks in $\N_{d,L}$ ending at state $\bracket{u}$.
\end{enumerate}
\end{theorem}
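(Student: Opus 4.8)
The plan is to treat Corollary~\ref{cor:bij} as the only substantive ingredient and to derive everything else by formal manipulations. The bijection $\bij\colon \SCT^n_{d,L}(\alpha/\cdot)\to\SCT^n_{d,L}(\cdot/\alpha)$ of Corollary~\ref{cor:bij} is exactly a bijection between set~\ref{it:a''} and set~\ref{it:a} of Theorem~\ref{thm:equivalence}. Since Theorem~\ref{thm:equivalence} already places \ref{it:a} in bijection with \ref{it:b}, \ref{it:c} and \ref{it:d}, this immediately puts \ref{it:a''} in bijection with all four sets of Theorem~\ref{thm:equivalence}. It then remains only to connect the four hatted sets \ref{it:a''}--\ref{it:d''} to one another.

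For this I would use the reversal of walks: reversing the vertex sequence of an $n$-step oscillating walk is an involution, and restricting it to type $-^n$ yields a bijection between type-$-^n$ oscillating walks starting at a vertex $v$ and forward walks ending at $v$, valid in any directed multigraph. In $\CSg_{d,L}$, reversing a sequence $\alpha=\rho^0\supset\rho^1\supset\dots\supset\rho^n$ in $\OCT^{-^n}_{d,L}(\alpha,\cdot)$ produces a forward walk ending at $\alpha$, i.e.\ (by Remark~\ref{rem:SCT}) the walk representation of a standard cylindric tableau of outer shape $\alpha$; thus reversal identifies $\OCT^{-^n}_{d,L}(\alpha,\cdot)$ with \ref{it:a''}. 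The same reversal identifies $\OW^{-^n}_{d,L}(\x)$, together with the type-$-^n$ oscillating walks in $\E_{d,L}$ and $\N_{d,L}$ starting at $u$ and $\bracket u$, with \ref{it:b''}, \ref{it:c''} and \ref{it:d''}, respectively.

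Now I would invoke Theorem~\ref{thm:equivalence-oscillating} with $w=-^n$: it already supplies bijections among precisely those four ``type-$-^n$ starting'' sets, bundling the covering maps $f,g,q$ of Lemma~\ref{lem:PD}, Lemma~\ref{lem:DN} and Example~\ref{ex:EN} and, in particular, the rotation bookkeeping needed for $\E_{d,L}$. Conjugating each of these bijections by the reversal involution of the previous paragraph transports them to bijections among \ref{it:a''}, \ref{it:b''}, \ref{it:c''} and \ref{it:d''}. Combined with the first paragraph, this shows that all eight sets \ref{it:a}--\ref{it:d} and \ref{it:a''}--\ref{it:d''} are mutually in bijection, which is the assertion of the theorem. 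As a by-product, the composition \ref{it:b}$\to$\ref{it:a}$\to$\ref{it:a''}$\to$\ref{it:b''}, namely $\bij$ flanked by the bijections of Theorem~\ref{thm:equivalence} and by reversal, realizes the bijection of Theorem~\ref{thm:courtiel_bijections_2021} between walks in $\D_{d,L}$ starting at $\x$ and walks ending at $\x$.

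I expect no obstacle beyond record-keeping, since the content is concentrated in Corollary~\ref{cor:bij}. The one genuine subtlety is that Lemma~\ref{lem:covering} (and hence Theorem~\ref{thm:equivalence-oscillating}) is phrased for walks \emph{starting} at a vertex, whereas \ref{it:b''}--\ref{it:d''} concern walks \emph{ending} at a vertex; the reversal step is exactly what reconciles the two, by converting an ``ending'' walk into a type-$-^n$ ``starting'' walk before any covering map is applied. One should also confirm that reversal respects the prescribed endpoints in each of $\CSg_{d,L}$, $\D_{d,L}$, $\E_{d,L}$ and $\N_{d,L}$, which is immediate because the covering maps act on vertices while reversal merely permutes the vertex sequence.
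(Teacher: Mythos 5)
Your proposal is correct and follows essentially the same route as the paper: the map $\bij$ of Corollary~\ref{cor:bij} supplies the bijection between \ref{it:a''} and \ref{it:a}, and the remaining bijections among \ref{it:a''}--\ref{it:d''} come from Theorem~\ref{thm:equivalence-oscillating} with $w=-^n$ together with the reversal identification of type-$-^n$ walks starting at a vertex with forward walks ending at that vertex. The only cosmetic difference is that you phrase the last step as conjugating the bijections of Theorem~\ref{thm:equivalence-oscillating} by the reversal involution, while the paper states the identification directly; the mathematical content is identical.
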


\begin{proof}
The map $\bij$ from Corollary~\ref{cor:bij} provides a bijection between the set \ref{it:a''} and the set \ref{it:a} from Theorem~\ref{thm:equivalence}. 

The bijections between the sets \ref{it:a''}--\ref{it:d''} are obtained by applying Theorem~\ref{thm:equivalence-oscillating} for $w=-^n$. 
Indeed, elements of the set~\ref{it:A} are oscillating tableaux of type $-^n$ starting at $\alpha$. By reversing the order of the shapes, these are equivalent to oscillating tableaux of type $+^n$ ending at $\alpha$, which correspond to elements of $\SCT_{d,L}^n(\alpha/\cdot)$. Similarly, elements of the sets \ref{it:B}, \ref{it:C} and~\ref{it:D} are oscillating walks in $\D_{d,L}$, $\E_{d,L}$ and $\N_{d,L}$ of type $-^n$ starting at $\x$, $u$ and $\bracket{u}$, respectively. After reversal, these become oscillating walks of type $+^n$ (i.e., $n$-step forward walks) ending at~$\x$, $u$ and $\bracket{u}$, which correspond to the sets \ref{it:b''}, \ref{it:c''} and~\ref{it:d''}, respectively.

It is also not hard to describe the bijections directly. To go from \ref{it:a''} to \ref{it:b''}, let $T\in\SCT_{d,L}^n(\alpha/\cdot)$, and let $i_k\in[d]$ be the row that contains entry $k$, for each $k\in[n]$. Then the image of $T$ is the $n$-step walk in $\D_{d,L}$ with steps $s_{i_1}s_{i_2}\dots s_{i_n}$ ending at~$\x$. 

To obtain the corresponding element in \ref{it:c''}, we first index the $d$ particles in state $u$ so that, for $i\in[d]$, particle $i$ occupies the site corresponding to the $i$th $1$ from the left. Then the vertex preceding $u$ in the walk is the state obtained from $u$ by moving particle $i_n$ to the next site in clockwise direction. The vertex before that is obtained now by moving particle $i_{n-1}$ to the next site in clockwise direction, and so on, until particle $i_{1}$ is moved, at which point the resulting state is the initial vertex of the walk.
\end{proof}

Figure~\ref{fig:backward} gives an example of these bijections when applied to the standard cylindric tableau and the walks in Figure~\ref{fig:33walk}.

\begin{figure}%[htb]
\centering
\begin{tikzpicture}
\begin{scope}[scale=0.5]
\fill[yellow!25] (-4,-.5) rectangle (-1,0);
\fill[yellow!25] (1,3) rectangle (3,3.5);
\fill[yellow] (-2,0) \north\east\north\north\east\east \east\east  \west\south\south\west\west\south \west\west;
 \draw[very thin,dotted] (-2,0)  grid (2,3);
 \draw[green, very thick] (-4,-.5)--++(0,.5) \east\east \north\east\north\north\east\east -- ++(0,.5);
 \draw[violet, ultra thick] (3,3.5)-- ++(0,-.5)\west\south\south\west\west\south \west -- ++(0,-0.5);
  \draw[thick,<->,olive] (-.2,0)--(-.2,3); \draw[olive] (.3,.6) node {$d$};
 \draw[thick,<->,olive] (0,3.2)-- (3,3.2); \draw[olive] (.5,3.6) node {$L$};
 \draw[purple] (-.5,2.5) node {1}; \draw[purple] (-.5-3,2.5-3) node {\footnotesize 1}; 
 \draw[purple] (-1.5,.5) node {2}; \draw[purple] (-1.5+3,.5+3) node {\footnotesize 2};  
 \draw[purple] (.5,2.5) node {3}; \draw[purple] (.5-3,2.5-3) node {\footnotesize 3}; 
 \draw[purple] (-.5,1.5) node {4}; 
 \draw[purple] (.5,1.5) node {5}; 
 \draw[purple] (1.5,2.5) node {6}; \draw[purple] (1.5-3,2.5-3) node {\footnotesize 6}; 
 \draw[purple] (1.5,1.5) node {7}; 
 \draw[purple] (-.5,.5) node {8}; \draw[purple] (-.5+3,.5+3) node {\footnotesize 8};  
 \draw[violet] (2.5,1.5) node {$\alpha$};
 \draw[->] (0,3.5)--(0,-1);
 \draw[->] (-4,3)--(3.5,3);
\draw (-.5,4) node {$T$};

\draw[purple] (-4.3,2.5) node {$s_1$};
\draw[purple] (-4.3,1.5) node {$s_2$};
\draw[purple] (-4.3,.5) node {$s_3$};

\draw (1,5.5) node {\ref{it:a''}};
\end{scope}

\begin{scope}[shift={(6,-1.2)},scale=.8]
\coordinate (a) at (2,0);
\coordinate (b) at (1,1.732);

\foreach \x in {0,...,3}{
	\foreach \y in {\x,...,3}{
		\fill ($\x*(a)+3*(b)-\y*(b)$) circle (.1);
}}

\fill[red] (0,0) circle (.1);
\fill[red] ($3*(a)$) circle (.1);
\fill[red] ($3*(b)$) circle (.1);
\fill[blue] (a) circle (.1);
\fill[blue] ($2*(a)+(b)$) circle (.1);
\fill[blue] ($2*(b)$) circle (.1);
\fill[green] (b) circle (.1);
\fill[green] ($2*(b)+(a)$) circle (.1);
\fill[green] ($2*(a)$) circle (.1);
\fill[brown] ($(b)+(a)$) circle (.1);

\foreach \x in {0,...,2}{
	\foreach \y in {\x,...,2}{
		\arrows{($\x*(a)+3*(b)-\y*(b)$)}{($\x*(a)+2*(b)-\y*(b)$)}
		\arrows{($\x*(b)+2*(a)-\y*(a)$)}{($\x*(b)+3*(a)-\y*(a)$)}
		\arrows{($\x*(b)+3*(a)-\y*(a)$)}{($\x*(b)+(b)+2*(a)-\y*(a)$)}
}}
\arrowsl{($(b)$)}{($(a)+(b)$)}{$1$}{below}
\arrowsl{($(a)+(b)$)}{($(a)$)}{$2$}{left}
\arrowsl{($(a)$)}{($2*(a)$)}{$3$}{below}
\arrowsl{($2*(a)$)}{($(a)+(b)$)}{$4$}{right}
\arrowsl{($(a)+(b)$)}{($2*(b)$)}{$5$}{right}
\arrowsl{($2*(b)$)}{($2*(b)+(a)$)}{$6$}{below}
\arrowsl{($2*(b)+(a)$)}{($3*(b)$)}{$7$}{right}
\arrowsl{($3*(b)$)}{($2*(b)$)}{$8$}{left}

\draw (0,0) node[left] {$C=(3,0,0)$};
\draw ($3*(a)$) node[right] {$(0,3,0)$};
\draw ($3*(b)$) node[above] {$(0,0,3)$};
\draw[violet] ($2*(b)$) node[left] {$\x=(1,0,2)$};
\draw[violet,ultra thick] ($2*(b)$) circle (.12);

\arrowsl{($3*(b)-2*(a)+(-.5,-2.5)$)}{($3*(b)-(a)+(-.5,-2.5)$)}{$s_1$}{below}
\arrowsl{($3*(b)-2*(a)+(-.5,-2.5)$)}{($4*(b)-3*(a)+(-.5,-2.5)$)}{$s_2$}{left}
\arrowsl{($3*(b)-2*(a)+(-.5,-2.5)$)}{($2*(b)-2*(a)+(-.5,-2.5)$)}{$s_3$}{left}

\draw[purple] (-.5,5) node {$s_1s_3s_1s_2s_2s_1s_2s_3$};
\draw (5.5,5.5) node {\ref{it:b''}};
\draw (5.5,3) node {$\D_{3,3}$};
\end{scope}

\begin{scope}[shift={(4.5,-6)},scale=0.42]
\draw[violet,very thick] (240:\dB) circle (1.05);
\state{180}{\dD}{6}{{2,4,6}}{brown}
\state{180}{\dAA}{6}{{1,2,3}}{red}
\state{180}{\dA}{6}{{3,4,5}}{red}
\state{180}{\dB}{6}{{1,5,6}}{red}
\state{180+60}{\dAA}{6}{{2,3,6}}{blue} 
\state{180+60}{\dA}{6}{{2,4,5}}{blue}
\state{180+60}{\dB}{6}{{1,4,6}}{blue}
\state{180+2*60}{\dAA}{6}{{2,3,5}}{green}
\state{180+2*60}{\dA}{6}{{1,3,6}}{green}
\state{180+2*60}{\dB}{6}{{1,4,5}}{green}
\state{180+3*60}{\dD}{6}{{1,3,5}}{brown}
\state{180+3*60}{\dAA}{6}{{2,3,4}}{red}
\state{180+3*60}{\dA}{6}{{1,2,6}}{red}
\state{180+3*60}{\dB}{6}{{4,5,6}}{red}
\state{180+4*60}{\dAA}{6}{{1,3,4}}{blue}
\state{180+4*60}{\dA}{6}{{1,2,5}}{blue}
\state{180+4*60}{\dB}{6}{{3,5,6}}{blue}
\state{180+5*60}{\dAA}{6}{{1,2,4}}{green}
\state{180+5*60}{\dA}{6}{{3,4,6}}{green}
\state{180+5*60}{\dB}{6}{{2,5,6}}{green}
\foreach \i in {0,2,3,5}{
	\coordinate (aa1) at (180+\i*60:\dAA);
	\coordinate (a1) at (180+\i*60:\dA);
	\coordinate (b1) at (180+\i*60:\dB);
	\coordinate (aa2) at (180+60+\i*60:\dAA);
	\coordinate (a2) at (180+60+\i*60:\dA);
	\coordinate (b2) at (180+60+\i*60:\dB);
	\arrow{(aa1)}{(aa2)}
	\arrow{(a1)}{(a2)}
	\arrow{(b1)}{(b2)}
}
\foreach \i in {1,4}{
	\coordinate (aa1) at (180+\i*60:\dAA);
	\coordinate (a1) at (180+\i*60:\dA);
	\coordinate (b1) at (180+\i*60:\dB);
	\coordinate (aa2) at (180+60+\i*60:\dAA);
	\coordinate (a2) at (180+60+\i*60:\dA);
	\coordinate (b2) at (180+60+\i*60:\dB);
	\arrow{(aa1)}{(aa2)}
	\arrow{(aa1)}{(a2)}
	\arrow{(a1)}{(aa2)}
	\arrow{(a1)}{(b2)}
	\arrow{(b1)}{(a2)}
	\arrow{(b1)}{(b2)}
}
\foreach \i in {2,5}{
	\coordinate (aa1) at (180+\i*60:\dAA);
	\coordinate (a1) at (180+\i*60:\dA);
	\coordinate (b1) at (180+\i*60:\dB);
	\coordinate (d) at (180+60+\i*60-3:\dD+.5);
	\arrow{(aa1)}{(d)}
	\arrow{(a1)}{(d)}
	\arrow{(b1)}{(d)}
}
\foreach \i in {1,4}{
	\coordinate (aa1) at (180+\i*60:\dAA);
	\coordinate (a1) at (180+\i*60:\dA);
	\coordinate (b1) at (180+\i*60:\dB);
	\coordinate (d) at (180-60+\i*60+3:\dD+.5);
	\arrow{(d)}{(aa1)}
	\arrow{(d)}{(a1)}
	\arrow{(d)}{(b1)}
}

\draw[violet] (180+60:\dB)++(1.7,1.65) node {$u=011001$};
\draw[gray] (180+60:\dB)++(180:.6) node {\footnotesize $\mathtt{1}$};
\draw[gray] (180+60:\dB)++(120:.6) node {\footnotesize $\mathtt{2}$};
\draw[gray] (180+60:\dB)++(-60:.6) node {\footnotesize $\mathtt{3}$};
\draw (180:\dB)++(1.1,0) node[right] {$111000$};
\draw[gray] (180:\dB)++(180:.6) node {\footnotesize $\mathtt{1}$};
\draw[gray] (180:\dB)++(120:.6) node {\footnotesize $\mathtt{2}$};
\draw[gray] (180:\dB)++(-120:.6) node {\footnotesize $\mathtt{3}$};
\draw (120:\dB)++(.6,-1.6) node {$110100$};
\draw[gray] (120:\dB)++(180:.6) node {\footnotesize $\mathtt{1}$};
\draw[gray] (120:\dB)++(60:.6) node {\footnotesize $\mathtt{2}$};
\draw[gray] (120:\dB)++(-120:.6) node {\footnotesize $\mathtt{3}$};
\draw (60:\dA)++(1.1,0) node[right] {$101100$};
\draw[gray] (60:\dA)++(120:.6) node {\footnotesize $\mathtt{1}$};
\draw[gray] (60:\dA)++(60:.6) node {\footnotesize $\mathtt{2}$};
\draw[gray] (60:\dA)++(-120:.6) node {\footnotesize $\mathtt{3}$};
\draw (0:\dD)++(1.3,0) node[right] {$101010$};
\draw[gray] (0:\dD)++(120:.6) node {\footnotesize $\mathtt{1}$};
\draw[gray] (0:\dD)++(0:.6) node {\footnotesize $\mathtt{2}$};
\draw[gray] (0:\dD)++(-120:.6) node {\footnotesize $\mathtt{3}$};
\draw (300:\dB)++(.9,-.3) node[right] {$101001$};
\draw[gray] (300:\dB)++(120:.6) node {\footnotesize $\mathtt{1}$};
\draw[gray] (300:\dB)++(-60:.6) node {\footnotesize $\mathtt{2}$};
\draw[gray] (300:\dB)++(-120:.6) node {\footnotesize $\mathtt{3}$};
\draw (240:\dA)++(1.1,0) node[right] {$100101$};
\draw[gray] (240:\dA)++(60:.6) node {\footnotesize $\mathtt{1}$};
\draw[gray] (240:\dA)++(-60:.6) node {\footnotesize $\mathtt{2}$};
\draw[gray] (240:\dA)++(-120:.6) node {\footnotesize $\mathtt{3}$};
\draw (180:\dD)++(-1.3,0) node[left] {$010101$};
\draw[gray] (180:\dD)++(60:.6) node {\footnotesize $\mathtt{1}$};
\draw[gray] (180:\dD)++(-60:.6) node {\footnotesize $\mathtt{2}$};
\draw[gray] (180:\dD)++(180:.6) node {\footnotesize $\mathtt{3}$};
\draw (120:\dA)++(1.1,0) node[right] {$010011$};
\draw[gray] (120:\dA)++(0:.6) node {\footnotesize $\mathtt{1}$};
\draw[gray] (120:\dA)++(-60:.6) node {\footnotesize $\mathtt{2}$};
\draw[gray] (120:\dA)++(180:.6) node {\footnotesize $\mathtt{3}$};

\arrowl{(180-60:\dA)}{(180-3:\dD+.5)}{$1$}{above}
\arrowl{(180+3:\dD+.5)}{(180+60:\dA)}{$2$}{below}
\arrowl{(180+60:\dA)}{(180+2*60:\dB)}{$3$}{below  right=-1mm}
\arrowl{(180+2*60:\dB)}{(180+3*60-3:\dD+.5)}{$4$}{below}
\arrowl{(180+3*60+3:\dD+.5)}{(180+4*60:\dA)}{$5$}{above}
\arrowl{(180+4*60:\dA)}{(180+5*60:\dB)}{$6$}{above left=-1mm}
\arrowl{(180+5*60:\dB)}{(180:\dB)}{$7$}{left}
\arrowl{(180:\dB)}{(180+60:\dB)}{$8$}{left}

\draw (-13,7) node {\ref{it:c''}};
\draw (11,6) node {$\E_{3,3}$};
\end{scope}
\end{tikzpicture}
\caption{An example of the bijections from Theorem~\ref{thm:backward} when applied to the objects in Figure~\ref{fig:33walk}: 
the standard cylindric tableau $T=\bij^{-1}(P)$ with outer shape $\alpha=[2,2,0]\in\CS_{3,3}$ (top left), the corresponding walk in $\D_{3,3}$ ending at $\x=(1,0,2)$ (top right), and the corresponding walk in $\E_{3,3}$ ending at $u=011001$ (bottom).}
\label{fig:backward}
\end{figure}
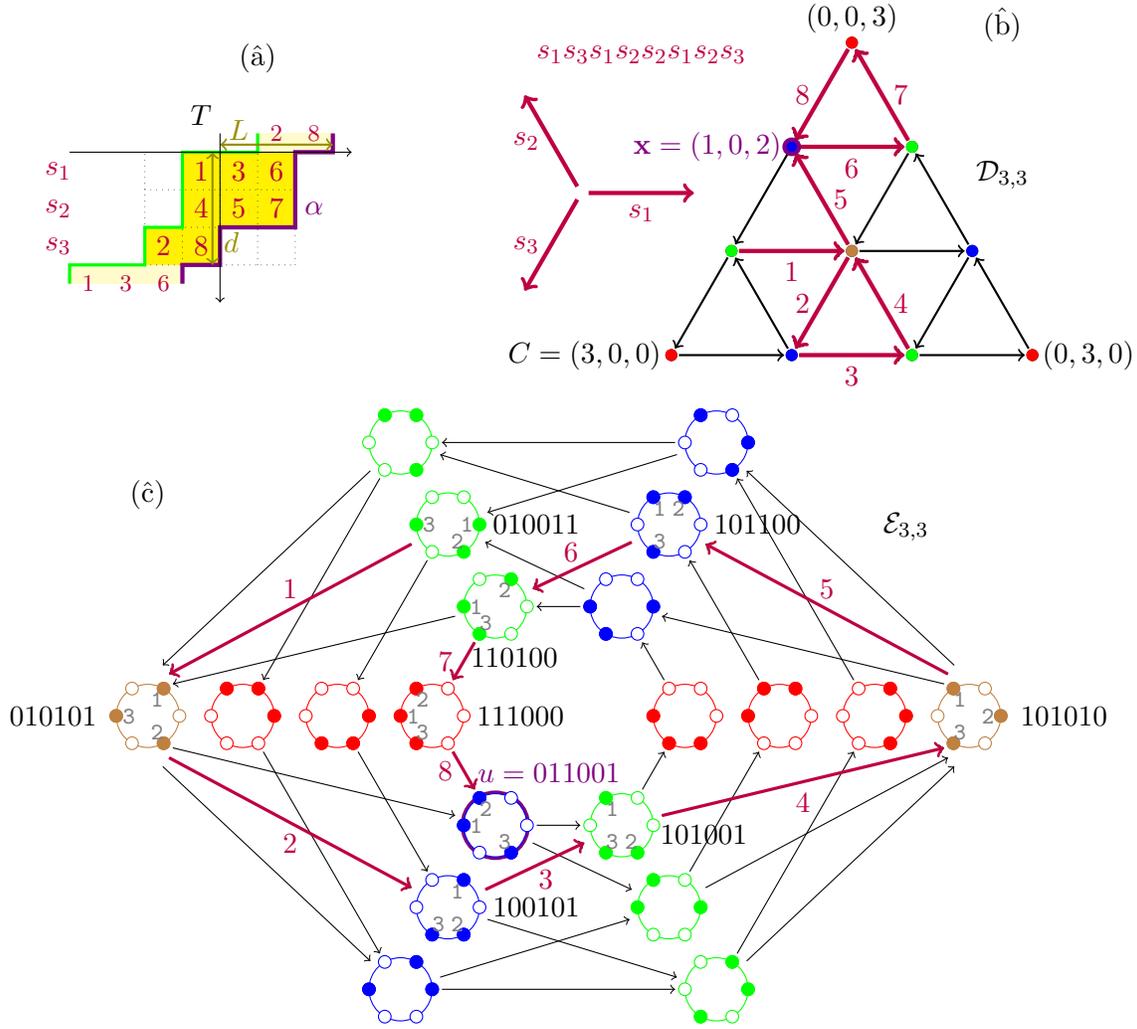 

The resulting bijection between the sets~\ref{it:b} (from Theorem~\ref{thm:equivalence}) and~\ref{it:b''} (from Theorem~\ref{thm:backward}) gives an alternative proof Theorem~\ref{thm:courtiel_bijections_2021}.
Our bijection is obtained by translating the map $\bij$ from Corollary~\ref{cor:bij} (which is a special case of cylindric RS when both tableaux are equal) in terms of simplicial walks.

By comparison, Courtiel, Elvey Price and Marcovici's construction repeatedly applies certain flips to adjacent steps and to the last step of the walk in $\D_{d,L}$ starting at $\x$, until all the forward steps have been switched into backward steps. The authors also give an alternative description of this bijection in \cite[Sec.~2.3]{courtiel_bijections_2021} in terms of tilings of a tilted square with labeled tiles that must follow certain rules that emulate the allowed flips in the walks. 

\section{Cylindric growth diagrams}\label{sec:growth} 

Growth diagrams were introduced by Fomin~\cite{fomin_generalized_1986,fomin_duality_1994,fomin_schensted_1995} as an alternative description of the RS correspondence, in order to generalize it to differential posets~\cite{stanley_differential_1988}. We refer the reader to \cite[Sec.~7.13]{stanley_enumerative_1999} and \cite[Sec.~5.2]{sagan_symmetric_2001} for expositions on growth diagrams. Roby showed in \cite[Ch.~3]{roby_applications_1991} that Sagan and Stanley's analogue of the RS algorithm for skew tableaux~\cite{sagan_robinson-schensted_1990} also has a natural description in terms of growth diagrams.

The goal of this section is to extend the growth diagram approach to the cylindric case. We will define cylindric growth diagrams and use them to provide a new description of the bijections $\CRS$ and $\CRSK$ from Theorems~\ref{thm:CRS} and~\ref{thm:CRSK}, i.e., Neyman's cylindric version of the RSK algorithm. As in the case of skew tableaux, this description makes the construction more natural, and it elucidates the symmetry obtained when switching the two tableaux in the pair.

\subsection{Growth diagrams for SCT}\label{sec:growthSCT} 

We start with the case of standard cylindric tableaux, which is simpler and more similar to the classical case. In Subsection~\ref{sec:growthSSCT} we will explain how to deal with repeated entries. 

Fix $\alpha,\beta\in\CS_{d,L}$ and $n,m\ge0$.
Given $T\in\SCT_{d,L}(\alpha/\mu)$ and $U\in\SCT_{d,L}(\beta/\mu)$, where $\mu\subseteq\alpha,\beta$ satisfies $|\alpha/\mu|=n$ and $|\beta/\mu|=m$, we will describe how to compute a pair $(\tP,\tQ)$ using cylindric growth diagrams, and then show that $(\tP,\tQ)=\CRS(T,U)$, where $\CRS$ is the bijection defined in Theorem~\ref{thm:CRS}.

We draw growth diagrams as $m\times n$ rectangular grids whose vertices have integer coordinates $(x,y)$ for $0\le x\le m$ and $0\le y\le n$. 
We use the usual orientation of Cartesian coordinates for the vertices of growth diagrams, rather than the rotated convention that we used for cells of the cylinder $\C_{d,L}$ in Section~\ref{sec:SCT}. Each vertex of the grid will be labeled with an element of $\CS_{d,L}$.

Before we proceed with the description of the labels, let us highlight some differences between our approach and the usual setup for growth diagrams \cite{roby_applications_1991,sagan_symmetric_2001,stanley_enumerative_1999}.
Unlike Young's lattice, the lattice $(\CS_{d,L},\subseteq)$ of cylindric shapes ordered by containment is not an {\em $r$-differential poset} as defined in \cite[Def.~1.1]{stanley_differential_1988}, since it does not have a minimal element $\hat{0}$. 
However, if we disregard this condition and relax the usual requirement that $r$ be a positive integer, then $(\CS_{d,L},\subseteq)$ satisfies the other parts of this definition with $r=0$. Specifically,
\begin{itemize}
\item[(D1)] it is locally finite and it has a rank function (letting the rank of $\rho$ be $|\rho|$, as defined in Section~\ref{sec:SCT});
\item[(D2)] for any $\rho,\nu\in\CS_{d,L}$ with $\rho\neq\nu$, the number of elements covered by both $\rho$ and $\nu$ equals the number of elements that cover both $\rho$ and $\nu$ (this number is always $0$ or $1$); and
\item[(D3)] every $\rho\in\CS_{d,L}$ covers the same number of elements that it is covered by (this number equals the number of insertion corners $c(\rho)$, see Equation~\eqref{eq:corners}). 
\end{itemize}

For comparison, condition (D3) for an $r$-differential poset states that if an element covers exactly $k$ elements (for some $k$), then it must be covered by exactly $k+r$ elements. One consequence of setting $r=0$ in this rule is that, unlike usual growth diagrams, cylindric  growth diagrams will never have crosses in the squares of the $m\times n$ grid. Another difference is that, for any edge in a cylindric growth diagram (in the case of standard tableaux) with endpoints labeled $\nu$ and $\rho$, with $\rho$ above or to the right of $\nu$, the shape $\rho$ is obtained from $\nu$ by adding exactly one cell; in particular, $\nu\neq\rho$.

Using Fomin's terminology~\cite{fomin_duality_1994,fomin_schensted_1995}, the corresponding {\em graded graph} is $\CSg_{d,L}$. The case $r=0$ for graded graphs was in fact considered in \cite[Eq.~(2.2.9)]{fomin_duality_1994}.

Now we are ready to describe the labels of the vertices of the $m\times n$ grid. We start by labeling the vertices in the left boundary  (i.e., those with coordinates of the form $(0,y)$), from bottom to top, by the sequence $\ssh(T)$, as defined in Equation~\eqref{eq:ssh}. 
Similarly, we label the vertices in the bottom boundary (i.e., those of the form $(x,0)$), from left to right, by the sequence $\ssh(U)$.
In particular, vertex $(0,0)$ has label $\mu$, vertex $(0,n)$ has label $\alpha$, and vertex $(m,0)$ has label $\beta$. See Figure~\ref{fig:growth} for an example.

\begin{figure}[htb]
\centering
\begin{tikzpicture}[scale=1]
\draw[dotted] (0,0) grid (4,5);
\draw[blue] (0,0) node {$[1 1 0]$}; \draw[blue] (-.3,0) node[left] {$\mu=$}; 
\draw (1,0) node {$[1 1 1]$};
\draw (2,0) node {$[2 1 1]$}; 
\draw (3,0) node {$[2 2 1]$};
\draw[red] (4,0) node {$[2 2 2]$}; \draw[red] (4.3,0) node[right] {$=\beta$}; 
\draw (0,1) node {$[2 1 0]$};
\draw (1,1) node {$[2 1 1]$};
\draw (2,1) node {$[2 2 1]$}; 
\draw (3,1) node {$[2 2 2]$};
\draw (4,1) node[purple] {$[3 2 2]$}; 
\draw (0,2) node {$[2 1 1]$};
\draw (1,2) node {$[3 1 1]$};
\draw (2,2) node {$[3 2 1]$}; 
\draw (3,2) node[purple] {$[3 2 2]$};
\draw (4,2) node[purple] {$[3 3 2]$}; 
\draw (0,3) node {$[3 1 1]$};
\draw (1,3) node {$[3 2 1]$};
\draw (2,3) node {$[3 2 2]$}; 
\draw (3,3) node[purple] {$[4 2 2]$};
\draw (4,3) node {$[4 3 2]$};
\draw (0,4) node {$[3 2 1]$};
\draw (1,4) node[purple] {$[3 2 2]$};
\draw (2,4) node[purple] {$[4 2 2]$}; 
\draw (3,4) node[purple] {$[4 3 2]$};
\draw (4,4) node {$[4 3 3]$};
\draw[violet] (0,5) node {$[3 3 1]$}; \draw[violet] (-.3,5) node[left] {$\alpha=$}; 
\draw (1,5) node[purple] {$[3 3 2]$};
\draw (2,5) node {$[4 3 2]$}; 
\draw (3,5) node {$[4 3 3]$};
\draw[brown] (4,5) node {$[5 3 3]$}; \draw[brown] (4.3,5) node[right] {$=\lambda$}; 

\draw[->] (1.5,-.4)--node[below]{$U$} (2.5,-.4);
\draw[->] (-.7,2)--node[left]{$T$} (-.7,3);
\draw[->] (1.5,5.4)--node[above]{$Q$} (2.5,5.4);
\draw[->] (4.7,2)--node[right]{$P$} (4.7,3);

\begin{scope}[scale=.5,shift={(-8,6)}]
 \fill[yellow!25] (-1,-.5)++(0,.5) \east \north\east\north\north\east -- ++(0,.5)-- ++(1,0) -- ++(0,-.5) \south\south\west\west\south -- ++(0,-0.5) -- ++(-2,0);
 \fill[yellow] (0,0) \north\east\north\north\east \east  \south\south\west\west\south \west;
 \draw[very thin,dotted] (0,0)  grid (3,3);
 \draw[thick,<->,olive] (-.2,0)-- (-.2,3);
 \draw[thick,<->,olive] (0,3.2)-- (2,3.2); 
 \draw[blue, very thick] (-1,-.5)--++(0,.5) \east \north\east\north\north\east -- ++(0,.5);
 \draw[violet, very thick] (3,3.5)-- ++(0,-.5) \south\south\west\west\south -- ++(0,-0.5);
 \draw (1.5,2.5) node {1}; \draw (1.5-2,2.5-3) node {\footnotesize 1}; 
 \draw (.5,.5) node {2}; \draw (.5+2,.5+3) node {\footnotesize 2};  
 \draw (2.5,2.5) node {3}; \draw (2.5-2,2.5-3) node {\footnotesize 3}; 
 \draw (1.5,1.5) node {4}; 
 \draw (2.5,1.5) node {5}; 
 \draw[blue] (.5,2.5) node {$\mu$};
 \draw[violet] (3.5,1.5) node {$\alpha$};
 \draw[thick,->] (0,3.5)--(0,-1);
\draw[thick,->] (-1,3)--(3.5,3);
\draw (1.5,4) node {$T$};
\end{scope}

\begin{scope}[scale=.5,shift={(-8,0)}]
 \fill[yellow!25] (-1,-.5)++(0,.5) \east \north\east\north\north\east -- ++(0,.5)-- ++(2,0) -- ++(0,-.5) \west\west\south\south\south\west\west -- ++(0,-0.5) -- ++(-1,0);
 \fill[yellow] (0,0) \north\east\north\north\east \east  \west\south\south\south\west\west;
 \draw[very thin,dotted] (0,0)  grid (3,3);
 \draw[blue, very thick] (-1,-.5)--++(0,.5) \east \north\east\north\north\east -- ++(0,.5);
 \draw[red, very thick] (4,3.5)-- ++(0,-.5) \west\west\south\south\south\west\west-- ++(0,-0.5);
 \draw (.5,.5) node {1}; \draw (.5+2,.5+3) node {\footnotesize 1}; 
 \draw (1.5,2.5) node {2}; \draw (1.5-2,2.5-3) node {\footnotesize 2};  
 \draw (1.5,1.5) node {3}; 
 \draw (1.5,.5) node {4}; \draw (1.5+2,.5+3) node {\footnotesize 4}; 
 \draw[blue] (.5,2.5) node {$\mu$};
 \draw[red] (2.5,1.5) node {$\beta$};
 \draw[thick,->] (0,3.5)--(0,-1);
\draw[thick,->] (-1,3)--(4.5,3);
\draw (1.5,4) node {$U$};
\end{scope}

\begin{scope}[scale=.5,shift={(13,6)}]
 \fill[yellow!25] (0,-.5)++(0,.5) \east\east\north\north\north\east\east -- ++(0,.5)-- ++(1,0) -- ++(0,-.5) \south\west\west\south\south -- ++(0,-0.5) -- ++(-3,0);
 \fill[yellow] (0,0) \east\east\north\north\north\east\east \east  \south\west\west\south\south\west \west\west;
 \draw[very thin,dotted] (0,0)  grid (5,3);
 \draw[red, very thick] (0,-.5)--++(0,.5) \east\east\north\north\north\east\east -- ++(0,.5);
 \draw[brown, very thick] (5,3.5)-- ++(0,-.5) \south\west\west\south\south -- ++(0,-0.5);
 \draw (3.5,2.5) node {3}; \draw (3.5-2,2.5-3) node {\footnotesize 3}; 
 \draw (4.5,2.5) node {5}; \draw (4.5-2,2.5-3) node {\footnotesize 5};  
 \draw (2.5,2.5) node {1}; \draw (2.5-2,2.5-3) node {\footnotesize 1}; 
 \draw (2.5,.5) node {4}; \draw (2.5+2,.5+3) node {\footnotesize 4};  
 \draw (2.5,1.5) node {2}; 
 \draw[thick,->] (0,3.5)--(0,-1);
\draw[thick,->] (-.5,3)--(5.5,3);
\draw (1.5,4) node {$P$};
\draw[red] (1.5,1.5) node {$\beta$};
\draw[brown] (3.5,.5) node {$\lambda$};
\end{scope}

\begin{scope}[scale=.5,shift={(13,0)}]
 \fill[yellow!25] (1,-.5)++(0,.5) \north\east\east\north\north -- ++(0,.5)-- ++(2,0) -- ++(0,-.5) \south\west\west\south\south -- ++(0,-0.5) -- ++(-2,0);
 \fill[yellow] (1,0)\north\east\east\north\north \east\east \south\west\west\south\south  \west\west;
 \draw[very thin,dotted] (0,0)  grid (5,3);
 \draw[brown, very thick] (5,3.5)-- ++(0,-.5)\south\west\west\south\south-- ++(0,-0.5);
 \draw[violet, very thick] (1,-.5)--++(0,.5)  \north\east\east\north\north -- ++(0,.5);
 \draw (1.5,.5) node {1}; \draw (1.5+2,.5+3) node {\footnotesize 1}; 
 \draw (4.5,2.5) node {4}; \draw (4.5-2,2.5-3) node {\footnotesize 4};  
 \draw (3.5,2.5) node {2}; \draw (3.5-2,2.5-3) node {\footnotesize 2}; 
 \draw (2.5,.5) node {3}; \draw (2.5+2,.5+3) node {\footnotesize 3};  
 \draw[thick,->] (0,3.5)--(0,-1);
\draw[thick,->] (-.5,3)--(5.5,3);
\draw (1.5,4) node {$Q$};
\draw[violet] (2.5,1.5) node {$\alpha$};
\draw[brown] (3.5,.5) node {$\lambda$};
\end{scope}
\end{tikzpicture}
\caption{The computation of $\CRS(T,U)=(P,Q)$ using growth diagrams, for the same example as in Figure~\ref{fig:CRS}. 
The shapes colored in \textcolor{purple}{purple} represent the oscillating tableau from Example~\ref{ex:OCT}.
}
\label{fig:growth}
\end{figure} 

To compute the labels of the remaining vertices, we use the following {\em forward local rule}, which describes, for each unit square of the grid, the label of the upper-right vertex in terms of the labels of the other three vertices. 
Let the diagram below denote the labels of the vertices of a unit square, and suppose that $\rholl$, $\rhoul$ and $\rholr$ have already been computed.
\begin{equation}\label{eq:square}
\begin{tikzpicture}[scale=1]
\draw[dotted] (0,0) rectangle (1,1);
\draw (0.07,0) node{$\rholl$};
\draw (0.07,1) node{$\rhoul$};
\draw (1.04,0) node{$\rholr$};
\draw (1.04,1) node{$\rhour$};
\end{tikzpicture}
\end{equation}
Then $\rhour$ is computed as follows:
\begin{itemize}
\item[(F1)] If $\rhoul\neq \rholr$, let $\rhour=\rhoul\cup \rholr$, i.e., $\rhour_i=\max\{\rhoul_i,\rholr_i\}$ for all $i$. 
\item[(F2)] If $\rhoul=\rholr$, then this shape is obtained from $\rholl$ by adding a cell to some row~$i$.
Let $\rhour$ be the shape obtained from $\rhoul=\rholr$ by adding 
a cell to row~$i+1$ (with indices modulo $d$).
\end{itemize}

Once all the labels of the $m\times n$ grid have been computed, let $\lambda$ be the label of vertex $(m,n)$, and note that  $\lambda\supseteq\alpha,\beta$, $|\lambda/\beta|=n$, and $|\lambda/\alpha|=m$.
Let $\tP\in\SCT_{d,L}(\lambda/\beta)$ be such that $\ssh(\tP)$ is the sequence of the labels of the right boundary of the grid (i.e., vertices of the form $(m,y)$), from bottom to top. Similarly, let $\tQ\in\SCT_{d,L}(\lambda/\alpha)$ be such that $\ssh(\tQ)$ is the sequence of labels of the top boundary (i.e., the vertices of the form $(x,n)$), from left to right. 
See Figure~\ref{fig:growth} for an example, where the pair $(\tP,\tQ)$ is denoted by $(P,Q)$ since we will soon show that it coincides with the image of $(T,U)$ under $\CRS$.

The forward local rule can be inverted in order to determine the label of the lower-left vertex of a unit square of the grid in terms of the labels of the other three vertices. Assuming that $\rhoul$, $\rholr$ and $\rhour$ have already been computed, the following  {\em backward local rule} determines $\rholl$:
\begin{itemize}
\item[(B1)] If $\rhoul\neq \rholr$, let $\rholl=\rhoul\cap \rholr$, i.e., $\rholl_i=\min\{\rhoul_i,\rholr_i\}$ for all $i$. 
\item[(B2)] If $\rhoul=\rholr$, then this shape is obtained from $\rhour$ by 
removing a cell from some row~$i+1$.
Let $\rholl$ be the shape obtained from $\rhoul=\rholr$ by 
removing a cell from row~$i$ (with indices modulo~$d$).
\end{itemize}
This allows us to compute the inverse map that recovers the pair $(T,U)$ from the pair $(\tP,\tQ)\in\SCT_{d,L}(\lambda/\beta)\times\SCT_{d,L}(\lambda/\alpha)$.

We will show that the map $(T,U)\mapsto(\tP,\tQ)$ described above in terms of growth diagrams is equivalent to the map $\CRS$ described in the proof of Theorem~\ref{thm:CRS} in terms of iterated insertion operations.
For $0\le x\le m$ and $0\le y\le n$, let $\rho^{(x,y)}\in\CS_{d,L}$ denote the label of vertex $(x,y)$ in the above growth diagram starting from the pair $(T,U)$.
For $0\le k\le m$, let $(P_k,Q_k)$ be the sequence of pairs of tableaux built in the computation of $\CRS(T,U)=(P,Q)$ in the proof of Theorem~\ref{thm:CRS}. The next lemma relates the two constructions. Recall the map $\ssh$ defined in Equation~\eqref{eq:ssh}.

\begin{lemma} \label{lem:Pk}
For each $0\le k\le m$, we have $\ssh(P_k)=(\rho^{(k,0)},\rho^{(k,1)},\dots,\rho^{(k,n)})$.
 Additionally, $\ssh(Q)=(\rho^{(0,n)},\rho^{(1,n)},\dots,\rho^{(m,n)})$.
\end{lemma}

\begin{proof}
We will prove the first statement by induction on $k$. It trivially holds for $k=0$, since, by construction, the left boundary of the growth diagram is labeled by $\ssh(T)=\ssh(P_0)$. 
Now let $k\in[m]$, and suppose that $\ssh(P_{k-1})=(\rho^{(k-1,0)},\rho^{(k-1,1)},\dots,\rho^{(k-1,n)})$. Recall from the proof of Theorem~\ref{thm:CRS} that $P_k=\ins_i(P_{k-1})$ (as in Definition~\ref{def:insertion}), where $i\in[d]$ is the row of $U$ containing entry $k$. Let $\tP_k$ be such that $\ssh(\tP_k)=(\rho^{(k,1)},\rho^{(k,2)},\dots,\rho^{(k,n)})$. We want to show that $P_k=\tP_k$.

Since the bottom boundary of the growth diagram is labeled by $\ssh(U)$, the shape $\rho^{(k,0)}$ (i.e., the inner shape of $\tilde{P}_k$) is obtained from $\rho^{(k-1,0)}$ (i.e., the inner shape of $P_{k-1}$) by adding a cell to row $i$. We denote this by 
$\rho^{(k-1,0)}\add{i}\rho^{(k,0)}$.
For each $\ell\in[n]$, let $r_\ell\in[d]$ the row of $P_{k-1}$ containing $\ell$, so that 
$\rho^{(k-1,0)}\add{r_1}\rho^{(k-1,1)}\add{r_2}\cdots\add{r_n}\rho^{(k-1,n)}$. 
If $r_1\neq i$, then $\rho^{(k-1,1)}\neq\rho^{(k,0)}$, so case (F1) of the forward local rule is applied when computing $\rho^{(k,1)}$, and we have $\rho^{(k,0)}\add{r_1}\rho^{(k,1)}$ and $\rho^{(k-1,1)}\add{i}\rho^{(k,1)}$.
More generally, letting $a$ be the smallest (equivalently, leftmost) entry in row $i$ of $P_{k-1}$,
case (F1) is used when computing $\rho^{(k,\ell)}$ for $\ell<a$, since $r_\ell\neq i$, and we get $\rho^{(k,0)}\add{r_1}\rho^{(k,1)}\add{r_2}\cdots\add{r_{a-1}}\rho^{(k,a-1)}$. This means that entries $1,2,\dots,a-1$ are in the same rows in $\tP_k$ as in $P_{k-1}$. Additionally, for $\ell<a$, we have $\rho^{(k-1,\ell)}\add{i}\rho^{(k,\ell)}$. See the diagram in Figure~\ref{fig:Pkgrowth}.

\begin{figure}[htb]
\centering
\begin{tikzpicture}
[scale=1,decoration={
    markings,
    mark=at position .5 with {\arrow{>}}}
    ] 
\foreach \y in {0,1,2.5,3.5,4.5} {
	\draw[fill] (0,\y) circle (.05); \draw[fill] (1,\y) circle (.05); 
}
\draw[postaction={decorate},shorten <=1mm,shorten >=1mm,purple] (0,0) node[left,scale=.8,black] {$(k-1,0)$} -- node[left,scale=.9]{$r_1$} (0,1) node[left,scale=.8,black] {$(k-1,1)$};
\draw[postaction={decorate},shorten <=1mm,shorten >=1mm,purple] (1,0) node[right,scale=.8,black] {$(k,0)$} -- node[right,scale=.9]{$r_1$} (1,1) node[right,scale=.8,black] {$(k,1)$};
\draw[postaction={decorate},shorten <=1mm,shorten >=1mm,purple] (0,0) -- node[above,scale=.9]{$i$} (1,0);
\draw[postaction={decorate},shorten <=1mm,shorten >=1mm,purple] (0,1) -- node[above,scale=.9]{$i$} (1,1);
\draw[postaction={decorate},shorten <=1mm,shorten >=1mm,purple] (0,1) -- node[left,scale=.9]{$r_2$} (0,1.6);
\draw[postaction={decorate},shorten <=1mm,shorten >=1mm,purple] (1,1) -- node[right,scale=.9]{$r_2$} (1,1.6);
\draw (.5,1.8) node {$\vdots$};
\draw[postaction={decorate},shorten <=1mm,shorten >=1mm,purple] (0,1.9) -- node[left,scale=.9]{$r_{a-1}$} (0,2.5);
\draw[postaction={decorate},shorten <=1mm,shorten >=1mm,purple] (1,1.9) -- node[right,scale=.9]{$r_{a-1}$} (1,2.5);
\draw[postaction={decorate},shorten <=1mm,shorten >=1mm,purple] (0,2.5) node[left,scale=.8,black] {$(k-1,a-1)$} -- node[left,scale=.9]{$r_a=i$} (0,3.5) node[left,scale=.8,black] {$(k-1,a)$};
\draw[postaction={decorate},shorten <=1mm,shorten >=1mm,purple] (1,2.5) node[right,scale=.8,black] {$(k,a-1)$} -- node[right,scale=.9]{$i+1$} (1,3.5) node[right,scale=.8,black] {$(k,a)$};
\draw[postaction={decorate},shorten <=1mm,shorten >=1mm,purple] (0,2.5) -- node[above,scale=.9]{$i$} (1,2.5);
\draw[postaction={decorate},shorten <=1mm,shorten >=1mm,purple] (0,3.5) -- node[above,scale=.9]{$i+1$} (1,3.5);
\draw[postaction={decorate},shorten <=1mm,shorten >=1mm,purple] (0,3.5) -- node[left,scale=.9]{$r_{a+1}$} (0,4.5) node[left,scale=.8,black] {$(k-1,a+1)$};
\draw[postaction={decorate},shorten <=1mm,shorten >=1mm,purple] (1,3.5) -- node[right,scale=.9]{$r_{a+1}$} (1,4.5) node[right,scale=.8,black] {$(k,a+1)$};
\draw[postaction={decorate},shorten <=1mm,shorten >=1mm,purple] (0,4.5) -- node[above,scale=.9]{$i+1$} (1,4.5);
\draw[postaction={decorate},shorten <=1mm,shorten >=1mm,purple] (0,4.5) -- (0,5.2);
\draw[postaction={decorate},shorten <=1mm,shorten >=1mm,purple] (1,4.5) -- (1,5.2);
\draw (.5,5.2) node {$\vdots$};
\draw (.5,.6) node[scale=.7,brown]{(F1)};
\draw (.5,2.2) node[scale=.7,brown]{(F1)};
\draw (.5,3.1) node[scale=.7,brown]{(F2)};
\draw (.5,4.1) node[scale=.7,brown]{(F1)};
\end{tikzpicture}
\caption{The computation of the $k$th column of the growth diagram, illustrating the proof of Lemma~\ref{lem:Pk}. We write the coordinates $(x,y)$ of each vertex instead of the label $\rho^{(x,y)}$.}
\label{fig:Pkgrowth}
\end{figure} 

Since $r_a=i$, both $\rho^{(k-1,a)}$ and $\rho^{(k,a-1)}$ are obtained from $\rho^{(k-1,a-1)}$ by adding a cell to row $i$, so $\rho^{(k-1,a)}=\rho^{(k,a-1)}$. Thus, applying case (F2) of the forward local rule, $\rho^{(k,a)}$ is obtained from $\rho^{(k-1,a)}=\rho^{(k,a-1)}$ by adding a cell to row $i+1$ (with row indices modulo $d$).
Consequently, entry $a$ is placed in row $i+1$ of $\tP_k$.
If there are entries larger than $a$ in row $i+1$ of $P_{k-1}$, let $a'$ be the smallest such entry. 
Since $r_\ell\neq i+1$ for $a<\ell<a'$, case (F1) is used to compute $\rho^{(k,a)}\add{r_{a+1}}\rho^{(k,a+1)}\add{r_{a+2}}\cdots\add{r_{a'-1}}\rho^{(k,a'-1)}$, which causes entries $a+1,a+2,\dots,a'-1$ to be in the same rows in $\tP_k$ as in $P_{k-1}$. Additionally, for $a<\ell<a'$, we have $\rho^{(k-1,\ell)}\add{i+1}\rho^{(k,\ell)}$.

Since $r_{a'}=i+1$, case (F2) is used when computing $\rho^{(k,a')}$, which is obtained from $\rho^{(k-1,a')}=\rho^{(k,a'-1)}$ by adding a cell to row $i+2$. Thus, entry $a'$ is placed in row $i+2$ of $\tP_k$. If there are entries larger than $a'$ in row $i+2$ of $P_{k-1}$, we bump be the smallest such entry and repeat the process. Eventually, the bumped entry (call it $a''$) is placed in a row (call it $t$) where $P_{k-1}$ contains no larger entries. Then case (F1) is used to compute $\rho^{(k,a'')}\add{r_{a''+1}}\rho^{(k,a''+1)}\add{r_{a''+2}}\cdots\add{r_{n}}\rho^{(k,n)}$, causing entries $a''+1,a''+2,\dots,n$ to be in the same rows in $\tP_k$ as in $P_{k-1}$. 
Additionally, for $a''<\ell\le n$, we have $\rho^{(k-1,\ell)}\add{t}\rho^{(k,\ell)}$.

By comparing the above description of $\tP_k$ with the process of internal row insertion at row $i$ of $P_{k-1}$, described in Definition~\ref{def:insertion}, we see that each $\ell\in[n]$ is in the same row in both $\tP_k$ and $\ins_i(P_{k-1})$. Since these tableaux have the same inner shape, it follows that $\tP_k=\ins_i(P_{k-1})=P_k$.

Finally, we have $\rho^{(k-1,n)}\add{t}\rho^{(k,n)}$, where $t$ is the row where the above insertion process terminates. 
In the proof of Theorem~\ref{thm:CRS}, this is also the row where a cell is added to $Q_{k-1}$ in order to obtain $Q_k$. Since this holds for all $k\in[m]$, we deduce that $\ssh(Q)=(\rho^{(0,n)},\rho^{(1,n)},\dots,\rho^{(m,n)})$.
\end{proof}

\begin{theorem}\label{thm:CRS=growth},
Let $(T,U)\in\SCT_{d,L}(\alpha/\mu)\times \SCT_{d,L}(\beta/\mu)$. Suppose that $(T,U)\mapsto (\tP,\tQ)$ via the above growth diagram construction, and that $\CRS(T,U)=(P,Q)$, where $\CRS$ is the bijection from Theorem~\ref{thm:CRS}.
Then 
$(\tP,\tQ)=(P,Q)$.
\end{theorem}

\begin{proof}
Consider the cylindric growth diagram with whose left and bottom boundaries are $\ssh(T)$ and $\ssh(U)$, respectively, and the remaining labels are computed using the forward local rule. By Lemma~\ref{lem:Pk} with $k=m$, the right boundary of the diagram is $\ssh(P_m)=\ssh(P)$, and the top boundary is $\ssh(Q)$. We deduce that $(\tP,\tQ)=(P,Q)$.
\end{proof}

\subsection{Growth diagrams for SSCT} \label{sec:growthSSCT}
Next we generalize the above construction to deal with semistandard cylindric tableaux. In this case, for any edge of the cylindric growth diagram with endpoints labeled $\nu$ and $\rho$, with $\rho$ above or to the right of $\nu$, the shape $\rho/\nu$ is a horizontal strip. 

Again, fix $\alpha,\beta\in\CS_{d,L}$, and now let $T\in\SSCT_{d,L}(\alpha/\mu)$ and $U\in\SSCT_{d,L}(\beta/\mu)$, where $\mu\subseteq\alpha,\beta$. Let $N$ and $M$ be upper bounds on the entries of $T$ and $U$, respectively. 
We will build a cylindric growth diagram on a $M\times N$ rectangular grid, where again each vertex is labeled with an element of $\CS_{d,L}$, and use it to compute a pair $(\tP,\tQ)$ such that $(\tP,\tQ)=\CRSK(T,U)$.

We start by labeling the vertices in the left boundary of the grid, from bottom to top, by the sequence $\ssh(T)$, and the vertices in the bottom boundary, from left to right, by the sequence $\ssh(U)$. See Figure~\ref{fig:growthSSCT} for an example.

\begin{figure}[htb]
\centering
\begin{tikzpicture}[scale=1]
\draw[dotted] (0,0) grid (3,4);
\draw[blue] (0,0) node {$[2 1 0]$}; \draw[blue] (-.3,0) node[left] {$\mu=$}; 
\draw (1,0) node {$[3 1 1]$};
\draw (2,0) node {$[5 2 1]$}; 
\draw[red] (3,0) node {$[5 3 2]$}; \draw[red] (3.3,0) node[right] {$=\beta$}; 
\draw (0,1) node {$[4 2 1]$};
\draw (1,1) node {$[5 3 1]$};
\draw (2,1) node {$[5 5 2]$}; 
\draw (3,1) node {$[6 5 3]$};
\draw (0,2) node {$[4 4 1]$};
\draw (1,2) node {$[5 4 2]$};
\draw (2,2) node {$[6 5 3]$}; 
\draw (3,2) node {$[7 6 3]$};
\draw (0,3) node {$[5 4 1]$};
\draw (1,3) node {$[5 5 2]$};
\draw (2,3) node {$[6 5 4]$}; 
\draw (3,3) node {$[7 6 4]$};
\draw[violet] (0,4) node {$[5 5 3]$}; \draw[violet] (-.3,4) node[left] {$\alpha=$}; 
\draw (1,4) node {$[6 5 4]$};
\draw (2,4) node {$[8 6 4]$}; 
\draw[brown] (3,4) node {$[8 7 5]$}; \draw[brown] (3.3,4) node[right] {$=\lambda$}; 

\draw[->] (1,-.4)--node[below]{$U$} (2,-.4);
\draw[->] (-.7,1.5)--node[left]{$T$} (-.7,2.5);
\draw[->] (1,4.4)--node[above]{$Q$} (2,4.4);
\draw[->] (3.7,1.5)--node[right]{$P$} (3.7,2.5);

\begin{scope}[scale=.5,shift={(-11,6)}]
 \fill[yellow!25] (-2,-.5)++(0,.5) \east\east \north\east\north\east\north\east\east -- ++(0,.5)-- ++(3,0) -- ++(0,-.5)
 \west\west \south\south\west\west\south\west\west -- ++(0,-0.5) -- ++(-3,0);
 \fill[yellow] (0,0)\north\east\north\east\north\east\east  \east \south\south\west\west\south\west\west \west;
 \draw[very thin,dotted] (0,0)  grid (5,3);
 \draw[thick,<->,olive] (-.2,0)-- node[left] {$d$} (-.2,3);
 \draw[thick,<->,olive] (0,3.2)-- node[above] {$L$} (4,3.2); 
 \draw[blue, very thick] (-2,-.5)--++(0,.5)  \east\east \north\east\north\east\north\east\east  -- ++(0,.5);
 \draw[violet, very thick] (7,3.5)-- ++(0,-.5) \west\west \south\south\west\west\south\west\west  -- ++(0,-0.5);
 \draw (2.5,2.5) node {1}; \draw (2.5-4,2.5-3) node {\footnotesize 1}; 
 \draw (3.5,2.5) node {1}; \draw (3.5-4,2.5-3) node {\footnotesize 1}; 
 \draw (1.5,1.5) node {1}; 
 \draw (.5,.5) node {1}; \draw (.5+4,.5+3) node {\footnotesize 1}; 
 \draw (2.5,1.5) node {2};  \draw (3.5,1.5) node {2}; 
 \draw (4.5,2.5) node {3}; \draw (4.5-4,2.5-3) node {\footnotesize 3}; 
  \draw (4.5,1.5) node {4}; 
 \draw (1.5,.5) node {4}; \draw (1.5+4,.5+3) node {\footnotesize 4}; 
  \draw (2.5,.5) node {4}; \draw (2.5+4,.5+3) node {\footnotesize 4}; 
 \draw[blue] (1.5,2.5) node {$\mu$};
 \draw[violet] (5.5,1.5) node {$\alpha$};
 \draw[thick,->] (0,3.5)--(0,-1);
\draw[thick,->] (-1,3)--(7.5,3);
\draw (3.5,4) node {$T$};
\end{scope}

\begin{scope}[scale=.5,shift={(-11,0)}]
 \fill[yellow!25] (-2,-.5)++(0,.5) \east\east \north\east\north\east\north\east\east -- ++(0,.5)-- ++(2,0) -- ++(0,-.5)
 \west \south\west\west\south\west\south\west -- ++(0,-0.5) -- ++(-3,0);
 \fill[yellow] (0,0)\north\east\north\east\north\east\east  \east\south\west\west\south\west\south\west  \west\west;
 \draw[very thin,dotted] (0,0)  grid (5,3);
 \draw[thick,<->,olive] (-.2,0)-- node[left] {$d$} (-.2,3);
 \draw[thick,<->,olive] (0,3.2)-- node[above] {$L$} (4,3.2); 
 \draw[blue, very thick] (-2,-.5)--++(0,.5)  \east\east \north\east\north\east\north\east\east  -- ++(0,.5);
 \draw[red, very thick] (6,3.5)-- ++(0,-.5) \west \south\west\west\south\west\south\west  -- ++(0,-0.5);
 \draw (2.5,2.5) node {1}; \draw (2.5-4,2.5-3) node {\footnotesize 1}; 
 \draw (3.5,2.5) node {2}; \draw (3.5-4,2.5-3) node {\footnotesize 2}; 
 \draw (1.5,1.5) node {2}; 
 \draw (.5,.5) node {1}; \draw (.5+4,.5+3) node {\footnotesize 1}; 
 \draw (2.5,1.5) node {3}; 
 \draw (4.5,2.5) node {2}; \draw (4.5-4,2.5-3) node {\footnotesize 2}; 
 \draw (1.5,.5) node {3}; \draw (1.5+4,.5+3) node {\footnotesize 3}; 
 \draw[blue] (1.5,2.5) node {$\mu$};
 \draw[red] (3.5,1.5) node {$\beta$};
 \draw[thick,->] (0,3.5)--(0,-1);
\draw[thick,->] (-1,3)--(6.5,3);
\draw (3.5,4) node {$U$};
\end{scope}

\begin{scope}[scale=.5,shift={(10.4,6)}]
 \fill[yellow!25] (1,-.5)++(0,.5) \east \north\east\north\east\east\north\east -- ++(0,.5)-- ++(3,0) -- ++(0,-.5) \west \south\west\south\west\west\south\west -- ++(0,-0.5) -- ++(-3,0);
 \fill[yellow] (2,0)\north\east\north\east\east\north\east \east\east \south\west\south\west\west\south\west \west\west;
 \draw[very thin,dotted] (0,0)  grid (8,3);
 \draw[red, very thick] (1,-.5)--++(0,.5)  \east \north\east\north\east\east\north\east-- ++(0,.5);
 \draw[brown, very thick] (9,3.5)-- ++(0,-.5)\west \south\west\south\west\west\south\west -- ++(0,-0.5);
 \draw (5.5,2.5) node {1}; \draw (5.5-4,2.5-3) node {\footnotesize 1}; 
 \draw (6.5,2.5) node {2}; \draw (6.5-4,2.5-3) node {\footnotesize 2};
 \draw (7.5,2.5) node {4}; \draw (7.5-4,2.5-3) node {\footnotesize 4}; 
 \draw (3.5,1.5) node {1};
 \draw (4.5,1.5) node {1}; 
 \draw (5.5,1.5) node {2}; 
 \draw (6.5,1.5) node {4}; 
 \draw (2.5,.5) node {1}; \draw (2.5+4,.5+3) node {\footnotesize 1};
 \draw (3.5,.5) node {3}; \draw (3.5+4,.5+3) node {\footnotesize 3};  
 \draw (4.5,.5) node {4}; \draw (4.5+4,.5+3) node {\footnotesize 4};  
 \draw[thick,->] (0,3.5)--(0,-1);
\draw[thick,->] (-.5,3)--(9.5,3);
\draw (4.5,4) node {$P$};
\draw[red] (2.5,1.5) node {$\beta$};
\draw[brown] (5.5,.5) node {$\lambda$};
\end{scope}

\begin{scope}[scale=.5,shift={(10.4,0)}]
 \fill[yellow!25] (1,-.5)++(0,.5) \east\east \north\east\east\north\north\east\east -- ++(0,.5)-- ++(2,0) -- ++(0,-.5) \west \south\west\south\west\west\south\west -- ++(0,-0.5) -- ++(-3,0);
 \fill[yellow] (3,0)\north\east\east\north\north\east\east  \east \south\west\south\west\west\south\west \west;
 \draw[very thin,dotted] (0,0)  grid (8,3);
 \draw[violet, very thick] (1,-.5)--++(0,.5)  \east\east \north\east\east\north\north\east\east -- ++(0,.5);
 \draw[brown, very thick] (9,3.5)-- ++(0,-.5)\west \south\west\south\west\west\south\west -- ++(0,-0.5);
 \draw (5.5,2.5) node {1}; \draw (5.5-4,2.5-3) node {\footnotesize 1}; 
 \draw (6.5,2.5) node {2}; \draw (6.5-4,2.5-3) node {\footnotesize 2};
 \draw (7.5,2.5) node {2}; \draw (7.5-4,2.5-3) node {\footnotesize 2}; 
 \draw (5.5,1.5) node {2}; 
 \draw (6.5,1.5) node {3}; 
 \draw (3.5,.5) node {1}; \draw (3.5+4,.5+3) node {\footnotesize 1};
 \draw (4.5,.5) node {3}; \draw (4.5+4,.5+3) node {\footnotesize 3};  
 \draw[thick,->] (0,3.5)--(0,-1);
\draw[thick,->] (-.5,3)--(9.5,3);
\draw (4.5,4) node {$Q$};
\draw[violet] (4.5,1.5) node {$\alpha$};
\draw[brown] (5.5,.5) node {$\lambda$};
\end{scope}
\end{tikzpicture}
\caption{The computation of $\CRSK(T,U)=(P,Q)$ using growth diagrams.
}
\label{fig:growthSSCT}
\end{figure} 

The forward local rule that computes $\rhour$ given the labels $\rholl$, $\rhoul$ and $\rholr$ of the vertices of a unit square, as in the diagram~\eqref{eq:square}, is now generalized by letting
\begin{equation}\label{eq:forward}
\rhour_i=\max\{\rhoul_i,\rholr_i\}+\min\{\rhoul_{i-1},\rholr_{i-1}\}-\rholl_{i-1}
\end{equation}
for all $i$. It is not hard to see that this rule restricts to (F1) and (F2) when both $\rhoul$ and $\rholr$ are obtained from $\rholl$ by adding one cell.

As an example of equation~\eqref{eq:forward}, let $\rholl=[2,1,0]$, $\rhoul=[4,2,1]$ and $\rholr=[3,1,1]$, as in the bottom-left square in Figure~\ref{fig:growthSSCT}. Then 
$\rhour_1=\max\{4,3\}+\min\{5,5\}-4=5$, $\rhour_2=\max\{2,1\}+\min\{4,3\}-2=3$ and $\rhour_3=\max\{1,1\}+\min\{2,1\}-1=1$, so
$\rhour=[5,3,1]$.

The next lemma shows that the forward local rule~\eqref{eq:forward} has the desired properties when applied to horizontal strips.

\begin{lemma}\label{lem:forward}
Suppose that $\rholl,\rhoul,\rholl,\rhour\in\CS_{d,L}$ satisfy Equation~\eqref{eq:forward}.
Then $\rhoul/\rholl$ and $\rholr/\rholl$ are horizontal strips if and only if so are $\rhour/\rholr$ and $\rhour/\rhoul$. In addition, 
 $|\rhour/\rholr|=|\rhoul/\rholl|$ and $|\rhour/\rhoul|=|\rholr/\rholl|$.
\end{lemma}

\begin{proof}
The fact that $\rhoul/\rholl$ and $\rholr/\rholl$ are horizontal strips is equivalent to the string of inequalities 
$$\rholl_i\le \rhoul_i,\rholr_i\le \rholl_{i-1}$$ for all $i$, which in turn is equivalent to
\begin{equation}\label{eq:rho-max-min} \max\{\rhoul_i,\rholr_i\}\le \rholl_{i-1} \le \min\{\rhoul_{i-1},\rholr_{i-1}\}\end{equation} for all $i$.
By Equation~\eqref{eq:forward}, $\rhour_i$ is obtained by reflecting $\rholl_{i-1}$ with respect to the midpoint of the interval
$[\max\{\rhoul_i,\rholr_i\},\min\{\rhoul_{i-1},\rholr_{i-1}\}]$. Thus, $\rholl_{i-1}$ belongs to this interval if and only if so does $\rhour_i$, making Equation~\eqref{eq:rho-max-min} equivalent to
$$\max\{\rhoul_i,\rholr_i\}\le \rhour_{i} \le \min\{\rhoul_{i-1},\rholr_{i-1}\}$$ for all~$i$. But these inequalities simply state the fact that $\rhour/\rholr$ and $\rhour/\rhoul$ are horizontal strips.

To prove the statement about the number of cells, subtract $\rholl_i$ from both sides of Equation~\eqref{eq:forward} to get
$$\rhour_i-\rholl_i=\max\{\rhoul_i-\rholl_i,\rholr_i-\rholl_i\}+\min\{\rhoul_{i-1}-\rholl_{i-1},\rholr_{i-1}-\rholl_{i-1}\}.$$
Summing over $1\le i\le d$ and considering the periodicity of cylindric shapes, we get
$$|\rhour/\rholl|=|\rhoul/\rholl|+|\rholr/\rholl|.$$
Writing the left-hand side as $|\rhour/\rhoul|+|\rhoul/\rholl|$ and canceling the term $|\rhoul/\rholl|$, we deduce that $|\rhour/\rhoul|=|\rholr/\rholl|$. Similarly, writing the left-hand side as $|\rhour/\rholr|+|\rholr/\rholl|$, we deduce that $|\rhour/\rholr|=|\rhoul/\rholl|$.
\end{proof}

Once all the labels of the grid have been computed, let $\lambda$ be the label of vertex $(M,N)$. Let $\tP\in\SSCT_{d,L}(\lambda/\beta)$ be determined by the sequence of the labels of the right boundary of the grid, and let $\tQ\in\SSCT_{d,L}(\lambda/\alpha)$ be determined by the sequence of labels of the top boundary. Lemma~\ref{lem:forward} guarantees that $\tP$ and $\tQ$ are semistandard cylindric tableaux, and that $\wt(\tP)=\wt(T)$ and $\wt(\tQ)=\wt(U)$.

In these growth diagrams for semistandard cylindric tableaux, the backward rule to compute $\rholl$ given $\rhoul$, $\rholr$ and $\rhour$ is obtained by solving Equation~\eqref{eq:forward} for $\rholl_{i-1}$ and shifting the indices:
\begin{equation}\label{eq:backward}
\rholl_i=\max\{\rhoul_{i+1},\rholr_{i+1}\}+\min\{\rhoul_{i},\rholr_{i}\}-\rhour_{i+1}
\end{equation}
for all~$i$. Iterating this rule yields the inverse map that takes $(\tP,\tQ)\in\SSCT_{d,L}(\lambda/\beta)\times\SSCT_{d,L}(\lambda/\alpha)$ to the pair $(T,U)$.

By comparing the multi-insertion construction from~\cite{neyman_cylindric_2015} with our growth diagram construction, the next theorem can be proved similarly to how we proved Theorem~\ref{thm:CRS=growth}.

\begin{theorem}
Let $(T,U)\in\SSCT_{d,L}(\alpha/\mu)\times \SSCT_{d,L}(\beta/\mu)$. Suppose that $(T,U)\mapsto (\tP,\tQ)$ via the above growth diagram construction, and that $\CRSK(T,U)=(P,Q)$, where $\CRSK$ is the bijection from Theorem~\ref{thm:CRSK}.
Then  $(\tP,\tQ)=(P,Q)$.
\end{theorem}

\subsection{Edge local rules}

An alternative way to construct cylindric growth diagrams is by labeling the edges instead of the vertices, in analogy with Viennot's edge local rules \cite{viennot_growth_2018} for growth diagrams for standard Young tableaux (see also \cite{pfannerer_promotion_2020}).

The label of each edge is the difference of the labels of its endpoints.
Specifically, for a square with vertex labels as in Equation~\eqref{eq:square}, the edge labels are given by the following diagram:
$$
\begin{tikzpicture}[scale=1,decoration={
    markings,
    mark=at position .5 with {\arrow{>}}}
    ] 
\draw[dotted] (0,0) rectangle (1,1);
\draw[postaction={decorate},purple] (0,0) -- node[below,scale=.9]{$\delta^B=\rholr-\rholl$} ++(1,0);
\draw[postaction={decorate},purple] (0,1) -- node[above,scale=.9]{$\delta^T=\rhour-\rhoul$} ++(1,0);
\draw[postaction={decorate},purple] (0,0) -- node[left,scale=.9]{$\delta^L=\rhoul-\rholl$} ++(0,1);
\draw[postaction={decorate},purple] (1,0) -- node[right,scale=.9]{$\delta^R=\rhour-\rholr$} ++(0,1);
\end{tikzpicture}
$$
Note that all the edge labels are periodic sequences with period $d$, so a label $\delta$ is determined by $[\delta_1,\delta_2,\dots,\delta_d]$. 

The local rules (F1) and (F2), and more generally Equation~\eqref{eq:forward}, can be translated into local rules for edge labels, which describe how to compute $\delta^R$ and $\delta^T$ in terms of $\delta^L$ and $\delta^B$.

In the standard case (Subsection~\ref{sec:growthSCT}), $\rhoul$ is obtained from $\rholl$ by adding a cell to some row $i$, where $1\le i\le d$, so the sequence $[\delta^L_1,\delta^L_2,\dots,\delta^L_d]$ has a one in position $i$ and zeros elsewhere. We will denote this by $\delta^L=(i)$. Similarly, we have $\delta^B=(j)$ for some $1\le j\le d$.
The rule (F1),
when translated to edge labels, states that if $i\neq j$, then $\delta^R=(i)$ and $\delta^T=(j)$. The rule (F2) states that if $i=j$, then $\delta^R=\delta^T=(i+1)$, with indices modulo $d$. We can represent these edge local rules as
\begin{equation}\label{eq:edge_rules}
\begin{tikzpicture}[scale=1,decoration={
    markings,
    mark=at position 0.5 with {\arrow{>}}}
    ] 
\draw[dotted] (0,0) rectangle (1,1);
\draw[postaction={decorate},purple] (0,0) -- node[below,scale=.9]{$(j)$} ++(1,0);
\draw[postaction={decorate},purple] (0,1) -- node[above,scale=.9]{$(j)$} ++(1,0);
\draw[postaction={decorate},purple] (0,0) -- node[left,scale=.9]{$(i)$} ++(0,1);
\draw[postaction={decorate},purple] (1,0) -- node[right,scale=.9]{$(i)$} ++(0,1);
\draw (1.8,.5) node[right]{for all $i\neq j$,};
\begin{scope}[shift={(6,0)}]
\draw[dotted] (0,0) rectangle (1,1);
\draw[postaction={decorate},purple] (0,0) -- node[below,scale=.9]{$(i)$} ++(1,0);
\draw[postaction={decorate},purple] (0,1) -- node[above,scale=.9]{$(i+1)$} ++(1,0);
\draw[postaction={decorate},purple] (0,0) -- node[left,scale=.9]{$(i)$} ++(0,1);
\draw[postaction={decorate},purple] (1,0) -- node[right,scale=.9]{$(i+1)$} ++(0,1);
\draw (2.5,.5) node[right]{for all $i$.};
\end{scope}
\end{tikzpicture}
\end{equation}
See the left of Figure~\ref{fig:growthSSCT-edges} for an example of a growth diagram for standard cylindric tableaux with labeled edges.

\begin{figure}[htb]
\centering
\begin{tikzpicture}[scale=1]
\draw[dotted] (0,0) grid (4,5);
\draw[purple] (.5,0) node[scale=.9]{$(3)$};
\draw[purple] (1.5,0) node[scale=.9]{$(1)$};
\draw[purple] (2.5,0) node[scale=.9]{$(2)$};
\draw[purple] (3.5,0) node[scale=.9]{$(3)$};
\draw[purple] (0,.5) node[scale=.9]{$(1)$};
\draw[purple] (0,1.5) node[scale=.9]{$(3)$};
\draw[purple] (0,2.5) node[scale=.9]{$(1)$};
\draw[purple] (0,3.5) node[scale=.9]{$(2)$};
\draw[purple] (0,4.5) node[scale=.9]{$(2)$};

\draw[purple] (1,.5) node[scale=.9]{$(1)$};
\draw[purple] (.5,1) node[scale=.9]{$(3)$};
\draw[purple] (2,.5) node[scale=.9]{$(2)$};
\draw[purple] (1.5,1) node[scale=.9]{$(2)$};
\draw[purple] (3,.5) node[scale=.9]{$(3)$};
\draw[purple] (2.5,1) node[scale=.9]{$(3)$};
\draw[purple] (4,.5) node[scale=.9]{$(1)$};
\draw[purple] (3.5,1) node[scale=.9]{$(1)$};

\draw[purple] (1,1.5) node[scale=.9]{$(1)$};
\draw[purple] (.5,2) node[scale=.9]{$(1)$};
\draw[purple] (2,1.5) node[scale=.9]{$(1)$};
\draw[purple] (1.5,2) node[scale=.9]{$(2)$};
\draw[purple] (3,1.5) node[scale=.9]{$(1)$};
\draw[purple] (2.5,2) node[scale=.9]{$(3)$};
\draw[purple] (4,1.5) node[scale=.9]{$(2)$};
\draw[purple] (3.5,2) node[scale=.9]{$(2)$};

\draw[purple] (1,2.5) node[scale=.9]{$(2)$};
\draw[purple] (.5,3) node[scale=.9]{$(2)$};
\draw[purple] (2,2.5) node[scale=.9]{$(3)$};
\draw[purple] (1.5,3) node[scale=.9]{$(3)$};
\draw[purple] (3,2.5) node[scale=.9]{$(1)$};
\draw[purple] (2.5,3) node[scale=.9]{$(1)$};
\draw[purple] (4,2.5) node[scale=.9]{$(1)$};
\draw[purple] (3.5,3) node[scale=.9]{$(2)$};

\draw[purple] (1,3.5) node[scale=.9]{$(3)$};
\draw[purple] (.5,4) node[scale=.9]{$(3)$};
\draw[purple] (2,3.5) node[scale=.9]{$(1)$};
\draw[purple] (1.5,4) node[scale=.9]{$(1)$};
\draw[purple] (3,3.5) node[scale=.9]{$(2)$};
\draw[purple] (2.5,4) node[scale=.9]{$(2)$};
\draw[purple] (4,3.5) node[scale=.9]{$(3)$};
\draw[purple] (3.5,4) node[scale=.9]{$(3)$};

\draw[purple] (1,4.5) node[scale=.9]{$(2)$};
\draw[purple] (.5,5) node[scale=.9]{$(3)$};
\draw[purple] (2,4.5) node[scale=.9]{$(2)$};
\draw[purple] (1.5,5) node[scale=.9]{$(1)$};
\draw[purple] (3,4.5) node[scale=.9]{$(3)$};
\draw[purple] (2.5,5) node[scale=.9]{$(3)$};
\draw[purple] (4,4.5) node[scale=.9]{$(1)$};
\draw[purple] (3.5,5) node[scale=.9]{$(1)$};

\draw[->] (1.5,-.4)--node[below]{$U$} (2.5,-.4);
\draw[->] (-.7,2)--node[left]{$T$} (-.7,3);
\draw[->] (1.5,5.4)--node[above]{$Q$} (2.5,5.4);
\draw[->] (4.7,2)--node[right]{$P$} (4.7,3);

\begin{scope}[shift={(8,-.5)},scale=1.6,decoration={
    markings,
    mark=at position 0.5 with {\arrow{>}}}
    ] 
\draw[dotted] (0,0) grid (3,4);
\draw[purple] (.5,0) node[scale=.9]{$[101]$};
\draw[purple] (1.5,0) node[scale=.9]{$[210]$};
\draw[purple] (2.5,0) node[scale=.9]{$[011]$};
\draw[purple] (0,.5) node[scale=.9]{$[211]$};
\draw[purple] (0,1.5) node[scale=.9]{$[020]$};
\draw[purple] (0,2.5) node[scale=.9]{$[100]$};
\draw[purple] (0,3.5) node[scale=.9]{$[012]$};
\draw[olive] (.53,.37) node[scale=.8]{$[101]$};
\draw[teal] (.47,.63) node[scale=.8]{$[01\mn{1}]$};
\draw[purple] (.5,1) node[scale=.9]{$[110]$};
\draw[purple] (1,.5) node[scale=.9]{$[220]$};
\begin{scope}[shift={(1,0)}]
\draw[olive] (.53,.37) node[scale=.8]{$[210]$};
\draw[teal] (.47,.63) node[scale=.8]{$[\mn{2}11]$};
\draw[purple] (.5,1) node[scale=.9]{$[021]$};
\draw[purple] (1,.5) node[scale=.9]{$[031]$};
\end{scope}
\begin{scope}[shift={(2,0)}]
\draw[olive] (.53,.37) node[scale=.8]{$[011]$};
\draw[teal] (.47,.63) node[scale=.8]{$[1\mn{1}0]$};
\draw[purple] (.5,1) node[scale=.9]{$[101]$};
\draw[purple] (1,.5) node[scale=.9]{$[121]$};
\end{scope}
\begin{scope}[shift={(0,1)}]
\draw[olive] (.53,.37) node[scale=.8]{$[010]$};
\draw[teal] (.47,.63) node[scale=.8]{$[0\mn{1}1]$};
\draw[purple] (.5,1) node[scale=.9]{$[101]$};
\draw[purple] (1,.5) node[scale=.9]{$[011]$};
\end{scope}
\begin{scope}[shift={(1,1)}]
\draw[olive] (.53,.37) node[scale=.8]{$[011]$};
\draw[teal] (.47,.63) node[scale=.8]{$[1\mn{1}0]$};
\draw[purple] (.5,1) node[scale=.9]{$[111]$};
\draw[purple] (1,.5) node[scale=.9]{$[101]$};
\end{scope}
\begin{scope}[shift={(2,1)}]
\draw[olive] (.53,.37) node[scale=.8]{$[101]$};
\draw[teal] (.47,.63) node[scale=.8]{$[01\mn{1}]$};
\draw[purple] (.5,1) node[scale=.9]{$[110]$};
\draw[purple] (1,.5) node[scale=.9]{$[110]$};
\end{scope}
\begin{scope}[shift={(0,2)}]
\draw[olive] (.53,.37) node[scale=.8]{$[100]$};
\draw[teal] (.47,.63) node[scale=.8]{$[\mn{1}10]$};
\draw[purple] (.5,1) node[scale=.9]{$[011]$};
\draw[purple] (1,.5) node[scale=.9]{$[010]$};
\end{scope}
\begin{scope}[shift={(1,2)}]
\draw[olive] (.53,.37) node[scale=.8]{$[010]$};
\draw[teal] (.47,.63) node[scale=.8]{$[0\mn{1}1]$};
\draw[purple] (.5,1) node[scale=.9]{$[102]$};
\draw[purple] (1,.5) node[scale=.9]{$[001]$};
\end{scope}
\begin{scope}[shift={(2,2)}]
\draw[olive] (.53,.37) node[scale=.8]{$[000]$};
\draw[teal] (.47,.63) node[scale=.8]{$[000]$};
\draw[purple] (.5,1) node[scale=.9]{$[110]$};
\draw[purple] (1,.5) node[scale=.9]{$[001]$};
\end{scope}
\begin{scope}[shift={(0,3)}]
\draw[olive] (.53,.37) node[scale=.8]{$[011]$};
\draw[teal] (.47,.63) node[scale=.8]{$[1\mn{1}0]$};
\draw[purple] (.5,1) node[scale=.9]{$[101]$};
\draw[purple] (1,.5) node[scale=.9]{$[102]$};
\end{scope}
\begin{scope}[shift={(1,3)}]
\draw[olive] (.53,.37) node[scale=.8]{$[102]$};
\draw[teal] (.47,.63) node[scale=.8]{$[11\mn{2}]$};
\draw[purple] (.5,1) node[scale=.9]{$[210]$};
\draw[purple] (1,.5) node[scale=.9]{$[210]$};
\end{scope}
\begin{scope}[shift={(2,3)}]
\draw[olive] (.53,.37) node[scale=.8]{$[110]$};
\draw[teal] (.47,.63) node[scale=.8]{$[\mn{1}01]$};
\draw[purple] (.5,1) node[scale=.9]{$[011]$};
\draw[purple] (1,.5) node[scale=.9]{$[111]$};
\end{scope}
\draw[->] (1,-.3)--node[below]{$U$} (2,-.3);
\draw[->] (-.5,1.5)--node[left]{$T$} (-.5,2.5);
\draw[->] (1,4.3)--node[above]{$Q$} (2,4.3);
\draw[->] (3.5,1.5)--node[right]{$P$} (3.5,2.5);
\end{scope}
\end{tikzpicture}
\caption{The computation of the growth diagrams from Figures~\ref{fig:growth} (left) and~\ref{fig:growthSSCT} (right) via edge local rules. 
Inside each square on the right diagram, we have written  $\textcolor{olive}{m}$, with $\textcolor{teal}{\Delta m}$ above it.
}
\label{fig:growthSSCT-edges}
\end{figure} 

In the semistandard case (Subsection~\ref{sec:growthSSCT}), the edge labels can have arbitrary nonnegative entries. To translate the rule in Equation~\eqref{eq:forward} into a rule for the edge labels, we first rewrite this equation as
$$\rhour_i=\rhoul_i+\rholr_i-\min\{\rhoul_i,\rholr_i\}+\min\{\rhoul_{i-1},\rholr_{i-1}\}-\rholl_{i-1},$$
and add $-\rhoul_i-\rholr_i+\rholl_i$ to both sides to get
$$\left.\begin{array}{c}(\rhour_i-\rholr_i)-(\rhoul_i-\rholl_i)\\
(\rhour_i-\rhoul_i)-(\rholr_i-\rholl_i)\end{array}\right\}=\min\{\rhoul_{i-1}-\rholl_{i-1},\rholr_{i-1}-\rholl_{i-1}\}-\min\{\rhoul_i-\rholl_i,\rholr_i-\rholl_i\},$$
or equivalently,
$$\left.\begin{array}{c}\delta^R_i-\delta^L_i\\
\delta^T_i-\delta^B_i\end{array}\right\}=\min\{\delta^L_{i-1},\delta^B_{i-1}\}-\min\{\delta^L_i,\delta^B_i\}.$$

Thus, if we let $m_i= \min\{\delta^L_i,\delta^B_i\}$ and $\Delta m_i=m_{i-1}-m_i$ for all $i$, we can compute $\delta^R$ and $\delta^T$ from $\delta^L$ and $\delta^B$ by using the formulas
\begin{align*} 
 \delta^R&=\delta^L+\Delta m,\\
\delta^T&=\delta^B+\Delta m.
 \end{align*}

For example, if $\delta^L=[2,1,1]$ and $\delta^B=[1,0,1]$, then $m=[1,0,1]$ and $\Delta m=[0,1,-1]$, so 
$\delta^T=[1,0,1]+[0,1,-1]=[1,1,0]$ and $\delta^R=[2,1,1]+[0,1,-1]=[2,2,0]$.
A full example of the computation of the edge labels of a growth diagram for semistandard cylindric tableaux is given on 
the right of Figure~\ref{fig:growthSSCT-edges}.

\section{Properties of $\CRSK$}\label{sec:properties}

\subsection{Symmetry}\label{sec:symmetry}

One advantage of the growth diagram description of $\CRSK$ is that it explains the symmetry of this correspondence when $T$ and $U$ are swapped. 
The following result was proved by Neyman~\cite{neyman_cylindric_2015} using the insertion description. We can now provide a more transparent proof.

\begin{corollary}[{\cite[Thm.\ 5.18]{neyman_cylindric_2015}}]
If $\CRSK(T,U)=(P,Q)$, then $\CRSK(U,T)=(Q,P)$.
\end{corollary}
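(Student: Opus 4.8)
The plan is to exploit the growth-diagram description of $\CRS$ just established, whose crucial feature is that it is manifestly symmetric under reflection across the main diagonal. First I would recall that, by the preceding theorem, $\CRS(T,U)=(P,Q)$ is computed by the cylindric growth diagram whose left boundary is the walk representation of $T$, whose bottom boundary is the walk representation of $U$, and whose interior labels are filled in by the forward local rule (F1)--(F2); the resulting right and top boundaries are then the walk representations of $P$ and $Q$, respectively.

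The heart of the argument is the observation that the forward local rule is invariant under swapping the upper-left and lower-right labels of each unit square, that is, under reflecting the square across the diagonal joining $\rholl$ and $\rhour$. Indeed, the case distinction ``$\rhoul\neq\rholr$'' versus ``$\rhoul=\rholr$'' is symmetric in $\rhoul$ and $\rholr$; in case (F1) the output $\rhoul\cup\rholr$ is a symmetric function of the two arguments; and in case (F2) the output depends only on $\rholl$ together with the common shape $\rhoul=\rholr$ (if that shape covers $\rholl$ by a cell in row $i$, then $\rhour$ is obtained by adding a cell in row $i+1$), so it too is unchanged when $\rhoul$ and $\rholr$ are interchanged.

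Given this, I would reflect the entire $m\times n$ diagram across the line $y=x$, sending the vertex $(x,y)$ to $(y,x)$ while keeping its label. By the symmetry of the local rule, every unit square of the reflected array still satisfies (F1)--(F2), so the reflected array is itself a legitimate cylindric growth diagram. The reflection turns the original left boundary (the walk representation of $T$) into the bottom boundary, and the original bottom boundary (that of $U$) into the left boundary; likewise it turns the original right boundary (that of $P$) into the top boundary and the original top boundary (that of $Q$) into the right boundary. Hence the reflected diagram is precisely the growth diagram computing $\CRS(U,T)$: its left boundary is $U$, its bottom boundary is $T$, its right boundary is $Q$, and its top boundary is $P$. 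Since the forward local rule determines the diagram uniquely from its left and bottom boundaries, reading off the right and top boundaries yields $\CRS(U,T)=(Q,P)$.

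The only delicate points are the routine verification of the symmetry of rule (F2) and the bookkeeping of how the four boundaries are permuted under the reflection; I do not expect either to present a genuine obstacle, since the local rule was designed precisely so that the upper-left and lower-right corners of each square enter on an equal footing, which is what makes the diagonal symmetry transparent.
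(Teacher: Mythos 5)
Your proposal is correct and follows essentially the same route as the paper: both exploit the fact that the forward local rules treat $\rhoul$ and $\rholr$ symmetrically, so reflecting the growth diagram across the diagonal turns the diagram computing $\CRS(T,U)$ into the one computing $\CRS(U,T)$, swapping the roles of $P$ and $Q$. Your write-up simply spells out the verification of the rule symmetry and the boundary bookkeeping that the paper leaves implicit.
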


\begin{proof}
The local rule in equation~\eqref{eq:forward} is symmetric with respect to switching the axes, since it does not distinguish between $\rhoul$ and $\rholr$. Thus, the growth diagram for the pair $(U,T)$ is the reflection of the growth diagram for the pair $(T,U)$. This reflection swaps the upper boundary with the right boundary, and hence the roles of $P$ and~$Q$.
\end{proof}

For standard cylindric tableaux, in the special case $T=U$ (which requires $\alpha=\beta$ and $m=n$), the resulting growth diagram is symmetric, and so $P=Q$. The resulting map $T\mapsto P$ is the bijection 
$\bij:\SCT^n_{d,L}(\alpha/\cdot)\to\SCT^n_{d,L}(\cdot/\alpha)$ from Corollary~\ref{cor:bij}. Figure~\ref{fig:growth-bij} shows an example of the computation of $\bij$ using growth diagrams.

\begin{figure}[htb]
\centering
\begin{tikzpicture}[scale=1]
\draw[dotted] (0,0) grid (8,8);
\draw (0,0) node {$[\mn1\mn1\mn2]$};
\draw (1,0) node {$[0 \mn1 \mn2]$}; \draw (0,1) node {$[0 \mn1 \mn2]$};
\draw (2,0) node {$[0 \mn1 \mn1]$}; \draw (0,2) node {$[0 \mn1 \mn1]$};
\draw (3,0) node {$[1 \mn1 \mn1]$}; \draw (0,3) node {$[1 \mn1 \mn1]$};
\draw (4,0) node {$[1 0 \mn1]$}; \draw (0,4) node {$[1 0 \mn1]$};
\draw (5,0) node {$[1 1 \mn1]$}; \draw (0,5) node {$[1 1 \mn1]$};
\draw (6,0) node {$[2 1 \mn1]$}; \draw (0,6) node {$[2 1 \mn1]$};
\draw (7,0) node {$[2 2 \mn1]$}; \draw (0,7) node {$[2 2 \mn1]$};
\draw (8,0) node[violet] {$[2 2 0]$}; \draw (8.3,0) node[right,violet] {$=\alpha$}; 
\draw (0,8) node[violet] {$[2 2 0]$}; \draw (-.3,8) node[left,violet] {$\alpha=$}; 
\draw (1,1) node {$[0 0 \mn2]$}; 
\draw (2,1) node {$[0 0 \mn1]$}; \draw (1,2) node {$[0 0 \mn1]$};
\draw (3,1) node {$[1 0 \mn1]$}; \draw (1,3) node {$[1 0 \mn1]$};
\draw (4,1) node {$[1 0 0]$}; \draw (1,4) node {$[1 0 0]$};
\draw (5,1) node {$[1 1 0]$}; \draw (1,5) node {$[1 1 0]$};
\draw (6,1) node {$[2 1 0]$}; \draw (1,6) node {$[2 1 0]$};
\draw (7,1) node {$[2 2 0]$}; \draw (1,7) node {$[2 2 0]$};
\draw (8,1) node {$[3 2 0]$}; \draw (1,8) node {$[3 2 0]$};
\draw (2,2) node {$[1 0 \mn1]$};
\draw (3,2) node {$[1 1 \mn1]$}; \draw (2,3) node {$[1 1 \mn1]$};
\draw (4,2) node {$[1 1 0]$}; \draw (2,4) node {$[1 1 0]$};
\draw (5,2) node {$[1 1 1]$}; \draw (2,5) node {$[1 1 1]$};
\draw (6,2) node {$[2 1 1]$}; \draw (2,6) node {$[2 1 1]$};
\draw (7,2) node {$[2 2 1]$}; \draw (2,7) node {$[2 2 1]$};
\draw (8,2) node {$[3 2 1]$}; \draw (2,8) node {$[3 2 1]$};
\draw (3,3) node {$[1 1 0]$};
\draw (4,3) node {$[2 1 0]$}; \draw (3,4) node {$[2 1 0]$};
\draw (5,3) node {$[2 1 1]$}; \draw (3,5) node {$[2 1 1]$};
\draw (6,3) node {$[2 2 1]$}; \draw (3,6) node {$[2 2 1]$};
\draw (7,3) node {$[2 2 2]$}; \draw (3,7) node {$[2 2 2]$};
\draw (8,3) node {$[3 2 2]$}; \draw (3,8) node {$[3 2 2]$};
\draw (4,4) node {$[2 2 0]$};
\draw (5,4) node {$[2 2 1]$}; \draw (4,5) node {$[2 2 1]$};
\draw (6,4) node {$[2 2 2]$}; \draw (4,6) node {$[2 2 2]$};
\draw (7,4) node {$[3 2 2]$}; \draw (4,7) node {$[3 2 2]$};
\draw (8,4) node {$[3 3 2]$}; \draw (4,8) node {$[3 3 2]$};
\draw (5,5) node {$[3 2 1]$};
\draw (6,5) node {$[3 2 2]$}; \draw (5,6) node {$[3 2 2]$};
\draw (7,5) node {$[3 3 2]$}; \draw (5,7) node {$[3 3 2]$};
\draw (8,5) node {$[3 3 3]$}; \draw (5,8) node {$[3 3 3]$};
 \draw (6,6) node {$[4 2 2]$};
\draw (7,6) node {$[4 3 2]$}; \draw (6,7) node {$[4 3 2]$};
\draw (8,6) node {$[4 3 3]$}; \draw (6,8) node {$[4 3 3]$};
\draw (7,7) node {$[4 3 3]$};
\draw (8,7) node {$[5 3 3]$}; \draw (7,8) node {$[5 3 3]$};
\draw (8,8) node {$[5 4 3]$};
\draw[->] (3.5,-.4)--node[below]{$T$} (4.5,-.4);
\draw[->] (-.7,3.5)--node[left]{$T$} (-.7,4.5);
\draw[->] (3.5,8.4)--node[above]{$P$} (4.5,8.4);
\draw[->] (8.7,3.5)--node[right]{$P$} (8.7,4.5);

\begin{scope}[scale=.5,shift={(-7,11)}]
\fill[yellow!25] (-4,-.5) rectangle (-1,0);
\fill[yellow!25] (1,3) rectangle (3,3.5);
\fill[yellow] (-2,0) \north\east\north\north\east\east \east\east  \west\south\south\west\west\south \west\west;
 \draw[very thin,dotted] (-2,0)  grid (2,3);
  \draw[thick,<->,olive] (-.2,0)-- (-.2,3);
 \draw[thick,<->,olive] (0,3.2)-- (3,3.2); 
 \draw[black, very thick] (-4,-.5)--++(0,.5) \east\east \north\east\north\north\east\east -- ++(0,.5);
 \draw[violet, ultra thick] (3,3.5)-- ++(0,-.5)\west\south\south\west\west\south \west -- ++(0,-0.5);
 \draw (-.5,2.5) node {1}; \draw (-.5-3,2.5-3) node {\footnotesize 1}; 
 \draw (-1.5,.5) node {2}; \draw (-1.5+3,.5+3) node {\footnotesize 2};  
 \draw (.5,2.5) node {3}; \draw (.5-3,2.5-3) node {\footnotesize 3}; 
 \draw (-.5,1.5) node {4}; 
 \draw (.5,1.5) node {5}; 
 \draw (1.5,2.5) node {6}; \draw (1.5-3,2.5-3) node {\footnotesize 6}; 
 \draw (1.5,1.5) node {7}; 
 \draw (-.5,.5) node {8}; \draw (-.5+3,.5+3) node {\footnotesize 8};  
 \draw[violet] (2.5,1.5) node {$\alpha$};
 \draw[->] (0,3.5)--(0,-1);
\draw[->] (-4,3)--(3.5,3);
\draw (-.5,4) node {$T$};
\end{scope}

\begin{scope}[scale=.5,shift={(-10,2)}]
\fill[pink!50] (-1,-.5) rectangle (2,0);
\fill[pink!50] (3,3) rectangle (6,3.5);
\fill[pink] (0,0) \north\east\east\north\north \east\east\east \south \west\south\west\south\west\west\west;
 \draw[very thin,dotted] (0,0)  grid (5,3);
 \draw[black, very thick] (6,3.5)-- ++(0,-.5)\west\south\west\south\west\south\west  -- ++(0,-0.5);
  \draw[violet, ultra thick] (3,3.5)-- ++(0,-.5)\west\south\south\west\west\south \west -- ++(0,-0.5);
  \draw[violet] (1.5,1.5) node {$\alpha$};
 \draw (2.5,2.5) node {1}; \draw (2.5-3,2.5-3) node {\footnotesize 1}; 
 \draw (.5,.5) node {2}; \draw (.5+3,.5+3) node {\footnotesize 2};  
 \draw (1.5,.5) node {3}; \draw (1.5+3,.5+3) node {\footnotesize 3}; 
 \draw (2.5,1.5) node {4}; 
 \draw (2.5,.5) node {5}; \draw (2.5+3,.5+3) node {\footnotesize 5}; 
 \draw (3.5,2.5) node {6}; \draw (3.5-3,2.5-3) node {\footnotesize 6}; 
 \draw (4.5,2.5) node {7}; \draw (4.5-3,2.5-3) node {\footnotesize 7}; 
 \draw (3.5,1.5) node {8}; 
 \draw[->] (0,3.5)--(0,-1);
\draw[->] (-1,3)--(6.5,3);
\draw (2,4) node {$P$};
\end{scope}

\end{tikzpicture}
\caption{The computation of $\bij$ using growth diagrams, for the same example as in Figure~\ref{fig:bij}. 
}
\label{fig:growth-bij}
\end{figure} 

\subsection{Complementation}\label{sec:complementation}

In Section~\ref{sec:connections} we studied conjugation, which is the bijection between $\CS_{d,L}$ and $\CS_{L,d}$ obtained by reflection along the diagonal $y=x$. Next we consider another natural symmetry on cylindric shapes that results from  $180^\circ$ rotation. 
For $\lambda=[\lambda_1,\lambda_2,\dots,\lambda_d]\in\CS_{d,L}$, let 
$$\blam=[L-\lambda_{d},L-\lambda_{d-1},\dots,L-\lambda_{1}]\in\CS_{d,L}.
$$
We call $\blam$ the {\em complement}\footnote{The operation $\lambda\mapsto\blam$ is similar to the Flip function from \cite[Def.~4.12]{neyman_cylindric_2015}, but we use a different indexing of the rows and columns of the flipped diagram.} of $\lambda$,
and we note that the map $\lam\mapsto\blam$ is an involution on $\CS_{d,L}$.
For example, the complement of $\lambda=[2,2,0]\in\CS_{3,3}$ is $\blam=[3,1,1]$.
The boundary sequences $\bdry_{\lambda}$ and $\bdry_{\blam}$ are reversals of each other, since
\begin{align*}\bdry_{\blam}&=(\w^{\blam_0-\blam_1}\s\w^{\blam_1-\blam_2}\s\dots \w^{\blam_{d-1}-\blam_d}\s)^\infty
=(\w^{\lam_d-\lam_{d+1}}\s\w^{\lam_{d-1}-\lam_d}\s\dots \w^{\lam_{1}-\lam_2}\s)^\infty\\
&=(\s\w^{\lam_{d-1}-\lam_d}\s\dots \w^{\lam_{1}-\lam_2}\s\w^{\lam_0-\lam_{1}})^\infty.
\end{align*}

For $\langle i,j\rangle\in\C_{d,L}$, define $\barij=\langle d+1-i,L+1-j \rangle$.
It follows from the definitions that $\langle i,j \rangle\in Y_\lambda$ if and only if $\barij\notin Y_{\blam}$. Thus, for $\lambda,\mu\in\CS_{d,L}$, we have $\mu\subseteq\lambda$ if and only if $\blam\subseteq\bar\mu$, and 
$\langle i,j \rangle\in Y_{\lambda/\mu}$ if and only if $\barij\in Y_{\bar\mu/\blam}$.
For $T\in\SCT_{d,L}(\lambda/\mu)$ with $|\lambda/\mu|=n$, we define its {\em complement} tableau $\bT\in\SCT_{d,L}(\bar\mu/\blam)$ to be the one with entries
$\bT(\barij)=n+1-T(\ij)$ for all $\ij\in Y_{\lambda/\mu}$. Viewed as fillings of cylindric Young diagrams, $\bT$ is obtained from $T$ by performing a $180^\circ$ rotation and replacing each entry $k$ with $n+1-k$. See the examples in Figure~\ref{fig:complement}.
For $T\in\SSCT_{d,L}(\lambda/\mu)$, its {\em complement} $\bT\in\SSCT_{d,L}(\bar\mu/\blam)$ is defined similarly, but with the role of $n+1$ played by the sum of the largest and the smallest entries of~$T$.

\begin{figure}[htb]
\centering
\begin{tikzpicture}[scale=0.5]
\fill[yellow!25] (-4,-.5) rectangle (-1,0);
\fill[yellow!25] (1,3) rectangle (3,3.5);
\fill[yellow] (-2,0) \north\east\north\north\east\east \east\east  \west\south\south\west\west\south \west\west;
 \draw[very thin,dotted] (-2,0)  grid (2,3);
 \draw[thick,<->,olive] (-.2,0)-- (-.2,3);
 \draw[thick,<->,olive] (0,3.2)-- (3,3.2); 
 \draw[green, very thick] (-4,-.5)--++(0,.5) \east\east \north\east\north\north\east\east -- ++(0,.5);
 \draw[violet, ultra thick] (3,3.5)-- ++(0,-.5)\west\south\south\west\west\south \west -- ++(0,-0.5);
 \draw (-.5,2.5) node {1}; \draw (-.5-3,2.5-3) node {\footnotesize 1}; 
 \draw (-1.5,.5) node {2}; \draw (-1.5+3,.5+3) node {\footnotesize 2};  
 \draw (.5,2.5) node {3}; \draw (.5-3,2.5-3) node {\footnotesize 3}; 
 \draw (-.5,1.5) node {4}; 
 \draw (.5,1.5) node {5}; 
 \draw (1.5,2.5) node {6}; \draw (1.5-3,2.5-3) node {\footnotesize 6}; 
 \draw (1.5,1.5) node {7}; 
 \draw (-.5,.5) node {8}; \draw (-.5+3,.5+3) node {\footnotesize 8};  
 \draw[violet] (2.5,1.5) node {$\alpha$};
 \draw[->] (0,3.5)--(0,-1);
\draw[->] (-4,3)--(3.5,3);
\draw (-.5,4) node {$T$};
\draw[->] (4,1.5)-- node[above]{$\bij$}(5,1.5);
\draw[<->] (-1,-1.5)--node[left]{complement}(-1,-2.75);

\begin{scope}[shift={(7,0)}]
\fill[pink!50] (-1,-.5) rectangle (2,0);
\fill[pink!50] (3,3) rectangle (6,3.5);
\fill[pink] (0,0) \north\east\east\north\north \east\east\east \south \west\south\west\south\west\west\west;
 \draw[very thin,dotted] (0,0)  grid (5,3);
 \draw[brown, very thick] (6,3.5)-- ++(0,-.5)\west\south\west\south\west\south\west  -- ++(0,-0.5);
  \draw[violet, ultra thick] (3,3.5)-- ++(0,-.5)\west\south\south\west\west\south \west -- ++(0,-0.5);
  \draw[violet] (1.5,1.5) node {$\alpha$};
 \draw (2.5,2.5) node {1}; \draw (2.5-3,2.5-3) node {\footnotesize 1}; 
 \draw (.5,.5) node {2}; \draw (.5+3,.5+3) node {\footnotesize 2};  
 \draw (1.5,.5) node {3}; \draw (1.5+3,.5+3) node {\footnotesize 3}; 
 \draw (2.5,1.5) node {4}; 
 \draw (2.5,.5) node {5}; \draw (2.5+3,.5+3) node {\footnotesize 5}; 
 \draw (3.5,2.5) node {6}; \draw (3.5-3,2.5-3) node {\footnotesize 6}; 
 \draw (4.5,2.5) node {7}; \draw (4.5-3,2.5-3) node {\footnotesize 7}; 
 \draw (3.5,1.5) node {8}; 
 \draw[->] (0,3.5)--(0,-1);
\draw[->] (-1,3)--(6.5,3);
\draw (1.5,4) node {$P$};
\draw[<->] (1,-1.5)--node[right]{complement}(1,-2.75);
\end{scope}

\begin{scope}[shift={(-4,-7.5)}]\fill[yellow!25] (0,-.5) rectangle (2,0);
\fill[yellow!25] (4,3) rectangle (7,3.5);
\fill[yellow] (1,0) \north\north\east\east\north\east \east \south\west\south\south\west\west\west;
 \draw[very thin,dotted] (0,0)  grid (5,3);
 \draw[violet, ultra thick] (4,3.5)-- ++(0,-.5)\west\south\west\west\south\south\west -- ++(0,-0.5);
 \draw[green, very thick] (7,3.5)-- ++(0,-.5)\west\west\south\west\south\south\west\west  -- ++(0,-0.5);
  \draw[violet] (.5,1.5) node {$\balpha$};
 \draw (3.5,2.5) node {1}; \draw (3.5-3,2.5-3) node {\footnotesize 1}; 
 \draw (1.5,1.5) node {2}; 
 \draw (1.5,.5) node {3}; \draw (1.5+3,.5+3) node {\footnotesize 3};  
 \draw (2.5,1.5) node {4}; 
 \draw (3.5,1.5) node {5}; 
 \draw (2.5,.5) node {6}; \draw (2.5+3,.5+3) node {\footnotesize 6};  
 \draw (4.5,2.5) node {7}; \draw (4.5-3,2.5-3) node {\footnotesize 7}; 
 \draw (3.5,.5) node {8}; \draw (3.5+3,.5+3) node {\footnotesize 8};  
 \draw[->] (0,3.5)--(0,-1);
\draw[->] (-.5,3)--(7.5,3);
\draw (1.5,4) node {$\bT$};
\draw[->] (9,1.5)-- node[above]{$\bij$}(8,1.5);

\begin{scope}[shift={(13,0)}]
\fill[pink!50] (-3,-.5) rectangle (0,0);
\fill[pink!50] (1,3) rectangle (4,3.5);
\fill[pink] (-2,0) \north\east\north\east\north\east \east\east  \south\west\west\south\south\west \west\west;
 \draw[very thin,dotted] (-2,0)  grid (3,3);
 \draw[brown, very thick] (-3,-.5)--++(0,.5) \east \north\east\north\east\north\east -- ++(0,.5);
 \draw[violet, ultra thick] (4,3.5)-- ++(0,-.5)\west\south\west\west\south\south\west -- ++(0,-0.5);
 \draw (-.5,1.5) node {1}; 
 \draw (-1.5,.5) node {2}; \draw (-1.5+3,.5+3) node {\footnotesize 2};  
 \draw (-.5,.5) node {3}; \draw (-.5+3,.5+3) node {\footnotesize 3};  
 \draw (.5,2.5) node {4}; \draw (.5-3,2.5-3) node {\footnotesize 4}; 
 \draw (.5,1.5) node {5}; 
 \draw (1.5,2.5) node {6}; \draw (1.5-3,2.5-3) node {\footnotesize 6}; 
 \draw (2.5,2.5) node {7}; \draw (2.5-3,2.5-3) node {\footnotesize 7}; 
 \draw (.5,.5) node {8}; \draw (.5+3,.5+3) node {\footnotesize 8};
 \draw[violet] (1.5,1.5) node {$\balpha$};
 \draw[->] (0,3.5)--(0,-1);
\draw[->] (-3,3)--(4.5,3);
\draw (-.5,4) node {$\bP$};
\end{scope}
\end{scope}
\end{tikzpicture}
\caption{The complements of the tableaux $P$ and $T$ from Figure~\ref{fig:bij} satisfy $\bij(\bP)=\bT$.}
\label{fig:complement}
\end{figure}

One property of $\CRSK$, already observed by Neyman~\cite{neyman_cylindric_2015}, is that its inverse can be computed by simply applying $\CRSK$ to the pair of complement tableaux. We use cylindric growth diagrams to provide a simpler proof of this property.

\begin{corollary}[{\cite[Cor.\ 4.19]{neyman_cylindric_2015}}]\label{cor:involution}
If $\CRSK(T,U)=(P,Q)$ then $\CRSK(\bP,\bQ)=(\bT,\bU)$.
In particular, if $\bij(T)=P$, then $\bij(\bP)=\bT$. 
\end{corollary}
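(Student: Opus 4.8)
The plan is to exploit the growth diagram description of $\CRS$ just established, and to show that complementing every label while rotating the grid by $180^\circ$ carries the growth diagram of $(T,U)$ to the growth diagram of $(\bP,\bQ)$. Concretely, let $\rho^{(x,y)}$ (for $0\le x\le m$, $0\le y\le n$) be the labels of the growth diagram computed from $(T,U)$, whose right and top boundaries are the walk representations of $P$ and $Q$. On the same $m\times n$ grid I would define a new labeling $\sigma^{(x,y)}=\overline{\rho^{(m-x,\,n-y)}}$ and argue that it is precisely the growth diagram attached to $(\bP,\bQ)$.

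First I would identify the four boundaries of the $\sigma$-labeling. Using that the complement reverses containment, preserves the cardinality of skew shapes, and that reversing the walk representation of a standard cylindric tableau and taking complements produces the walk representation of its complement tableau (all recorded above, together with the reversal of $\bdry_\lambda$ under $\lambda\mapsto\blam$), one checks that the left, bottom, right and top boundaries of the $\sigma$-diagram are the walk representations of $\bP$, $\bQ$, $\bT$ and $\bU$, respectively. For example the left boundary $\sigma^{(0,0)},\dots,\sigma^{(0,n)}$ equals $\blam,\overline{\rho^{(m,n-1)}},\dots,\bar\beta$, an increasing chain that is exactly the walk representation of $\bP$. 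Thus, provided the $\sigma$-labeling obeys the forward local rule, it is by definition the growth diagram of $(\bP,\bQ)$, and reading off its right and top boundaries yields $\CRS(\bP,\bQ)=(\bT,\bU)$.

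The crux is therefore to verify that the $\sigma$-labeling satisfies the forward local rule. Under $(x,y)\mapsto(m-x,n-y)$ each unit square is rotated by $180^\circ$, so the $\sigma$-square coming from a fixed original square has lower-left label $\overline{\rhour}$, upper-left $\overline{\rholr}$, lower-right $\overline{\rhoul}$ and upper-right $\overline{\rholl}$. Hence applying the forward rule to the $\sigma$-square means computing $\overline{\rholl}$ from $\overline{\rhour},\overline{\rholr},\overline{\rhoul}$, and what must be shown is that \emph{the complement operation interchanges the forward local rule \textup{(F1)--(F2)} with the backward local rule \textup{(B1)--(B2)}}. This is a short case check. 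Since $\overline{\nu\cap\rho}=\bar\nu\cup\bar\rho$, the intersection rule (B1) becomes the union rule (F1) on complements. Since $\blam_k=L-\lambda_{d+1-k}$, complementing turns a cover ``$\nu\lessdot\mu$ by a cell in row $i$'' into a cover ``$\bar\mu\lessdot\bar\nu$ by a cell in row $d+1-i$''; feeding this into (B2), the two successive additions ``row $i$ then row $i+1$'' of (F2) correspond, on the complemented and rotated square, to ``row $d-i$ then row $d-i+1$'', which is again an instance of (F2). Because a valid square satisfies both the forward and backward rules simultaneously (the forward rule being invertible, as noted above), this confirms that the $\sigma$-square is valid.

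I expect the index bookkeeping in the case (F2)/(B2) to be the main obstacle: one must track precisely how the row receiving a cell transforms under the complement and confirm that the ``$i\mapsto i+1$'' shift is preserved rather than reversed, so that (F2) maps to (F2) and not to some inadmissible configuration. Once the local rule is shown to be preserved, the forward implication of the corollary is immediate; the converse follows by applying the statement to $(\bP,\bQ)$ in place of $(T,U)$, using that complementation of tableaux is an involution. Finally, specializing to $U=T$ (so that $P=Q$ and $\bij(T)=P$) gives $\CRS(\bP,\bP)=(\bT,\bT)$, i.e.\ $\bij(\bP)=\bT$, which is the ``in particular'' statement and reproves Equation~\eqref{eq:involution}.
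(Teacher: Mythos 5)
Your proof is correct and takes essentially the same route as the paper's: both arguments observe that complementing every label and rotating the grid by $180^\circ$ turns the forward local rules (F1)--(F2) into the backward local rules (B1)--(B2), and that complementation reverses walk representations, so the growth diagram computing $\CRS(T,U)=(P,Q)$ becomes a growth diagram computing $\CRS(\bP,\bQ)=(\bT,\bU)$. The only difference is that you carry out the row-index bookkeeping for the (F2)/(B2) exchange explicitly (correctly obtaining the shift $i\mapsto d-i$), a verification the paper leaves implicit.
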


\begin{proof}
For any $T\in\SSCT_{d,L}(\alpha/\mu)$ with complement $\bT\in\SSCT_{d,L}(\bar\mu/\bar\alpha)$, the sequence $\ssh(\bT)$ is obtained by reversing $\ssh(T)$ and replacing each shape $\rho$ with its complement shape~$\bar\rho$. 
Additionally, rotating the square in equation~\eqref{eq:square} by $180^\circ$ and replacing each label $\rho$ by $\bar\rho$, the forward local rule in equation~\eqref{eq:forward} becomes the backward local rule in  equation~\eqref{eq:backward}. 

We deduce that the cylindric growth diagram that computes $\CRSK(T,U)=(P,Q)$, after replacing each shape by its complement and rotating the grid by $180^\circ$, becomes a cylindric growth diagram that computes $\CRSK(\bP,\bQ)=(\bT,\bU)$.
\end{proof}

Corollary~\ref{cor:involution} implies that the diagram in Figure~\ref{fig:complement} commutes, and that, for any standard cylindric tableau $P$, we have $\bij^{-1}(P)=\overline{\bij(\bP)}$.

\subsection{Evacuation of standard Young tableaux in terms of $\CRS$}

Evacuation is an involution on the set of linear extensions of a poset~\cite{stanley_promotion_2009}. 
It was introduced in the study of the Robinson--Schensted correspondence by Sch\"utzenberger~\cite{schutzenberger_quelques_1963}, who originally defined it as a map on standard Young tableaux. In this subsection we show that there is a simple description of this map in terms of the cylindric Robinson--Schensted correspondence.

For $d,L\ge n$, elements of $\SCT_{d,L}^n(\cdot/[0^d])$ can be viewed as standard Young tableaux of straight shape, since the cylindric setting does not impose any additional conditions on the entries of the tableau. Denote by $\SYT^n$ the set of standard Young tableaux of straight shape with $n$ cells.
By complementation, the set $\SCT_{d,L}^n([0^d]/\cdot)$ is in bijection with $\SCT_{d,L}^n(\cdot/[L^d])$, which in turn is in bijection with $\SCT_{d,L}^n(\cdot/[0^d])$ by subtracting $L$ from each integer, and hence in bijection with $\SYT^n$.
Thus, in the case $d,L\ge n$ and $\alpha=[0^d]$, we can interpret the map $P\mapsto\bij(\bP)$, where $\bij$ is the bijection from Corollary~\ref{cor:bij}, as an involution on $\SYT^n$. See Figure~\ref{fig:evacuation} for an example.

\begin{figure}[htb]
\centering
\begin{tikzpicture}[scale=0.5]
\fill[pink] (0,-4) \north\north\north\north \east\east\east\east \south\west\west\south\south\west\south\west;
 \draw[very thin,dotted] (0,-4)  grid (4,0);
 \draw[very thick] (4,0)\south\west\west\south\south\west\south\west;
  \draw[violet, ultra thick] (0,-4.5) --++(0,.5) \north\north\north\north \east\east\east\east;
   \draw[violet, ultra thick,dotted] (4,0) \east;
      \draw[violet, ultra thick] (5,0) \east\north\north\north\north --++(0,.5);;
   \draw (.5,-.5) node {1}; 
 \draw (.5,-1.5) node {2}; 
 \draw (1.5,-.5) node {3};
 \draw (.5,-2.5) node {4}; 
 \draw (2.5,-.5) node {5}; 
 \draw (3.5,-.5) node {6}; 
 \draw (1.5,-1.5) node {7};
 \draw (.5,-3.5) node {8}; 
 \draw (1.5,-2.5) node {9}; 
\draw[->] (8.5,0)-- node[above]{complement}(9.5,0);
\draw (1.5,1) node {$P$};

\begin{scope}[shift={(12,0)}]
\fill[pink] (2,0) \east\east\east\east \north\north\north\north \west\south\west\south\south\west\west\south;
 \draw[very thin,dotted] (2,0)  grid (6,4);
 \draw[very thick] (6,4) \west\south\west\south\south\west\west\south;
  \draw[violet, ultra thick] (0,-4.5)  --++(0,.5) \north\north\north\north\east;
  \draw[violet, ultra thick,dotted] (1,0) \east;
  \draw[violet, ultra thick] (2,0)  \east\east\east\east \north\north\north\north --++(0,.5);
  \draw (4.5,2.5) node {1}; 
 \draw (5.5,3.5) node {2}; 
 \draw (4.5,1.5) node {3};
 \draw (2.5,.5) node {4}; 
 \draw (3.5,.5) node {5}; 
 \draw (5.5,2.5) node {6}; 
 \draw (4.5,.5) node {7};
 \draw (5.5,1.5) node {8}; 
 \draw (5.5,.5) node {9}; 

\draw (1,3) node {$\bP$};
\draw[->] (7.5,0)-- node[above]{$\bij$}(8.5,0);
\end{scope}

\begin{scope}[shift={(22.5,0)}]
\fill[yellow] (0,-4) \north\north\north\north \east\east\east\east \south\west\west\south\south\west\south\west;
 \draw[very thin,dotted] (0,-4)  grid (4,0);
 \draw[very thick] (4,0)\south\west\west\south\south\west\south\west;
  \draw[violet, ultra thick] (0,-4.5) --++(0,.5) \north\north\north\north \east\east\east\east;
   \draw[violet, ultra thick,dotted] (4,0) \east;
      \draw[violet, ultra thick] (5,0) \east\north\north\north\north --++(0,.5);;
   \draw (.5,-.5) node {1}; 
 \draw (1.5,-.5) node {2}; 
 \draw (.5,-1.5) node {3};
 \draw (.5,-2.5) node {4}; 
 \draw (1.5,-1.5) node {5}; 
 \draw (2.5,-.5) node {6}; 
 \draw (1.5,-2.5) node {7};
 \draw (3.5,-.5) node {8}; 
 \draw (.5,-3.5) node {9}; 
\draw (1.5,1) node {$\bij(\bP)$};
\end{scope}

\end{tikzpicture}
\caption{The map $P\mapsto\bij(\bP)$ for $d,L\ge n$ and $\alpha=[0^d]$.}
\label{fig:evacuation}
\end{figure} 

We will see that this involution coincides with evacuation on standard Young tableaux, which we
denote by $\evac$. We will prove a more general result that applies to pairs $(P,Q)$.
For $d,L\ge n$ and $\alpha=\beta=[0^d]\in\CS_{d,L}$, we can interpret the map $(P,Q)\mapsto\CRS(\bP,\bQ)$ 
as a map on pairs of standard Young tableaux of the same shape. We know that this map is an involution by Corollary~\ref{cor:involution}. 

\begin{proposition}
For $d,L\ge n$ and $\alpha=\beta=[0^d]$, the map $(P,Q)\mapsto\CRS(\bP,\bQ)$, when interpreted as an involution on pairs of standard Young tableaux, coincides with the map $(P,Q)\mapsto(\evac(P),\evac(Q))$. 
\end{proposition}

\begin{proof}
Let us analyze the insertions that take place in the computation of $\CRS(\bP,\bQ)$, as described in Theorem~\ref{thm:CRS}. We say that an entry is {\em bumped} when it is removed from a row and inserted in the next row. A bump from row $d$ (equivalently, row $0$) to row $1$ will be called a {\em bump around the edge}.
As we successively apply the internal row insertion operations $R_{i_1},R_{i_2},\dots,R_{i_n}$ to $\bP$, where $i_j$ is the row of $\bQ$ containing $j$, each insertion causes exactly one entry to be bumped around the edge. Let $\sigma=\sigma_1\sigma_2\dots\sigma_n$ be this sequence of entries, which is a permutation in the symmetric group $\S_n$. In the example in Figure~\ref{fig:evacuation} (where $P=Q$), we have $\sigma=493157682\in\S_9$. The bumps that take place when applying $R_{i_j}$ can then be separated into {\em early bumps} (those occurring up until the bump of $\sigma_j$ around the edge) and {\em late bumps} (those occurring afterwards).

Interpreting $\CRS(\bP,\bQ)$ as a pair of standard Young tableaux, this pair is precisely the image of $\sigma$ under the usual RS algorithm. Indeed, since $d\ge n$, the late bumps that occur when applying $R_{i_j}$ mimic the insertion of $\sigma_j$ via RS. 

Now let $\pi\in\S_n$ be the reverse-complement of $\sigma$, that is, $\pi=(n+1-\sigma_n)\dots(n+1-\sigma_2)(n+1-\sigma_1)$. Interpreting $(P,Q)$ as a pair of standard Young tableaux, we claim that this pair is precisely the image of $\pi$ under RS. This is because, for $j$ from $1$ to $n$, the bumps caused by the insertion of $\pi_j=n+1-\sigma_{n+1-j}$ via RS correspond, under complementation, to the inverses of the early bumps that occur when applying $R_{i_{n+i-j}}$ to the pair $(\bP,\bQ)$.
A similar argument is used in \cite[Cor.~4.19]{neyman_cylindric_2015} to show that the inverse of $\CRS$ can be computed by applying $\CRS$ to the complement tableaux.

Finally, it is a well-known property of the Robinson--Schensted correspondence \cite[Thm.~A1.2.10]{stanley_enumerative_1999} that if $\pi$ and $\sigma$ are reverse-complements of each other and $\RS(\pi)=(P,Q)$, then $\RS(\sigma)=(\evac(P),\evac(Q))$.
\end{proof}

\section{Bijections for oscillating cylindric tableaux}
\label{sec:OCT}

In this section we use cylindric growth diagrams to describe bijections for oscillating cylindric tableaux, and to give a proof of Theorem~\ref{thm:osc_walks}. Recall that $\OCT_{d,L}^w(\alpha,\beta)$ is the set of oscillating cylindric tableaux of type $w$ with initial shape $\alpha$ and final shape~$\beta$.
Denote by $\WW_{m,n}\subseteq\{+,-\}^{m+n}$ the set of words consisting of exactly $m$ pluses and $n$ minuses.
The following theorem can be thought of as a cylindric analogue of \cite[Thm.\ 4.2.10]{roby_applications_1991}.

\begin{theorem}\label{thm:OCT}
Let $m,n\ge0$ and let $\alpha,\beta\in\CS_{d,L}$ such that $|\alpha|+m=|\beta|+n$. For any $w\in\WW_{m,n}$, there is a bijection between $\OCT_{d,L}^w(\alpha,\beta)$ and the set of cylindric growth diagrams on an $m\times n$ grid where vertex $(0,n)$ has label $\alpha$ and vertex $(m,0)$ has label $\beta$.
 In particular, $\card{\OCT_{d,L}^w(\alpha,\beta)}$ is independent of $w\in\WW_{m,n}$, and it equals
$$\sum_{\substack{\mu\subseteq\alpha,\beta\\ |\alpha/\mu|=n,|\beta/\mu|=m}} \card{\SCT_{d,L}(\alpha/\mu)} \card{\SCT_{d,L}(\beta/\mu)}= \sum_{\substack{\lambda\supseteq\alpha,\beta\\ |\lambda/\beta|=n,|\lambda/\alpha|=m}} \card{\SCT_{d,L}(\lambda/\beta)} \card{\SCT_{d,L}(\lambda/\alpha)}.$$
\end{theorem}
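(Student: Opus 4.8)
The plan is to realize each oscillating cylindric tableau as the sequence of vertex labels read along a monotone lattice path through a cylindric growth diagram, and then to exploit the fact that the forward and backward local rules are mutually inverse in order to pass between different paths. For $w\in\WW_{m,n}$, let $P_w$ be the lattice path from the vertex $(0,n)$ to the vertex $(m,0)$ whose $k$th step goes one unit to the right if $w_k=+$ and one unit down if $w_k=-$; since $w$ has $m$ pluses and $n$ minuses, $P_w$ is a monotone staircase in the $m\times n$ grid. Because every edge of a cylindric growth diagram joins a shape to one obtained from it by adding a single cell (the upper or right endpoint covering the lower or left one), reading the labels along $P_w$ produces a sequence in which each right step adds a cell and each down step removes a cell. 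The endpoints carry the labels $\alpha$ (at $(0,n)$) and $\beta$ (at $(m,0)$), so this sequence is exactly an element of $\OCT_{d,L}^w(\alpha,\beta)$, and this defines a map $\Phi_w$ from growth diagrams with the prescribed corner labels to $\OCT_{d,L}^w(\alpha,\beta)$.

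Next I would prove that $\Phi_w$ is a bijection. The key local statement is that at any unit square the forward rule (F1)--(F2) computes $\rhour$ from $\rholl,\rhoul,\rholr$, while the backward rule (B1)--(B2) computes $\rholl$ from $\rhour,\rhoul,\rholr$, and these two operations are mutually inverse. Consequently, two monotone paths that differ in exactly one unit square---one turning at its lower-left corner, the other at its upper-right corner---have label sequences determined by each other through a single application of a local rule; in terms of words this is precisely the adjacent swap $-+\leftrightarrow+-$. Since any two words in $\WW_{m,n}$ are connected by such swaps and every vertex of the grid lies on some monotone path, the labels along $P_w$ determine all labels of the diagram, giving injectivity. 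For surjectivity I would place a given oscillating tableau along $P_w$ and flip one square at a time toward the lower-left boundary path, each flip being a valid application of the backward rule; the resulting labels on the left and bottom boundaries form a pair $(T,U)$ which extends by the forward rule to a full growth diagram, and because the two rules undo each other this diagram reads back the original tableau along $P_w$.

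The independence of $\card{\OCT_{d,L}^w(\alpha,\beta)}$ from $w$ is then immediate, since all these sets biject with the single set of growth diagrams pinned by the corner labels $\alpha$ and $\beta$. The two displayed sums arise by specializing to the extreme words. For $w=-^n+^m$ the path $P_w$ is the left boundary followed by the bottom boundary, so by Remark~\ref{rem:SCT} the oscillating tableau splits as a pair $(T,U)\in\SCT_{d,L}(\alpha/\mu)\times\SCT_{d,L}(\beta/\mu)$ with common inner shape $\mu$ the label of $(0,0)$, yielding the left-hand sum; for $w=+^m-^n$ the path is the top boundary followed by the right boundary, giving a pair in $\SCT_{d,L}(\lambda/\alpha)\times\SCT_{d,L}(\lambda/\beta)$ indexed by the outer shape $\lambda$ at $(m,n)$, yielding the right-hand sum. (The equality of the two sums recovers the cylindric RS bijection of Theorem~\ref{thm:CRS}.)

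The step I expect to be the main obstacle is the surjectivity and confluence part of the second paragraph: verifying that propagating the local rules from $P_w$ toward the boundary is consistent and independent of the order in which squares are flipped, so that one genuinely recovers a well-defined growth diagram. In the classical theory this is routine, and here it should go through unchanged, because conditions (D1)--(D3) guarantee that the local rules always produce legitimate covering relations; the one genuinely cylindric feature, the absence of a minimum element $\hat 0$, never enters, since the rules only ever add or remove a single cell.
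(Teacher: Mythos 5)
Your proposal is correct and follows essentially the same route as the paper: both identify $w\in\WW_{m,n}$ with a monotone lattice path from $(0,n)$ to $(m,0)$, realize an oscillating cylindric tableau as the vertex labels along that path, pass between the path labels and a full cylindric growth diagram via the forward and backward local rules, and extract the two sums from the extreme words $-^n+^m$ and $+^m-^n$. The only difference is one of detail: where the paper simply asserts that the labeled path extends uniquely to a growth diagram, you justify this explicitly via the mutual inverseness of rules (F1)--(F2) and (B1)--(B2) together with the adjacent-swap connectivity of $\WW_{m,n}$, which is a correct elaboration of the same argument.
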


\begin{proof}
Fix $w\in\WW_{m,n}$, and note that we can view $w$ as a lattice path from $(0,n)$ to $(m,0)$ with steps $\e=(1,0)$ and $\s=(0,-1)$ by replacing each $+$ with an $\e$ and each $-$ with an $\s$.

Given a tableau in $\OCT_{d,L}^w(\alpha,\beta)$, label the vertices of the above lattice path by the cylindric shapes in the sequence. 
Note that  vertex $(0,n)$ has label $\alpha$ and vertex $(m,0)$ has label $\beta$. This labeled path can be uniquely extended to a cylindric growth diagram on the $m\times n$ grid by using the forward and backward local rules. 

Conversely, given a cylindric growth diagram on an $m\times n$ grid, where vertex $(0,n)$ has label $\alpha$ and vertex $(m,0)$ has label $\beta$, the labels of the vertices of the above lattice path determine an element of $\OCT_{d,L}^w(\alpha,\beta)$. This completes the description of the bijection.

In the special case $w=-^n+^m$, tableaux in $\OCT_{d,L}^w(\alpha,\beta)$ describe the labels of the vertices on the left and bottom boundaries of the grid, and they can be viewed as pairs $(T,U)\in\SCT_{d,L}(\alpha/\mu)\times\SCT_{d,L}(\beta/\mu)$ for some $\mu\subseteq\alpha,\beta$ with $|\alpha/\mu|=n$ and $|\beta/\mu|=m$. 

Similarly, oscillating tableaux of type $w=+^m-^n$ describe the labels of the vertices on the right and top boundaries of the grid, which can be viewed as pairs $(P,Q)\in\SCT_{d,L}(\lambda/\beta)\times\SCT_{d,L}(\lambda/\alpha)$ for some $\lambda\supseteq\alpha,\beta$ with $|\lambda/\beta|=n$ and $|\lambda/\alpha|=m$.
\end{proof}

\begin{example}\label{ex:OCT} Let $\alpha=[3,3,1],\beta=[2,2,2]\in\CS_{3,2}$, and let $n=5$, $m=4$. For $w={+}{-}{+}{+}{-}{-}{+}{-}{-}\allowbreak\in\WW_{4,5}$,
the above bijection takes the oscillating tableau
$$[3,3,1],[3,3,2],[3,2,2],[4,2,2],[4,3,2],[4,2,2],[3,2,2],[3,3,2],[3,2,2],[2,2,2],$$
which is an element of $\OCT_{3,2}^w(\alpha,\beta)$, to the cylindric growth diagram in Figure~\ref{fig:growth}.
\end{example}

It is particularly interesting to consider the restriction of the above construction to the symmetric case.
Suppose that $m=n$ and $\alpha=\beta$, and consider oscillating cylindric tableaux in $\OCT_{d,L}^{2n}(\alpha,\alpha)$ of the form $\rho^0,\rho^1,\dots,\rho^{2n}$ such that $\rho^k=\rho^{2n-k}$ for all $0\le k\le n$. We call these {\em symmetric} oscillating cylindric tableaux. The type of such a tableau is a word in $\WW_{n,n}$ that is invariant under reversing the word and switching signs.
The corresponding lattice path from $(0,n)$ to $(n,0)$, with vertices labeled by the cylindric shapes in the sequence, is symmetric with respect to reflection along $y=x$. It follows that, when extending this path to the $n\times n$ grid by using the forward and backward local rules, the resulting cylindric growth diagram has the same symmetry.

Conversely, given a cylindric growth diagram on an $n\times n$ grid that is symmetric with respect to reflection along $y=x$, if we let $\alpha$ be the label of vertices $(0,n)$ and $(n,0)$, then the labels of the vertices along a symmetric path from $(0,n)$ to $(n,0)$ determine a symmetric oscillating cylindric tableau.

In this case, because of the symmetry, the tableau is determined by the shapes $\rho^0,\rho^1,\dots,\rho^n$, the type is determined by the first $n$ signs, and the growth diagram is determined by the labels of the vertices $(x,y)$ with $x\le y$.

\begin{theorem}\label{thm:OCT-symmetric}
Let $\alpha\in\CS_{d,L}$ and $w\in\{+,-\}^n$ for some $n\ge0$. There is a bijection between $\OCT_{d,L}^w(\alpha,\cdot)$ and the set of symmetric cylindric growth diagrams on an $n\times n$ grid where vertices $(0,n)$ and $(n,0)$ have label $\alpha$.

In particular, $\card{\OCT_{d,L}^w(\alpha,\cdot)}$ is independent of $w\in\{+,-\}^n$, and it equals
$\card{\SCT^n_{d,L}(\alpha/\cdot)} = \card{\SCT^n_{d,L}(\cdot/\alpha)}$.
\end{theorem}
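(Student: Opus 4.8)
The plan is to reduce Theorem~\ref{thm:OCT-symmetric} to the symmetric case of Theorem~\ref{thm:OCT} by a doubling construction. Given $w=w_1\cdots w_n\in\{+,-\}^n$ and an oscillating tableau $\rho^0,\rho^1,\dots,\rho^n\in\OCT_{d,L}^w(\alpha,\cdot)$, I would extend it to the length-$2n$ sequence $\sigma^0,\dots,\sigma^{2n}$ defined by $\sigma^k=\rho^k$ for $0\le k\le n$ and $\sigma^{2n-k}=\rho^k$. By construction $\sigma^0=\sigma^{2n}=\alpha$ and $\sigma^k=\sigma^{2n-k}$, so $\sigma$ is a symmetric oscillating cylindric tableau in the sense defined before the theorem. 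First I would record that this doubling is a bijection between $\OCT_{d,L}^w(\alpha,\cdot)$ and the set of symmetric oscillating cylindric tableaux whose first $n$ signs equal $w$: the inverse simply reads off the first half $\sigma^0,\dots,\sigma^n$, and a symmetric tableau is determined by its first half. A short check of signs shows that the type of $\sigma$ is $w\cdot\tilde w$, where $\tilde w$ is the reverse-complement of $w$, since reading the reversed second half turns each addition into a removal and vice versa; in particular the type lies in $\WW_{n,n}$ and is invariant under reversal-and-sign-switch.

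Next I would invoke Theorem~\ref{thm:OCT} with $m=n$ and $\beta=\alpha$, applied to the word $v=w\tilde w\in\WW_{n,n}$, to get a bijection between $\OCT_{d,L}^v(\alpha,\alpha)$ and cylindric growth diagrams on the $n\times n$ grid with vertices $(0,n)$ and $(n,0)$ both labeled $\alpha$. The heart of the argument is to show that this bijection restricts to one between the symmetric oscillating tableaux of type $v$ and the growth diagrams symmetric under reflection along $y=x$. This is essentially the content of the paragraph preceding the theorem: the lattice path $\gamma_v$ determined by $v$ runs from $(0,n)$ through its midpoint $(p,p)$ --- where $p$ is the number of pluses in $w$, which always lies on the diagonal since after the first $n$ steps the path reaches $(p,p)$ --- to $(n,0)$, and because $\tilde w$ traces the reflection of $w$, the path $\gamma_v$ is symmetric under $(x,y)\mapsto(y,x)$. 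A symmetric oscillating tableau therefore labels $\gamma_v$ by a reflection-symmetric sequence, and since the forward and backward local rules do not distinguish $\rhoul$ from $\rholr$, extending a reflection-symmetric labeled path produces a reflection-symmetric growth diagram; conversely, reading any symmetric growth diagram along $\gamma_v$ returns a symmetric oscillating tableau of type $v$.

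Composing the doubling bijection with this restricted form of Theorem~\ref{thm:OCT} yields, for each $w$, a bijection between $\OCT_{d,L}^w(\alpha,\cdot)$ and the set of symmetric cylindric growth diagrams with $(0,n)$ and $(n,0)$ labeled $\alpha$. Since this target set does not depend on $w$, the cardinality $\card{\OCT_{d,L}^w(\alpha,\cdot)}$ is independent of $w$. For the final equalities I would specialize: taking $w=-^n$ identifies $\OCT_{d,L}^{w}(\alpha,\cdot)$, after reversing the sequence, with walk representations of standard cylindric tableaux of outer shape $\alpha$, that is, with $\SCT^n_{d,L}(\alpha/\cdot)$; taking $w=+^n$ identifies $\OCT_{d,L}^{w}(\alpha,\cdot)$ directly with $\SCT^n_{d,L}(\cdot/\alpha)$ as in Remark~\ref{rem:SCT}. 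As both have the common $w$-independent cardinality, the three quantities agree.

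I expect the main obstacle to be pinning down with full rigor that the doubling construction interacts correctly with the growth-diagram symmetry --- specifically, verifying that $\gamma_v$ is reflection-symmetric (the midpoint-on-the-diagonal computation together with the identification of the second half of the type as the reverse-complement of $w$) and that the axis-symmetry of the local rules promotes a symmetric boundary path to a fully symmetric diagram. Once those two points are secured, the remainder is bookkeeping, most of which is already carried out in the discussion preceding the theorem statement.
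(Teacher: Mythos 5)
Your proposal is correct and follows essentially the same route as the paper's proof: the doubling map $\rho^0,\dots,\rho^n\mapsto\rho^0,\dots,\rho^n,\dots,\rho^1,\rho^0$ onto symmetric tableaux of type $w\overleftarrow{w}$, followed by the restriction of Theorem~\ref{thm:OCT} to the symmetric case (using the axis-symmetry of the local rules), and then the specializations $w=-^n$ and $w=+^n$ to get the two $\SCT$ cardinalities. The extra verifications you flag (the midpoint $(p,p)$ lying on the diagonal and the promotion of a symmetric boundary path to a symmetric diagram) are exactly the points the paper handles in the discussion preceding the theorem, so no gap remains.
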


\begin{proof}
Let $\overleftarrow{w}\in\{+,-\}^n$ be obtained from $w$ by reversing it and switching signs. There is a simple bijection between tableaux in $\OCT_{d,L}^w(\alpha,\cdot)$ and symmetric tableaux in $\OCT_{d,L}^{w\overleftarrow{w}}(\alpha,\alpha)$, obtained by mapping $\rho^0,\rho^1,\dots,\rho^n$ to $\rho^0,\rho^1,\dots,\rho^n,\dots,\rho^1,\rho^0$. These symmetric tableaux, in turn, are in bijection with symmetric cylindric growth diagrams on $n\times n$ where vertices $(0,n)$ and $(n,0)$ have label~$\alpha$, as can be seen by restricting the bijection from Theorem~\ref{thm:OCT} to the symmetric case.

In the special case $w=-^n$, tableaux in $\OCT_{d,L}^w(\alpha,\cdot)$ describe the labels of the vertices on the left boundary of the grid, which determine tableaux in $\SCT^n_{d,L}(\alpha/\cdot)$. Similarly, oscillating tableaux of type $w=+^n$ describe the labels of the vertices on the top boundary of the grid, which determine tableaux in $\SCT^d_{d,L}(\cdot/\alpha)$.
\end{proof}

We can now use Theorem~\ref{thm:equivalence-oscillating} to translate the above result to the other settings.

\begin{corollary}\label{cor:ww'}
For any  $w,w'\in\{+,-\}^n$, there are bijections between the sets in Theorem~\ref{thm:equivalence-oscillating} of type $w$ and those of type $w'$.
\end{corollary}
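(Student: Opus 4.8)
The plan is to reduce the statement to the single setting of oscillating cylindric tableaux and then transport the resulting bijection to the remaining three settings. For a fixed shape $\alpha\in\CS_{d,L}$ (with its images $\x$, $u$, and $\bracket{u}$ as in Theorem~\ref{thm:equivalence-oscillating}), that theorem already supplies, for each fixed type $w$, a bijection between the four sets \ref{it:A}--\ref{it:D} of type $w$: the set $\OCT_{d,L}^w(\alpha,\cdot)$ of oscillating cylindric tableaux and the sets of oscillating walks of type $w$ in $\D_{d,L}$, $\E_{d,L}$, and $\N_{d,L}$ starting at the corresponding basepoints. Consequently it suffices to produce a bijection between $\OCT_{d,L}^w(\alpha,\cdot)$ and $\OCT_{d,L}^{w'}(\alpha,\cdot)$ for arbitrary $w,w'\in\{+,-\}^n$; composing this with the bijections from Theorem~\ref{thm:equivalence-oscillating} (applied once for $w$ and once for $w'$, with the same $\alpha$) then yields the claimed bijection in each of the four settings simultaneously.

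The key step is to apply Theorem~\ref{thm:OCT-symmetric}, which gives a bijection between $\OCT_{d,L}^w(\alpha,\cdot)$ and the set of symmetric cylindric growth diagrams on an $n\times n$ grid whose vertices $(0,n)$ and $(n,0)$ carry the label $\alpha$. The decisive observation is that this target set is \emph{independent of} $w$: the type $w$ only selects which symmetric monotone lattice path from $(0,n)$ to $(n,0)$ one reads off, whereas the growth diagram itself is completely determined once the labels along any such path are propagated throughout the grid by the forward and backward local rules. Hence $\OCT_{d,L}^w(\alpha,\cdot)$ and $\OCT_{d,L}^{w'}(\alpha,\cdot)$ are in bijection with one and the same collection of symmetric growth diagrams, and composing these two bijections (one inverted) delivers the desired bijection between them.

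Putting the pieces together, for each of the settings \ref{it:B}, \ref{it:C}, \ref{it:D} I would chain the bijection of Theorem~\ref{thm:equivalence-oscillating} from the type-$w$ set to $\OCT_{d,L}^w(\alpha,\cdot)$, then the growth-diagram bijection $\OCT_{d,L}^w(\alpha,\cdot)\leftrightarrow\OCT_{d,L}^{w'}(\alpha,\cdot)$ above, then the bijection of Theorem~\ref{thm:equivalence-oscillating} back to the type-$w'$ set. Since every arrow in this chain is an appeal to an already-established theorem, there is no substantial obstacle; the only points demanding care are verifying that the symmetric growth diagrams in Theorem~\ref{thm:OCT-symmetric} form a $w$-independent set (so the two bijections are genuinely composable) and keeping track of the fixed basepoints $\alpha,\x,u,\bracket{u}$ so that Theorem~\ref{thm:equivalence-oscillating} applies verbatim for both $w$ and $w'$. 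I would also remark that restricting to setting \ref{it:B} recovers Theorem~\ref{thm:osc_walks}.
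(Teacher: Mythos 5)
Your proposal is correct and follows essentially the same route as the paper: the paper likewise obtains the bijection $\OCT_{d,L}^w(\alpha,\cdot)\leftrightarrow\OCT_{d,L}^{w'}(\alpha,\cdot)$ from Theorem~\ref{thm:OCT-symmetric} (both sets being identified with the same $w$-independent collection of symmetric cylindric growth diagrams) and then transports it to the other three settings via Theorem~\ref{thm:equivalence-oscillating}. Your extra care in spelling out why the set of symmetric growth diagrams does not depend on $w$ is exactly the content already packaged in Theorem~\ref{thm:OCT-symmetric}, so no gap remains.
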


\begin{proof}
For any $\alpha\in\CS_{d,L}$, Theorem~\ref{thm:OCT-symmetric} gives a bijection between $\OCT_{d,L}^w(\alpha,\cdot)$ and $\OCT_{d,L}^{w'}(\alpha,\cdot)$.
On the other hand, we can use the bijections from Theorem \ref{thm:equivalence-oscillating} to translate between oscillating cylindric tableaux of types $w$ and $w'$ starting at $\alpha$, oscillating walks in $\Delta_{d,L}$ of types $w$ and $w'$ starting at vertex $\x$, oscillating walks in $\E_{d,L}$ of types $w$ and $w'$ starting at state $u$,
and oscillating walks in $\N_{d,L}$ of types $w$ and $w'$ starting at state $\bracket{u}$, respectively.
\end{proof}

In particular, Corollary~\ref{cor:ww'} gives a bijection between $\OW^w_{d,L}(\x)$ and $\OW^{w'}_{d,L}(\x)$, which proves Theorem~\ref{thm:osc_walks}.
The original proof in \cite[Sec.~2.2]{courtiel_bijections_2021} applies a sequence of certain flips to the walks, while transforming $w$ into $w'$. In \cite[Sec.~2.3]{courtiel_bijections_2021}, the authors recast this construction in terms of tilings of a tilted square using 9 kinds of square tiles (for the $d=3$ case), and then use induction to prove that the tiling exists and is unique. 
Even though there is no mention of growth diagrams or cylindric tableaux in~\cite{courtiel_bijections_2021}, one can show that these tiles correspond to the edge local rules in Equation~\eqref{eq:edge_rules}. Hence, when generalizing the tiling from \cite{courtiel_bijections_2021} to arbitrary $d$, the bijection from $\OW^w_{d,L}(\x)$ to $\OW^{w'}_{d,L}(\x)$ that it produces is equivalent to the one that we obtain in Corollary~\ref{cor:ww'} by using growth diagrams. This is illustrated in Figure~\ref{fig:Courtiel}, which shows the cylindric growth diagram that corresponds to the example in \cite[Fig.\ 7,8]{courtiel_bijections_2021}.

\newcommand\labarr[3]{
\draw[postaction={decorate},shorten <=4.2mm,shorten >=4.2mm,purple] (#1,#2) -- node[above,scale=.8]{$#3$} ++(1,0);
\draw[postaction={decorate},shorten <=4.2mm,shorten >=4.2mm,purple] (#2,#1) -- node[left,xshift=1,yshift=1,scale=.8]{$#3$} ++(0,1);
}

\begin{figure}[htb]
\centering
\begin{tikzpicture}[scale=1.2,decoration={
    markings,
    mark=at position .55 with {\arrow{>}}}
    ] 
\draw[dotted] (0,0) grid (3,3);
\draw (0,0) node {$[1\mn1\mn2]$};
\draw (1,0) node {$[1 0 \mn2]$}; \draw (0,1) node {$[1 0 \mn2]$};
\draw (2,0) node {$[1 0 \mn1]$}; \draw (0,2) node {$[1 0 \mn1]$};
\draw (3,0) node[violet] {$[1 0 0]$}; \draw (0,3) node[violet] {$[1 0 0]$};
\draw (3.3,0) node[right,violet] {$=\alpha$}; \draw (-.3,3) node[left,violet] {$\alpha=$}; 
\draw (1,1) node[purple] {$[1 0 \mn1]$}; 
\draw (2,1) node {$[2 0 \mn1]$}; \draw (1,2) node[purple] {$[2 0 \mn1]$};
\draw (3,1) node {$[2 0 0]$}; \draw (1,3) node[purple] {$[2 0 0]$};
\draw (2,2) node {$[2 1 \mn1]$};
\draw (3,2) node {$[2 1 0]$}; \draw (2,3) node {$[2 1 0]$};
\draw (3,3) node {$[3 1 0]$};
\draw[->] (1,-.4)--node[below]{$T$} (2,-.4);
\draw[->] (-.7,1)--node[left]{$T$} (-.7,2);
\draw[->] (1,3.5)--node[above]{$P$} (2,3.5);
\draw[->] (3.7,1)--node[right]{$P$} (3.7,2);

\labarr{0}{0}{(2)}
\labarr{1}{0}{(3)}
\labarr{2}{0}{(3)}
\labarr{0}{1}{(3)}
\labarr{1}{1}{(1)}
\labarr{2}{1}{(3)}
\labarr{0}{2}{(1)}
\labarr{1}{2}{(2)}
\labarr{2}{2}{(3)}
\labarr{0}{3}{(1)}
\labarr{1}{3}{(2)}
\labarr{2}{3}{(1)}

\begin{scope}[scale=.417,shift={(-8,2)}]
\fill[yellow!25] (1,3) rectangle (3,3.5);
\fill[yellow] (-2,0) \north\east\north\east\east\north \south\west\south\south\west\west;
 \draw[very thin,dotted] (-2,0)  grid (3,3);
  \draw[thick,<->,olive] (-.2,0)-- (-.2,3);
 \draw[thick,<->,olive] (0,3.2)-- (3,3.2); 
 \draw[black, very thick] (-2,-.5)--++(0,.5) \north\east\north\east\east\north -- ++(0,.5);
 \draw[violet, ultra thick] (3,3.5)-- ++(0,-.5)\west\west \south\west\south\south\west\west -- ++(0,-0.5);
 \draw (-.5,1.5) node {1}; 
 \draw (-1.5,.5) node {2}; \draw (-1.5+3,.5+3) node {\footnotesize 2};  
 \draw (-.5,.5) node {3}; \draw (-.5+3,.5+3) node {\footnotesize 3};  
 \draw[violet] (.5,1.5) node {$\alpha$};
 \draw[->] (0,3.5)--(0,-1);
\draw[->] (-2,3)--(3.5,3);
\draw (-.5,4) node {$T$};
\end{scope}

\begin{scope}[scale=.417,shift={(13.5,2)}]
\fill[pink!50] (-2,-.5) rectangle (0,0);
\fill[pink] (0,0) \north\north\east\north\east\east \south\west\west\south\west\south;
 \draw[very thin,dotted] (-2,0)  grid (3,3);
 \draw[black, very thick] (3,3.5)-- ++(0,-.5)\south\west\west\south\west\south  -- ++(0,-0.5);
 \draw[violet, ultra thick] (3,3.5)-- ++(0,-.5)\west\west \south\west\south\south\west\west -- ++(0,-0.5);
  \draw[violet] (-.5,1.5) node {$\alpha$};
 \draw (1.5,2.5) node {1}; \draw (1.5-3,2.5-3) node {\footnotesize 1}; 
 \draw (.5,1.5) node {2}; 
 \draw (2.5,2.5) node {3}; \draw (2.5-3,2.5-3) node {\footnotesize 3}; 
 \draw[->] (0,3.5)--(0,-1);
\draw[->] (-2,3)--(3.5,3);
\draw (2,4) node {$P$};
\end{scope}
\end{tikzpicture}
\caption{The symmetric cylindric growth diagram for the oscillating tableau $\alpha=[100],\allowbreak[200],\allowbreak[20\mn1],\allowbreak[10\mn1]$ in $\OCT_{3,3}^3(\alpha,\cdot)$ of type $w={+}{-}{-}$, which corresponds to the oscillating walk $s_1 \bars_3 \bars_1$ in $\Delta_{3,3}$ starting at $\x=f(\alpha)=(2,1,0)$.}
\label{fig:Courtiel}
\end{figure}

\section{Further directions}\label{sec:further}

\subsection{Crossings and nestings in matchings}

Huh, Kim, Krattenthaler and Okada \cite{huh_bounded_2025} have recently found a surprising connection between standard cylindric tableaux and certain matchings.
In our notation, letting $[0^d]=[0,\dots,0]\in\Delta_{d,L}$, they prove in \cite[Cor.~8.9]{huh_bounded_2025} that $\card{\SCT^n_{d,L}(\cdot/[0^d])}$ equals the number of certain partial matchings on $n$ points.
Specifically, if $d=2h+1$ and $L=2w+1$ for some $h,w\ge1$, then $\card{\SCT^n_{d,L}(\cdot/[0^d])}$ is the number of partial matchings on $[n]$ with no $(h+1)$-crossing and no $(w+1)$-nesting (see~\cite{huh_bounded_2025} for definitions).
They also give similar identities when $d$ or $L$ are even.

When $d=3$, this result is equivalent to Theorem~\ref{thm:mortimer_number_2015}, by interpreting noncrossing matchings as Motzkin paths. And when $d\to\infty$ and $L=2w+1$, it is equivalent to the fact, which can be proved using the RS correspondence, that the number of standard Young tableaux with $n$ cells and at most $2w+1$ columns equals the number of partial matchings on $[n]$ with no $(w+1)$-nesting.

The proofs in~\cite{huh_bounded_2025} are computational, based on explicit determinantal formulas. In \cite[Prob.~8.10]{huh_bounded_2025}, the authors ask for a bijective proof of their equalities.  It is our hope that the cylindric growth diagrams that we introduced in this paper, or some generalization of them, may be helpful in finding the elusive bijection between standard cylindric tableaux and restricted partial matchings.

It is known~\cite{chen_crossings_2007,krattenthaler_growth_2006} that partial matchings on $[n]$ with no $(h+1)$-crossing and no $(w+1)$-nesting are in bijection with $n$-step {\em vacillating tableaux} (similar to oscillating tableaux, but allowing steps that do no not add or remove a cell) where the partitions have at most $h$ rows and at most $w$ columns.

\subsection{The operators $U$ and $D$}

As discussed in Section~\ref{sec:growth}, the poset $(\CS_{d,L},\subseteq)$ of cylindric shapes ordered by containment is not an $r$-differential poset in the sense of \cite[Def.~1.1]{stanley_differential_1988}, but it has similar properties if we allow $r=0$ and do not require the poset to have a minimal element. 
In particular, similarly to~\cite{stanley_differential_1988}, one can define linear transformations $U$ and $D$ on the vector space of linear combinations of elements of $\CS_{d,L}$. In our poset, these transformations satisfy $DU-UD=0$, i.e., they commute. It follows, using the notation from Theorem~\ref{thm:OCT}, that $\card{\OCT_{d,L}^w(\alpha,\beta)}$ is independent of $w\in\WW_{m,n}$.
One could ask if parts of the theory of $r$-differential posets, such as the enumerative properties studied in \cite[Sec.~3]{stanley_differential_1988}, have analogues for the poset of cylindric shapes.

\subsection{Promotion on standard cylindric tableaux}

By viewing standard cylindric tableaux as linear extensions of posets, one can define a promotion operation on $\SCT(\lambda/\mu)$ via the usual sliding procedure~\cite{stanley_promotion_2009}. The slides in this case are the cylindric analogue of Sch\"utzenberger's {\it jeu de taquin} for standard Young tableaux~\cite{schutzenberger_correspondance_1977}.
This promotion operation can also be described in terms of growth diagrams, following the approach from \cite[Sec.~5]{stanley_promotion_2009}, which generalizes Fomin's growth diagram description of {\it jeu de taquin} \cite[Sec.~A1.2]{stanley_enumerative_1999}.

One could ask if promotion on standard cylindric tableaux has any interesting properties. For example, one could look for shapes $\lambda/\mu$ for which the order of promotion on $\SCT(\lambda/\mu)$ is small (compared to the cardinality of the set), as is the case for standard Young tableaux of rectangular or staircase shapes.

\subsection*{Acknowledgments}
The author thanks Alex Postnikov, Darij Grinberg, Christian Krattenthaler and Tom Roby for helpful conversations about cylindric tableaux and growth diagrams, as well as an anonymous referee for useful suggestions.
This work was partially supported by Simons Collaboration Grant \#929653.

\bibliographystyle{plain}
\bibliography{walks_simplices}

\end{document}